\definecolor{darkgreen}{rgb}{0,0.75,0}
\definecolor{darkred}{rgb}{0.75,0,0}
\definecolor{darkmagenta}{rgb}{0.5,0,0.5}
\newtheorem{theorem}{Theorem}[section]
\newtheorem{cor}[theorem]{Corollary}
\newtheorem{lemma}[theorem]{Lemma}
\newtheorem{prop}[theorem]{Proposition}
\theoremstyle{definition}
\newtheorem{definition}[theorem]{Definition}
\newtheorem{question}[theorem]{Question}
\newtheorem{remark}[theorem]{Remark}
\newtheorem{example}[theorem]{Example}
\newtheorem{notation}[theorem]{Notation}
\newtheorem{conjecture}[theorem]{Conjecture}
\numberwithin{equation}{section}
\def\be{\begin{equation}}
	\def\ee{\end{equation}}
\def\bes{\begin{equation*}}
	\def\ees{\end{equation*}}
\newcommand{\set}[1]{\left\{ #1 \right\}}
\newcommand{\abs}[1]{{\left\lvert #1\right\rvert}}
\newcommand\norm[1]{\left\lVert #1\right\rVert} 
\newcommand{\one}{\mathds{1}} 
\DeclareMathOperator*{\esssup}{ess\,sup}
\DeclareMathOperator*{\essinf}{ess\,inf}
\def\to {\rightarrow}
\def\q{\quad} 
\def\dint{\int\kern-.6em\int}
\def\grad{\nabla}
\newcommand\restr[2]{{
		\left.\kern-\nulldelimiterspace 
		#1 
		\vphantom{\big|} 
		\right|_{#2} 
}} 
\def\diam{{\mathop{{\rm diam }}}}
\def\dist{{\mathop {{\rm dist}}}}
\def\supp{\mathop{{\rm supp}}\nolimits}
\newcommand{\on}[1]{\operatorname{ #1}}
\def\wt{\widetilde}
\def\be{\begin{equation}}
	\def\ee{\end{equation}}
\def\bes{\begin{equation*}}
	\def\ees{\end{equation*}}
\def\ba{\begin{align}}
	\def\ea{\end{align}}
\def\xxea{\end{align}}
\def\bas{\begin{align*}}
\def\eas{\end{align*}}
\definecolor{dgreen}{rgb}{0, 0.6, 0.1}
\definecolor{dblue}{rgb}{0, 0.0, 0.6}
\definecolor{vdblue}{rgb}{0,.08, 0.45}
\definecolor{dred}{rgb}{0.7, 0.0, 0.0}
\definecolor{vdblue}{rgb}{0,.08, 0.45}
\definecolor{purple}{rgb}{0.6, 0.0, 0.6}
\definecolor{mytext}{rgb}{0.1, 0.1, 0.1}
\newcommand{\gener}{\mathcal{A}} 
\newcommand{\generlaak}{\mathcal{A}^{\laak}} 
\newcommand{\branch}{\mathbf{b}}
\newcommand{\glue}{\mathbf{g}}
\newcommand{\tree}{\mathcal{T}} 
\newcommand{\metrictree}{\mathsf{d}_{\tree(\branch)}}
\newcommand{\meastree}{\mathsf{m}_{\tree(\branch)}}
\newcommand{\lentree}{\lambda_{\tree(\branch)}}
\newcommand{\codetree}{\chi} 
\newcommand{\ultra}{\mathcal{U}} 
\newcommand{\metricultra}{\mathsf{d}_{\ultra(\glue)}}
\newcommand{\measultra}{\mathsf{m}_{\ultra(\glue)}}
\newcommand{\laak}{\mathcal{L}} 
\newcommand{\metlaak}{\mathsf{d}_{\laak(\glue,\branch)}}
\newcommand{\measlaak}{\mathsf{m}_{\laak(\glue,\branch)}}
\newcommand{\lenlaak}{\lambda_{\laak(\glue,\branch)}}
\newcommand{\prspace}{\mathcal{P}(\glue,\branch)} 
\newcommand{\metprod}{\mathsf{d}_{\prspace}}
\newcommand{\measprod}{\mathsf{m}_{\prspace}}
\newcommand{\worm}{\mathcal{W}} 
\newcommand{\quolaak}{\mathcal{Q}}
\newcommand{\projtree}{\pi^{\tree}}
\newcommand{\contfunc}{C} 
\newcommand{\dftree}{\mathcal{E}^{\tree}}
\newcommand{\domtree}{\mathcal{F}^{\tree}}
\newcommand{\emtree}{\Gamma^{\tree}}
\newcommand{\dflaak}{\mathcal{E}^{\laak}}
\newcommand{\domlaak}{\mathcal{F}^{\laak}}
\newcommand{\emlaak}{\Gamma^{\laak}}
\newcommand{\sttree}{\Psi_\branch}
\def\Cap{\operatorname{Cap}} 
\newcommand{\projultra}{\pi^\ultra}
\newcommand{\projlaak}{\pi^\laak}
\begin{document}
	
	\font\titlefont=cmbx14 scaled\magstep1
	\title{\titlefont Diffusions and random walks with prescribed sub-Gaussian heat kernel estimates} 
	\author{Mathav Murugan} 
	\renewcommand{\thefootnote}{}
	\footnotetext{The author is partially supported by NSERC and the Canada research chairs program.}
	\renewcommand{\thefootnote}{\arabic{footnote}}
	\setcounter{footnote}{0}
 
	\maketitle
	\vspace{-0.5cm}
\begin{abstract}
	Given suitable functions $V, \Psi:[0,\infty) \to [0,\infty)$, we obtain necessary and sufficient conditions on $V,\Psi$ for the existence of a metric measure space and a symmetric diffusion process that satisfies sub-Gaussian heat kernel estimates with volume growth profile $V$ and escape time profile $\Psi$. We prove sufficiency by constructing a new family of diffusions. Special cases of this construction also leads to a new family of infinite graphs whose simple random walks satisfy sub-Gaussian heat kernel estimates with prescribed volume growth and escape time profiles. In particular, these random walks on graphs generalizes earlier results of Barlow who considered the case $V(r)=r^\alpha$ and $\Psi(r)=r^\beta$ (Rev Mat Iberoam 2004). The family of diffusions we construct have martingale dimension one but can have arbitrarily high spectral dimension. Therefore our construction shows the impossibility of obtaining non-trivial lower bounds on martingale dimension in terms of spectral dimension which is in contrast with upper bounds on martingale dimension in terms of spectral dimension, as obtained by Hino (Probab Theory Relat Fields 2013).
\end{abstract}

\section{Introduction}

Following the seminal work of Barlow and Perkins,  sub-Gaussian estimates on heat kernel have been established for diffusions on many fractals  \cite{BP}.
It is now well-established that on a large class of spaces there is a  nice diffusion process $\{X_{t}\}_{t\geq 0}$  symmetric with respect to a suitable reference measure $m$ and exhibits   sub-diffusive
behavior in the sense that its transition density (heat kernel) $p_{t}(x,y)$ satisfies
the following \textbf{sub-Gaussian estimates}:
\begin{align}\label{e:HKEbeta}
	p_{t}(x,y) &\le \frac{c_{1}}{m(B(x,t^{1/\beta}))} \exp\biggl( - c_{2} \Bigl( \frac{d(x,y)^{\beta}}{t} \Bigr)^{\frac{1}{\beta-1}}\biggr) \nonumber
	\\
	p_{t}(x,y)	&\ge \frac{c_{3}}{m(B(x,t^{1/\beta}))} \exp\biggl( - c_{4} \Bigl( \frac{d(x,y)^{\beta}}{t} \Bigr)^{\frac{1}{\beta-1}}\biggr)
\end{align}
for all points $x,y$ and all $t>0$, where $c_{1},c_{2},c_{3},c_{4}>0$ are some constants,
$d$ is a natural metric on the space, $B(x,r)$ denotes the open ball of radius $r>0$ centered at $x$, and $\beta\geq 2$ is a characteristic of the diffusion called the
\textbf{walk dimension} or \textbf{escape time exponent}. 
This result was obtained first for the Sierpi\'{n}ski gasket in
\cite{BP}, then for nested fractals in \cite{Kum93}, for affine nested fractals in \cite{FHK}
and for Sierpi\'{n}ski carpets in \cite{BB92,BB99}. We refer to \cite{CQ,Kig23} for some very recent examples that have fewer symmetries but are low dimensional in a certain sense. We refer to \cite{Bar98,Kig01} for an  introduction to diffusion on fractals.

In all of the examples mentioned above, there exists $C \in (1,\infty), \alpha \in [1,\infty)$ such that 
\begin{equation}
	\label{e:alphavol}
	C^{-1}r^\alpha \le	m(B(x,r)) \le C r^\alpha
\end{equation}
for all points $x$ and for all radii $r$ less than the diameter of the space.
Here  the parameter $\alpha \in [1,\infty)$ is called the \textbf{volume growth exponent} since it governs the volume of balls with respect to the reference measure $m$. The term \emph{escape time exponent} for $\beta$ in \eqref{e:HKEbeta} arises from the fact that  \eqref{e:HKEbeta} implies that the expected exit time  $\mathbb{E}_x[\tau_{B(x,r)}]$ from a ball $B(x,r)$ for the diffusion started at the center $x$ is comparable to $r^\beta$ for all balls $B(x,r)$ whose radii is less than $c$ times the diameter of the space, where $c \in (0,1)$ (see Definition \ref{d:conseqhke}(d) and the discussion above it).

More generally, the  sub-Gaussian heat kernel estimates also occur when the volume growth and expected exit times are not described by power functions. For example, this is the case for scale-irregular Sierpi\'nski gaskets \cite{Ham92, BH, Ham00}. In this case, it is natural to replace volume growth and escape time \emph{exponents} with \emph{profiles} (or functions). These profiles are \emph{doubling functions} which can be viewed as a generalization of power functions mentioned above. We say that a function $V:[0,\infty) \to [0,\infty)$ is \emph{doubling} if $V$ is a homeomorphism (hence increasing) and there exists $C>1$ such that 
\begin{equation} \label{e:doublef}
	V(2r) \le C V(r) \quad \mbox{for all $r>0$.}
\end{equation}
We say that a metric space $(X,d)$ equipped with a measure $m$ has \emph{volume growth profile} if there exists $C>1, c \in (0,1)$ such that 
\[
C^{-1} V(r) \le m(B(x,r)) \le C V(r),\quad \mbox{for all $x \in X, 0<r<\diam(X,d)$,}
\]
where $\diam(X,d):= \sup_{x,y \in X} d(x,y)$ denotes the diameter of the metric space $(X,d)$. 
Similarly, the \emph{escape time profile} for a symmetric diffusion process is a doubling function $\Psi: [0,\infty) \to [0,\infty)$ such that 
\[
C^{-1} \Psi(r) \le \mathbb{E}_x[\tau_{B(x,r)}] \le C \Psi(r),\quad \mbox{for all $x \in X, 0<r<\diam(X,d)$.}
\]
The version of sub-Gaussian heat kernel estimates \eqref{e:HKEbeta} corresponds to the escape time profile $\Psi(r)=r^\beta$. The generalization of \eqref{e:HKEbeta} for a more general escape time profile $\Psi:[0,\infty) \to [0,\infty)$ is given in Definition \ref{d:HKE}. We refer to Remark \ref{r:profile} for some examples that are not power functions.


Our work is closely related to  
earlier results of M.~Barlow who considered an \emph{inverse problem} that we describe below \cite{Bar04}.
The setting of \cite{Bar04} is random walks on graphs where variants of \eqref{e:HKEbeta} in a discrete-time setting are known to hold  for random walks on graphs (see Definition \ref{d:dHKE}). Well-known estimates on random walks imply that if sub-Gaussian heat kernel estimates with volume growth exponent $\alpha$ and escape time exponent $\beta$ hold, then we necessarily have  (see \cite[Theorem 1]{Bar04})
\begin{equation} \label{e:condrw}
	2 \le \beta \le \alpha+1. 
\end{equation}
Under condition \eqref{e:condrw}, Barlow constructs an infinite graph with    volume growth exponent $\alpha$, escape time exponent $\beta$  satisfying the discrete-time sub-Gaussian heat  kernel estimates for the simple random walk on graph \cite[Theorem 2 and Lemma 1.3(a)]{Bar04}. This result can be viewed as solving an \emph{inverse problem}: the construction of a graph with prescribed sub-Gaussian heat kernel estimates.

Prior to our work, the solution to an analogous inverse problem was not known if one replaces random walks with diffusions, or if volume growth and escape time exponents are replaced with the corresponding profiles, as previously discussed. This motivates the following question, which the author learned from Barlow \cite{Bar22}.
\begin{question} \label{q:main}
	For which  doubling functions $V:[0,\infty) \to [0,\infty)$ and $\Psi:[0,\infty) \to [0,\infty)$ there exist a metric measure space $(X,d,m)$  and a $m$-symmetric diffusion
	process on  $X$ that satisfies sub-Gaussian heat kernel estimates with volume growth profile
	$V$ and escape time profile $\Psi$? If such a diffusion exists, how to construct a corresponding diffusion and the underlying metric
	measure space?
\end{question}

Our main results answer Question \ref{q:main} (see Theorems \ref{t:mainnec} and \ref{t:mainsuf}).  For a  diffusion  that satisfies sub-Gaussian heat kernel estimates with volume growth profile
$V$ and escape time profile $\Psi$ to exist on a metric space $(X,d)$, we show that the following condition is both necessary and sufficient: there exists $C>1$ such that 
\begin{equation} \label{e:conddiff}
	C^{-1} \frac{R^2}{r^2} \le \frac{\Psi(R)}{\Psi(r)} \le C \frac{RV(R)}{rV(r)}, \quad \mbox{for all $0<r<R< \diam(X,d)$.}
\end{equation}
We note that if $V(r)=r^\alpha, \Psi(r)=r^\beta$, then \eqref{e:conddiff} is equivalent to \eqref{e:condrw} as stated in \cite[Theorem 1]{Bar04}. 
Hence our work verifies  that the condition \eqref{e:condrw} plays the same role in the continuous time setting as well. This was long believed to be case as the authors of \cite[p. 2069]{GHL03} write 
``\emph{There	is no doubt that the same is true for continuous time heat kernels}". 

Our construction produces fractal-like spaces and graphs in a  unified framework and hence we also answer the discrete version of Question \ref{q:main} for random walks on graphs (see Theorem \ref{t:sufgraph}(a)). As explained above, since \eqref{e:conddiff} is a generalization of \eqref{e:condrw}, our results can be viewed as a generalization of \cite{Bar04}. The necessity of first estimate in \eqref{e:conddiff} was shown by the author in \cite[Corollary 1.10]{Mur20} but we provide a new shorter proof using capacity bounds (cf.  Lemma \ref{l:dwge2}). The necessity of the second estimate in \eqref{e:conddiff} follows from an argument using Poincar\'e inequality, capacity bounds along with the chain condition for the metric obtained in \cite[Theorem 2.11]{Mur20} (cf. Lemma \ref{l:dwdf}).

Before discussing our construction proving the sufficiency of \eqref{e:conddiff},
let us briefly mention further motivations behind Question \ref{q:main} and its variants.  One of the most important motivations is to construct a rich class of examples, as they provide testing grounds for various questions and methods (for example, see Question \ref{q:dcw}). By the necessity of \eqref{e:conddiff}, our construction encompasses all possible volume growth and escape time profiles.
Certain predictions in the physics literature (for example, \cite{WF}) rely on treating dimension as a continuous parameter. We hope that the spaces we construct will provide a rigorous setting for verifying some of these predictions. Additionally, our construction yields growing sequences of finite graphs with uniform sub-Gaussian heat kernel estimates (see Theorem \ref{t:sufgraph}(b)). Such examples are relevant for the study of hitting times, mixing times and cutoff phenomenon on finite Markov chains \cite{DKN, SW25}. We anticipate that the new heat kernel estimates developed here will lead to new phenomena for the corresponding random walks and diffusions.

Similar to Barlow's work \cite{Bar04}, our construction of the metric measure space proving the sufficiency of \eqref{e:conddiff} can be viewed as a variant of Laakso space \cite{Laa}.
We  note that variants of Laakso's construction are of recent interest in various contexts such as the study of conformal dimension, Loewner spaces, Poincar\'e inequalities and metric embeddings \cite{CE,CK,AE, OO}. Similar to \cite{Bar04}, the construction consists of two steps that we outline below.
\begin{enumerate}[(1)]
	\item  In the first step, we construct an $\mathbb{R}$-tree equipped with a measure whose volume growth profile is comparable to the function $r \mapsto \Psi(r)/r$. This tree admits a natural symmetric diffusion process \cite{Kig95, AEW}. Using standard results on resistance forms, we know that this diffusion satisfies sub-Gaussian heat kernel estimates whose escape time profile is $\Psi$ \cite{Kum04,Kig12}. The first step only depends on the exit time profile $\Psi$. 
	\item If $V$ happens to be comparable to the function $r \mapsto \frac{\Psi(r)}{r}$, then we stop after the first step. In general,  the tree would not have the desired volume growth profile.
	In the second step, we take (typically infinitely) many disjoint copies of the tree constructed in the first step and identify (or glue) carefully chosen points in different copies of the tree so as to increase the volume growth profile without affecting the escape time profile. 	The identification is done in such a way that the  preimage of the corresponding quotient map of any point on the quotient space  is finite. This leads to a quotient space which we call the \textbf{Laakso-type space}. We define a natural diffusion on the Laakso-type space such that it behaves like the diffusion on the tree and is equally likely to proceed in one of the identified copies of the tree in case the diffusion hits a point on the quotient space that corresponds to multiple copies of the tree.
\end{enumerate}

We use the theory of Dirichlet forms to construct the diffusion process in both    steps mentioned above. This theory also plays an important role in the analysis of the corresponding diffusion processes. In particular, our proof of sub-Gaussian heat kernel estimates for the Laakso-type space relies on a general characterization in terms of Poincar\'e inequality and cutoff energy inequality \cite[Theorem 1.2]{GHL}. Our proof of Poincar\'e inequality is an adaptation of the \emph{pencil of curves} approach in Laakso's work \cite[Definition 2.3 and Proof of Theorem 2.6]{Laa}. 
For the proof of the cutoff energy inequality, we use cutoff functions on tree to construct cutoff functions on the Laakso-type space.
Our proof of cutoff energy inequality relies on a recent simplification of the inequality in \cite[Lemma 6.2]{Mur24} following \cite{AB,BM}.
Although cutoff energy inequality was introduced two decades ago in \cite{BB04}, it has almost exclusively been used to obtain abstract characterizations and stability results for Harnack inequalities and heat kernel estimates \cite{BB04, BBK,GHL,AB, BM}. This work along with another very recent work \cite{Mur24} shows that cutoff energy inequalities can also be useful to obtain heat kernel estimates for concrete diffusions of interest.

There are challenges associated with constructing a diffusion process (or Dirichlet form) on Laakso-type spaces that do not arise in the discrete setting of \cite{Bar04}. A key insight from Barlow and Evans \cite{BE} is that Laakso  spaces can be viewed as projective limits of simpler spaces, where defining corresponding diffusions is more straightforward. These diffusions on approximating spaces were then used to construct a diffusion on the projective limit, as demonstrated in \cite[Theorem 4.3]{BE}. While it might be possible to adapt their arguments to construct a diffusion, we prefer the Dirichlet form approach as it provides additional tools to analyze the diffusion process such as the functional inequalities mentioned in the previous paragraph.

Our construction of the Dirichlet form is an analytic counterpart to that of \cite{BE}. Specifically, we approximate the Laakso-type space by a sequence of simpler Laakso-type spaces, in which at most countably many trees are glued together. In these approximating spaces, the points where non-trivial identifications occur form a separated subset of the tree.  For these simpler  Laakso-type   spaces, the definition of Dirichlet forms is easier since it can be described directly using the Dirichlet form for the diffusion on the tree (Proposition \ref{p:dflaakprelim}). 
We then define the Dirichlet form on the projective limit by constructing the limiting generator, using the Friedrichs extension theorem (see Proposition \ref{p:friedrich}).

Although the overall approach behind our work is similar to that in \cite{Bar04}, every step of our construction and the proof of sub-Gaussian bounds differs from that in \cite{Bar04}. We highlight some of the key differences below.
\begin{remark} \label{r:diff}
	\begin{enumerate}[(1)]
		\item The construction of  trees in \cite[\textsection 4]{Bar04} is a variant of the \emph{Vicsek set} while our tree is a variant of the \emph{continuum self-similar tree} recently studied by Bonk and Tran \cite{BT21} (see Example \ref{x:csst}).  The latter tree seems slightly simpler to construct and analyze compared to the Vicsek tree based approach. Following the terminologies of \cite{BH, BT21} our tree can be viewed as a \emph{scale-irregular continuum self-similar tree}.
		\item For the construction of the Laakso-type space from the tree, Barlow uses certain sets with good separation properties \cite[Proposition 2.1]{Bar04} on the tree constructed in the first step. Such sets are called \emph{wormholes} in \cite{Laa} and are used to glue points in different copies of the tree. The  existence of such sets in \cite[proof of Proposition 2.1]{Bar04} relies implicitly on the axiom of choice (in its equivalent form of Zorn's lemma).  In contrast, our construction of wormholes has the advantage of being \emph{constructive}; that is, explicit and independent of the axiom of choice (see \eqref{e:defwormhole} and the definition of Laakso-type space in \textsection \ref{ss:laak}). 
		\item In both steps of our construction, the volume growth profile need not be a power function, unlike the corresponding construction in \cite{Bar04}. See Remark \ref{r:profile} for a family of such examples.
		\item The proof of sub-Gaussian heat kernel estimates in \cite{Bar04} relies on obtaining bounds on expected exit times from balls and the elliptic Harnack inequality. Our approach differs,  as we rely on deriving  Poincar\'e  and cutoff energy inequalities as mentioned above (see Propositions \ref{p:pi} and \ref{p:cs}). In particular, we provide a new proof of  \cite[Theorem 2]{Bar04}.  One difficulty in extending the method of \cite{Bar04} is that the proof of elliptic Harnack inequality relies on the fact that the space is a graph which is no longer true in our setting.  Furthermore, our method has the advantage of being easily extended to non-linear setting of $p$-energies \cite{Yan}.
	\end{enumerate}
\end{remark}

Our work sheds new light on the relationship between spectral and martingale dimensions for diffusions. Hino \cite{Hin13} shows that the martingale dimension is bounded from above by the spectral dimension for diffusions on self-similar sets. This bound  is sharp in certain cases such as the   Brownian motion on $\mathbb{R}^n$. We conjecture that this upper bound of Hino is a general phenomenon that should not require the space 
to be self-similar. On the other hand, our construction shows that the spectral dimension could be arbitrarily high for spaces with martingale dimension one. This shows the impossibility of obtaining non-trivial lower bound on martingale dimension that only depends on the spectral dimension.
In particular, our work shows that sub-Gaussian heat kernel bounds are not enough to obtain non-trivial lower bounds on martingale dimension. Obtaining non-trivial lower bounds or determining the martingale dimension for concrete diffusions on fractals such as Brownian motion on the generalized (high dimensional) Sierpinski carpets remains a challenging open problem \cite{Hin08, Hin10, Hin13}.
\subsection{Related results: flexibility versus rigidity}
A key takeaway from this work is that sub-Gaussian heat kernel estimates are flexible as the only essential constraint is \eqref{e:conddiff}. However as we illustrate below there are several closely related situations that exhibit a contrasting rigidity.

The first rigidity phenomenon concerns random walks on graphs.
The volume growth  and expected exit time behaviours we consider are independent of the center of the ball and depends only on the radius. Therefore it is tempting to ask the following variant of Question \ref{q:main} by imposing an additional requirement that the graph be transitive.
\begin{question} \label{q:trans}
	For which  doubling functions $V:[1,\infty) \to [1,\infty)$ and $\Psi:[1,\infty) \to [1,\infty)$ there exist an infinite \emph{transitive}  graph such that the corresponding simple random walk
	satisfies sub-Gaussian heat kernel estimates with volume growth profile $V$ and escape time profile $\Psi$? If so, how to construct such graphs?
\end{question}
The answer to this question is known.
Despite the flexibility in the sub-Gaussian heat kernel  behavior of random walks on graphs, the transitivity condition imposes severe restrictions on $V$ and $\Psi$. It turns out that the answer to Question \ref{q:trans} is given by (cf. \eqref{e:conddiff})
\begin{equation*} \label{e:trans}
	V(r)= r^n \quad \mbox{where $n \in \mathbb{N}$, and} \quad \Psi(r)=r^2.
\end{equation*}
All such examples are quasi-isometric to groups of polynomial volume growth. 
These statements follow from deep results on the structure of groups of polynomial growth \cite{Gro} and transitive graphs of polynomial growth \cite{Tro} along with known heat kernel estimates for such graphs \cite{HS}.

Next, we discuss a rigidity result for heat kernel in the continuous time setting.
The most classical example of heat kernel is that of the Brownian motion in $\mathbb{R}^n$ given by
\[
p_t(x,y)= \frac{1}{(2\pi t)^{n/2}} \exp\left(-\frac{d(x,y)^2}{2t}\right) \quad \mbox{for all $x,y \in \mathbb{R}^n$ and $t>0$.}
\]
This is an example of sub-Gaussian heat kernel estimates with volume profile $V(r)=r^n$ and $\Psi(r)=r^2$. A remarkable   rigidity result of Carron and Tewodrose shows that if a $m$-symmetric diffusion process on a metric measure space $(X,d,m)$ has an \emph{Euclidean-like} heat kernel 
\begin{equation*} \label{e:hkealpha}
	p_t(x,y)= \frac{1}{(2\pi t)^{\alpha/2}} \exp\left(-\frac{d(x,y)^2}{2t}\right) \quad \mbox{for all $x,y \in X$ and $t>0$,}
\end{equation*}
where $\alpha \in [1,\infty)$, then $\alpha=n$ for some $n \in \mathbb{N}$, $X=\mathbb{R}^n$ equipped with Lebesgue measure $m$, Euclidean metric $d$ and the diffusion process is the Brownian motion on $\mathbb{R}^n$ \cite[Theorem 1.1]{CT}.  This rigidity result is in sharp contrast to our results that imply the existence of diffusions with (sub-)Gaussian heat kernel bounds with volume growth profile $V(r)=r^\alpha$ and escape time profile $\Psi(r)=r^2$ for any $\alpha \in [1,\infty)$. The authors also obtain a quantitative version of this rigidity result \cite[Theorem 1.2]{CT} and an analogous result for the Brownian motion on the $n$-sphere $\mathbb{S}^n$ \cite[Theorem 7.1]{CT}.

\subsection{Outline of the work}
In \textsection \ref{s:framework}, we recall the   setting of metric measure space with an associated Dirichlet form and state the main result. In \textsection \ref{s:mms}, we construct   trees and Laakso-type spaces as metric measures spaces and prove some of their   properties. In \textsection \ref{s:df}, we define Dirichlet forms on the metric measure spaces constructed in \textsection \ref{s:mms}. In \textsection \ref{s:hke}, we obtain heat kernel estimates for the Laakso-type spaces and show that they always have martingale dimension  one. These heat kernel estimates are enough to show the sufficiency of \eqref{e:conddiff}. We   show how these constructions of Laakso-type spaces lead to constructions of graphs and growing family of graphs with prescribed heat kernel estimates. 
Finally, we propose several questions that arise from this work.


\begin{notation}
	Throughout this paper, we use the following notation and conventions.
	\begin{enumerate}[(i)]
		\item For $a<b$, we write $\llbracket a,b \rrbracket = [a,b] \cap \mathbb{Z}$.
		\item The cardinality (the number of elements) of a set $A$ is denoted by $\#A$.
		\item  We write
		$a\vee b:=\max\{a,b\}$, $a\wedge b:=\min\{a,b\}$.
		\item Let $X$ be a non-empty set. We define $\one_{A}=\one_{A}^{X}\in\mathbb{R}^{X}$ for $A\subset X$ by
		\[\one_{A}(x):=\one_{A}^{X}(x):= \begin{cases}
			1 & \mbox{if $x \in A$,}\\
			0 & \mbox{if $x \notin A$.}
		\end{cases} \]
		
		\item We use the notation  $A \lesssim B$ for quantities $A$ and $B$ to indicate the existence of an
		implicit constant $C \ge 1$ depending on some inessential parameters such that $A \le CB$. We write $A \asymp B$, if $A \lesssim B$ and $B \lesssim A$.
		
		\item  For a function $f : X \to \mathbb{R}$ and $A \subset X$, we denote the restriction of $f$ to $A$ by $\restr{f}{A}$.
	\end{enumerate}
\end{notation}

\section{Framework and main results} \label{s:framework}

We recall basic notions in the theory of Dirichlet forms in \textsection \ref{ss:df} followed by definitions of sub-Gaussian heat kernel estimates and   related properties in \textsection\ref{ss:hke}.  In \textsection \ref{ss:maindiff}, we state the main results that answer Question \ref{q:main}.
Finally, we prove the necessity of \eqref{e:conddiff} in \textsection \ref{ss:necproof}.
\subsection{Metric measure Dirichlet space and energy measure} \label{ss:df}
Let $(\mathcal{E},\mathcal{F})$ be a \emph{symmetric Dirichlet form} on $L^{2}(X,m)$;
that is, $\mathcal{F}$ is a dense linear subspace of $L^{2}(X,m)$, and
$\mathcal{E}:\mathcal{F}\times\mathcal{F}\to\mathbb{R}$
is a non-negative definite symmetric bilinear form which is \emph{closed}
($\mathcal{F}$ is a Hilbert space under the inner product $\mathcal{E}_{1}:= \mathcal{E}+ \langle \cdot,\cdot \rangle_{L^{2}(X,m)}$)
and \emph{Markovian} ($f^{+}\wedge 1\in\mathcal{F}$ and $\mathcal{E}(f^{+}\wedge 1,f^{+}\wedge 1)\leq \mathcal{E}(f,f)$ for any $f\in\mathcal{F}$).
Recall that $(\mathcal{E},\mathcal{F})$ is called \emph{regular} if
$\mathcal{F}\cap \contfunc_{\mathrm{c}}(X)$ is dense both in $(\mathcal{F},\mathcal{E}_{1})$
and in $(\contfunc_{\mathrm{c}}(X),\|\cdot\|_{\mathrm{sup}})$, and that
$(\mathcal{E},\mathcal{F})$ is called \emph{strongly local} if $\mathcal{E}(f,g)=0$
for any $f,g\in\mathcal{F}$ with $\supp_{m}[f]$, $\supp_{m}[g]$ compact and
$\supp_{m}[f-a\one_{X}]\cap\supp_{m}[g]=\emptyset$ for some $a\in\mathbb{R}$. Here
$\contfunc_{\mathrm{c}}(X)$ denotes the space of $\mathbb{R}$-valued continuous functions on $X$ with compact support, and
for a Borel measurable function $f:X\to[-\infty,\infty]$ or an
$m$-equivalence class $f$ of such functions, $\supp_{m}[f]$ denotes the support of the measure $|f|\,dm$,
i.e., the smallest closed subset $F$ of $X$ with $\int_{X\setminus F}|f|\,dm=0$,
which exists since $X$ has a countable open base for its topology; note that
$\supp_{m}[f]$ coincides with the closure of $X\setminus f^{-1}(0)$ in $X$ if $f$ is continuous.
The pair $(X,d,m,\mathcal{E},\mathcal{F})$ of a metric measure space $(X,d,m)$ and a strongly local,
regular symmetric Dirichlet form $(\mathcal{E},\mathcal{F})$ on $L^{2}(X,m)$ is termed
a \emph{metric measure Dirichlet space}, or an \emph{MMD space} in abbreviation. By Fukushima's theorem about regular Dirichlet forms, the MMD space corresponds to a symmetric Markov processes on $X$ with continuous sample paths \cite[Theorem 7.2.1 and 7.2.2]{FOT}.
We refer to \cite{FOT,CF} for details of the theory of symmetric Dirichlet forms.

We recall the definition of energy measure.
Note that $fg\in\mathcal{F}$
for any $f,g\in\mathcal{F}\cap L^{\infty}(X,m)$ by \cite[Theorem 1.4.2-(ii)]{FOT}
and that $\{(-n)\vee(f\wedge n)\}_{n=1}^{\infty}\subset\mathcal{F}$ and
$\lim_{n\to\infty}(-n)\vee(f\wedge n)=f$ in norm in $(\mathcal{F},\mathcal{E}_{1})$
by \cite[Theorem 1.4.2-(iii)]{FOT}.

\begin{definition}\label{d:EnergyMeas}
	Let $(X,d,m,\mathcal{E},\mathcal{F})$ be an MMD space.
	The \emph{energy measure} $\Gamma(f,f)$ of $f\in\mathcal{F}$
	associated with $(X,d,m,\mathcal{E},\mathcal{F})$ is defined,
	first for $f\in\mathcal{F}\cap L^{\infty}(X,m)$ as the unique ($[0,\infty]$-valued)
	Borel measure on $X$ such that
	\begin{equation}\label{e:EnergyMeas}
		\int_{X} g \, d\Gamma(f,f)= \mathcal{E}(f,fg)-\frac{1}{2}\mathcal{E}(f^{2},g) \qquad \textrm{for all $g \in \mathcal{F}\cap \contfunc_{\mathrm{c}}(X)$,}
	\end{equation}
	and then by
	$\Gamma(f,f)(A):=\lim_{n\to\infty}\Gamma\bigl((-n)\vee(f\wedge n),(-n)\vee(f\wedge n)\bigr)(A)$
	for each Borel subset $A$ of $X$ for general $f\in\mathcal{F}$. The signed measure $\Gamma(f,g)$ for $f,g \in \mathcal{F}$ is defined by polarization.
\end{definition}
Associated with a Dirichlet form is a \textbf{strongly continuous contraction semigroup} $(P_t)_{t > 0}$; that is, a family of symmetric bounded linear operators $P_t:L^2(X,m) \to L^2(X,m)$ such that
\[
P_{t+s}f=P_t(P_sf), \q \norm{P_tf}_2 \le \norm{f}_2, \q \lim_{t \downarrow 0} \norm{P_tf-f}_2 =0, 
\]
for all $t,s>0, f \in L^2(X,m)$. In this case, we can express $(\mathcal{E},\mathcal{F})$ in terms of the semigroup as
\begin{equation} \label{e:semigroup}
	\mathcal{F}=\{f \in L^2(X,m): \lim_{t \downarrow 0}  \frac{1}{t}\langle f - P_t f, f \rangle < \infty \}, \q \mathcal{E}(f,f)= \lim_{t \downarrow 0}  \frac{1}{t}\langle f - P_t f, f \rangle, 
\end{equation}
for all $f \in \mathcal{F}$, where $\langle \cdot, \cdot \rangle$ denotes the inner product in $L^2(X,m)$ \cite[Theorem 1.3.1 and Lemmas 1.3.3 and 1.3.4]{FOT}. It is known that  $P_t$ restricted to  $L^2(X,m) \cap L^\infty(X,m)$ extends to a linear contraction on $L^\infty(X,m)$ \cite[pp. 5 and 6]{CF}. 

The (non-negative) \emph{generator} $\gener$ with domain $D(\gener)$ of the Dirichlet form $(\mathcal{E},\mathcal{F})$ on $L^2(X,m)$ is defined by
\begin{align} \label{e:defgener}
	D(\gener)  &:= \Biggl\{f \in L^2(X,m): \lim_{t \downarrow 0} \frac{f-P_t f}{t} \mbox{ exists as a strong limit in $L^2(X,m)$}\Biggr\}, \nonumber \\
	\gener( f) &:=\lim_{t \downarrow 0} \frac{f-P_t f}{t}, \quad \mbox{ for all $f \in D(\gener)$.}
\end{align}
By \cite[Lemma 1.3.1]{FOT} the operator $(\gener,D(\gener))$ is a non-negative definite, self-adjoint operator.
The domain of the generator $D(\gener)$  is dense not only in $L^2(X,m)$ by also in $\mathcal{F}$ with respect ot the $\mathcal{E}_1$-inner product due to \cite[Lemma 1.3.3-(iii) and (1.3.3)]{FOT}. By \cite[Corollary 1.3.1]{FOT}, we have 
\begin{equation} \label{e:generate}
	\mathcal{E}(f,g)= \langle \gener(f), g \rangle_{L^2(X,m)}, \quad \mbox{for all $f \in D(\gener), g \in \mathcal{F}$.}
\end{equation}
We recall the definition of capacity between sets.
\begin{definition}[Capacity between sets]{
		For subsets $A,B \subset X$, we define
		\[
		\mathcal{F}(A,B):= \set{f \in \mathcal{F}: f \equiv 1 \mbox{ on a neighborhood of $A$ and } f \equiv 0 \mbox{ on a neighborhood of $B$}},
		\]
		and the capacity $\Cap(A,B)$ as 
		\[
		\Cap(A,B)= \inf \set{\mathcal{E}(f,f): f \in \mathcal{F}(A,B)}.
		\]
}\end{definition}
We recall the notion of a quasi-continuous modification as we will need it for the statement of cutoff energy inequality below. We recall the notion of $1$-capacity of a set. Given an MMD space $(X,d,m,\mathcal{E},\mathcal{F})$ and a Borel set $A$, we define its \emph{$1$-capacity} as
\[
\Cap_1(A)= \inf \set{\mathcal{E}(f,f) + \norm{f}_2^2 : \mbox{$f \in \mathcal{F} \cap \contfunc(X)$, $f \equiv 1$ on a neighborhood of $A$}},
\]
where $\norm{f}_2$ denotes the $L^2(X,m)$ norm.  
We say that an increasing sequence of closed subsets $\{F_k\}$ of $X$ is said to be \emph{nest} for $(X,d,m,\mathcal{E},\mathcal{F})$ if $\lim_{k \to \infty} \Cap_1(X \setminus F_k)=0$.
We say that a function $\wt{u}:X \to \mathbb{R}$ is a quasi-continuous version of $u \in \mathcal{F}$  if $u=\wt{u}$ $m$-almost everywhere and if for any $\epsilon>0$, there exists an open subset $G \subset X$ such that $\Cap_1(G)<\epsilon$ and the restriction $\restr{\wt{u}}{X \setminus G}$ is finite and continuous on $X \setminus G$. Every function in the domain $\mathcal{F}$ admits a quasi-continuous version by \cite[Theorem 2.1.3]{FOT}.
\subsection{Sub-Gaussian heat kernel estimates} \label{ss:hke}
Let $\Psi:[0,\infty) \to [0,\infty)$ be a homeomorphism, such that for all $0 < r \le R$,
\begin{equation}  \label{e:reg}
	C^{-1} \left( \frac R r \right)^{\beta_1} \le \frac{\Psi(R)}{\Psi(r)} \le C \left( \frac R r \right)^{\beta_2}, 
\end{equation}
for some constants $1 < \beta_1 < \beta_2$ and $C>1$. 
Such a function $\Psi$ is said to be a \textbf{scale function}.
For $\Psi$ satisfying \eqref{e:reg}, we define
\begin{equation} \label{e:defPhi}
	\Phi(s)= \sup_{r>0} \left(\frac{s}{r}-\frac{1}{\Psi(r)}\right).
\end{equation}

\begin{definition}[\hypertarget{hke}{$\on{HKE(\Psi)}$}]\label{d:HKE}
	Let $(X,d,m,\mathcal{E},\mathcal{F})$ be an MMD space, and let $\set{P_t}_{t>0}$
	denote its associated Markov semigroup. A family $\set{p_t}_{t>0}$ of non-negative
	Borel measurable functions on $X \times X$ is called the
	\emph{heat kernel} of $(X,d,m,\mathcal{E},\mathcal{F})$, if $p_t$ is the integral kernel
	of the operator $P_t$ for any $t>0$, that is, for any $t > 0$ and for any $f \in L^2(X,m)$,
	\[
	P_t f(x) = \int_X p_t (x, y) f (y)\, dm (y) \qquad \mbox{for $m$-almost all $x \in X$.}
	\]
	We say that $(X,d,m,\mathcal{E},\mathcal{F})$ satisfies the sub-Gaussian \textbf{heat kernel estimates}
	\hyperlink{hke}{$\on{HKE(\Psi)}$}, if there exist $C_{1},c_{1},c_{2},c_{3},\delta\in(0,\infty)$
	and a heat kernel $\set{p_t}_{t>0}$ such that for any $t>0$,
	\begin{align}\label{e:uhke}
		p_t(x,y) &\le \frac{C_{1}}{m\bigl(B(x,\Psi^{-1}(t))\bigr)} \exp \left( -c_{1} t \Phi\left( c_{2}\frac{d(x,y)} {t} \right) \right)
		\qquad \mbox{for $m$-a.e.~$x,y \in X$,}\\
		p_t(x,y) &\ge \frac{c_{3}}{m\bigl(B(x,\Psi^{-1}(t))\bigr)}
		\qquad \mbox{for $m$-a.e.~$x,y\in X$ with $d(x,y) \le \delta\Psi^{-1}(t)$,}
		\label{e:nlhke}
	\end{align}
	where $\Phi$ is as defined in \eqref{e:defPhi} and $\Psi^{-1}:[0,\infty)\to [0,\infty)$ denotes the inverse of the homeomorphism $\Psi$. 
	If the underlying metric is close to a geodesic metric (see Definition \ref{d:chain}), then it is  obtain a lower bound that matches the upper bound in \eqref{e:uhke}. We say that $(X,d,m,\mathcal{E},\mathcal{F})$ satisfies the \emph{full} sub-Gaussian \textbf{heat kernel estimates}
	\hypertarget{hkef}{$\on{HKE_f(\Psi)}$}, if there exist $C_{1},c_{1},c_{2},c_{3},C_4, C_5>0$
	and a heat kernel $\set{p_t}_{t>0}$ such that for any $t>0$, we have \eqref{e:uhke} along with the  lower bound
	\begin{align}\label{e:lhke}
		p_t(x,y) &\ge \frac{c_{3}}{m\bigl(B(x,\Psi^{-1}(t))\bigr)} \exp \left( -C_{4} t \Phi\left( C_{5}\frac{d(x,y)} {t} \right) \right)
		\qquad \mbox{for $m$-a.e.~$x,y \in X$.}
	\end{align}
	\end{definition}
	The function $\Psi$ above governs the space-time scaling of the associated diffusion process as the expected distance traveled by the diffusion process by time $t$ is comparable to $\Psi^{-1}(t)$ for all $t$ such that $\Psi(t)<\diam(X,d)$. 
	
	The simplest scale functions $\Psi$ are power function   with logarithmic or iterated logarithmic corrections. We describe the corresponding inverse   $\Psi^{-1}$ and the function $\Phi$ below in this case.
	\begin{remark} \label{r:profile}
		Suppose there exists $\alpha_1, \beta_1 \in (1,\infty)$ and $\alpha_2, \alpha_3, \beta_2,\beta_3 \in \mathbb{R}$ such that 
		\[
		\Psi(r) \asymp r^{\alpha_1} (\log r)^\alpha_2 (\log \log r)^\alpha_3, \quad \mbox{for all $r>e^e$,}
		\]
		and 
		\[
		\Psi(r) \asymp r^{\beta_1} (\log (1/r))^\beta_2 (\log \log (1/r))^\beta_3, \quad \mbox{for all $r<e^{-e}$.}
		\]
		Then we have 
		\[
		\Psi^{-1}(r) \asymp r^{1/\alpha_1} (\log r)^{-\alpha_2/\alpha_1} (\log \log r)^{-\alpha_3/\alpha_1}, \quad \mbox{for all $r>e^e$,}
		\]
		and 
		\[
		\Psi^{-1}(r) \asymp r^{1/\beta_1} (\log (1/r))^{-\beta_2/\beta_1} (\log \log (1/r))^{-\beta_3/\beta_1}, \quad \mbox{for all $r<e^{-e}$.}
		\]
		The above estimates follow standard facts about regularly varying functions and their asymptotic inverses. Similarly using \cite[Lemma 6.6]{GT}, we have the following estimates on $t \Phi\left( c\frac{d(x,y)}{t}\right)$. It is divided into three cases:
		\begin{itemize}
			\item If   $t<\Psi(d(x,y)), t \ge d(x,y)$, we have 
			\[
			t \Phi\left( c\frac{d(x,y)}{t}\right) \lesssim 1.
			\]
			\item If $t<\Psi(d(x,y))$ and $t \ge d(x,y)$, we have that $t \Phi\left( c\frac{d(x,y)}{t}\right)$ is comparable to 
			\[
			\left(\frac{d(x,y)^{\alpha_1}}{t}\right)^{1/(\alpha_1-1)}  \left(\log \left( \frac{et}{d(x,y)}\right)\right)^{-\alpha_2/(\alpha_1-1)}\left( \log \log \left( \frac{e^2t}{d(x,y)}\right)\right)^{-\alpha_3/(\alpha_1-1)}.
			\]
			\item If $t<\Psi(d(x,y))$ and $t \le d(x,y)$, we have that $t \Phi\left( c\frac{d(x,y)}{t}\right)$ is comparable to 
			\[
			\left(\frac{d(x,y)^{\beta_1}}{t}\right)^{1/(\beta_1-1)} \left(\log \left( \frac{ed(x,y)}{t}\right)\right)^{-\beta_2/(\beta_1-1)} \left(\log \log \left( \frac{e^2d(x,y)}{t}\right)\right)^{-\beta_3/(\beta_1-1)}.
			\]
		\end{itemize}
		Similar estimates also hold if there are higher iterations of logarithm.
	\end{remark}
	
	The sub-Gaussian heat kernel estimates are known to imply Poincar\'e inequality, capacity estimates, cutoff energy inequality and exit time bounds (see for example, \cite[Theorem 1.2]{GHL} and results of \cite{AB,BBK,BB04,GT}). We recall some of the relevant properties below. 
	\begin{definition} \label{d:conseqhke}
		Let $(X,d,m,\mathcal{E},\mathcal{F})$ be an MMD space, $\Psi:[0,\infty) \to [0,\infty)$ be a scale function, and let $\Gamma(\cdot,\cdot)$ denote the corresponding energy measure. Here the function $\Psi$ governs to the space-time scaling of the corresponding diffusion as the properties below can be used to characterize sub-Gaussian heat kernel bounds \hyperlink{hke}{$\on{HKE(\Psi)}$} \cite[Theorem 1.2]{GHL}.
		\begin{enumerate}[(a)]
			\item 
			We say that $(X,d,m,\mathcal{E},\mathcal{F})$ satisfies the \textbf{Poincar\'e inequality} \hypertarget{pi}{$\operatorname{PI}(\Psi)$},
			if there exist constants $C_{P},A_P\ge 1$ such that 
			for all $(x,r)\in X\times(0,\infty)$ and all $f \in \mathcal{F}$,
			\begin{equation} \tag*{$\operatorname{PI}(\Psi)$}
				\int_{B(x,r)} (f -   f_{B(x,r)})^2 \,dm  \le C_{P} \Psi(r) \int_{B(x,A_P r)}d\Gamma(f,f),
			\end{equation}
			where $f_{B(x,r)}:= m(B(x,r))^{-1} \int_{B(x,r)} f\, dm$.
			
			\item 
			For open subsets $U,V$ of $X$ with $\overline{U} \subset V$, we say that
			a function $\phi \in \mathcal{F}$ is a \emph{cutoff function} for $U \subset V$
			if $0 \le \phi \le 1$, $\phi=1$ on a neighbourhood of $\overline{U}$ and $\supp_m[\phi] \subset V$.
			Then we say that $(X,d,m,\mathcal{E},\mathcal{F})$ satisfies the \textbf{cutoff energy inequality} \hypertarget{cs}{$\operatorname{CS}(\Psi)$},
			if there exists $C_S>0, A \in (1,\infty)$ such that the following holds: for all $x \in X$ and $R>0$,
			there exists a cutoff function $\phi \in \mathcal{F}$ for $B(x,R) \subset B(x,R+r)$ such that for all $f \in \mathcal{F}$,
			\begin{equation}\tag*{$\operatorname{CS}(\Psi)$}
				\begin{split}
					&\int_{B(x,R+r) \setminus B(x,R)} \wt{f}^2\, d\Gamma(\phi,\phi)\\
					&\mspace{40mu}\le C_S \int_{B(x,R+r) \setminus B(x,R)} \wt{\phi}^2 \, d\Gamma(f,f)
					+ \frac{C_S}{\Psi(r)} \int_{B(x,R+r) \setminus B(x,R)} f^2\,dm;
				\end{split}
			\end{equation}
			where $\wt{f},\wt{\phi}$ are the quasi-continuous versions of $f,\phi \in \mathcal{F}$ so that $\wt{\phi}$ is uniquely determined $\Gamma(f,f)$-a.e.\ for
			any $f\in\mathcal{F}$; see \cite[Theorem 2.1.3, Lemmas 2.1.4 and 3.2.4]{FOT}.
			
			\item We say that an MMD space $(X,d,m,\mathcal{E},\mathcal{F})$ satisfies the \emph{capacity bound} \hypertarget{cap}{$\operatorname{cap}(\Psi)$} 	
			if there exist $C_1,A_1,A_2>1$ such that for all $R\in (0,\diam(X,d)/A_2)$, $x \in X$, we have
			\begin{equation}\tag*{$\operatorname{cap}(\Psi)$}
				C_1^{-1} \frac{m(B(x,R))}{\Psi(R)} \le	\Cap(B(x,R),B(x,A_1R)^c) \le C_1 \frac{m(B(x,R))}{\Psi(R)}.
			\end{equation}
			The upper and lower bounds on capacity above will be denoted by $\operatorname{cap}(\Psi)_{\le}$ and $\operatorname{cap}(\Psi)_{\ge}$ respectively.
			\item   We say that   the  an MMD space $(X,d,m,\mathcal{E},\mathcal{F})$ satisfies the   \emph{exit time   bound} \hypertarget{exit}{$\on{E}(\Psi)$}, if there exist $C,A \in (1,\infty), \delta \in (0,1)$ such that for all $x \in X, 0<r<\diam(X,d)/A$ the corresponding Hunt process satisfies 
			\begin{equation}\tag*{$\operatorname{E}(\Psi)$}
				C^{-1} \Psi(r) \le \essinf_{y \in B(x, \delta r)} \mathbb{E}_y[\tau_{B(x,r)}]	,\quad \esssup_{y \in B(x,r)} \mathbb{E}_y[\tau_{B(x,r)}] \le C \Psi(r).
			\end{equation}
		\end{enumerate}
	\end{definition}
	
	\subsection{Main results: diffusion case} \label{ss:maindiff}
	We are now ready to state the main results of this work that answer Question \ref{q:main}. Our first main result addresses the necessity of \eqref{e:conddiff}.
	\begin{theorem} \label{t:mainnec}
		Let $V, \Psi:[0,\infty) \to [0,\infty)$ be doubling functions such that $\Psi$ is a scale function. Let $(X,d,m,\mathcal{E},\mathcal{F})$ be an MMD space that satisfies the full sub-Gaussian kernel estimate 	\hyperlink{hkef}{$\on{HKE_f(\Psi)}$}. Furthermore, suppose that exists $C_1>1$ satisfying
		\begin{equation} \label{e:volest}
			C_1^{-1} V(r) \le m(B(x,r)) \le C_1 V(r), \quad \mbox{for all $x \in X, 0<r< \diam(X,d)$.}
		\end{equation}
		Then there exists $C\in (1,\infty)$ such that 
		\begin{equation} \label{e:necdiff}
			C^{-1} \frac{R^2}{r^2} \le \frac{\Psi(R)}{\Psi(r)} \le C \frac{RV(R)}{rV(r)}, \quad \mbox{for all $0<r \le R< \diam(X,d)$.}
		\end{equation}
	\end{theorem}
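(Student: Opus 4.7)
The plan is to extract the standard analytic consequences from $\operatorname{HKE}_f(\Psi)$ and the volume regularity \eqref{e:volest} --- namely the Poincar\'e inequality $\operatorname{PI}(\Psi)$, the capacity bounds $\operatorname{cap}(\Psi)$, the cutoff energy inequality $\operatorname{CS}(\Psi)$, the exit time bounds $\operatorname{E}(\Psi)$, volume doubling, and doubling of $\Psi$ --- together with the quantitative chain condition on $(X,d)$ from \cite[Theorem 2.11]{Mur20}, and then to establish the two inequalities of \eqref{e:necdiff} independently. These outputs follow from the characterization theorems \cite[Theorem 1.2]{GHL} and the related work \cite{AB, BBK, BB04, GT}, and I would take them as black boxes.

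For the upper bound $\Psi(R)/\Psi(r) \le C R V(R)/(r V(r))$ (Lemma \ref{l:dwdf}), I combine $\operatorname{cap}(\Psi)$, $\operatorname{PI}(\Psi)$, and the chain condition. Take the equilibrium potential $\phi$ for $\operatorname{Cap}(B(x,R), B(x,A_1 R)^c)$, so that $\phi \equiv 1$ on $B(x,R)$, $\phi \equiv 0$ off $B(x, A_1 R)$, and $\mathcal{E}(\phi,\phi) \asymp V(R)/\Psi(R)$. Pick $y \in X$ with $d(x,y) = A_1 R + r$ (assumed below $\operatorname{diam}(X,d)$; smaller radii are handled by adjusting constants) and use the chain condition to obtain $x = x_0, x_1, \ldots, x_N = y$ with $d(x_i, x_{i+1}) \le r$ and $N \le C R/r$. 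Then $\phi_{B(x_0,r)} = 1$ and $\phi_{B(x_N,r)} = 0$. Applying $\operatorname{PI}(\Psi)$ on the ball $B(x_i, 2A_P r)$, which contains both $B(x_i, r)$ and $B(x_{i+1}, r)$, together with volume doubling gives
\[
(\phi_{B(x_i,r)} - \phi_{B(x_{i+1},r)})^2 \lesssim \frac{\Psi(r)}{V(r)} \int_{B(x_i, 2A_P r)} d\Gamma(\phi,\phi).
\]
Telescoping via Cauchy--Schwarz, and using bounded overlap of the enlarged balls to bound the sum of local energies by $\mathcal{E}(\phi,\phi)$, yields
\[
1 = (\phi_{B(x_0,r)} - \phi_{B(x_N,r)})^2 \le N \sum_{i=0}^{N-1} (\phi_{B(x_i,r)} - \phi_{B(x_{i+1},r)})^2 \lesssim \frac{R}{r} \cdot \frac{\Psi(r)}{V(r)} \cdot \frac{V(R)}{\Psi(R)},
\]
which rearranges precisely to the claimed inequality.

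For the lower bound $C^{-1}(R/r)^2 \le \Psi(R)/\Psi(r)$ (Lemma \ref{l:dwge2}), which encodes the diffusive walk-dimension-at-least-two property, I would follow the capacity-based proof advertised in the introduction. The idea is to construct, via iterated use of $\operatorname{CS}(\Psi)$ applied to dyadic annular shells of comparable scales between $r$ and $R$, a single cutoff separating $B(x,r)$ from $B(x,R)^c$ whose energy is controlled on one hand by summing shell contributions estimated through $\operatorname{cap}(\Psi)_\le$, and on the other by the a priori lower bound $\operatorname{Cap}(B(x,r), B(x,A_1 r)^c) \ge c V(r)/\Psi(r)$. Balancing the two bounds across the multiscale construction and invoking doubling of $V$ and $\Psi$ produces the quadratic ratio $(R/r)^2$ in $\Psi(R)/\Psi(r)$.

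The main obstacle will be the first inequality: converting the multi-scale capacity construction into a clean $(R/r)^2$ factor --- with no polynomial loss or logarithmic blow-up as $R/r$ grows --- demands careful bookkeeping in the telescoping of shell energies, which is why the author highlights the new short proof in Lemma \ref{l:dwge2}. The second inequality, by contrast, is essentially a one-dimensional resistance calculation in disguise along the chain, and once the equilibrium potential and chain are in hand, the argument is quite direct.
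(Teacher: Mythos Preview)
Your treatment of the upper bound in \eqref{e:necdiff} is essentially the paper's argument (Lemma~\ref{l:dwdf}): equilibrium potential for $\operatorname{Cap}(B(x,R),B(x,A_1R)^c)$, a chain of balls of radius $\sim r$ from inside $B(x,R)$ to outside $B(x,A_1R)$, telescope the averages using $\operatorname{PI}(\Psi)$ and Cauchy--Schwarz, and close with bounded overlap. The only detail to watch is that bounded overlap of the enlarged balls $A_PKB_i$ needs to come from the chain construction itself (the paper arranges $K^{-1}B_i$ pairwise disjoint, which via doubling gives the overlap bound).

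For the lower bound, however, your sketch diverges from the paper and is where the real gap lies. You propose dyadic shells between scales $r$ and $R$ together with $\operatorname{CS}(\Psi)$, and you flag the bookkeeping as the obstacle. The paper's argument (Lemma~\ref{l:dwge2}) avoids all of this and is much cleaner: it uses $n\asymp R/r$ \emph{consecutive annuli of equal width $r$} inside $B(x,A_1R)\setminus B(x,R)$, builds a cutoff $\psi_j$ on each with $\sE(\psi_j,\psi_j)\lesssim m(\text{annulus}_j)/\Psi(r)$ via a covering by balls of radius $\sim r$ (this uses only $\operatorname{cap}(\Psi)_\le$, not $\operatorname{CS}(\Psi)$), and then takes the \emph{average} $\psi=\frac1n\sum_j\psi_j$. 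By strong locality the $\psi_j$ have disjoint energy supports, so $\sE(\psi,\psi)=\frac1{n^2}\sum_j\sE(\psi_j,\psi_j)\lesssim \frac{r^2}{R^2}\frac{V(R)}{\Psi(r)}$. Comparing with $\operatorname{cap}(\Psi)_\ge$ gives $V(R)/\Psi(R)\lesssim (r/R)^2 V(R)/\Psi(r)$ directly --- the $(R/r)^2$ falls out of the $1/n^2$ from averaging, with no multiscale telescoping and no logarithmic loss. Your dyadic-scale picture with $\operatorname{CS}(\Psi)$ would involve only $\sim\log(R/r)$ shells and it is not clear how you would manufacture the quadratic factor from that; the paper's equal-width averaging trick is the key idea you are missing.
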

	Our next result addresses the sufficiency of \eqref{e:conddiff}.  
	\begin{theorem}\label{t:mainsuf}
		Let $V, \Psi:[0,\infty) \to [0,\infty)$ be doubling functions and let $C_0 \in (1,\infty)$ be such that 
		\[
		C_0^{-1} \frac{R^2}{r^2} \le \frac{\Psi(R)}{\Psi(r)} \le C_0 \frac{RV(R)}{rV(r)}, \quad \mbox{for all $0<r \le R$.}
		\]
		Then there exists  an unbounded (that is, infinite diameter) MMD space  $(X,d,m,\mathcal{E},\mathcal{F})$ that satisfies the full sub-Gaussian kernel estimate 	\hyperlink{hkef}{$\on{HKE_f(\Psi)}$} and there exists $C_1 \in (1,\infty)$ such that the volume of balls satisfy the estimate \eqref{e:volest}. 
	\end{theorem}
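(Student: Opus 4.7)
The plan is a two-step construction of the MMD space, following the outline sketched in the introduction. First, given $\Psi$, I choose an integer branching sequence $\branch$ so that the scale-irregular continuum self-similar $\bR$-tree $\tree(\branch)$ (a variant of the Bonk--Tran tree \cite{BT21} in the spirit of \cite{BH}), equipped with its canonical length-type measure $\meastree$, has volume growth profile comparable to $r\mapsto \Psi(r)/r$; this is possible because the hypothesis implies that $\Psi(r)/r$ is itself doubling. The tree carries a canonical regular resistance form $(\dftree,\domtree)$ (cf.\ \cite{Kig95}), and the associated MMD space satisfies $\on{HKE_f(\Psi)}$ by standard resistance-form theory \cite{Kum04,Kig12}, since for a ball of radius $r$ the effective resistance to its complement is $\asymp r$ while the measure of the ball is $\asymp \Psi(r)/r$, giving mean exit time $\asymp \Psi(r)$. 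Second, if $V$ is not already comparable to $\Psi(r)/r$, I take countably many disjoint copies of $\tree(\branch)$ and glue them along explicit ``wormhole'' sets $\worm_k$ given by \eqref{e:defwormhole}, at a rate encoded by a gluing sequence $\glue$ chosen so that the number of copies identified at scale $r$ is comparable to $rV(r)/\Psi(r)$, which is doubling in $r$ by hypothesis. The quotient $(\laak(\glue,\branch),\metlaak,\measlaak)$ is unbounded and has volume growth profile comparable to $V$, since each ball $B(x,r)$ in $\laak$ is covered by a controlled number of tree-balls of radius $r$, each of $\meastree$-measure $\asymp \Psi(r)/r$.

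To equip $\laak$ with a Dirichlet form I adopt an analytic counterpart of the projective-limit approach of Barlow--Evans \cite{BE}. I approximate $\laak(\glue,\branch)$ by a sequence of simpler Laakso-type spaces in which only finitely many gluing scales are active; on each of these, Proposition~\ref{p:dflaakprelim} defines $(\dflaak,\domlaak)$ directly in terms of the tree form via pullback along the corresponding quotient map. The Dirichlet form on the full space $\laak$ is then obtained as the projective limit of the corresponding generators via the Friedrichs extension (Proposition~\ref{p:friedrich}). By construction, the energy measure $\emlaak$ is the pushforward of $\emtree$ under the projection $\projlaak$, which immediately forces the martingale dimension of the resulting diffusion to equal one and yields the strong locality and regularity required of an MMD space.

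The main analytic step is to verify $\on{HKE_f(\Psi)}$ via the characterization \cite[Theorem 1.2]{GHL}, i.e., the conjunction of the Poincar\'e inequality $\operatorname{PI}(\Psi)$ and the cutoff energy inequality $\operatorname{CS}(\Psi)$. For $\operatorname{PI}(\Psi)$, I adapt Laakso's pencil-of-curves argument \cite[Definition 2.3, proof of Theorem 2.6]{Laa}: each ball in $\laak$ is spanned by a measured family of rectifiable curves projecting to tree-geodesics, so Fubini combined with the one-dimensional Poincar\'e inequality on each curve yields the estimate, with the factor $\Psi(r)$ coming from the resistance/volume scaling on the tree. For $\operatorname{CS}(\Psi)$, I pull back cutoff functions from the tree along $\projlaak$ and invoke the simplified form of the inequality from \cite[Lemma 6.2]{Mur24}, which reduces matters to energy bounds for tree cutoff functions that are standard in the resistance-form setting. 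The main obstacle I anticipate is ensuring that the Friedrichs-extended form is regular with the expected energy measure and that the constants in $\operatorname{PI}$ and $\operatorname{CS}$ are uniform in the projective approximation; this requires showing that the limit form admits enough cylindrical/pullback test functions for both inequalities, originally verified on the approximating Laakso-type spaces, to pass to the limit.
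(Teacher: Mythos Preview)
Your proposal is correct and follows essentially the same approach as the paper: choose $\branch,\glue$ so that $V_\branch(r)\asymp\Psi(r)/r$ and $V_\glue(r)\asymp rV(r)/\Psi(r)$ (this is Lemma~\ref{l:elementary}), build the Dirichlet form on $\laak(\glue,\branch)$ via the Friedrichs-extension/projective-limit argument of Proposition~\ref{p:friedrich}, and then verify $\operatorname{PI}(\sttree)$ by the pencil-of-curves argument (Proposition~\ref{p:pi}) and $\operatorname{CS}(\sttree)$ by lifting tree cutoff functions through $\projtree$ combined with \cite[Lemma~6.2]{Mur24} (Proposition~\ref{p:cs}), concluding via \cite[Theorem~1.2]{GHL}. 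One small inaccuracy: the energy measure $\emlaak$ is not the pushforward of $\emtree$ under $\projlaak$ (which goes in the wrong direction) but rather, for core functions $f\circ\projlaak_n$, the pushforward under $\quolaak$ of $\measultra\times\emtree$-type measures as in Lemma~\ref{l:emeaslaak}; this does not affect the argument.
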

	We   obtain similar results for simple random walks on graphs and also for a sequence of growing finite graphs in Theorem \ref{t:sufgraph}.
	\subsection{Necessary conditions on volume and space-time scaling} \label{ss:necproof}
	
	The rest of the section is devoted to the proof of Theorem \ref{t:mainnec}.
	So let us assume that  $V, \Psi:[0,\infty) \to [0,\infty)$ be doubling functions such that $\Psi$ is a scale function and let $(X,d,m,\mathcal{E},\mathcal{F})$ be an MMD space satisfying the estimate  \eqref{e:volest}.

	The following lemma   demonstrates how  obtaining bounds on capacity at a larger scale using bounds on capacity at smaller scale leads to the first estimate in \eqref{e:necdiff}.
	
	\begin{lemma} \label{l:dwge2} Let $(X,d,m,\mathcal{E},\mathcal{F})$ be a MMD space that satisfies   the volume estimate \eqref{e:volest} and the capacity estimate \hyperlink{cap}{$\operatorname{cap}(\Psi)$}. Then there exists $C \in (1,\infty)$ such that 
		\[
		\frac{\Psi(R)}{\Psi(r)} \ge  C^{-1}\frac{R^2}{r^2}, \quad \mbox{for all $\diam(X,d)>R>r>0$.}
		\]
	\end{lemma}
	
	\begin{proof}
		We make a simplifying assumption that $\diam(X,d)=\infty$ for ease of notation. The general case follows from the same argument. 
		
		Let $R>r>0$.
		Let $n$ be  the largest positive integer such that $B(x,A_1 R) \setminus B(x,R) \supseteq \cup_{j=0}^{n-1} B(x,R+(j+1)r) \setminus B(x,R+jr)$, where $A_1 \in (1,\infty)$ is the constant in \hyperlink{cap}{$\operatorname{cap}(\Psi)$}. 
		By the triangle inequality
		\begin{equation} \label{e:dw1}
			n \gtrsim \frac{R}{r}.
		\end{equation}
		By a   covering argument  (by covering the annulus $ B(x,R+(j+1)r) \setminus B(x,R+jr)$ with balls of radii comparable to $r$)as described in the argument for \cite[(6.1) in Proof of Lemma 6.2]{Mur24}, we obtain
		\[
		\Cap(B(x,R+jr),B(x,R+(j+1)r)^c) \lesssim \frac{m(B(x,R+(j+1)r)) \setminus B(x,R+jr)}{\Psi(r)}.
		\]
		Let $\psi_j \in \mathcal{F} \cap C_c(X)$ be a cutoff function such that $\restr{\psi_j}{B(x,R+jr)} \equiv 1, \restr{\psi_j}{B(x,R+(j+1)r)^c} \equiv 0$ and 
		\begin{equation} \label{e:dw2}
			\mathcal{E}(\psi_j,\psi_j) \lesssim   \frac{m(B(x,R+(j+1)r)) \setminus B(x,R+jr)}{\Psi(r)}.
		\end{equation} By considering the function $\psi= \frac{1}{n}\sum_{j=0}^{n-1} \psi_j$, by strong locality and \hyperlink{cap}{$\operatorname{cap}(\Psi)$} 	
		\begin{align*}
			\frac{V(R)}{\Psi(R)} &\stackrel{\mbox{\hyperlink{cap}{$\operatorname{cap}(\Psi)_{\ge}$,}\eqref{e:volest} 	}}{\lesssim}\Cap(B(x,R),B(x,A_1 R)^c) \\
			&\le \mathcal{E}(\psi,\psi) = \frac{1}{n^2} \sum_{j=0}^{n-1} \mathcal{E}(\psi_j,\psi_j) \stackrel{\eqref{e:dw1},\eqref{e:dw2}}{\lesssim} \frac{r^2}{R^2} \frac{\mu(B(x,AR))\setminus B(x,R)}{\Psi(r)} \stackrel{\eqref{e:volest}}{\lesssim} \frac{r^2}{R^2} \frac{V(R)}{\Psi(r)}
		\end{align*}
		which implies the desired bound.
	\end{proof}
	
	
	We recall the definition of the chain condition which is a consequence of \hyperlink{hkef}{$\on{HKE_f(\Psi)}$} as shown in \cite[Theorem 2.11]{Mur20}.
	\begin{definition} \label{d:chain}{
			Let $(X,d)$ be a metric space.
			We say that a sequence  $\set{x_i}_{i=0}^N$  of points in $X$ is an \emph{$\varepsilon$-chain} between points $x,y \in X$ if 
			\[
			x_0=x, \q x_N=y, \q \mbox{ and } \q d(x_i,x_{i+1}) < \varepsilon \q \mbox{ for all $i=0,1,\ldots,N-1$.}
			\]
			For any $\varepsilon>0$ and $x,y \in X$, define
			\[
			d_\varepsilon(x,y) = \inf_{\set{x_i} \mbox{ is $\varepsilon$-chain}} \sum_{i=0}^{N-1} d(x_i,x_{i+1}),
			\]
			where the infimum is taken over all $\varepsilon$-chains  $\set{x_i}_{i=0}^N$ between $x,y$ with arbitrary $N$. \\
			We say that $(X,d)$ satisfies the \emph{chain condition} if there exists $K \in [1,\infty)$ such that 
			\[
			d_\varepsilon(x,y) \le K d(x,y) \quad \mbox{for all $\varepsilon>0, x,y \in X$.}
			\]
	}\end{definition}
	The following lemma is needed for the proof of the second estimate in \eqref{e:necdiff}. The reason for chain condition  in the assumption of Lemma \ref{l:dwdf} is due to the fact the conclusion fails to hold without this assumption. This can be seen using a snowflake transformation of the metric ($d \mapsto d^\gamma$ for some $\gamma \in (0,1)$).
	\begin{lemma} \label{l:dwdf} Let $(X,d,m,\mathcal{E},\mathcal{F})$ be a MMD space that satisfies   the volume estimate  \eqref{e:volest} and the capacity upper estimate \hyperlink{cap}{$\operatorname{cap}(\Psi)_\le$}, and the Poincar\'e inequality \hyperlink{pi}{$\operatorname{PI}(\Psi)$}. Assume that the metric space $(X,d)$ satsifies the chain condition. Then  there exists $C \in (1,\infty)$ such that 
		\[
		\frac{\Psi(R)}{\Psi(r)} \le C \frac{R V(R)}{r V(r)}, \quad \mbox{for all $\diam(X,d)> R>r>0$.}
		\]
	\end{lemma}
	
	\begin{proof}
		We make a simplifying assumption that $\diam(X,d)=\infty$ for ease of notation. The general case follows from the same argument. 
		
		Let $R>r>0$ and let $\phi$ be a function for $ \mathcal{E}(\phi,\phi) \le 2 \Cap(B(x,R), B(x,A_1 R)^c)$ such that $\phi \equiv 1$ on $B(x,R)$ and $\phi \equiv 0$ on $B(x,A_1 R)^c$ almost everywhere, where $A_1$ is the constant in \hyperlink{cap}{$\operatorname{cap}(\Psi)_\le$}. 
		By the chain condition, we can find a sequence of balls $B_0,\ldots,B_{n}$ where $B_i=B(x_i,r), i=0,1,\ldots,n$ of radii $r$ and $K\in (1,\infty)$  such that $K^{-1}B_i= B(x_i,K^{-1}r)$ are pairwise disjoint, $B_{i+1} \subset KB_i=B(x_i,Kr)$ for all $i=0,\ldots,n-1$, $B_0 \subset B(x,A_1 R)^c, B_{n} \subset B(x,R)$, and $n \lesssim \frac{R}{r}$. Let $$\phi_{B_i}:= \frac{1}{m(B_i)} \int_{B_i} \phi \,dm. $$
		By the above properties of $B_i$, we have $\phi_{B_0}=0, \phi_{B_n}=1$.
		Let $A_P \in [1,\infty)$ denote the constant in \hyperlink{pi}{$\operatorname{PI}(\Psi)$}.
		We obtain the desired bound as follows:
		\begin{align*}
			1 &= \left( \sum_{i=0}^{n-1} \phi_{B_{i+1}}- \phi_{B_i} \right)^2 \le n \sum_{i=0}^{n-1}  \left( \phi_{B_{i+1}}- \phi_{B_i} \right)^2  \q \mbox{(by Cauchy-Schwarz)}\\
			&\lesssim \frac{R}{r} \sum_{i=0}^{n-1}  \left( \phi_{B_{i+1}}- \phi_{B_i} \right)^2 \quad \mbox{(since $n \lesssim R/r$)}\\
			&\lesssim \frac{R}{r} \sum_{i=0}^{n-1}  \frac{1}{m(B_i) m(B_{i+1})} \int_{B_{i+1}}\int_{B_i} (\phi(x)-\phi(y))^2 \,m(dy)\,m(dx)    \q \mbox{(by Jensen's inequality)}\\
			& \lesssim \frac{R}{r}\frac{1}{V(r)} \sum_{i=0}^{n-1}\int_{K B_i} \abs{\phi-\phi_{K B_i}}^2\,dm \q \mbox{(by volume doubling, \eqref{e:volest}, $B_{i+1} \subset K B_i$)}\\
			&\lesssim \frac{R}{r}  \frac{\Psi(r)}{V(r)}  \sum_{i=0}^{n-1} \int_{A_P KB_i} d\Gamma(\phi,\phi) \q \mbox{(by \hyperlink{pi}{$\operatorname{PI}(\Psi)$} and bounded overlap $(A_PKB_i)_{0 \le i \le n}$)}\\
			& \lesssim \frac{R}{r} \frac{\Psi(r)}{V(r)}  \mathcal{E}(\phi,\phi) \lesssim \frac{R}{r} \frac{\Psi(r)}{V(r)}  \Cap(B(x,R),B(x,AR)^c) \\
			& \lesssim \frac{R}{r} \frac{\Psi(r)}{V(r)}  \frac{V(R)}{\Psi(R)} \q \mbox{(by \hyperlink{cap}{$\operatorname{cap}(\Psi)_{\le}$} and \eqref{e:volest})}.
		\end{align*}
	\end{proof} 
	We conclude the proof below.
	\begin{proof}[Proof of Theorem \ref{t:mainnec}]
		We note that capacity bounds \hyperlink{cap}{$\operatorname{cap}(\Psi)$}, Poincar\'e inequality \hyperlink{pi}{$\operatorname{PI}(\Psi)$} and the chain condition follow from \cite[Theorem 1.2]{GHL} and \cite[Theorem 2.11]{Mur20}. Hence the desired conclusion follows from Lemmas \ref{l:dwge2} and \ref{l:dwdf}.
	\end{proof}
	
	\section{Construction of the trees and Laakso-type spaces} \label{s:mms}

	The goal of this section is to construct Laakso-type spaces which in turn requires us to define a suitable family of $\mathbb{R}$-trees. 
	In \textsection \ref{ss:ultra}, we define the ultrametric space that is used to index different copies of trees. This ultrametric space and its corresponding measure will also be used to construct a measure on the $\mathbb{R}$-tree. In \textsection \ref{ss:finitetrees}, we define a family of finite trees and study their properties.  In \textsection \ref{ss:rtree}, we define the $\mathbb{R}$-tree as a suitable limit of finite trees introduced in \textsection\ref{ss:finitetrees}. In \textsection\ref{ss:laak}, we define the Laakso-type space by taking many disjoint copies of $\mathbb{R}$-trees constructed in \textsection \ref{ss:rtree} and identifying (or gluing) carefully chosen \emph{wormhole} points in the different copies of tree. We   define a suitable metric and measure on the Laakso-type space and establish some of the basic properties of the resulting metric measure space. We show that certain balls in trees and Laakso-type spaces are uniform domains in \textsection \ref{ss:uniform}.

	The $\mathbb{R}$-tree we construct is completely determined by \emph{branching function} $\branch: \mathbb{Z} \to \mathbb{N}$ whose definition we introduce below. 
	\begin{definition}
		We say that a function  $\branch: \mathbb{Z} \to \mathbb{N}$ is a \emph{branching function}, if 
		\begin{equation} \label{e:defbranch}
			2 \le			\inf_{k \in \mathbb{Z}} \branch(k) \le \sup_{k \in \mathbb{Z}} \branch(k) < \infty.
		\end{equation}
	\end{definition}
	
	The $\mathbb{R}$-tree we construct in \textsection\ref{ss:rtree} is  completely determined by the branching function $\branch$.
	Different copies of the $\mathbb{R}$-tree will be indexed by an ultrametric space that depends on   a function $\glue: \mathbb{Z} \to \mathbb{N}$ such that 
	\begin{equation}\label{e:defglue}
		1 \le \inf_{\mathbb{Z}} \glue \le \sup_{\mathbb{Z}} \glue < \infty.
	\end{equation}
	This function determines how different points in copies of the $\mathbb{R}$-tree are glued together and hence we could call $\glue$ as the gluing function.
	For the remainder of this section, we choose and fix a  branching function $\branch:\mathbb{Z} \to \mathbb{N}$ satisfying \eqref{e:defbranch} and a gluing function $\glue: \mathbb{Z} \to \mathbb{N}$ that satisfies \eqref{e:defglue}. 
	Roughly speaking, the values of $\branch(k)$ and $\glue(k)$ determine the construction at scale $2^k$.
	As we will see, the family of spaces obtained by varying $\branch$ and $\glue$ satisfying the above conditions is large enough to prove Theorem \ref{t:mainsuf}.

	\subsection{An ultrametric  space with a measure} \label{ss:ultra}
	We start with a description of the ultrametric space determined by $\glue:\mathbb{Z} \to \mathbb{Z}$ that satisfies \eqref{e:defglue}.
	This sequence defines an ultrametric space on  the set
	\begin{equation} \label{e:defultra}
		\ultra(\glue):= \set{ \mathbf{s}:\mathbb{Z} \to \mathbb{Z} \mid \mathbf{s}(k) \in \llbracket 0, \glue(k)-1 \rrbracket \mbox{ for all $k \in \mathbb{Z}$ and } \lim_{k \to \infty} \mathbf{s}(k)=0 },
	\end{equation}
	equipped with the metric 
	\begin{equation} \label{e:defultramet}
		\metricultra(\mathbf{s},\mathbf{t})= 2^{\inf \{k \in \mathbb{Z} \mid \mathbf{s}(l) = \mathbf{t}(l), \mbox{ for all $l \ge k$} \} }, \quad \mbox{for all $\mathbf{s},\mathbf{t} \in \ultra(\glue)$.}
	\end{equation}
	We denote the open and closed balls centered at $\mathbf{s} \in \ultra(\glue)$ with radius $r>0$ as $B_{\ultra(\glue)}(\mathbf{s},r)$ and $\overline{B}_{\ultra(\glue)}(\mathbf{s},r)$.
	
	Next, we describe a Borel measure $\mathsf{m}_{\ultra(\glue)}$ on $(\ultra(\glue), \metricultra)$. To define this measure, it would be convenient to view $\ultra(\glue)$ as a product of two sets $\ultra(\glue,-\infty,0) \times\ultra(\glue,1,\infty)$, where 
	\[
	\ultra(\glue,-\infty,0):= \left\{ \restr{\mathbf{s}}{\llbracket-\infty,0 \rrbracket}  \mid \mathbf{s} \in \ultra(\glue) \right\}, \quad \ultra(\glue,1,\infty):= \left\{ \restr{\mathbf{s}}{\llbracket 1, \infty \rrbracket} \mid \mathbf{s} \in \ultra(\glue) \right\}.
	\]
	We note that $\ultra(\glue,-\infty,0)$ can be naturally identified with the product space $\prod_{\stackrel{j \in \mathbb{Z},}{j \le 0}} \llbracket 0, \glue(k)-1\rrbracket$. Using this we define the measure  $\mathsf{m}_{\ultra(\glue,-\infty,0)}$ on  $\ultra(\glue,-\infty,0)$  as the product measure $\prod_{\stackrel{j \in \mathbb{Z},}{j \le 0}} \mathsf{m}_{\glue(k)}$, where $\mathsf{m}_{\glue(k)}$ denote the uniform probability measure on the finite set $\llbracket 0, \glue(k)-1\rrbracket$. We define $\mathsf{m}_{\ultra(\glue,1,\infty)}$ as the counting measure on  $\ultra(\glue,1,\infty)$ (note that $\ultra(\glue,1,\infty)$ is at most countable).
	
	We identify   $\ultra(\glue)$ with $\ultra(\glue,-\infty,0) \times\ultra(\glue,1,\infty)$ using the obvious bijection and using this bijection we define the measure $\mathsf{m}_{\ultra(\glue)}$ on $\ultra(\glue)$ as the product measure 
	\begin{equation} \label{e:defultrameas}
		\mathsf{m}_{\ultra(\glue)}:=	\mathsf{m}_{\ultra(\glue,-\infty,0)} \times \mathsf{m}_{\ultra(\glue,1,\infty)}.
	\end{equation}
	This defines a metric measure space $(\ultra(\glue), \metricultra,\mathsf{m}_{\ultra(\glue)})$. 
	It is easy to verify that the measure of balls is given by
	\begin{equation} \label{e:measultraformula}
		\mathsf{m}_{\ultra(\glue)}\left(B_{{\ultra(\glue)}}(\mathbf{s},r)\right) = \begin{cases}
			\left(\prod_{\stackrel{0 \le k \le n}{k \in \mathbb{Z}}} \glue(k)\right)^{-1}& \mbox{if $2^{n-1}<r \le 2^n, n \in \mathbb{Z}, n \le 0$,}\\
			1& \mbox{if $1<r \le 2$,}\\
			\prod_{k=1}^{n-1} \glue(k)& \mbox{if $2^{n-1}<r \le 2^n, n \in \mathbb{Z}, n \ge 2$,}
		\end{cases}
	\end{equation}
	for any $\mathbf{s} \in \ultra(\glue)$ and $r>0$. Since $\sup_{\mathbb{Z}} \glue<\infty$, we note that $\mathsf{m}_{\ultra(\glue)}$ is a doubling measure on $(\ultra(\glue),\metricultra)$. Since the volume growth does not depend  on the center, we introduce the abbreviated notation
	\begin{equation} \label{e:defvol}
		V_\glue(r):= 
		\mathsf{m}_{\ultra(\glue)}\left(B_{{\ultra(\glue)}}(\mathbf{s},r)\right) \quad \mbox{for all $r>0$.}
	\end{equation}
	We also use the construction of ultrametric space above when the gluing function $\glue:\mathbb{Z} \to \mathbb{Z}$ is replaced with the branching function $\branch:\mathbb{Z} \to \mathbb{Z}$. In that case we denote the corresponding metric measure space by $(\ultra(\branch),\mathsf{d}_{\ultra(\branch)},\mathsf{m}_{\ultra(\branch)})$ and its volume function by $V_\branch$ (see   Proposition \ref{p:dsr}, \eqref{e:defmeastree}, and Corollary \ref{c:meastree}).
	
	We use the notion of David-Semmes regular map   to obtain estimates on measures   on trees and Laasko-type spaces. We recall its definition below \cite[Definition 12.1]{DS}.
	\begin{definition}
		A map $F:(X,\mathsf{d}_X) \to (Y,\mathsf{d}_Y)$ between metric spaces is  \textbf{David-Semmes regular} if there exists $L,M,N \in (0,\infty)$ such that $F$ is $L$-Lipschitz and for every $y \in Y, r>0$, there exists $x_1,\ldots,x_M \in X$ such that 
		\begin{equation} \label{e:defdsr}
			F^{-1}(B_{Y}(y,r)) \subset \cup_{i=1}^M B_{X}(x_i,Nr),
		\end{equation}
		where $B_X(\cdot,\cdot), B_Y(\cdot,\cdot)$ denote open balls in the corresponding metric spaces.
	\end{definition}
	
	The following elementary lemma is inspired by \cite[Proof of Theorem 2.6]{Laa}. It shows that uniform estimates on doubling measures is preserved by push-forward measures under a surjective, David-Semmes regular map.
	\begin{lemma} \label{l:volume}
		Let $C_1 \in (1,\infty)$ and let $V:(0,\infty) \to (0,\infty)$ be a non-decreasing function such that 	
		$V(2r) \le C_1 V(r)$. Let $F:(X,\mathsf{d}_X) \to (Y,\mathsf{d}_Y)$ be a surjective, David-Semmes regular map. Let $C_2 \in (1,\infty)$ and let $\mathsf{m}_X$ be a Borel measure on $(X,\mathsf{d}_X)$ such that 
		\begin{equation} \label{e:dsl1}
			C_2^{-1} V(r) \le \mathsf{m}_X(B_{X}(x,r)) \le C_2 V(r), \quad \mbox{for all $x \in X, r>0$.}
		\end{equation}
		Then, there exists $C_3>1$ such that the push forward measure $\mathsf{m}_Y:= F_* \mathsf{m}_X =\mathsf{m}_X \circ F^{-1}$ on $Y$ satisfies
		\begin{equation} \label{e:dsl2}
			C_3^{-1} V(r) \le \mathsf{m}_Y(B_{Y}(y,r)) \le C_3 V(r), \quad \mbox{for all $y \in Y, r>0$.}
		\end{equation}
	\end{lemma}
	\begin{proof}
		The upper bound in \eqref{e:dsl2} is obtained by using the covering property \eqref{e:defdsr} and using \eqref{e:dsl1} for each ball in the covering.
		
		For the lower bound, we use the fact that $F$ is Lipschitz. Since $F$ is surjective and $L$-Lipschitz, for every $y \in Y, r>0$, there exists $x \in X$ such that $F(x)=y$ and $B_{X}(x,r/L) \subset F^{-1}(B_{Y}(y,r))$ and hence $\mathsf{m}_Y(B_{Y}(y,r)) \ge \mathsf{m}_X(B_{X}(x,r/L) )$. The desired conclusion follows from the doubling property of $\mathsf{m}_X$ and \eqref{e:dsl1}.
	\end{proof}

	\subsection{An approximating family of finite trees} \label{ss:finitetrees}
	We introduce  a family of finite graphs $T_{m,n}$ that approximate the desired  $\mathbb{R}$-tree. We outline the construction before providing the technical details. Given a branching function $\mathbf{b}: \mathbb{Z} \to \llbracket 2 ,\infty \rrbracket$, $T_{n,n}$ is the complete bipartite graph $K_{1,\mathbf{n}}$. We recursively construct $T_{m-1,n}$ from $T_{m,n}$ for each $m \le n$ by replacing each edge in $T_{m,n}$ with a copy of the complete bipartite graph $K_{1,\mathbf{b}(m-1)}$, where the end points of each edge in  $T_{m,n}$ are mapped to leaves of the tree $K_{1,\mathbf{b}(m-1)}$. These finite trees $T_{m,n}$ converge to a $\mathbb{R}$-tree as $m \to -\infty$ and $n \to \infty$ and this limiting $\mathbb{R}$-tree is used as a building block for our construction of Laakso-type space.

	\begin{definition}	
		For $m,n \in \mathbb{Z}$ with $m\le n$ we define a graph $T_{m,n}$ as follows.
		The \textbf{vertex set} $V(T_{m,n})$ is defined to be the collection of equivalence classes on set the \emph{labels} $	S(T_{m,n})$ with respect to an equivalence relation $R_{m,n}$ on $	S(T_{m,n})$; that is $V(T_{m,n})= S(T_{m,n})/R_{m,n}$. The set of labels 	$S(T_{m,n})$ is defined as 
		\begin{equation} \label{e:deflabel}
			S(T_{m,n}):= \{\mathbf{s}:  \llbracket m,n \rrbracket \to \mathbb{Z}   \mid \mbox{ $\mathbf{s}(m) \in \{0,1\}$, }   \mathbf{s}(k) \in \llbracket 0, \branch(k)-1 \rrbracket \mbox{ for all $k \in \llbracket m+1,n \rrbracket$} \}
		\end{equation}
		An ordered pair of labels $(\mathbf{s},\mathbf{t}) \in 	S(T_{m,n})\times 	S(T_{m,n})$ belong to  $R_{m,n}$ if and only if either $\mathbf{s}=\mathbf{t}$ or if there exists $l \in \llbracket m+1,n \rrbracket$ such that  
		\[
		\restr{\mathbf{s}}{\llbracket m ,n \rrbracket \setminus \{l\}}=  \restr{\mathbf{t}}{\llbracket m ,n \rrbracket \setminus \{l\}}, \quad 	\mathbf{s}(l-1)=\mathbf{t}(l-1)=1, \quad \mbox{and } 	\mathbf{s}(j)=\mathbf{t}(j)= 0 \quad \mbox{for all $j < l-1$.}
		\]
		It is evident that  $R_{m,n}$ defines an equivalence relation on $S(T_{m,n})$. For $\mathbf{s} \in S(T_{m,n})$, we denote the equivalence class with respect to $R_{m,n}$ as  $[\mathbf{s}]_{m,n} \in V(T_{m,n})$. A pair of distinct vertices $\{[\mathbf{s}]_{m,n},[\mathbf{t}]_{m,n}\}$ belong to the set of \textbf{edges} $E(T_{m,n})$ if and only if
		\begin{equation}\label{e:defedge}
			\{\mathbf{s}(m), \mathbf{t}(m)\} = \{0,1\}, \quad \mbox{and} \quad \restr{\mathbf{s}} {\llbracket m+1,n \rrbracket}=\restr{\mathbf{t}} {\llbracket m+1,n \rrbracket}.
		\end{equation}
		We define two distinguished vertices $r(T_{m,n})= [\mathbf{r}]_{m,n}, p(T_{m,n})= [\mathbf{p}]_{m,n} \in V(T_{m,n})$ as 
		\[
		\mathbf{r}(k)=\mathbf{p}(k)=0 \quad \mbox{for all $k \in \llbracket m ,n-1 \rrbracket$, $\mathbf{r}(n)=0$, and \quad} \mathbf{p}(n)=1.
		\]
		Let $\mathsf{d}_{m,n}: V(T_{m,n}) \times V(T_{m,n}) \to [0,\infty)$ denote the (combinatorial) graph distance corresponding to the graph $T_{m,n}$. 
	\end{definition}
	\begin{figure}\centering
		\includegraphics[height=110pt]{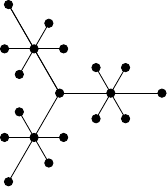}\hspace*{35pt}
		\includegraphics[height=110pt]{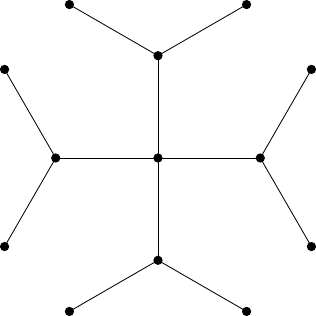}\hspace*{35pt}
		\includegraphics[height=110pt]{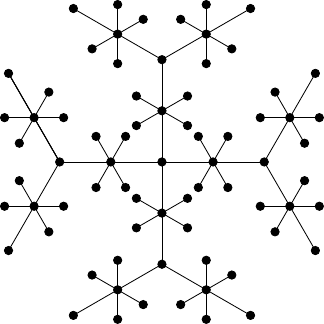}
		\caption{The graphs $T_{-2,0}, T_{-1,1}$ and $T_{-2,1}$ (from left to right) in the case $\branch(1)=4,\branch(0)=3,\branch(-1)=6$. Note that   $T_{-2,1}$ can be constructed by gluing $\branch(1)=4$ copies of $T_{-2,0}$ or alternately by replacing every edge of $T_{-1,1}$ with the complete bipartite graph $K_{1,\branch(-1)}=K_{1,6}$\label{fig.rec}}
	\end{figure} 
	We state some basic properties of the graph $T_{m,n}$. The following notation would be convenient to describe the relationship between $T_{m,n}$ and $T_{m,n+1}$.  If $\mathbf{s} \in S(T_{m,n})$ and $j \in \llbracket 0 , b(n+1)-1 \rrbracket$, we define the label $ (\mathbf{s}j): \llbracket m , n+1 \rrbracket \to \mathbb{Z} \in S(T_{m-1,n})$    as
	\begin{equation} \label{e:defsj}
		(\mathbf{s}j)(n+1)=j, \quad (\mathbf{s}j)(k)=\mathbf{s}(k), \quad \mbox{for all $k \in \llbracket m ,n \rrbracket$.}
	\end{equation}
	It is easy to see that the map $\mathbf{s} \mapsto \mathbf{s}j$ respects the corresponding equivalence relations; that is, $(\mathbf{s},\mathbf{t}) \in R_{m,n}$ implies $(\mathbf{s}j,\mathbf{t}j) \in R_{m,n+1}$ for any $j \in \llbracket 0, b(n+1)-1 \rrbracket$. Hence this map $\mathbf{s} \mapsto \mathbf{s}j$ induces a well-defined function from $V(T_{m,n})$ to $V(T_{m,n+1})$.

	The graph $T_{m,n+1}$ can be viewed as taking $b(n+1)$-copies of $T_{m,n}$ with the vertex $p(T_{m,n})$ in each of the  $b(n+1)$-copies identified (glued) as a single vertex as described in the following lemma (see Figure \ref{fig.rec}).
	\begin{lemma} \label{l:incntree}
		Let $m \le n$ with $m,n \in \mathbb{Z}$ and $j \in \llbracket 0, b(n+1)-1 \rrbracket$, 
		Define $I_{m,n}^{m,n+1,j}: V(T_{m,n}) \to V(T_{m,n+1})$ as 
		\[
		I_{m,n}^{m,n+1,j}([\mathbf{s}]_{m,n})= [\mathbf{s}j]_{m,n+1}.
		\]
		\begin{enumerate}[(a)]
			\item If $k,l \in  \llbracket 0, b(n+1)-1 \rrbracket$ are distinct, then
			\[
			I_{m,n}^{m,n+1,k}(V(T_{m,n})) \cap 	I_{m,n}^{m,n+1,l}(V(T_{m,n})) = \{I_{m,n}^{m,n+1,k}(p(T_{m,n})) \}
			\]
			\item The map $I_{m,n}^{m,n+1,j}: V(T_{m,n}) \to V(T_{m,n+1})$ is an isometry, that is 
			\[
			\mathsf{d}_{m,n+1}([\mathbf{s}j]_{m,n+1},[\mathbf{t}j]_{m,n+1})= 	\mathsf{d}_{m,n}([\mathbf{s}]_{m,n},[\mathbf{t}]_{m,n}), \quad \mbox{for all $[\mathbf{s}]_{m,n},[\mathbf{t}]_{m,n}\in V(T_{m,n})$.}
			\]
			\item The graph $T_{m,n}$ is a tree and its diameter is given by $\diam(V(T_{m,n}), \mathsf{d}_{m,n})= 2^{n-m}= \mathsf{d}_{m,n}(p(T_{m,n}),r(T_{m,n}))$.
		\end{enumerate}
		
	\end{lemma}
	
	\begin{proof}
		\begin{enumerate}[(a)]
			\item This follows from the definition of   equivalence relation $R_{m,n}$.
			\item We note that $\{[\mathbf{s}j]_{m,n+1},[\mathbf{t}j]_{m,n+1}\} \in E(T_{m,n+1})$ if and only if $\{[\mathbf{s}]_{m,n},[\mathbf{t}]_{m,n}\} \in E(T_{m,n})$. This implies that $I_{m,n}^{m,n+1,j}$ is an isometry.
			\item The claim is obvious when $m=n$ as $T_{m,m}$ consists of two vertices $r(T_{m,m})$ and $p(T_{m,m})$ joined by an edge. If $m<n$, the result follows from (a), (b) and induction on $n-m$.
		\end{enumerate}
	\end{proof}

	Next, we describe a similar recursive construction of 
	$T_{m-1,n}$ from $T_{m,n}$.
	Similar  to \eqref{e:defsj}, if $i \in \{0,1\}$ and $\mathbf{s} \in S(T_{m,n})$, we define the label $ (i\mathbf{s}): \llbracket m-1 , n\rrbracket$    as
	\begin{equation} \label{e:defjs}
		(i\mathbf{s})(m-1)=i, \quad  (i\mathbf{s})(k)=\mathbf{s}(k), \quad \mbox{for all $k \in \llbracket m ,n \rrbracket$.}
	\end{equation}
	We define 	$I^{m-1,n}_{m,n}: V(T_{m,n}) \to V(T_{m-1,n})$ as
	\begin{equation} \label{e:defdecm}
		I^{m-1,n}_{m,n}([\textbf{s}]_{m,n})=[0\textbf{s}]_{m-1,n}, \quad \mbox{for all $[\textbf{s}]_{m,n} \in V(T_{m,n})$.}
	\end{equation}
	It is evident that the map $I^{m-1,n}_{m,n}$ is well-defined.
	If $\mathbf{s} \in S(T_{m+1,n})$ and if $[0\mathbf{s}]_{m,n}, [1\mathbf{s}]_{m,n}$ denote adjacent vertices in $V(T_{m,n})$ then the distance between the corresponding images $I^{m-1,n}_{m,n}([0\mathbf{s}]_{m,n})=[00\mathbf{s}]_{m-1,n}, I^{m-1,n}_{m,n}([1\mathbf{s}]_{m,n})=[01\mathbf{s}]_{m-1,n}$ is two as they are both adjacent to $[10\mathbf{s}]_{m-1,n}=[11\mathbf{s}]_{m-1,n}$. We summarize this in the following lemma.
	\begin{lemma} \label{l:decmtree}
		Let $m,n \in \mathbb{Z}$ be such that $m \le n$. Then the map $I^{m-1,n}_{m,n}: V(T_{m,n}) \to V(T_{m-1,n})$ defined in \eqref{e:defdecm} is well-defined and satisfies
		\[
		\mathsf{d}_{m-1,n}\left(I^{m-1,n}_{m,n}([0\mathbf{s}]_{m,n}),I^{m-1,n}_{m,n}([0\mathbf{t}]_{m,n})\right) = 2 \mathsf{d}_{m,n} \left(I^{m-1,n}_{m,n}([0\mathbf{s}]_{m,n}), I^{m-1,n}_{m,n}([0\mathbf{t}]_{m,n})\right)
		\]
	\end{lemma} 
	
	The construction of $F_{m-1,n}$ from $F_{m,n}$ is done by replacing each edge $\{[0\mathbf{s}]_{m,n}, [1\mathbf{s}]_{m,n}\}$ with a copy of the complete bipartite graph $K_{1,\branch(m)}$, where the partitions are given by $\{[10\mathbf{s}]_{m-1,n}\}$ and $\{[0j\mathbf{s}]_{m-1,n}: j \in \llbracket 0, b(m)-1 \rrbracket \}$ respectively and the end points of the edge are mapped to the latter partition via the map $I_{m,n}^{m-1,n}$ (see Figure \ref{fig.rec}). This can be used to provide another proof for Lemma \ref{l:incntree}(c).
	
	We describe the continuum self-similar tree of \cite{BT21} that motivated our construction.
	\begin{example} \label{x:csst}
		If $\branch(k)=3$ for all $k \in \mathbb{Z}$, then the metric spaces $(V(T_{m,0}),2^md_{m,0})$ converge in the Gromov-Hausdorff topology to a compact metric space as $m \to -\infty$. This limit is called the continuum self-similar tree (see Figure \ref{fig.csst}). One reason for its importance is that Aldous' continuum random tree is almost surely homeomorphic to the continuum self-similar tree \cite{BT21}.
		\begin{figure}\centering
			\includegraphics[height=110pt]{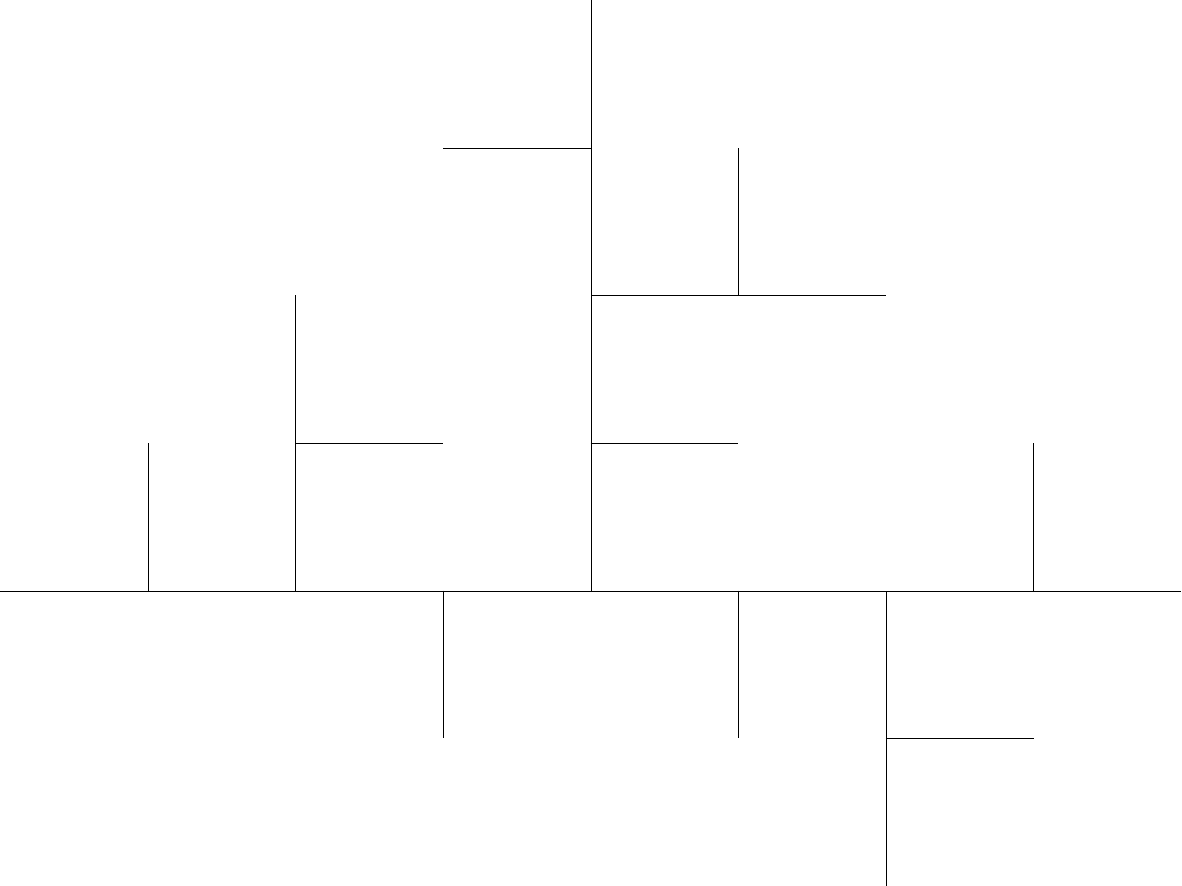}\hspace*{35pt}
			\includegraphics[height=110pt]{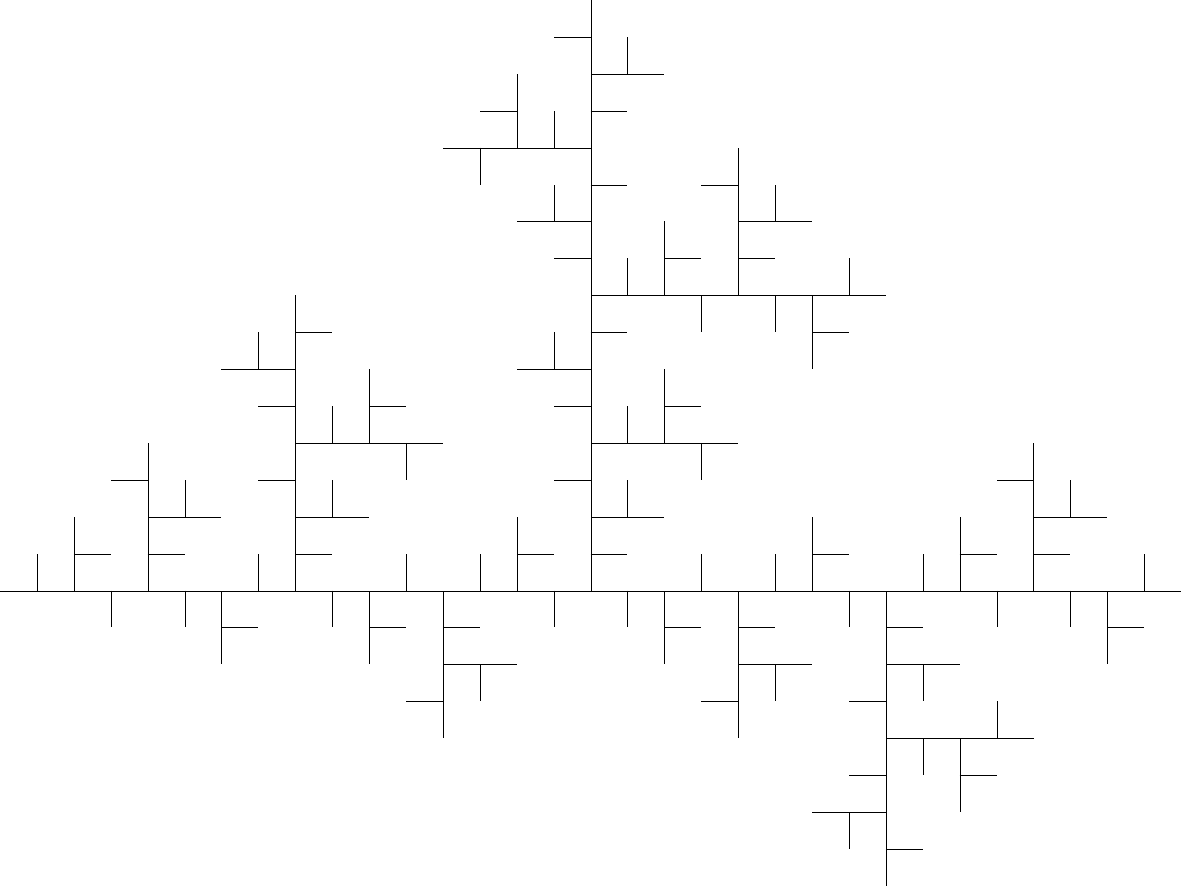}\hspace*{35pt} \\
			\includegraphics[height=110pt]{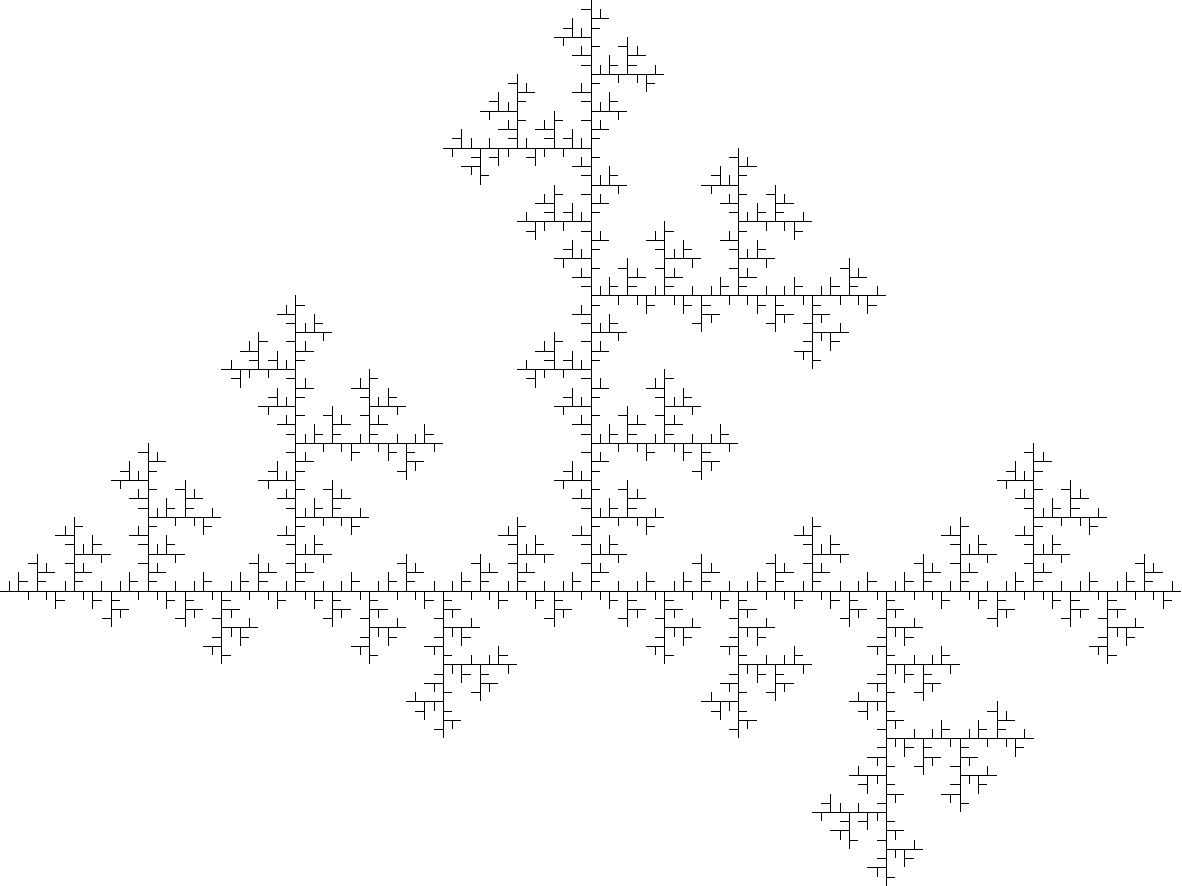}\hspace*{35pt}
			\includegraphics[height=110pt]{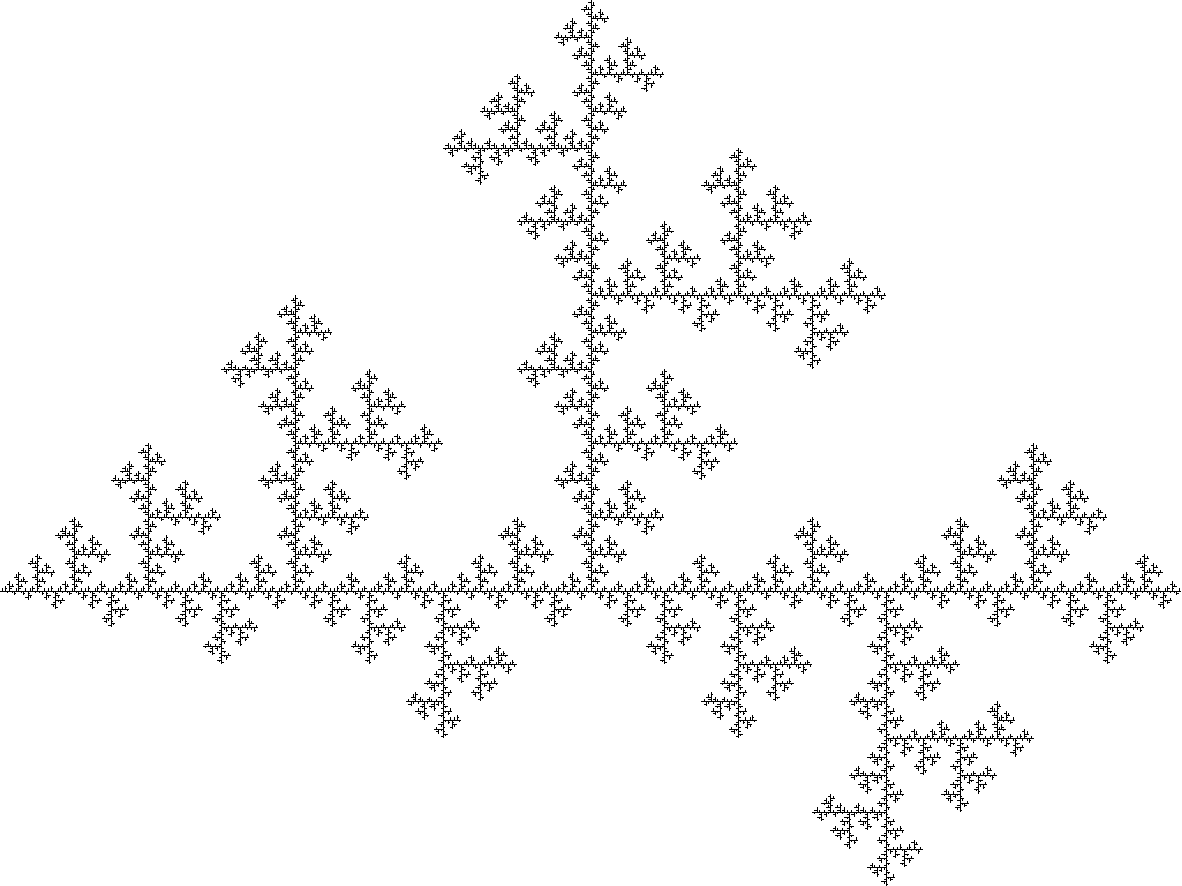}
			\caption{The graphs $T_{-3,0}, T_{-5,0}$, $T_{-7,0}$  and $T_{-9,0}$ if $\branch(k)=3$ for all $k \in \mathbb{Z}$.}\label{fig.csst} 
		\end{figure} 
	\end{example}
	
	\subsection{$\mathbb{R}$-tree as a limit of finite graphs} \label{ss:rtree}
	Lemmas \ref{l:decmtree} and \ref{l:incntree} imply that the re-scaled metric space $(V(T_{m,n}),2^m \mathsf{d}_{m,n})$  naturally embeds isometrically in the  spaces $(V(T_{m,n+1}),2^m \mathsf{d}_{m,n+1})$ and $(V(T_{m-1,n}),2^{m-1}\mathsf{d}_{m-1,n})$. 
	Our goal is to take limits as $m \to -\infty$ and $n \to \infty$ (see Figure \ref{fig.csst}). To this end, we define the set of `infinite' labels 
	\[
	S_0(T_{-\infty,\infty}):= \{\mathbf{s}: \mathbb{Z} \to \mathbb{Z} \mid \mathbf{s}(k) \in \llbracket 0, \branch(k)-1 \rrbracket \mbox{ for all $k \in \mathbb{Z}$, and  $\supp(\mathbf{s})$ is finite}\}.
	\]
	The equivalence relations $R_{m,n}$ on $S(T_{m,n})$ induce an equivalence relation $R_\infty$ on $S_0(T_{-\infty,\infty})$ as follows. We say that $(\mathbf{s},\mathbf{t}) \in R_\infty$ if there exists $N \in \mathbb{N}$ such that 
	\[
	\supp(\mathbf{s}) \cup 	 \supp(\mathbf{t}) \subset \llbracket -(N-1),N \rrbracket, \quad \mbox{and } \restr{\mathbf{s}}{\llbracket -N,N \rrbracket}R_{-N,N}\restr{\mathbf{t}}{\llbracket -N,N \rrbracket}.
	\]
	It is easy to check that  $R_\infty$ is an equivalence relation on $S_0(T_{-\infty,\infty})$.
	For $\mathbf{s} \in S_0(T_{-\infty,\infty})$, we denote the equivalence class of $\mathbf{s}$ under the equivalence relation $R_\infty$ as $[\mathbf{s}]_\infty$. The set of equivalence classes of $S_0(T_{-\infty,\infty})$ with respect to  $R_\infty$ as $T_\infty:=  S_0(T_{-\infty,\infty})/R_\infty$.
	We define a suitable metric $\mathsf{d}_{T_\infty}: T_\infty \times T_\infty \to [0,\infty)$ as 
	\begin{equation} \label{e:defdinf}
		\mathsf{d}_{T_\infty}([\mathbf{s}]_\infty,[\mathbf{t}]_\infty)= 2^{-N} \mathsf{d}_{-N,N}\left(\left[\restr{\mathbf{s}}{\llbracket -N, N \rrbracket}\right]_{-N,N}, \left[\restr{\mathbf{t}}{\llbracket -N, N \rrbracket}\right]_{-N,N} \right),
	\end{equation}
	where $N \in \mathbb{N}$ is chosen large enough so that $ \supp(\mathbf{s}) \cup 	 \supp(\mathbf{t}) \subset \llbracket -(N-1),N \rrbracket$.
	The following lemma is an immediate consequence of Lemmas \ref{l:incntree}(b) and \ref{l:decmtree}.
	For $\mathbf{u}: \llbracket n+1,\infty \rrbracket$ such that $\mathbf{u}(k) \in \llbracket 0, \branch(k)-1 \rrbracket$ for all $k \in \llbracket n+1,\infty \rrbracket$, we set 
	\begin{equation} \label{e:defTinfu}
		T_\infty^{\mathbf{u}}:= \left\{ [\mathbf{s}]_\infty: \mathbf{s} \in S_0(T_{-\infty,\infty}), \restr{\mathbf{s}}{\llbracket n+1 ,\infty \rrbracket}= \mathbf{u}\right\}.
	\end{equation}
	The following lemma shows an embedding of $T_{m,n}$ in $T_\infty$ and basic properties of the sets $T_\infty^{\mathbf{u}}$ defined above.
	\begin{lemma} \label{l:inftree}
		\begin{enumerate}[(a)]
			\item 	Let $m \le n$ with $m,n \in \mathbb{Z}$.
			The equation \eqref{e:defdinf} gives a well-defined metric on $T_\infty$. The map $I_{m,n}: (V(T_{m,n}),2^m \mathsf{d}_{m,n} ) \to (T_\infty,\mathsf{d}_{T_\infty})$ defined by 
			\[
			I_{m,n}^\infty([\mathbf{s}]_{m,n})= [\wt{\mathbf{s}}]_\infty, \mbox{where $\wt{\mathbf{s}} \in S_0(T_{-\infty,\infty}), \restr{\wt{\mathbf{s}}}{\llbracket m ,n \rrbracket}=\mathbf{s}$ and $\supp(\wt{\mathbf{s}}) \subset \llbracket m ,n \rrbracket$}
			\]
			is an isometry. 
			\item For any finitely supported function  $\mathbf{u}: \llbracket n+1,\infty \rrbracket \to \mathbb{Z}$ where $n \in \mathbb{Z}$, we have	$\diam(T_\infty^{\mathbf{u}}, \mathsf{d}_{T_\infty}) \le 2^n$.	
			\item Let $\mathbf{u}, \mathbf{v}: \llbracket n+1,\infty \rrbracket \to \mathbb{Z}$ be finitely supported functions such that $\mathbf{u}(k), \mathbf{v}(k) \in \llbracket 0, \branch(k)-1 \rrbracket$ for all $k \in \llbracket n+1,\infty \rrbracket$  with $\mathbf{u} \neq \mathbf{v}$. Then exactly one of the alternatives hold:
			\begin{enumerate}[(i)]
				\item If $T^\mathbf{u}_\infty \cap T^\mathbf{v}_\infty \neq \emptyset$, then $T^\mathbf{u}_\infty \cap T_\infty^\mathbf{v}= \{ [\mathbf{s}]_\infty\}$ where 
				$\mathbf{s} \in S_0(T_{-\infty,\infty})$ is such that $\mathbf{s}(k)=0$ for all $k < n$ and $\mathbf{s}(n) \in \{0,1\}$. In case $\mathbf{s}(n)=1$, then $\mathbf{s}(k)=\mathbf{u}(k)=\mathbf{v}(k)$ for all $k > n+1$ and $\mathbf{u}(n+1) \neq \mathbf{v}(n+1)$. In the case $\mathbf{s}(n)=0$, then there exist $l \in \llbracket n+2, \infty \rrbracket$ such that $\mathbf{s}(k)=\mathbf{u}(k)=\mathbf{v}(k)$ for all $k \in \llbracket n,\infty\rrbracket \setminus \{l\}$,  $\mathbf{s}(l-1)=\mathbf{u}(l-1)=\mathbf{v}(l-1)=1$ and $\mathbf{s}(k)=\mathbf{u}(k)=\mathbf{v}(k)=0$ for all $k \le l-2$.
				\item If $T^\mathbf{u}_\infty \cap T^\mathbf{v}_\infty = \emptyset$, then $\mathsf{d}_{\infty}([\mathbf{s}]_\infty,[\mathbf{t}]_\infty) \ge 2^n$ for all $[\mathbf{s}]_\infty \in T^\mathbf{u}_\infty,[\mathbf{t}]_\infty \in T^\mathbf{v}_\infty$.
			\end{enumerate}
		\end{enumerate}
	\end{lemma}
	
	\begin{proof}
		\begin{enumerate}[(a)]
			\item As a immediate consequence of Lemmas \ref{l:incntree}(b) and \ref{l:decmtree}. 
			\item 
			Using  (a), any pair of points in $(T_\infty^{\mathbf{u}},\mathsf{d}_{T_\infty})$ is isometric to a pair of points in $(T_{m,n},2^m \mathsf{d}_{m,n})$ for some $m < n$ small enough (where $m$ is allowed to depend on the pair of points). This along with \ref{l:incntree}(c) implies $\diam(T_\infty^{\mathbf{u}}, \mathsf{d}_{T_\infty}) \le 2^n$.
			\item 
			\begin{enumerate}[(i)]
				\item If  $T^\mathbf{u}_\infty \cap T^\mathbf{v}_\infty \neq \emptyset$, then there exists $\mathbf{s}, \mathbf{t} \in S_0(T_{-\infty,\infty})$ such that 
				$\restr{\mathbf{s}}{\llbracket n+1,\infty \rrbracket}= \mathbf{u} \neq \mathbf{v}= \restr{\mathbf{t}}{\llbracket n+1,\infty \rrbracket}$ with $[\mathbf{s}]_\infty = [\mathbf{t}]_\infty$. By the definition of the equivalence relation $R_\infty$ it implies that there exists $l \in \mathbb{Z}$ such that $\mathbf{s}(l) \neq \mathbf{t}(l), \mathbf{s}(l-1)=\mathbf{t}(l-1)=1$, $\mathbf{s}(k)=\mathbf{t}(k)=0$ for all $k<l-1$ and $\mathbf{s}(k)=\mathbf{t}(k)$ for all $k >l$. There are two cases $l=n+1$ and $l>n+1$ which leads to the desired dichotomy.
				\item Pick any pair of points $\mathbf{s}, \mathbf{t} \in S_0(T_{-\infty,\infty})$ such that 
				$\restr{\mathbf{s}}{\llbracket n+1,\infty \rrbracket}= \mathbf{u} \neq \mathbf{v}= \restr{\mathbf{t}}{\llbracket n+1,\infty \rrbracket}$. It suffices to show that $\mathsf{d}_{T_\infty}([\mathbf{s}]_\infty,[\mathbf{t}]_\infty) \ge 2^n$. There exist $\wt{n},m \in \mathbb{Z}$ such that $\supp(\mathbf{s}) \cup \supp(\mathbf{t}) \subset \llbracket m+1, \wt{n} \rrbracket$. Then $[\restr{\mathbf{s}}{\llbracket m, \wt{n}\rrbracket}]_{m, \wt{n}}, [\restr{\mathbf{t}}{\llbracket m, \wt{n}\rrbracket}]_{m, \wt{n}} \in V(T_{m,\wt{n}})$ and $2^m \mathsf{d}_{m,\wt{n}}\left([\restr{\mathbf{s}}{\llbracket m, \wt{n}\rrbracket}]_{m, \wt{n}}, [\restr{\mathbf{t}}{\llbracket m, \wt{n}\rrbracket}]_{m, \wt{n}}\right)= \mathsf{d}_{T_\infty}([\mathbf{s}]_\infty, [\mathbf{t}]_\infty)$. Then the sequence of vertices on the shortest path between $[\restr{\mathbf{s}}{\llbracket m, \wt{n}\rrbracket}]_{m, \wt{n}}$ and $[\restr{\mathbf{t}}{\llbracket m, \wt{n}\rrbracket}]_{m, \wt{n}}$ in $V(T_{m,\wt{n}})$ defines a corresponding sequence of points in $T_\infty$. Let $\mathbf{w} : \llbracket n + 1 , \infty \rrbracket \to \mathbb{Z}$ be a finitely supported function such that $\mathbf{w}$ is finitely supported and $\mathbf{w}(k) \in \llbracket 0, \branch(k)-1\rrbracket$ and such $\mathbf{w} \notin \{\mathbf{u},\mathbf{v}\}$ that the corresponding sequence of points in $T_\infty$ from contains at least two points in $T^{\mathbf{w}}_\infty$ (such points exist by (i)). Again by (a), (c)-(i) and Lemma \ref{l:incntree}(c), this path must have $\mathsf{d}_{T_\infty}$-length at least $2^n$. 
			\end{enumerate}
			
		\end{enumerate}
	\end{proof}
	
	The desired $\mathbb{R}$-tree  is obtained by completing $(T_\infty,\mathsf{d}_{T_\infty})$ as defined below.
	\begin{definition}
		Let $(\tree(\branch),\metrictree)$ denote the  completion of the metric space $(T_\infty,\mathsf{d}_{T_\infty})$.
		We denote the open and closed balls centered at $x \in \tree(\branch)$ with radius $r>0$ as $B_{\tree(\branch)}(x,r)$ and $\overline{B}_{\tree(\branch)}(x,r)$.
	\end{definition}
	There is a natural `coding' map $\codetree:(\ultra(\branch), \mathsf{d}_{\ultra(\branch)}) \to (\tree(\branch), \metrictree)$ that we describe  below. 
	
	\begin{prop} \label{p:dsr}
		\begin{enumerate}[(a)]
			\item  For any $\mathbf{s} \in \ultra(\branch), n \in \mathbb{N}$, define $\mathbf{s}_n \in S_0(T_{-\infty,\infty})$ as
			\[
			\mathbf{s}_n(k)= \begin{cases}
				\mathbf{s}(k) & \mbox{if $k > -n$,}\\
				0 & \mbox{if $k \le -n$.}
			\end{cases}
			\]
			Then $([\mathbf{s}_n]_\infty)_{n \in \mathbb{N}}$ is a Cauchy sequence in $(T_\infty,\mathsf{d}_{T_\infty})$ and hence defines a map $\codetree:\ultra(\branch) \to \tree(\branch)$ as 
			\[
			\codetree(\mathbf{s})= \lim_{k \to \infty} [\mathbf{s}_k]_\infty.
			\]
			\item The map $\codetree$ is surjective.
			\item The map $\codetree:(\ultra(\branch), \mathsf{d}_{\ultra(\branch)}) \to (\tree(\branch),\metrictree)$ is David-Semmes regular.

		\end{enumerate}
	\end{prop}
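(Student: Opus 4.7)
The plan is to verify (a), (b), (c) in order, invoking Lemma~\ref{l:inftree} repeatedly. For (a), the truncations $\mathbf{s}_n$ and $\mathbf{s}_{n+1}$ differ only at the coordinate $-n$, so both restrict to the same finitely supported function $\mathbf{u} := \restr{\mathbf{s}}{\llbracket -n+1,\infty\rrbracket}$ on $\llbracket -n+1,\infty\rrbracket$ (finite support because $\mathbf{s}\in\ultra(\branch)$ is eventually zero at $+\infty$). Thus $[\mathbf{s}_n]_\infty,[\mathbf{s}_{n+1}]_\infty\in T_\infty^{\mathbf{u}}$, and Lemma~\ref{l:inftree}(b) yields $\mathsf{d}_{T_\infty}([\mathbf{s}_n]_\infty,[\mathbf{s}_{n+1}]_\infty)\le 2^{-n}$. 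Summability gives the Cauchy property, so $\codetree(\mathbf{s})$ is well defined.

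For (c) I first show that $\codetree$ is $1$-Lipschitz by the same device: if $\metricultra(\mathbf{s},\mathbf{t})\le 2^n$ then $\mathbf{s}$ and $\mathbf{t}$ agree on $\llbracket n+1,\infty\rrbracket$, so for sufficiently large $N$ the truncations $\mathbf{s}_N$ and $\mathbf{t}_N$ share a common restriction $\mathbf{u}$ there, placing $[\mathbf{s}_N]_\infty,[\mathbf{t}_N]_\infty\in T_\infty^{\mathbf{u}}$ and hence at $\mathsf{d}_{T_\infty}$-distance at most $2^n$ by Lemma~\ref{l:inftree}(b); passing to the limit gives $\metrictree(\codetree(\mathbf{s}),\codetree(\mathbf{t}))\le 2^n$. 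For the covering condition, I fix $y\in\tree(\branch)$ and $r>0$, pick $n\in\bZ$ with $2^{n-1}<r\le 2^n$, and use Lemma~\ref{l:inftree}(c) to enumerate a constant number $M$ (depending only on $\sup_{\bZ}\branch$) of finitely supported functions $\mathbf{u}^{(1)},\ldots,\mathbf{u}^{(M)}:\llbracket n+1,\infty\rrbracket\to\bZ$ whose level-$n$ cells $\ol{T_\infty^{\mathbf{u}^{(j)}}}$ cover $B_{\tree(\branch)}(y,r)$; this is a tree-geometry fact, because from any point in $\tree(\branch)$ only boundedly many branches can emanate at scale $2^n$. Each preimage $\codetree^{-1}(\ol{T_\infty^{\mathbf{u}^{(j)}}})$ is contained in a single ball of radius comparable to $r$ in $\ultra(\branch)$, centered at any $\mathbf{s}\in\ultra(\branch)$ extending $\mathbf{u}^{(j)}$, completing the David--Semmes verification.

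For (b), surjectivity, I first note $T_\infty\subset\codetree(\ultra(\branch))$: for a finitely supported label $\mathbf{s}\in S_0(T_{-\infty,\infty})$, the natural extension $\tilde{\mathbf{s}}\in\ultra(\branch)$ (same values, still eventually zero at $+\infty$) satisfies $\tilde{\mathbf{s}}_n=\mathbf{s}$ for every $n$ past $-\min\supp(\mathbf{s})$, so $\codetree(\tilde{\mathbf{s}})=[\mathbf{s}]_\infty$. Next, closed balls of $\ultra(\branch)$ are compact: $\ol{B}_{\ultra(\branch)}(\mathbf{s}^*,2^n)$ is homeomorphic to $\prod_{k\le n}\llbracket 0,\branch(k)-1\rrbracket$ with the product topology (the metric and product topologies coincide since each factor is finite), which is compact by Tychonoff. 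Given $x\in\tree(\branch)$, I approximate $x$ by a sequence $\codetree(\mathbf{z}^{(k)})\to x$ with $\mathbf{z}^{(k)}$ zero-extensions of finitely supported representatives and, by a judicious choice of representatives guided by Lemma~\ref{l:inftree}(c)(i), arrange the $\mathbf{z}^{(k)}$ to lie inside one common closed ball of $\ultra(\branch)$; extracting a convergent subsequence $\mathbf{z}^{(k_j)}\to\mathbf{s}$ and invoking the continuity from (c) gives $\codetree(\mathbf{s})=x$.

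The main obstacle is this representative-selection step in (b): one must show that the Cauchy approximants $[\mathbf{t}^{(k)}]_\infty\to x$ admit representatives whose supports remain in a common finite interval $\llbracket -\infty,N\rrbracket$. Lemma~\ref{l:inftree}(c) is the key tool here, because it asserts that any two close $R_\infty$-classes either share their upper coordinates outright or else differ only through a tightly prescribed branching move; these discrepancies can be absorbed by enlarging the upper support by a uniformly bounded amount. Once this finite combinatorial bookkeeping is in place, Tychonoff compactness together with the Lipschitz continuity of $\codetree$ closes the argument with no further analytic input.
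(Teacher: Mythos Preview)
Your proposal is correct and follows essentially the same approach as the paper. All three parts rely on Lemma~\ref{l:inftree} in the same way: part (a) via the diameter bound Lemma~\ref{l:inftree}(b), the Lipschitz half of (c) likewise, and the covering half of (c) via the enumeration in Lemma~\ref{l:inftree}(c)(i). For (b), the paper takes a slightly more direct route than your Tychonoff argument: rather than arranging representatives to lie in a common compact ball, it passes to a subsequence where each coordinate $\mathbf{s}_n(k)$ converges, then uses Lemma~\ref{l:inftree}(c) to conclude that for each level $l$ the restrictions $\restr{\mathbf{s}_n}{\llbracket -l,\infty\rrbracket}$ take only finitely many values and are therefore eventually constant, which builds the preimage $\mathbf{s}\in\ultra(\branch)$ directly. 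This sidesteps the ``representative-selection'' bookkeeping you flag as the main obstacle, though your compactness formulation is equally valid and rests on the same combinatorial input.
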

	\begin{proof}
		\begin{enumerate}[(a)]
			\item By Lemma \ref{l:inftree}(b), $\mathsf{d}_{T_\infty}([\mathbf{s}_n]_\infty, [\mathbf{s}_m]_\infty) \le 2^{-(\min(m,n))}$ for all $m,n \in \mathbb{N}$. This implies that  $([\mathbf{s}_n]_\infty)_{n \in \mathbb{N}}$ is a Cauchy sequence.

			\item
			If $[\mathbf{s}]_\infty \in T_\infty$, then $\mathbf{s} \in \ultra(\branch)$ and $\codetree(\mathbf{s})$ can be identified with $[\mathbf{s}]_\infty$. Hence it suffices to consider the set $\tree(\branch)\setminus T_\infty$ (where we identify $T_\infty$ as a subset of $\tree(\branch)$ in the usual way).

			Let $([\mathbf{s}_n]_\infty)_{n \in \mathbb{N}}$ be a Cauchy sequence in $(T_\infty,\mathsf{d}_{T_\infty})$ converging to an element in $\tree(\branch) \setminus T_\infty$. 
			Then by passing to a subsequence we may assume that $\mathbf{s}_n(k)$ converges for each $k \in \mathbb{Z}$. Let $\mathbf{s}(k)= \lim_{n \to \infty}\mathbf{s}_n(k)$ for all $k \in \mathbb{Z}$.
			For $l \in \mathbb{Z}$, let $N_l \in \mathbb{N}$ be such that $\mathsf{d}_{T_\infty}([\mathbf{s}_n]_\infty,[\mathbf{s}_m]_\infty) < 2^{-l-1}$ for all $n,m \ge N_l$. Then by Lemma \ref{l:inftree}(b), we have $T_{\infty}^{\restr{\mathbf{s}_n}{\llbracket - l,\infty \rrbracket}} \cap T_{\infty}^{\restr{\mathbf{s}_m}{\llbracket -l,\infty \rrbracket}} \neq \emptyset$. Then by Lemma \ref{l:inftree}(c), there are  only finitely many
			possibilities $\restr{\mathbf{s}_n}{\llbracket -l,\infty \rrbracket}$ for $n \ge N_l$ and for each $l \in \mathbb{N}$. Therefore by the pointwise convergence of $\mathbf{s_n}$, we have that $\restr{\mathbf{s}_n}{\llbracket- l,\infty \rrbracket}$ is eventually constant for each $l \in \mathbb{N}$. This implies that $\mathbf{s} \in \ultra(\branch)$ and $\codetree(\mathbf{s})= \lim_{k \to \infty} [\mathbf{s}_k]_\infty$.
			\item By Lemma \ref{l:inftree}(b)
			the map $\codetree$ is $1$-Lipschitz. 
			
			Since $\codetree$ is surjective any element of $\tree(\branch)$ is of the form $\codetree(\mathbf{s})$ for some $\mathbf{s} \in \ultra(\branch)$. Let $r>0$ and let $n \in \mathbb{Z}$ be such that $2^{n}< r\le2^{n+1}$ for some $n \in \mathbb{Z}$. The $\codetree^{-1}(B_{{\tree(\branch)}}(\codetree(\mathbf{s},r)))$ is covered by balls of the form $B_{{\ultra(\branch)}}(\codetree(\mathbf{t},2^{n+1}))$ where $\mathbf{t}$ is such that $$T_\infty^{\restr{t}{\llbracket n+1,\infty \rrbracket}} \cap T_\infty^{\restr{s}{\llbracket n+1,\infty \rrbracket}} \neq \emptyset.$$ Therefore by Lemma \ref{l:inftree}(c)-(i), $\codetree^{-1}(B_{{\tree(\branch)}}(\codetree(\mathbf{s},r)))$  can be covered by at most $2M$ balls of radii $2^{n+1}$ in $(\ultra(\branch), \mathsf{d}_{\ultra(\branch)})$, where $M_\branch= \sup_{k \in \mathbb{Z}} \branch(k)$. Since each ball of radius $2^{n+1}$ is an union of at most $M_\branch$ balls of radii $2^n$, we conclude that $\codetree^{-1}(B_{{\tree(\branch)}}(\codetree(\mathbf{s},r)))$  can be covered by at most $2M_\branch^2$ balls of radii $r$ in $(\ultra(\branch), \mathsf{d}_{\ultra(\branch)})$.
		\end{enumerate}
	\end{proof}
	We now define a measure on the tree $(\tree,\metrictree)$ as the push-forward measure
	\begin{equation} \label{e:defmeastree}
		\meastree=   \codetree_*(\mathsf{m}_{\ultra(\branch)}), 
	\end{equation}
	where $\mathsf{m}_{\ultra(\branch)}$ is as defined in \eqref{e:defultrameas} (see also \eqref{e:measultraformula}) and $\codetree$ is the coding map in Proposition \ref{p:dsr}. Recall that a metric space $(X,d)$ is said be be \textbf{proper}, if every closed ball
	\[
	\overline{B}(x,r)=\{y \in X \vert d(x,y) \le r\}
	\]
	is compact for all $x \in X, r>0$.

	The following estimate of $	\meastree$ follows from the David-Semmes regularity of $\codetree$ (recall \eqref{e:defvol}). 
	\begin{cor} \label{c:meastree}
		There exists $C>0$ such that for all $\mathbf{s} \in \ultra(\branch)$ and for all $r>0$, we have
		\begin{equation*}
			C^{-1} V_\branch(r) \le  \meastree(B_{{\tree(\branch)}}(\codetree(\mathbf{s}),r)) \le C  	V_\branch(r).
		\end{equation*}
		In particular, $\meastree$ is a doubling measure on $(\tree(\branch),\metrictree)$ and $(\tree(\branch),\metrictree)$ is a proper, separable, metric space. 
	\end{cor}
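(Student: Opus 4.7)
The plan is to deduce the volume estimate by invoking Lemma \ref{l:volume} for the coding map $\codetree$, and then derive properness and separability as easy consequences.

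First, from the explicit formula \eqref{e:measultraformula} together with the definition \eqref{e:defvol} of $V_\branch$, the reference measure on $(\ultra(\branch),\mathsf{d}_{\ultra(\branch)})$ satisfies
\[
\mathsf{m}_{\ultra(\branch)}\bigl(B_{\ultra(\branch)}(\mathbf{t},r)\bigr) = V_\branch(r)
\qquad \text{for every } \mathbf{t} \in \ultra(\branch),\ r>0,
\]
so the hypothesis \eqref{e:dsl1} of Lemma \ref{l:volume} holds with $C_2=1$. Since $\sup_{\bZ}\branch<\infty$, the formula \eqref{e:measultraformula} also shows that $V_\branch$ is a doubling function on $(0,\infty)$, giving the regularity assumption on $V$ in Lemma \ref{l:volume}. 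By Proposition \ref{p:dsr}(b)--(c), $\codetree\colon (\ultra(\branch),\mathsf{d}_{\ultra(\branch)}) \to (\tree(\branch),\metrictree)$ is surjective and David--Semmes regular. Applying Lemma \ref{l:volume} to $F=\codetree$ therefore produces a constant $C>0$ such that
\[
C^{-1}V_\branch(r) \le \codetree_*\mathsf{m}_{\ultra(\branch)}\bigl(B_{\tree(\branch)}(y,r)\bigr) \le C V_\branch(r)
\qquad \text{for every } y\in\tree(\branch),\ r>0.
\]
By the definition \eqref{e:defmeastree} this is exactly the required two-sided bound on $\meastree$, and because $V_\branch$ is doubling, so is $\meastree$.

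Next, separability of $(\tree(\branch),\metrictree)$ is immediate: the set $S_0(T_{-\infty,\infty})$ of finitely supported labellings is countable, hence $T_\infty=S_0(T_{-\infty,\infty})/R_\infty$ is countable, and $\tree(\branch)$ is by definition the metric completion of $T_\infty$, so $T_\infty$ is a countable dense subset.

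It remains to show that $(\tree(\branch),\metrictree)$ is proper. The completeness is built into the definition, so it suffices to show that every closed ball is totally bounded. Fix $y\in\tree(\branch)$ and $r>0$, and any $\varepsilon\in(0,r)$. Consider a maximal $\varepsilon$-separated subset $\{y_i\}_{i\in I}$ of $\ol{B}_{\tree(\branch)}(y,r)$; the balls $B_{\tree(\branch)}(y_i,\varepsilon/2)$ are then pairwise disjoint and all contained in $B_{\tree(\branch)}(y,2r)$. Using the two-sided estimate on $\meastree$ just proved we get
\[
\#I \cdot C^{-1} V_\branch(\varepsilon/2) \le \sum_{i\in I}\meastree\bigl(B_{\tree(\branch)}(y_i,\varepsilon/2)\bigr) \le \meastree\bigl(B_{\tree(\branch)}(y,2r)\bigr) \le C V_\branch(2r),
\]
so $\#I$ is finite (bounded by $C^2 V_\branch(2r)/V_\branch(\varepsilon/2)$) and maximality forces $\ol{B}_{\tree(\branch)}(y,r)$ to be covered by finitely many balls of radius $\varepsilon$. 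Hence $\ol{B}_{\tree(\branch)}(y,r)$ is totally bounded, and combined with completeness this gives compactness, proving properness.

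The only delicate step is the volume estimate itself, and even there the work has been offloaded to Lemma \ref{l:volume} and Proposition \ref{p:dsr}; the remaining deductions (doubling, separability, properness) are standard consequences of a doubling measure on a complete metric space with a countable dense subset.
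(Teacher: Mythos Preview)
Your proof is correct and follows essentially the same approach as the paper: both obtain the volume estimate by combining Proposition \ref{p:dsr}(b,c) with Lemma \ref{l:volume}, deduce doubling from $\sup_{\bZ}\branch<\infty$, and get properness via the standard packing argument showing closed balls are totally bounded. The only minor difference is that you derive separability from the explicit countable dense subset $T_\infty$, whereas the paper notes that total boundedness of balls already implies separability; both are valid and equally easy.
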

	
	\begin{proof}
		The estimate on $\meastree(B_{{\tree(\branch)}}(\codetree(\mathbf{s}),r))$ is a consequence of Proposition \ref{p:dsr}(b,c) along with Lemma \ref{l:volume}. The fact that $\meastree$ is a doubling measure  follows from \eqref{e:measultraformula} and $\sup_{k \in \mathbb{Z}} \branch(k)<\infty$. It is well-known that every complete metric space that carries a doubling measure is proper. This follows by observing that every closed ball in a doubling metric space is totally bounded; cf.~\cite[Exercise 10.17, Definition 10.15, p. 82]{Hei}. This total boundedness of metric balls also implies that the space is separable.	
	\end{proof}
	
	\begin{definition} \label{d:rtree}
		A metric space $(X,d)$ is a $\mathbb{R}$-tree if it satisfies the following conditions.
		\begin{enumerate}
			\item (0-hyperbolic) For any $x,y,z,t \in X$, we have
			\[
			d(x,y)+d(z,t) \le \max \left( d(x,z)+d(y,t), d(x,t) +d(y,z) \right).
			\]
			\item (geodesic) For any pair of points $x,y \in X$, there exists a continuous map (also called a \emph{curve})  $\gamma:[a,b] \to (X,d)$ such that $\gamma(a)=x,\gamma(b)=y$ and its \emph{length} $L(\gamma)=d(x,y)$, where the \textbf{length of a curve} $\gamma:[a,b] \to X$ is defined by
			$$L(\gamma):= \sup \Biggl\{ \sum_{i=0}^k \gamma(t_i): k \in \mathbb{N}, a=t_0< t_1 \ldots < t_k=b \Biggr\}.$$
			Such a curve $\gamma$ is said to be a \emph{geodesic} between $x$ and $y$.
		\end{enumerate}
	\end{definition}	
	We recall the definition of quasiconvexity which is a more flexible version of the geodesic property.
	\begin{definition}\label{d:qgeodesic}
		A metric space $(X,d)$ is said to be \textbf{quasiconvex} if there exists a constant $C_q \in [1,\infty)$ such that every pair of points $x,y \in X$ can be connected by a curve $\gamma$ whose length $L(\gamma)$ satisfies $L(\gamma) \le C_q d(x,y)$. 
		A curve $\gamma:[a,b] \to X$ is said to be a \textbf{geodesic} from $x$ to $y$, if $\gamma(a)=x, \gamma(b)=y$ and $L(\gamma)$ equals in the infimum of the length of any curve from $x$ to $y$. A curve $\gamma:[a,b] \to X$ is said to be \textbf{parameterized by arc length} if for any $s,t \in [a,b]$ such that $s<t$, we have $t-s= L\left( \restr{\gamma}{[s,t]}\right)$.
	\end{definition}

	We show that $(\tree(\branch),\metrictree)$ is a $\mathbb{R}$-tree. 
	
	\begin{prop} \label{p:rtree}
		The space $(\tree(\branch),\metrictree)$ is a $\mathbb{R}$-tree. 
	\end{prop}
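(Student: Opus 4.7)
The plan is to verify the two defining properties of an $\mathbb{R}$-tree from Definition \ref{d:rtree} separately: the four-point $0$-hyperbolic condition and the existence of geodesics. The first is essentially combinatorial; the second requires constructing continuous arcs that fill in the discrete structure of $T_\infty$.

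For $0$-hyperbolicity, I would observe that each $T_{m,n}$ is a graph-theoretic tree by Lemma \ref{l:incntree}(c), so its graph metric $\mathsf{d}_{m,n}$ satisfies the four-point condition. Any four points in $T_\infty$ have finitely supported representatives and therefore lie in the isometric image $I_{m,n}^\infty(V(T_{m,n}))$ of a sufficiently large $T_{m,n}$ by Lemma \ref{l:inftree}(a), so they inherit the condition. Both sides of the four-point inequality are continuous in their arguments, so density of $T_\infty$ in $\tree(\branch)$ upgrades the condition to all of $\tree(\branch)$.

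For the geodesic property, the main step is to associate to each edge $e = \{[0\mathbf{s}]_{m,n},[1\mathbf{s}]_{m,n}\} \in E(T_{m,n})$ an isometric embedding $\gamma_e:[0,2^m]\to\tree(\branch)$ whose endpoints are $I_{m,n}^\infty([0\mathbf{s}]_{m,n})$ and $I_{m,n}^\infty([1\mathbf{s}]_{m,n})$. I would build this by iterated refinement: the unique shortest path in $T_{m-k,n}$ between the images of these two endpoints consists of exactly $2^k$ edges, as one sees inductively using Lemma \ref{l:decmtree} together with the bipartite-insertion description of $T_{m-k,n}$ from $T_{m-k+1,n}$ given before Example \ref{x:csst}. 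Let $p_{j,k} \in T_\infty$, $j \in \llbracket 0,2^k \rrbracket$, denote the consecutive vertices on this refined path, viewed in $T_\infty$. A careful check yields the compatibility $p_{2j,k+1} = p_{j,k}$ and the distance formula $\metrictree(p_{j,k},p_{j',k}) = |j-j'|\,2^{m-k}$. Setting $\gamma_e(j\cdot 2^{m-k}) := p_{j,k}$ on the dyadic rationals of $[0,2^m]$ defines an isometry of dense subsets, so by completeness of $\tree(\branch)$ it extends uniquely to an isometric embedding of $[0,2^m]$, i.e., a geodesic.

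Geodesics between arbitrary $x,y \in T_\infty$ are then obtained by choosing $m \le n$ with $x,y \in I_{m,n}^\infty(V(T_{m,n}))$, taking the unique shortest path $v_0,\ldots,v_N$ between them in $T_{m,n}$, and concatenating the segments $\gamma_{\{v_{i-1},v_i\}}$; the resulting curve has length $N\cdot 2^m = \metrictree(x,y)$. For general $x,y \in \tree(\branch)$, approximate by $x_k,y_k \in T_\infty$ with $x_k \to x$, $y_k \to y$, reparametrize the corresponding geodesics to a common interval $[0,1]$, and apply Arzel\`a--Ascoli: these maps are uniformly Lipschitz and eventually take values in a single closed ball, which is compact by Corollary \ref{c:meastree}. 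A subsequential uniform limit is a curve from $x$ to $y$ of length at most $\metrictree(x,y)$, hence a geodesic. The hard part will be the bookkeeping for the compatibility $p_{2j,k+1} = p_{j,k}$ and the distance formula across refinement levels, which requires carefully tracking which vertices of $T_{m-k,n}$ remain on the shortest path between the endpoints after each bipartite insertion; this is where the tree property of the finite $T_{m-k,n}$, and hence uniqueness of shortest paths, is essential.
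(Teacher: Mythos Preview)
Your proposal is correct and follows essentially the same route as the paper. The paper's proof of $0$-hyperbolicity is identical to yours; for the geodesic property the paper invokes the midpoint property of $T_\infty$ (each edge of $T_{m,n}$ has its midpoint in $T_{m-1,n}$) to conclude that the completion is geodesic, whereas you iterate this refinement explicitly to build an isometric embedding of $[0,2^m]$ via dyadic points---the same idea unrolled---and both arguments finish with Arzel\`a--Ascoli and Corollary \ref{c:meastree} for general endpoints.
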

	
	\begin{proof}
		Given any four points in $T_\infty$, we can view them as contained in $V(T_{m,n})$ for some $m \le n$ with $m,n \in \mathbb{Z}$ using the isometry in Lemma \ref{l:inftree}. By
		Lemma \ref{l:incntree}(c) and the fact that every tree is a $0$-hyperbolic metric space (cf. \cite[Exemple 5(i), Chapitre 2]{GH}), we have that  $(T_\infty, \mathsf{d}_{T_\infty})$ is $0$-hyperbolic from which it follows that its completion $(\tree(\branch),\metrictree)$  is $0$-hyperbolic.
		
		Each pair of points in $x,y \in (T_\infty,\mathsf{d}_{T_\infty})$, there is a unique midpoint $z\in x,y$ such that $d(z,x)=d(z,y)=d(x,y)/2$. This follows from viewing $x,y$ as points in $V(T_{m,n})$ for some $m \le n$ with $m,n \in \mathbb{Z}$ and then since midpoint of each  edge in $T_{m,n}$ belongs to $(V(T_{m-1,n}),2^{m-1}\mathsf{d}_{m-1,n})$, we have the midpoint property. This implies that every pair of points in  $T_\infty$ has a geodesic connecting them in the completion $(\tree(\branch),\metrictree)$.  For arbitrary pair of points in $(\tree(\branch),\metrictree)$ we can take limit of geodesics with endpoints in $T_\infty$. Such a limit exists as a geodesic due to Arzela-Ascoli theorem (see \cite[Theorem 2.5.14]{BBI}) which can be applied by using the fact that every closed ball is compact (see Corollary \ref{c:meastree}).
	\end{proof}
	\subsection{A Laakso-type space} \label{ss:laak}
	
	Let us fix the base point $p_\tree \in \tree(\branch)$ defined by  \begin{equation} \label{e:bptree}
		p_\tree=\codetree(p_{\ultra(\branch)})
	\end{equation} where $p_{\ultra(\branch)}:\mathbb{Z} \to \mathbb{Z}$ is the function in $\ultra(\branch)$ that is identically zero and   $\codetree$ is as given in Proposition \ref{p:dsr}(a).
	Define `level-$n$ wormholes' $\worm_n \subset \tree(\branch)$ for each $n \in \mathbb{Z}$ as  
	\begin{align} \label{e:defwormhole}
		\worm_n &= \{ \codetree([\mathbf{s}]_\infty) \vert \mathbf{s}  \in S_0(T_{-\infty,\infty}), \mathbf{s}(n)=1, \mathbf{s}(k)=0 \mbox{ for all $k < n$} \} \nonumber \\
		&= \{  \mathbf{t} \in \tree(\branch) \vert 2^n \metrictree(\mathbf{t}, p_\tree )  \in \mathbb{N} \setminus 2 \mathbb{N} \},
	\end{align}
	where $\mathbb{N} \setminus 2 \mathbb{N}$ is the set of positive odd integers. The equivalence between the two definitions in \eqref{e:defwormhole} follows from Lemma \ref{l:incntree}(a).
	
	For a reader interested in Gaussian bounds (that is $\Psi(r)=r^2$), it suffices to consider the following tree that corresponds to $\branch \equiv 2$.
	\begin{example} \label{x:tb}
		If the branching function is identically two, then the metric measure space $(\tree(\branch),\metrictree,\meastree)$ can be identified with $[0,\infty)$ equipped with Euclidean metric and the   Lebesgue measure. Under this identification, the base point $p_\tree$ is $0$ and the level-$n$ wormholes $\worm_n$ is the set $\{(2k-1) 2^n: k \in \mathbb{N}\}$.
	\end{example}
	
	We record some basic properties of the family of sets $(\worm_n)_{n \in \mathbb{Z}}$ in the following lemma.
	\begin{lemma} \label{l:worm}
		\begin{enumerate}[(i)]
			\item For any $\mathbf{t} \in \tree(\branch), n \in \mathbb{Z}$, we have $\dist(\mathbf{t},\worm_n)\le 2^n$.
			\item For any $n,m \in \mathbb{Z}, \mathbf{s} \in \worm_m, \mathbf{t} \in \worm_n$ with $\mathbf{s}\neq \mathbf{t}$, we have $\metrictree(\mathbf{s},\mathbf{t}) \ge 2^{\min(m,n)}$.
		\end{enumerate}
	\end{lemma}
	\begin{proof}
		\begin{enumerate}[(i)]
			\item If  $\mathbf{t} = [\mathbf{s}]_\infty \in( T_\infty,\mathsf{d}_{T_\infty})$, then the conclusion follows from Lemma \ref{l:inftree}(b) by choosing the point $\codetree([\wt{\mathbf{s}}]_\infty) \in \worm_n$, where 
			\[
			\restr{\wt{\mathbf{s}}}{\llbracket n+1,\infty \rrbracket}=		\restr{\mathbf{s}}{\llbracket n+1,\infty \rrbracket}, \quad \wt{\mathbf{s}}(n)=1, \quad \wt{\mathbf{s}}(k)=0 \mbox{ for all $k<n$.}
			\]
			Hence this property extends  to the completion $(\tree(\branch),\metrictree)$.
			
			\item Without loss of generality, let $m \le n$. By Lemma \ref{l:inftree}(i), there exists $k \in \mathbb{Z}$ with $k>n$ such that there is an isometric copy of $\mathbf{s}$ and $\mathbf{t}$ in $V(T_{m,k},2^m\mathsf{d}_{m,k})$. Since the distance between any two distinct vertices with respect to $\mathsf{d}_{m,k}$ is at least one, we obtain the desired conclusion.
		\end{enumerate}
	\end{proof}
	
	Next, we define the Laakso-type space as a quotient space of the product space $\mathcal{P}(\glue,\branch):=\ultra(\glue) \times \tree(\branch)$ with respect to an equivalence relation.
	To this end, we define equivalence relation $R_\laak$ on $\mathcal{P}(\glue,\branch)$ as follows. Observe that $(\mathbf{s},\mathbf{t}),(\tilde{\mathbf{s}},\tilde{\mathbf{t}}) \in \mathcal{P}(\glue,\branch)$ are related in $R_{\laak}$ if and only if either $(\mathbf{s},\mathbf{t})=(\tilde{\mathbf{s}},\tilde{\mathbf{t}})$ or if $\mathbf{t}=\tilde{\mathbf{t}} \in \worm_n$ for some $n \in \mathbb{Z}$ and $\restr{\mathbf{s}}{\mathbb{Z} \setminus \{n\}} = \restr{\tilde{\mathbf{s}}}{\mathbb{Z} \setminus \{n\}}$. 
	It is clear that $R_{\laak}$ is an equivalence relation in $\mathcal{P}(\glue,\branch)$  and that each equivalence class contains at most $M_\glue:= \sup_{k \in \mathbb{Z}} \glue(k)$ elements. 
	For $\mathbf{p} \in \mathcal{P}$, we denote the  equivalence class with respect to $R_{\laak}$ containing $\mathbf{p}$ as $[\mathbf{p}]_{\laak}$.
	The collection of equivalence classes $\mathcal{P}(\glue,\branch)/ R_{\laak}$ is called the \textbf{Laakso-type space} $\mathcal{L}(\glue,\branch)$. Let $\quolaak: \mathcal{P}(\glue,\branch) \to  \laak(\glue,\branch)$ denote the cannonical surjective map
	given by
	\begin{equation} \label{e:defquolaak}
		\quolaak(\mathbf{p})= [\mathbf{p}]_\laak, \quad \mbox{for all $\mathbf{p} \in \mathcal{P}(\glue,\branch)$.}
	\end{equation}
	Since $\left((\mathbf{u},\mathbf{s}), (\mathbf{v},\mathbf{t}) \right) \in R_\laak$ implies $\mathbf{s}=\mathbf{t}$, there is a well-defined projection map
	$\projtree: \laak(\glue,\branch) \to \tree(\branch)$ defined as $\projtree([(\mathbf{u},\mathbf{t})]_\laak)=\mathbf{t}$.

	Our immediate goal is construct suitable metrics and measures on the Laakso-type space $\mathcal{L}(\glue,\branch)$ and the product space $\mathcal{P}(\glue,\branch)$. On the space  $\mathcal{P}(\glue,\branch)$, we  define the metric 
	\begin{equation} \label{e:defmetprod}
		\mathsf{d}_{\mathcal{P}(\glue,\branch)}\left( (\mathbf{s},\mathbf{t}),(\tilde{\mathbf{s}},\tilde{\mathbf{t}}) \right)= \max\left(\metricultra(\mathbf{s}, \tilde{\mathbf{s}}),\metrictree(\mathbf{t}, \tilde{\mathbf{t}}) \right),
	\end{equation}
	and the product measure
	\begin{equation} \label{e:defmeasprod}
		\mathsf{m}_{\mathcal{P}(\glue,\branch)}:=	\mathsf{m}_{\ultra(\glue)} \times 	\mathsf{m}_{\mathcal{\tree}(\branch)}.
	\end{equation}
	There is a natural choice of metric on the quotient space $\laak(\glue,\branch)$  that we recall now  \cite[Definition 3.1.12]{BBI}.
	To this end, we define a function $\metlaak: \laak(\glue,\branch) \times \laak(\glue,\branch) \to [0,\infty)$ as 
	\begin{equation} \label{e:defmetlaak}
		\metlaak([\mathbf{p}]_{\laak},[\mathbf{q}]_{\laak})= \inf  \sum_{i=1}^k \mathsf{d}_{\mathcal{P}(\glue,\branch)}(\mathbf{p}_i, \mathbf{q}_i), 
	\end{equation}
	where the infimum is over all $k \in \mathbb{N}, \mathbf{p}_i, \mathbf{q}_i \in \mathcal{P}(\glue,\branch)$ for all $i=1,\ldots,k$ such that  $\mathbf{p}_1=\mathbf{p}, \mathbf{q}_k=\mathbf{q}, (\mathbf{p}_{i+1}, \mathbf{q}_{i}) \in R_\laak \mbox{ for all $i=1,\ldots,k-1$}$. By triangle inequality and the estimate $\mathsf{d}_{\mathcal{P}(\glue,\branch)}( (\mathbf{u},\mathbf{s}) , (\mathbf{v}, \mathbf{t})  ) \ge \mathsf{d}_{\ultra(\branch)}(\mathbf{s},\mathbf{t})$, we have 
	\begin{equation} \label{e:projmet}
		\metlaak(\mathbf{p},\mathbf{q}) \ge \metrictree \left(\projtree(\mathbf{p}),\projtree(\mathbf{q})\right), \quad \mbox{for all $\mathbf{p},\mathbf{q} \in \laak(\glue,\branch)$.}
	\end{equation}
	We denote open and closed balls with respect to $\metlaak$ by $B_{\laak(\glue,\branch)}(\cdot,\cdot), \overline{B}_{\laak(\glue,\branch)}(\cdot,\cdot)$ respectively.
	We record an elementary lemma for future use.
	\begin{lemma} \label{l:wormlaak}
		Suppose $[(\mathbf{u},\mathbf{s})]_{\laak}, [(\mathbf{v},\mathbf{t})]_{\laak} \in \laak(\glue,\branch)$, $\metlaak\left([(\mathbf{u},\mathbf{s})]_{\laak}, [(\mathbf{v},\mathbf{t})]_{\laak}\right)<r$. Let $n \in \mathbb{Z}$ be such that $r<2^n$ and $B_{\tree(\branch)}(\mathbf{s},r) \cap \worm_n=\emptyset$. Then, we have $\mathbf{u}(n)=\mathbf{v}(n)$.
	\end{lemma}
	\begin{proof}
		Using the definition \eqref{e:defmetlaak}, pick $k \in \mathbb{N}, (\mathbf{u}_i,\mathbf{s}_i), (\mathbf{v}_i,\mathbf{t}_i)$ for $i=1,\ldots,k$ such that $(\mathbf{u}_1,\mathbf{s}_1)=(\mathbf{u},\mathbf{s})$, $(\mathbf{v}_k,\mathbf{t}_k)=(\mathbf{v},\mathbf{t})$ and $ \left((\mathbf{u}_{j+1},\mathbf{s}_{j+1}), (\mathbf{v}_j,\mathbf{t}_j)\right) \in R_\laak$ for all $j=1,\ldots,k-1$ such that 
		\[
		\sum_{i=1}^k \mathsf{d}_{\mathcal{P}(\glue,\branch)} \left( (\mathbf{u}_i,\mathbf{s}_i), (\mathbf{v}_i,\mathbf{t}_i)\right) < r.
		\]
		Hence by \eqref{e:projmet} and $B_{\tree(\branch)}(\mathbf{s},r) \cap \worm_n=\emptyset$, we have
		$\mathbf{t}_i \notin \worm_n, \mathbf{s}_i \notin \worm_n$ for all $i=1,\ldots,k$ and hence $\mathbf{u}_{i+1}(n)=\mathbf{v}_i(n)$ for all $i=1,\ldots,k-1$. Furthermore since  $\mathsf{d}_{\mathcal{P}(\glue,\branch)} \left( (\mathbf{u}_i,\mathbf{s}_i), (\mathbf{v}_i,\mathbf{t}_i)\right)<r<2^n$ for all $i=1,\ldots,k$, we have $\mathbf{u}_i(n)=\mathbf{v}_i(n)$ for all $i=1,\ldots,k$. Combining the above two statements, we obtain the desired conclusion.
	\end{proof}
	
	For a general quotient space of a metric space, the quotient `metric' as defined in \eqref{e:defmetlaak} is only a semi-metric and need not be a metric \cite[Example 3.1.10 and Exercise 3.1.11]{BBI}. Even if it were a metric, it need not induce the quotient topology as there are examples of quotient spaces  that are not first countable (and hence not metrizable) \cite[Example 3.1.17]{BBI}.  
	We rule out such a pathological behavior and collect  some important properties of 	$\metlaak$ in the proposition below.
	\begin{prop} \label{p:laak}
		\begin{enumerate}[(a)]
			\item The function 	$\metlaak$ defined in \eqref{e:defmetlaak} is a metric on $\laak(\glue,\branch)$ and 
			the map $\quolaak:(\mathcal{P}(\glue,\branch), \mathsf{d}_{\mathcal{P}(\glue,\branch) } )\to (\laak(\glue,\branch),	\metlaak)$ is a continuous, surjective, David-Semmes regular map. In particular, $ (\laak(\glue,\branch),	\metlaak)$ is a complete, locally compact metric space.
			\item The metric $\metlaak$ induces the quotient topology on $\laak(\glue,\branch)$ corresponding to the equivalence relation $R_\laak$.
			\item The metric $\metlaak$ is quasiconvex. In particular, $(\laak(\glue,\branch),\metlaak)$ satisfies the chain condition.
		\end{enumerate}
	\end{prop}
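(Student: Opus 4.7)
For part (a), the symmetry and triangle inequality for $\metlaak$ are immediate from the chain-based definition \eqref{e:defmetlaak}; the nontrivial point is positive-definiteness. First I would observe that the wormhole sets $\{\worm_n\}_{n\in\bZ}$ are pairwise disjoint (if $\mathbf{t}\in \worm_m\cap \worm_n$ with $m<n$, then $2^n\metrictree(\mathbf{t},p_\tree)=2^{n-m}\cdot 2^m\metrictree(\mathbf{t},p_\tree)$ would be even, contradicting the definition \eqref{e:defwormhole}) and each $\worm_n$ is closed by Lemma \ref{l:worm}(ii). If $\metlaak([(\mathbf{u},\mathbf{s})]_\laak,[(\mathbf{v},\mathbf{t})]_\laak)=0$, then \eqref{e:projmet} gives $\mathbf{s}=\mathbf{t}$, and for every $n$ with $\mathbf{s}\notin \worm_n$ one applies Lemma \ref{l:wormlaak} at a small radius to conclude $\mathbf{u}(n)=\mathbf{v}(n)$; the ultra coordinates then differ at most at the unique $n_0$ (if any) with $\mathbf{s}\in \worm_{n_0}$, so the two equivalence classes coincide. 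The bound $\metlaak\le \mathsf{d}_\prspace$ from taking $k=1$ in \eqref{e:defmetlaak} makes $\quolaak$ continuous, and surjectivity is immediate. For David-Semmes regularity, the key combinatorial fact is that for $r\in(2^{n-1},2^n]$ and any $\mathbf{t}\in \tree(\branch)$, the set $\{k\ge n+1:\worm_k\cap B_{\tree(\branch)}(\mathbf{t},2r)\ne \emptyset\}$ has cardinality at most two: two distinct such wormhole points would be $\ge 2^{n+1}$ apart by Lemma \ref{l:worm}(ii) while lying in a ball of diameter $\le 4r\le 2^{n+2}$. Applying Lemma \ref{l:wormlaak} at every $k\ge n+1$ then forces any $(\mathbf{u},\mathbf{s})\in \quolaak^{-1}(B_\laak(y,r))$ to agree with $\mathbf{v}$ at all $k\ge n+1$ except at most two, producing at most $M_\glue^2$ possible ultra profiles and covering the preimage by $M_\glue^2$ balls in $\prspace$ of radius $\lesssim r$. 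Completeness and local compactness then transfer from $\prspace$ via $\quolaak$, using that fibers have at most $M_\glue$ elements.

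For part (b), continuity of $\quolaak$ already gives the metric topology $\subseteq$ quotient topology. For the reverse, let $U\subset \laak$ be quotient-open, $[\mathbf{p}]\in U$, and enumerate the at-most-$M_\glue$ representatives $\mathbf{p}^{(1)},\ldots,\mathbf{p}^{(l)}$ of $[\mathbf{p}]$. All of them lie in the open set $\quolaak^{-1}(U)$, so $B_\prspace(\mathbf{p}^{(j)},r_0)\subset \quolaak^{-1}(U)$ for a common $r_0>0$. For $r$ sufficiently small (quantified by $r_0$ and the David-Semmes constant $N$ from (a)), the covering established in (a) centers at representatives of $[\mathbf{p}]$ and of equivalence classes within $\prspace$-distance $\lesssim r$ of them; combining this with the fact that $\quolaak^{-1}(U)$ is saturated under $R_\laak$ yields $\quolaak^{-1}(B_\laak([\mathbf{p}],r))\subset \quolaak^{-1}(U)$, whence $B_\laak([\mathbf{p}],r)\subset U$.

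For part (c), fix $[\mathbf{p}],[\mathbf{q}]\in \laak$ and $\epsilon>0$, and choose a chain $(\mathbf{p}_i,\mathbf{q}_i)_{i=1}^k$ realizing $\sum_i\mathsf{d}_\prspace(\mathbf{p}_i,\mathbf{q}_i)<\metlaak([\mathbf{p}],[\mathbf{q}])+\epsilon$ in \eqref{e:defmetlaak}. For each pair with $\mathbf{p}_i=(\mathbf{u}_i,\mathbf{s}_i)$, $\mathbf{q}_i=(\mathbf{u}_i',\mathbf{s}_i')$, I would construct a curve in $\laak$ from $[\mathbf{p}_i]$ to $[\mathbf{q}_i]$ by concatenating the tree geodesic from $\mathbf{s}_i$ to $\mathbf{s}_i'$ (available by Proposition \ref{p:rtree}) with short side-trips at each level $n$ where $\mathbf{u}_i(n)\ne \mathbf{u}_i'(n)$ to a nearest $\worm_n$-point, toggling the $n$-th ultra coordinate. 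Each such toggle costs at most $2\cdot 2^n$ by Lemma \ref{l:worm}(i); a geometric-series estimate over the (finitely many) disagreement levels, all bounded above by $\log_2\metricultra(\mathbf{u}_i,\mathbf{u}_i')$, caps the total side-trip cost by a constant times $\metricultra(\mathbf{u}_i,\mathbf{u}_i')\le \mathsf{d}_\prspace(\mathbf{p}_i,\mathbf{q}_i)$. Concatenating across the chain yields a curve in $\laak$ from $[\mathbf{p}]$ to $[\mathbf{q}]$ of length $\lesssim \metlaak([\mathbf{p}],[\mathbf{q}])+\epsilon$, establishing quasiconvexity; the chain condition follows by subdividing this curve.

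The main obstacle is the David-Semmes bound in (a), on which both the topological identification in (b) and the lifting arguments for completeness and local compactness rest. Its proof hinges on the scale-wise disjointness of the wormhole sets $\worm_n$ built into \eqref{e:defwormhole} and sharpened by Lemma \ref{l:worm}(ii), which together prevent the equivalence relation $R_\laak$ from compounding distance distortions across scales.
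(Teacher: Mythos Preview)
Your approach is essentially correct and closely parallels the paper's, with two minor issues worth noting.

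In part~(a), your ``at most two levels'' bound is actually correct but your stated justification (pairwise separation $\ge 2^{n+1}$ versus diameter $\le 2^{n+2}$) does not by itself limit the number of such points or levels. The paper instead invokes metric doubling of $(\tree(\branch),\metrictree)$ (Corollary~\ref{c:meastree}) together with Lemma~\ref{l:worm}(ii) to obtain a uniform bound $L$ on the number of wormhole points in the ball, which is all that is needed. A direct proof of your bound of two would require the additional observation that wormhole points at levels $\ge n+1$ have distances to $p_\tree$ that are distinct multiples of $2^{n+1}$, and an open interval of length $4r\le 2^{n+2}$ contains at most two such multiples.

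In part~(c), the disagreement levels between $\mathbf{u}_i$ and $\mathbf{u}_i'$ need not be finitely many: they can extend to $n\to-\infty$. Your geometric series still converges and the side-trips can be parameterized into a continuous curve, so the argument can be salvaged with some care. The paper takes a cleaner route: it first reduces to the case $\mathbf{s}=\mathbf{t}$ by a single tree geodesic, then exploits the key fact (from \eqref{e:defwormhole}) that the tree geodesic from a $\worm_n$-point to a nearby $\worm_{n+1}$-point meets $\worm_k$ for \emph{every} $k\le n+1$. Thus a single three-segment path on the tree (to $\worm_n$, then to $\worm_{n+1}$, then back) suffices to toggle all required ultra coordinates at once, avoiding infinitely many side-trips and yielding the explicit constant $8$ in \eqref{e:8qc}, which is reused later (Lemma~\ref{l:pencil}, Proposition~\ref{p:pi}).
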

	
	\begin{proof}
		\begin{enumerate}[(a)]
			\item 
			First, we show that $\metlaak$ is a metric. The non-negativity, triangle inequality and symmetry   immediately follows from the definition. 
			If $[\mathbf{p}]_\laak=[\mathbf{q}]_\laak$, then $\metlaak([\mathbf{p}]_\laak,[\mathbf{q}]_\laak)=0$ by choosing $\mathbf{p}_1=\mathbf{q}_1=\mathbf{p}$ and $\mathbf{p}_2=\mathbf{q}_2=\mathbf{q}$ in \eqref{e:defmetlaak}. 
			
			Next, we show the non-degeneracy property; that is, $\metlaak ([(\mathbf{u},\mathbf{s})]_\laak,[(\mathbf{v},\mathbf{t})]_\laak )=0$ implies that $[(\mathbf{u},\mathbf{s})]_\laak=[(\mathbf{v},\mathbf{t})]_\laak$. By \eqref{e:projmet}, we have $\metrictree(\mathbf{s},\mathbf{t})=0$ and hence $\mathbf{s}=\mathbf{t}$. It therefore suffices to consider the case $\mathbf{u} \neq \mathbf{v}$.
			If $\mathbf{u}(k) \neq \mathbf{v} (k)$ for some $k \in \mathbb{Z}$, then by Lemma \ref{l:wormlaak}, we have that $\mathbf{s} \in \worm_k$. Since $\worm_n$ for different values of $n$ are pairwise disjoint, by the previous line, we have  $\restr{\mathbf{u}}{\mathbb{Z} \setminus \{k\}}=\restr{\mathbf{v}}{\mathbb{Z} \setminus \{k\}}$. This implies the non-degeneracy property of the metric. This completes the proof that $\metlaak$ is a metric.
			
			By taking $k=1$ in \eqref{e:defmetlaak}, we obtain that $\quolaak$ is $1$-Lipschitz. Therefore $\quolaak$ is continuous. It is evident that $\quolaak$ is surjective.
			
			It remains to show that $\quolaak$ is David-Semmes regular. 
			First note by \eqref{e:projmet} and Lemma \ref{l:wormlaak} that for all $[(\mathbf{v},\mathbf{t})]_{R_{\laak}} \in \laak(\glue,\branch),r>0$ we have
			\begin{align}\label{e:ballbox}
				\lefteqn{\quolaak^{-1}\left(B_{{\laak(\glue,\branch)}}([(\mathbf{v},\mathbf{t})]_{R_{\laak}},r) \right)}  \nonumber \\
				&\subset \Biggl\{  (\mathbf{u},\mathbf{s}) \in \mathcal{P}(\glue,\branch)\Biggm|
				\begin{minipage}{250pt}
					$\metrictree(\mathbf{s},\mathbf{t})<r$, $\mathbf{u}\in \ultra(\glue)$  satisfies $\mathbf{u}(k)=\mathbf{v}(k)$  for all $k \in \mathbb{Z}$ such that $\worm_k \cap B_{\tree(\branch)}(\mathbf{t},r)=\emptyset$
				\end{minipage}
				\Biggr\}.
			\end{align}
			Let $[(\mathbf{v},\mathbf{t})]_{R_{\laak}} \in \laak(\glue,\branch),r>0$ be arbitrary. Let $n \in \mathbb{Z}$ be the unique integer such that $2^{n} < r \le 2^{n+1}$. Since $(\tree(\branch),\metrictree)$ is a doubling metric space, by Lemma \ref{l:worm}(ii) and \cite[Exercise 10.17]{Hei}, there exists $L \in \mathbb{N}$ (depending only on the metric doubling constant) such that 
			\[
			\# \left( \bigcup_{\stackrel{k \ge n,} {k \in \mathbb{Z}}} \worm_k \cap B_{\tree(\branch)}(\mathbf{t},r) \right) \le L.
			\]

			Since $(\mathcal{P}(\glue,\branch), \mathsf{d}_{\mathcal{P}(\glue,\branch) } )$ is a complete, doubling metric space, all closed and bounded subsets of $\mathcal{P}(\glue,\branch)$ are compact. Since $\quolaak$ is a David-Semmes regular map, for every closed and bounded set $C \subset \laak(\glue,\branch)$, the set $\quolaak^{-1}(C)$ is compact in $\mathcal{P}(\glue,\branch)$. Hence by the continuity of $\quolaak$, every closed and bounded subset of $\laak(\glue,\branch)$ is compact. This proves the local compactness and completeness of $(\laak(\glue,\branch),\metlaak)$.

			\item 
			In order to show that $\metlaak$ induces the quotient topology, it suffices to show that a subset $U$ of $(\laak(\glue,\branch),	\metlaak)$ is open if and only if $\quolaak^{-1}(U)$ is open in $(\mathcal{P}(\glue,\branch), \mathsf{d}_{\mathcal{P}(\glue,\branch) } )$. By (a), since $\quolaak$ is continuous, it suffices to show the if part.
			
			To this end, let us assume that $U \subset \laak(\glue,\branch)$ is such that $\quolaak^{-1}(U)$ is open in $(\mathcal{P}(\glue,\branch), \mathsf{d}_{\mathcal{P}(\glue,\branch) } )$ is open. Let $[(\mathbf{u},\mathbf{s})]_{R_{\laak}} \in U$. 
			Since the equivalence class $[(\mathbf{u},\mathbf{s})]_{R_{\laak}}$ has at most $\sup_{k \in \mathbb{Z}} \glue(k)$ elements and $\quolaak^{-1}(U)$ is open, there exists $r_1>0$ such that 
			\begin{equation} \label{e:plaak1}
				\bigcup_{ (\mathbf{v},\mathbf{s}) \in [(\mathbf{u},\mathbf{s})]_{R_{\laak}}} B_{\mathcal{P}(\glue,\branch)}((\mathbf{v},\mathbf{s}),r_1) \subset \quolaak^{-1}(U).
			\end{equation}
			Pick $m \le l$ with $m,l \in \mathbb{Z}$ small enough such that $2^l < r_1$ and such that $2^m>\dist(\cup_{\stackrel{k \ge l,}{ k \in \mathbb{Z}}} \worm_k, \mathbf{s})>0$. Using \eqref{e:projmet} and Lemma \ref{l:wormlaak}, we have
			\begin{align} \label{e:plaak2}
				\quolaak^{-1}\left( B_{\laak(\glue,\branch)}([(\mathbf{u},\mathbf{s})]_{R_{\laak}}, 2^m)\right) &\subset \bigcup_{ (\mathbf{v},\mathbf{s}) \in [(\mathbf{u},\mathbf{s})]_{R_{\laak}}} \left( B_{\ultra(\glue)}(\mathbf{v},2^l)\times B_{\tree(\branch)}(\mathbf{s},2^m) \right) \nonumber\\
				&\subset 	\bigcup_{ (\mathbf{v},\mathbf{s}) \in [(\mathbf{u},\mathbf{s})]_{R_{\laak}}} B_{\mathcal{P}(\glue,\branch)}((\mathbf{v},\mathbf{s}),2^l).
			\end{align}
			By \eqref{e:plaak1}, \eqref{e:plaak2} and $2^l<r_1$, we obtain that $B_{\laak(\glue,\branch)}([(\mathbf{u},\mathbf{s})]_{R_{\laak}}, 2^m) \subset U$. Therefore $[(\mathbf{u},\mathbf{s})]_{R_{\laak}}$ is an interior point of $U$. Since $[(\mathbf{u},\mathbf{s})]_{R_{\laak}}$ is an arbitrary point in $U$, we conclude that $U$ is an open subset of $\laak(\glue,\branch)$.
			\item It suffices to show that there exist $C>0$ such that for all pairs of points $[(\mathbf{u},\mathbf{s})]_{\laak}$ and $[(\mathbf{v},\mathbf{t})]_{\laak}$ in  $\laak(\glue,\branch)$, there exists a curve in $\gamma:[0,1] \to \laak(\branch,\glue)$ between $[(\mathbf{u},\mathbf{s})]_{\laak}$ and  $[(\mathbf{v},\mathbf{t})]_{\laak}$ such that $L(\gamma) \le C \mathsf{d}_{\mathcal{P}(\glue,\branch)}\left( (\mathbf{u},\mathbf{s}), (\mathbf{v},\mathbf{t}) \right)$. This is because, we can choose a finite sequence of points using \eqref{e:defmetlaak} to approximate $\metlaak$ within a factor of $(1+\delta)$ for any $\delta>0$. 
			Then each consecutive points can be connected using a curve as above to obtain the  quasiconvexity condition for  $\metlaak$ with constant $C_q=2C$.
			
			Let $[(\mathbf{u},\mathbf{s})]_{\laak},[(\mathbf{v},\mathbf{t})]_{\laak} \in \laak(\glue,\branch)$ be arbitrary distinct points. Let $n \in \mathbb{Z}$ be defined by the condition 
			\[
			2^n< \mathsf{d}_{\mathcal{P}(\glue,\branch)} \left((\mathbf{u},\mathbf{s}),(\mathbf{v},\mathbf{t})\right)\le 2^{n+1}.
			\] Since the tree $(\tree(\branch),\metrictree)$ is a geodesic space, we can connect $[(\mathbf{u},\mathbf{s})]_{\laak}$ and $[(\mathbf{u},\mathbf{t})]_{\laak}$ by a curve of length at most $\metrictree(\mathbf{s},\mathbf{t}) \le \mathsf{d}_{\mathcal{P}(\glue,\branch)} \left((\mathbf{u},\mathbf{s}),(\mathbf{v},\mathbf{t})\right)$. Therefore by concatenating the geodesic between $[(\mathbf{u},\mathbf{s})]_{\laak}$ and $[(\mathbf{u},\mathbf{t})]_{\laak}$ and then by using a curve with controlled length between $[(\mathbf{u},\mathbf{t})]_{\laak}$  and $[(\mathbf{v},\mathbf{t})]_{\laak}$, it suffices to consider the case $\mathbf{s}=\mathbf{t}$. 
			
			By Lemma \ref{l:worm}, there exist $\mathbf{t}_n \in \worm_n,\mathbf{t}_{n+1} \in \worm_{n+1}$  such that 
			\begin{equation} \label{e:qg1}
				\metrictree(\mathbf{s},\mathbf{t}_n) \le 2^n, \quad \metrictree(\mathbf{t},\mathbf{t}_{n+1}) \le 2^{n+1}, \quad 2^n \le \metrictree(\mathbf{t}_n,\mathbf{t}_{n+1}) \le 5 \cdot 2^{n}
			\end{equation}
			Let $\gamma_\tree:[0,1] \to \tree(\branch)$ be the curve obtained by concatenating the geodesic between $\mathbf{s}$ and $\mathbf{t}_n$, followed by the geodesic from $\mathbf{t}_n$ to $\mathbf{t}_{n+1}$ and finally the geodesic from $\mathbf{t}_{n+1}$ back to $\mathbf{t}$. By \eqref{e:qg1}, the curve $\gamma$ satisfies $2^n \le L(\gamma) \le  2^{n+3}$.
			By \eqref{e:defwormhole}, it is easy to see that the image of the geodesic from $\mathbf{t}_n$ to $\mathbf{t}_{n+1}$ intersects $\worm_k$ for all $k \in \llbracket -\infty, n+1 \rrbracket$. Therefore there exists $s_k \in [0,1]$ such that
			\[
			\gamma(s_k) \in \worm_k, \quad \mbox{ and } s_k=\inf \{s \in [0,1] : \gamma(s) \in \worm_k\} \quad \mbox{for all $k \le n+1$.}
			\]
			Since  $\metricultra(\mathbf{u},\mathbf{v}) \le \mathsf{d}_{\mathcal{P}(\glue,\branch)} \left((\mathbf{u},\mathbf{s}),(\mathbf{v},\mathbf{t})\right) \le 2^{n+1}$, we have $\mathbf{u}(k)=\mathbf{v}(k)$ for all $k \in \llbracket n+2, \infty \rrbracket$. Therefore, we obtain a curve $\gamma:[0,1] \to \laak(\glue,\branch)$ defined by $\gamma(t)= [(\mathbf{u}(t),\gamma_\tree(t))]_{\laak}$, where $\mathbf{u}(t)$ is given by
			\[
			\left(\mathbf{u}(t) \right)(k)= \begin{cases}
				\mathbf{u}(k)=\mathbf{v}(k) & \mbox{if $k \in \llbracket n+2,\infty \rrbracket, t \in [0,1]$,}\\
				\mathbf{u}(k) & \mbox{if $k \in \llbracket -\infty,n+1 \rrbracket, t \in [0,s_k]$,}\\
				\mathbf{v}(k) & \mbox{if $k \in \llbracket -\infty, n+2 \rrbracket, t \in (s_k,1]$.}
			\end{cases}
			\]
			It is evident that $\gamma$ is a continuous function and the length of the curve satisfies
			\[
			L(\gamma)= L(\gamma_\tree)\le 2^{n+3} \le 8  \mathsf{d}_{\mathcal{P}(\glue,\branch)} \left((\mathbf{u},\mathbf{s}),(\mathbf{v},\mathbf{t})\right).
			\]
			This completes the proof of quasiconvexity.
			More precisely, we have that 
			\begin{align} \label{e:8qc}
				\MoveEqLeft{\inf \{L(\gamma) : \mbox{$\gamma$ is a curve joining $[(\mathbf{u},\mathbf{s})]_{R_{\laak}}$ and $[(\mathbf{v}, \mathbf{t})]_{\laak}$}\}} \nonumber \\ 
				&\le 8 \metlaak\left([(\mathbf{u},\mathbf{s})]_{R_{\laak}},[(\mathbf{v}, \mathbf{t})]_{\laak}\right)
			\end{align}
			for all $[(\mathbf{u},\mathbf{s})]_{R_{\laak}},[(\mathbf{v}, \mathbf{t})]_{\laak} \in \laak(\glue,\branch)$. 
			
			By part (a) and \cite[Theorem 2.5.23]{BBI}, we conclude that there is a geodesic joining $\gamma$ joining any pair of points $[(\mathbf{u},\mathbf{s})]_{R_{\laak}},[(\mathbf{v}, \mathbf{t})]_{\laak} \in \laak(\glue,\branch)$ such that  
			$L(\gamma) \le 8 \metlaak\left([(\mathbf{u},\mathbf{s})]_{R_{\laak}},[(\mathbf{v}, \mathbf{t})]_{\laak}\right)$. 
			It is easy to see that by choosing points along such a curve, the space $(\laak(\glue,\branch),\metlaak)$ satisfies the chain condition.
		\end{enumerate}
	\end{proof}
	
	The inclusion \eqref{e:ballbox} admits a partial converse that we record for future use. The following lemma is a useful comparison between balls and `cylinders' in Laakso-type space.
	\begin{lemma} \label{l:ballbox}
		For all $[(\mathbf{v},\mathbf{t})]_{\laak} \in \laak(\glue,\branch)$ and $r>0$, we have 
		\begin{align} \label{e:bbox1}
			\lefteqn{\quolaak^{-1}\left(B_{\laak(\glue,\branch)} ([(\mathbf{v},\mathbf{t})]_{\laak},r)\right)} \nonumber \\
			&\subset \Biggl\{ (\mathbf{u},\mathbf{s}) \in \mathcal{P}(\glue,\branch)\Bigm|\begin{minipage}{260pt}
				$\metrictree(\mathbf{s},\mathbf{t})<r, \mathbf{u} \in \ultra(\glue)$ satisfies $\mathbf{u}(k)=\mathbf{v}(k)$ for all $k \in \mathbb{Z}$ such that $\worm_k \cap B_{\tree(\branch)}(\mathbf{t},r) =\emptyset$.
			\end{minipage} \Biggr\},
		\end{align}
		and 
		\begin{align} \label{e:bbox2}
			\lefteqn{\quolaak^{-1}\left(B_{\laak(\glue,\branch)} ([(\mathbf{v},\mathbf{t})]_{\laak},r)\right)} \nonumber \\
			&\supset \Biggl\{ (\mathbf{u},\mathbf{s}) \in \mathcal{P}(\glue,\branch)\Bigm|\begin{minipage}{260pt}
				$\metrictree(\mathbf{s},\mathbf{t})<r/4, \mathbf{u} \in \ultra(\glue)$ satisfies $\mathbf{u}(k)=\mathbf{v}(k)$ for all $k \in \mathbb{Z}$ such that $\worm_k \cap B_{\tree(\branch)}(\mathbf{t},r/4) =\emptyset$.
			\end{minipage} \Biggr\}.
		\end{align}
	\end{lemma}
	\begin{proof}
		Since the estimate \eqref{e:bbox1} was shown in \eqref{e:ballbox}, it remains to show \eqref{e:bbox2}. 
		
		To this end, let us assume that $(\mathbf{u},\mathbf{s}),   (\mathbf{v},\mathbf{t}) \in \mathcal{P}(\glue,\branch)$ and $r>0$ satisfies $\metrictree(\mathbf{s},\mathbf{t})<r/4$, and   $\mathbf{u}(k)=\mathbf{v}(k)$ for all $k \in \mathbb{Z}$ such that $\worm_k \cap B_{\tree(\branch)}(\mathbf{t},r/4) =\emptyset$. Let $n \in \mathbb{Z}$ be such that $2^{n-1} \le r/4 < 2^n$. Then by Lemma \ref{l:worm}(b), we have 
		\begin{equation} \label{e:bbx1}
			\# \left(\bigcup_{\substack{k \ge n+1,\\ k \in \mathbb{Z}}} \worm_k \cap B_{\tree(\branch)}(\mathbf{t},r/4) \right) \in \{0,1\}.
		\end{equation}
		If the above cardinality is zero, then 
		\begin{equation}
			\label{e:bbx2}
			\metlaak\left( [(\mathbf{u},\mathbf{s})]_{\laak}, [(\mathbf{v},\mathbf{t})]_{\laak}\right) \le 2^n \le \frac{r}{2}.
		\end{equation}
		If the cardinality in \eqref{e:bbx1} is one, then let $l \in \llbracket n+1, \infty \rrbracket, \wt{\mathbf{t}} \in \tree(\branch)$ be such that $\worm_l \cap B_{\tree(\branch)}(\mathbf{t},r/4)  = \{ \wt{\mathbf{t}}\}$. If $\mathbf{u}(l)=\mathbf{v}(l)$, then the estimate \eqref{e:bbx2} holds. If $\mathbf{u}(l)\neq \mathbf{v}(l)$, let us define $\wt{\mathbf{u}} \in \ultra(\glue)$ by $\wt{\mathbf{u}}(l)=\mathbf{v}(l), \restr{\mathbf{u}}{\mathbb{Z} \setminus \{l\}}=\restr{\wt{\mathbf{u}}}{\mathbb{Z} \setminus \{l\}}$. In this case, we have 
		\begin{align} \label{e:bbx3}
			\metlaak\left( [(\mathbf{u},\mathbf{s})]_{\laak}, [(\mathbf{v},\mathbf{t})]_{\laak}\right)  &\le \metprod\left( (\mathbf{u},\mathbf{s}),(\mathbf{u},\wt{\mathbf{t}}) \right)+ \metprod\left( (\wt{\mathbf{u}},\wt{\mathbf{t}}),(\mathbf{v},\mathbf{t}) \right) \nonumber \\
			& < \frac{2r}{4} + 2^n < r.
		\end{align}
		Combining \eqref{e:bbx2} and \eqref{e:bbx3}, we obtain \eqref{e:bbox2}.
	\end{proof}
	Next, we define a measure on the Laakso-type space.
	Let us denote the pushforward of $\measprod$ on $\laak(\glue,\branch)$ under the quotient map $\quolaak$ as 
	\begin{equation} \label{e:measlaak}
		\measlaak:= \quolaak_*(\measprod).
	\end{equation}
	
	Similar to Corollary \ref{c:meastree}, the following two-sided volume estimate on $\measlaak$ is a consequence of Proposition \ref{p:laak}(a) and Lemma \ref{l:volume}. The proof of the following result follows from the same argument as Corollary \ref{c:meastree} (recall \eqref{e:defvol}).
	\begin{cor} \label{c:measlaak}
		There exists $C \in [1,\infty)$ such that for all $[(\mathbf{u},\mathbf{t})]_\laak \in \laak(\glue,\branch)$ and $r>0$, we have 
		\begin{equation*}
			C^{-1} 	V_\glue(r)	V_\branch(r) \le	\measlaak\left(B_{{\laak(\glue,\branch)}}([(\mathbf{u},\mathbf{t})]_\laak,r)\right) \le C 	V_\glue(r)	V_\branch(r).
		\end{equation*}
		In particular, $\mathsf{m}_{\ultra(\branch)}$ is a doubling measure on $(\ultra(\branch),\mathsf{d}_{\ultra(\branch)})$ and $(\ultra(\branch),\mathsf{d}_{\ultra(\branch)})$  is a proper, separable, metric space.
	\end{cor}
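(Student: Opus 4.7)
The plan is to mirror the proof of Corollary \ref{c:meastree}: first establish two-sided ball-volume estimates for $\measprod$ on the product space $(\mathcal{P}(\glue,\branch),\metprod)$, then push them down to $(\laak(\glue,\branch),\metlaak)$ through the quotient map $\quolaak$ using the David-Semmes regularity from Proposition \ref{p:laak}(a) together with Lemma \ref{l:volume}.

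For the first step, I note that because $\metprod$ is the sup-type metric defined in \eqref{e:defmetprod}, balls in $\mathcal{P}(\glue,\branch)$ factor as products:
\[
B_{\mathcal{P}(\glue,\branch)}((\mathbf{u},\mathbf{t}),r) = B_{\ultra(\glue)}(\mathbf{u},r) \times B_{\tree(\branch)}(\mathbf{t},r).
\]
Since $\measprod$ is the product measure \eqref{e:defmeasprod}, I can combine the exact formula \eqref{e:measultraformula} (which, by \eqref{e:defvol} applied to $\glue$, gives $\mathsf{m}_{\ultra(\glue)}(B_{\ultra(\glue)}(\mathbf{u},r))=V_\glue(r)$) with Corollary \ref{c:meastree} to obtain a constant $C' \in [1,\infty)$ such that
\[
(C')^{-1} V_\glue(r)V_\branch(r) \le \measprod\bigl(B_{\mathcal{P}(\glue,\branch)}((\mathbf{u},\mathbf{t}),r)\bigr) \le C' V_\glue(r)V_\branch(r)
\]
for all $(\mathbf{u},\mathbf{t}) \in \mathcal{P}(\glue,\branch)$ and $r>0$.

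For the second step, I observe that the function $r \mapsto V_\glue(r)V_\branch(r)$ is non-decreasing and doubling: $V_\glue$ is doubling because $\sup_{\bZ}\glue < \infty$ (directly from \eqref{e:measultraformula}), and $V_\branch$ is doubling by Corollary \ref{c:meastree}; the product of two non-negative non-decreasing doubling functions is again doubling. By Proposition \ref{p:laak}(a), the map $\quolaak:(\mathcal{P}(\glue,\branch),\metprod) \to (\laak(\glue,\branch),\metlaak)$ is surjective and David-Semmes regular. Applying Lemma \ref{l:volume} with $V=V_\glue V_\branch$, the pushforward $\measlaak = \quolaak_*(\measprod)$ satisfies the desired two-sided estimate with some constant $C \in [1,\infty)$.

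The concluding assertions then follow immediately: the displayed estimate together with the doubling of $V_\glue V_\branch$ shows $\measlaak$ is doubling on $(\laak(\glue,\branch),\metlaak)$, and since this space is complete by Proposition \ref{p:laak}(a) and carries a doubling measure, closed balls are totally bounded (as in \cite[Exercise 10.17]{Hei}), hence compact, giving properness and separability exactly as in the closing lines of Corollary \ref{c:meastree}. I do not foresee any real obstacle: the only point requiring a moment of care is the product-ball identity for the sup metric, which is immediate from the definition \eqref{e:defmetprod}.
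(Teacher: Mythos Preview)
Your proposal is correct and follows essentially the same approach as the paper: the paper simply states that the result follows from Proposition \ref{p:laak}(a) and Lemma \ref{l:volume} by the same argument as Corollary \ref{c:meastree}, and you have spelled out exactly those details (product-ball factorization for $\metprod$, volume estimate on $\mathcal{P}(\glue,\branch)$, doubling of $V_\glue V_\branch$, then pushforward via the David--Semmes regular map $\quolaak$).
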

	
	The geometry of geodesics in $\laak(\glue,\branch)$ plays an important role in the proof of Poincar\'e inequality for the diffusion we construct on the Laakso-type space. The next two lemmas provide some quantitative control on geodesics.
	The following lemma shows an upper bound on the length of geodesics that do not intersect $(\projtree)^{-1}(\worm_k)$ in $\laak(\glue,\branch)$.
	\begin{lemma} \label{l:wormdense}
		There exists $C >0$ such that for any $k \in \mathbb{Z}$, and any geodesic $\gamma$ in $(\laak(\glue,\branch),\metlaak)$ that satisfies $\projtree(\gamma) \cap \worm_k = \emptyset$ has length $l(\gamma) <  2^{k+6}$.
	\end{lemma}
	\begin{proof}

		Let $\gamma:[0,L(\gamma)] \to \laak(\glue,\branch)$ be a geodesic parameterized by arc length.
		
		We claim that 
		\begin{equation} \label{e:wd1}
			\diam(\projtree(\gamma([0,L(\gamma)] )), \metrictree) <2^{k+1}.
		\end{equation}
		Assume to the contrary that \eqref{e:wd1} fails. Then there exists $\mathbf{t}_1,\mathbf{t}_2 \in \tree(\branch)$ such that $\metrictree(\mathbf{t}_1,\mathbf{t}_2) \ge 2^{k+1}$.  Let $\wt{\gamma}: [0,\metrictree(\mathbf{t}_1,\mathbf{t}_2)] \to \tree(\branch)$ denote the unit speed geodesic in $\tree(\branch)$ from $\mathbf{t}_1$ to $\mathbf{t}_2$.
		Since $\tree(\branch)$ is a metric tree, we have $\wt{\gamma}([0,\metrictree(\mathbf{t}_1,\mathbf{t}_2)]) \subset \projtree(\gamma([0,L(\gamma)] ))$.
		By the $0$-hyperbolicity, there exists $s_0 \in [0,\metrictree(\mathbf{t}_1,\mathbf{t}_2)]$ such that $t \mapsto \metrictree(p_\tree, \wt{\gamma}(t))$ is strictly monotone affine function with slope $\pm 1$ in both $[0,s_0]$ and $[s_0,\metrictree(\mathbf{t}_1,\mathbf{t}_2)]$. Therefore, by \eqref{e:defwormhole} we have $\wt{\gamma}([0,\metrictree(\mathbf{t}_1,\mathbf{t}_2)])  \cap \worm_k \neq \emptyset$, a contradiction as it implies $\projtree(\gamma) \cap \worm_k \neq \emptyset$. This proves \eqref{e:wd1}.
		
		By \eqref{e:bbox2} in Lemma \ref{l:ballbox}, we have $\gamma([0,L(\gamma)]) \subset B_{\laak(\glue,\branch)} (\gamma(0),2^{k+3})$ and hence $\metlaak(\gamma(0),\gamma(L(\gamma)))< 2^{k+3}$. Hence by \eqref{e:8qc}, we have $L(\gamma) < 2^{k+6}$
		
	\end{proof}
	
	We would like to quantify that there is a sufficiently rich family of geodesics connecting any pair of points in $\laak(\glue,\branch)$ that have controlled overlap with one another.   To this end, we need the notion of a \emph{length measure} $\lenlaak$ on the Laakso-type space and on the $\mathbf{R}$-tree $\tree(\branch)$.  For $t_1,t_2 \in \tree(\branch)$, let $(t_1,t_2) \subset  \tree(\branch)$ denote the image of the geodesic between $t_1$ and $t_2$ with the endpoints removed. Similarly, we denote the image of the geodesic connecting $t_1$ and $t_2$ as $[t_1,t_2]$. As explained in \cite[p. 3117]{AEW}, there exists a unique $\sigma$-finite Borel measure $\lentree$ on $\left(\tree(\branch),\metrictree\right)$ such that 
	\begin{equation} \label{e:deflenmeas}
		\lentree((t_1,t_2))= \metrictree(t_1,t_2), \quad \mbox{for all pairs of distinct points $t_1,t_2\in\tree(\branch)$.}
	\end{equation}
	In other words, the length measure behaves like the $1$-dimensional Lebesgue measure on (image of) geodesics.
	The    \emph{length measure} $\lenlaak$ on the Laakso-type space is defined by
	\begin{equation} \label{e:deflenlaak}
		\lenlaak := \quolaak_* \left( \measultra \times \lentree \right).
	\end{equation}
	\begin{definition}
		Let $(X,\mathsf{d})$ be a metric space, $x,y \in X, D>0$ be such that $D$ is the length of a geodesic (and hence every geodesic) between $x$ and $y$.
		A \emph{probability measure on the space of geodesics between $x,y \in X$ parametrized by arc length} is a probability measure $\mathbb{P}^{x,y}$ on $\contfunc([0,D],X)$ such that $\gamma \in \contfunc([0,D],X)$ is a geodesic  between $x,y \in X$ parametrized by arc length for  $\mathbb{P}^{x,y}$-almost every $\gamma$.
	\end{definition}
	We construct a probability measure on the space of geodesics  that     are `spread out' in the lemma below. Similar probability measures  are called \textbf{pencil of curves} in \cite[Definition 2.3]{Laa}. A nice pedagogical introduction to this concept and its role in proving Poincar\'e inequality can be found in \cite[p. 30]{Hei}. The following lemma plays a key role in  obtaining the Poincar\'e inequality and thereby sub-Gaussian heat kernel estimates.
	
	\begin{lemma} \label{l:pencil}
		There exists   $C_1  \in (1,\infty)$ such that for any pair of points $x,y \in \laak(\glue,\branch)$ there is a probability measure $\mathbb{P}^{x,y}$ on the space of geodesics between $x$ and $y$ parameterized by arc length such that  for any measurable function $h: \laak(\glue,\branch) \to [0,\infty)$ we have
		\begin{align} \label{e:pencil}
			\lefteqn{\int	\int_0^{L(\gamma)} h(\gamma(t)) \, dt \,d \mathbb{P}^{x,y}(\gamma)} \nonumber \\
			&\le C_1 \int_{B_{\laak(\glue,\branch)}(x,8 \metlaak(x,y))} \frac{h(z)}{V_{\glue}(\metlaak(x,z) \wedge \metlaak(y,z))} \,d \lambda_{\laak(\glue,\branch)}(z).
		\end{align}
	\end{lemma}
	
	\begin{proof}
		For each $k \in \mathbb{Z}$, we pick independent random variables $U_k$ with law $\mathsf{m}_{\glue(k)}$, where $\mathsf{m}_{\glue(k)}$ denotes the uniform probability measure on the finite set $\llbracket 0, \glue(k)-1\rrbracket$ as defined in \textsection \ref{ss:ultra}. Let us pick a geodesic $\mu:[0,D] \to \laak(\glue,\branch)$ from $x$ to $y$. Note that $D \le 8\metlaak(x,y)$ by \eqref{e:8qc} and $D \ge\metlaak(x,y)$ by triangle inequality. Denote by $\mu^{\tree}= \projtree \circ \mu: [0,D] \to \tree(\branch)$. Then by Lemma \ref{l:wormdense}, we have 
		\begin{equation*}
			\mu^\tree([0,D/2)) \cap \worm_k \neq \emptyset, \quad \mbox{and } 	\mu^\tree((D/2,D]) \cap \worm_k \neq \emptyset,
		\end{equation*} 
		for all $k \in \mathbb{Z}$ such that $2^{k+8} \le \metlaak(x,y)$. For any such 
		$k \in \mathbb{Z}$ such that  $2^{k+8} \le \metlaak(x,y)$, we have by Lemma \ref{l:wormdense}
		\begin{equation*}
			f_k:= \inf \{s \in [0,D] : \mu^\tree (s) \in \worm_k \} < D/2, \quad l_k:= \sup \{s \in [0,D] : \mu^\tree (s) \in \worm_k \} > D/2,
		\end{equation*}
		and 
		\begin{equation} \label{e:penc1}
			\max(f_k,D-l_k ) \le 2^{k+6}.
		\end{equation}
		Let $\mu^\ultra:[0,D] \to \ultra(\glue)$ be such that $\mu(s)= [(\mu^\ultra(s),\mu^\tree(s))]_{\laak}$ for all $s \in [0,D]$. By modifying $\mu^\ultra$ if necessary (by making at most one just through the wormholes $\worm_k$ for each $k \in \mathbb{Z}$), we may and will assume that the set of discontinuities of $s \mapsto \mu^\ultra(s)$ is a subset of 
		\[
		\Biggl\{ \inf \{s \in [0,D]: \mu^\tree(s) \in \worm_k \} \Biggm| k \in \mathbb{Z} \mbox{ is such that } \mu^\tree([0,D]) \cap \worm_k \neq \emptyset \Biggr\}.
		\]
		In particular, the set of discontinuities of $s \mapsto \mu^\ultra(s)$ is at most countable.

		We define $\gamma^\ultra:[0,D] \to \ultra(\glue)$ and $\gamma:[0,D] \to \laak(\glue,\branch)$ as 
		\begin{equation} \label{e:penc2}
			(\gamma^{\ultra}(s))(k)= \begin{cases}
				\mu^{\ultra}(s)(k) & \mbox{if $k \in \mathbb{Z}$ such that $2^k> \metlaak(x,y)$,}\\
				\mu^{\ultra}(s)(k) & \mbox{if $k \in \mathbb{Z}$ such that $2^k\le \metlaak(x,y)$, $s \notin [f_k,l_k]$},\\
				U_k & \mbox{if $k \in \mathbb{Z}$ such that $2^k\le \metlaak(x,y)$, $s \in [f_k,l_k]$},
			\end{cases}
		\end{equation}
		and 
		\begin{equation} \label{e:penc3}
			\gamma(s)= [(\gamma^\ultra(s),\mu^\tree(s))]_{\laak}, \quad \mbox{for all $s \in [0,D]$.}
		\end{equation}
		It is easy to check that $\gamma$ is  a geodesic from $x$ to $y$ parameterized by arc length for all possible values of $(U_k)_{k \in \mathbb{Z}}$ with $U_k \in \llbracket 0 , \glue(k)-1 \rrbracket$ for all $k \in \mathbb{Z}$ and that $\gamma^\ultra$ can have at most countably many discontinuities and has countably many possible values for any fixed realization of $(U_k)_{k \in \mathbb{Z}}$. Hence this random curve $\gamma:[0,D] \to \laak(\glue,\branch)$ defines a probability measure on the space of geodesics between $x$ and $y$ parameterized by arc length. 
		For all $s \in [0,D]$ define $R(s) = \{ k \in \mathbb{Z} : 2^k\le \metlaak(x,y), s \in [f_k,l_k]\}$ as the set of integers $k$ for which $(\gamma^\ultra(s))(k)$ is \emph{random}. By \eqref{e:penc1}, Lemma \ref{l:worm}(b), there exists $K_0 \in \mathbb{N}$   depending only on $\sup_{k \in \mathbb{Z}} \branch(k), \sup_{k \in \mathbb{Z}} \glue(k)$ such that
		\begin{equation} \label{e:penc4}
			\mathbb{Z} \cap (-\infty, \log_2(s \wedge (D-s))-6]	\subset R(s), \quad   \# \left(\left(\mathbb{Z}  \setminus (-\infty, \log_2(s \wedge (D-s))-6] \right) \cap R(s) \right) \le K_0.
		\end{equation}
		The law of $\gamma^\ultra(s)$ is characterized by the following property:$\{(\gamma^\ultra(s))(k): k \in \mathbb{Z}\}$ are independent with the  distribution of $(\gamma^\ultra(s))(k)$ being  $\mathsf{m}_{\glue(k)}$ if $k \in R(s)$ and the  distribution of $(\gamma^\ultra(s))(k)$ given by Dirac mass at $\mu^\ultra(s)$ if $k \in \mathbb{Z} \setminus R(s)$. Let $\mathsf{m}_{\gamma^\ultra(s)}$  denote the law of  $\gamma^\ultra(s)$. By \eqref{e:penc4}, there exists $C_2>1$ depending only on $\sup_{k \in \mathbb{Z}} \branch(k), \sup_{k \in \mathbb{Z}} \glue(k)$ such that $\mathsf{m}_{\gamma^\ultra(s)} \ll \measultra$ and
		\begin{equation} \label{e:penc5}
			\frac{d\mathsf{m}_{\gamma^\ultra(s)}}{d\measultra} \le \frac{C_2}{V_{\glue}(s \wedge (D-s))}.
		\end{equation} 
		Since $\gamma$ is a geodesic, we have using Proposition \ref{p:laak}(c) and \eqref{e:8qc} that
		\begin{equation} \label{e:penc7}
			s/8 \le	\metlaak(x,\gamma(s)) \le s, \quad 	(D-s)/8 \le	\metlaak(y,\gamma(s)) \le D-s.
		\end{equation}
		On each interval $I \subset [0,D]$ where $\gamma^\ultra$ is constant, say $\mathbf{u} \in \ultra(\glue)$ the integral $\int_I h(\gamma(t)) \,dt$ is $\int h\circ I_\mathbf{u}(z) \one_{\mu^\tree(I)}(z) \,d\lentree(z)$.
		This along with \eqref{e:penc5}, \eqref{e:penc7} and Fubini's theorem implies \eqref{e:pencil}.
	\end{proof}

	\subsection{Uniform domains in   Laakso-type space} \label{ss:uniform}
	The results in \textsection \ref{ss:uniform} will be used to construct a growing family of graphs with prescribed sub-Gaussian heat kernel estimates. A reader who is only interested in the proof of Theorem \ref{t:mainsuf} can skip this part.
	
	We recall the notion of uniform domain in a metric space. 
	\begin{definition} \label{d:uniform}
		Let $(X,d)$ be a metric space and $c_U \in (0,1), C_U \in [1,\infty)$.
		A connected, non-empty, proper open set $U \subsetneq X$ is said to be a \textbf{$(c_U,C_U)$-uniform domain} if   for every pair of points $x,y \in U$, there exists a curve $\gamma$ in $U$ from $x$ to $y$ such that its diameter $\diam(\gamma) \le C_U d(x,y)$ and for all $z \in \gamma$, 
		\[
		\dist(z,U^c) \ge c_U \min \left(d(x,z), d(y,z) \right).
		\]
		Such a curve $\gamma$ is called a \emph{$(c_U,C_U)$-uniform curve}.
	\end{definition}
	The importance of uniform domains for us is that sub-Gaussian heat kernel estimates for diffusions on an ambient space are inherited by corresponding reflected diffusions on uniform domains \cite[Theorem 2.8]{Mur24}. This allows us to construct compact spaces with diffusions satisfying sub-Gaussian heat kernel bounds starting from diffusions on unbounded spaces.

	We show that certain balls in $\tree(\branch)$ and $\laak(\glue,\branch)$ are uniform domains (recall \eqref{e:bptree}).

	\begin{lemma} \label{l:uniftree}
		For any $k \in \mathbb{Z}$, the ball 
		$B_{\tree(\branch)}(p_\tree,2^k)$ is a $(1,1/5)$-uniform domain.
	\end{lemma}
	\begin{proof}
		Let us denote the zero function in $\ultra(\branch)$ by $p_{\ultra(\branch)}$. For $k \in \mathbb{Z}$, let $p_{\ultra(\branch)}^k \in \ultra(\branch)$ denote the function that is defined by
		\begin{equation} \label{e:unift1}
			p_{\ultra(\branch)}^k(l)= \begin{cases}
				1 & \mbox{if $l=k$,}\\
				0 & \mbox{if $l \neq k$.}
			\end{cases}
		\end{equation}
		Define $p_\tree^k \in \tree(\branch), k \in \mathbb{Z}$ as 
		\begin{equation} \label{e:unift2}
			p_\tree^k = \codetree(p_{\ultra(\branch)}^k).
		\end{equation}
		By Lemmas \ref{l:incntree} and \ref{l:inftree}, for all $k \in \mathbb{Z}$ we have 
		\begin{equation} \label{e:unift3}
			\partial B_{\tree(\branch)}(p_\tree,2^k)= \{p_\tree^k\},  \quad \dist(x, B_{\tree(\branch)}(p_\tree,2^k)^c)=d_{\tree(\branch)}(x,p_\tree^k),  \mbox{ for all $x \in B_{\tree(\branch)}(p_\tree,2^k)$.}
		\end{equation}

		Geodesics from points in $B_{\tree(\branch)}(p_\tree,2^k)$ to $p_\tree^k$ have to pass through some specific points. In order to describe them, we define for every $m<k, k, m \in \mathbb{Z}$ points $q_{\ultra(\branch)}^{m,k} \in \ultra(\branch)$ and $q_{\tree(\branch)}^{m,k} \in \tree(\branch)$ defined as
		\begin{equation} \label{e:defqtree}
			q_{\ultra(\branch)}^{m,k}(l) = \begin{cases}
				1 & \mbox{if $l \in \{m,k\}$,}\\
				0 & \mbox{if $l \notin \{m,k\}$,} 
			\end{cases}
			\quad q_{\tree(\branch)}^{m,k}= \codetree(q_{\ultra(\branch)}^{m,k}).
		\end{equation}
		Then by Lemmas \ref{l:incntree} and \ref{l:inftree}, $k,m \in \mathbb{Z}$ with $m<k$,   for all $x \in B_{\tree(\branch)}(p_\tree,2^k) \setminus B_{\tree(\branch)}(p_\tree^k,2^m), y \in B_{\tree(\branch)}(p_\tree^k,2^m) \cap   B_{\tree(\branch)}(p_\tree,2^k)$ the geodesic $\gamma$ from $x$ to $y$ satisfies
		\begin{equation} \label{e:unift4}
			q_{\tree(\branch)}^{m,k} \in \gamma.
		\end{equation}
		Since $\codetree$ is a $1$-Lipschitz map and $\overline{ B_{\tree(\branch)}(p_\tree,2^m)}= \codetree(\overline{B}_{\ultra(\branch)}(p_{\ultra(\branch)},2^m))$, we have by Lemma \ref{l:incntree}, we have 
		\begin{equation} \label{e:unift5}
			\diam(B_{\tree(\branch)}(p_\tree,2^m)) = 2^m, \quad \mbox{for all $m\in \mathbb{Z}$.}
		\end{equation}
		
		We claim that the geodesic curve between any pair of points in   $B_{\tree(\branch)}(p_\tree,2^k), k \in \mathbb{Z}$ is a $(1,1/5)$ uniform curve.
		Assume to the contrary that there exists $x,y \in B_{\tree(\branch)}(p_\tree,2^k)$ connected by a geodesic,  $z \in \gamma$ such that (by using \eqref{e:unift3})
		\begin{equation} \label{e:unift6}
			\metrictree(x,z) \wedge \metrictree(y,z) > 5 \metrictree(z,p_\tree^k).
		\end{equation}
		Then by triangle inequality and \eqref{e:unift6}, we have 
		\begin{equation*}
			\metrictree(x,p_\tree^k) \wedge \metrictree(y,p_\tree^k) > 4 \metrictree(z,p_\tree^k).
		\end{equation*}
		This along with \eqref{e:unift5} implies there exists $m \in \mathbb{Z}$ with $m<k$ such that 
		\begin{equation} \label{e:unift7}
			\metrictree(x,p_\tree^k) \wedge \metrictree(y,p_\tree^k) > 2^{m}>2^{m-1}> \metrictree(z,p_\tree^k).
		\end{equation}
		Therefore by \eqref{e:unift7}, \eqref{e:unift4}, the geodesics from $z$ to $x$ and  and from $z$ to $y$ contain $q_{\tree(\branch)}^{m,k}$ and $q_{\tree(\branch)}^{m-1,k}$. This is a contradiction since this would imply that the geodesic from $x$ to $y$ backtracks.
	\end{proof}
	
	We define a similar base point on $\laak(\glue,\branch)$. Let $p_\ultra \in \ultra(\glue)$ denote the identically zero function. Then we set the base point $p_\laak \in \laak(\glue,\branch)$ as 
	\begin{equation} \label{e:bplaak}
		p_\laak:=\quolaak((p_\ultra,p_\tree)),
	\end{equation}
	where $\quolaak$ is the quotient map from \eqref{e:defquolaak} and $p_\ultra \in \ultra(\glue)$ is the function that is identically zero.
	
	\begin{prop} \label{p:uniformlaak}
		There exist $C_U \in (1,\infty), c_U \in (0,1)$ such that any $k \in \mathbb{Z}$, the ball $B_{\laak(\glue.\branch)}(p_{\laak},2^k)$ is a $(C_U,c_U)$-uniform domain.
	\end{prop}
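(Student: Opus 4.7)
The plan is to lift the uniform geodesics in $\tree(\branch)$ provided by Lemma \ref{l:uniftree} to curves in $\laak(\glue,\branch)$ via the wormhole-switching construction from the proof of Proposition \ref{p:laak}(c). The projection $\projtree:\laak(\glue,\branch)\to\tree(\branch)$ is $1$-Lipschitz by \eqref{e:projmet}, and $\projtree(p_\laak)=p_\tree$ by \eqref{e:bplaak} and \eqref{e:bptree}, so $\projtree\bigl(B_{\laak(\glue,\branch)}(p_\laak,2^k)\bigr)\subset B_{\tree(\branch)}(p_\tree,2^k)$. This anchors the whole argument on the unique boundary point $p_\tree^k$ of $B_{\tree(\branch)}(p_\tree,2^k)$ identified in \eqref{e:unift3}.

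My main task is to establish a two-sided comparison of boundary distances: there exist constants $a_1,a_2\in(0,\infty)$ independent of $k$ such that, for every $z\in B_{\laak(\glue,\branch)}(p_\laak,2^k)$,
\[
a_1\,\metrictree\bigl(\projtree(z),p_\tree^k\bigr)\le\dist_{\metlaak}\bigl(z,B_{\laak(\glue,\branch)}(p_\laak,2^k)^{\compl}\bigr)\le a_2\,\metrictree\bigl(\projtree(z),p_\tree^k\bigr).
\]
The upper bound is immediate from the $1$-Lipschitz property of $\projtree$ and \eqref{e:unift3}. For the lower bound I would apply the inclusion \eqref{e:bbox2} of Lemma \ref{l:ballbox} centered at $z$ with radius proportional to $\metrictree(\projtree(z),p_\tree^k)$: any point $w$ at smaller $\metlaak$-distance admits a representative whose tree-coordinate remains inside $B_{\tree(\branch)}(p_\tree,2^k)$, and by Lemma \ref{l:worm}(ii) only boundedly many ultrametric-level adjustments are needed, each corresponding to a wormhole meeting the enlarged tree ball; choosing these adjustments to be compatible with $p_\laak$ places $w$ in $\quolaak^{-1}(B_{\laak(\glue,\branch)}(p_\laak,2^k))$.

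Given $x,y\in B_{\laak(\glue,\branch)}(p_\laak,2^k)$, I then use Lemma \ref{l:uniftree} to obtain a $(1,1/5)$-uniform geodesic $\gamma_\tree$ in $B_{\tree(\branch)}(p_\tree,2^k)$ from $\projtree(x)$ to $\projtree(y)$. Mimicking \eqref{e:penc1}--\eqref{e:penc3}, I lift $\gamma_\tree$ to a curve $\gamma$ in $\laak(\glue,\branch)$ from $x$ to $y$ with $\projtree\circ\gamma=\gamma_\tree$ and $L(\gamma)=L(\gamma_\tree)$, switching ultrametric coordinates only at wormhole points of $\gamma_\tree$. Since all such wormhole switches occur at tree points inside $B_{\tree(\branch)}(p_\tree,2^k)$, Step~1 shows that $\gamma$ lies in $B_{\laak(\glue,\branch)}(p_\laak,2^k)$; moreover, along $\gamma$ one has the identities $\metlaak(\gamma(s),\gamma(t))=|s-t|=\metrictree(\gamma_\tree(s),\gamma_\tree(t))$, since both sides are squeezed between the same arc-length and $\metrictree$ bounds. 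Combining this with the uniformity of $\gamma_\tree$ and Step~1 yields, for $\tilde z:=\projtree(z)$,
\[
\dist_{\metlaak}\bigl(z,B_{\laak(\glue,\branch)}(p_\laak,2^k)^{\compl}\bigr)\ge a_1\,\metrictree(\tilde z,p_\tree^k)\ge \tfrac{a_1}{5}\min\bigl(\metlaak(z,x),\metlaak(z,y)\bigr),
\]
and $\diam(\gamma)\le L(\gamma)=\metrictree(\projtree(x),\projtree(y))\le\metlaak(x,y)$ is immediate, giving the uniform-domain property with $C_U=1$ and $c_U=a_1/5$.

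The principal obstacle is the lower bound in Step~1: a point $z$ may leave $B_{\laak(\glue,\branch)}(p_\laak,2^k)$ either by a tree-coordinate displacement toward $p_\tree^k$ or by a wormhole traversal that changes an ultrametric coordinate at a level whose wormhole intersects $B_{\tree(\branch)}(p_\tree,2^k)$. Quantitatively separating these two mechanisms is exactly what the converse inclusion \eqref{e:bbox2} of Lemma \ref{l:ballbox} together with the bounded-overlap estimate in Lemma \ref{l:worm}(ii) provides, guaranteeing that only a uniformly bounded number of ultrametric levels are ever in play.
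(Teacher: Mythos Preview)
Your lifting step has a genuine gap. You take the tree geodesic $\gamma_\tree$ from $\projtree(x)$ to $\projtree(y)$ and claim you can lift it to a curve in $\laak(\glue,\branch)$ from $x$ to $y$ by switching ultrametric coordinates at wormhole points of $\gamma_\tree$. But $\gamma_\tree$ need not meet the wormholes required to pass from the ultrametric component of $x$ to that of $y$. Concretely, take $x=[(\mathbf{u},p_\tree)]_\laak$ and $y=[(\mathbf{v},p_\tree)]_\laak$ with $\mathbf{u}\neq\mathbf{v}$; since $p_\tree\notin\worm_m$ for any $m$, these are distinct points in the Laakso ball, yet $\gamma_\tree$ degenerates to the single point $p_\tree$ and hits no wormhole at all. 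More generally, whenever $\metrictree(\projtree(x),\projtree(y))$ is small relative to $\metricultra(\mathbf{u},\mathbf{v})$, the tree geodesic is too short to intersect $\worm_m$ at the levels $m$ where $\mathbf{u}(m)\neq\mathbf{v}(m)$, so no lift with $\projtree\circ\gamma=\gamma_\tree$ exists. Your appeal to \eqref{e:penc1}--\eqref{e:penc3} does not help: that construction begins with a geodesic \emph{in the Laakso space} and uses Lemma~\ref{l:wormdense} to guarantee wormhole hits, whereas you start from a tree geodesic whose length may be zero.

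This is precisely why the paper splits into two cases according to whether $2\,\metricultra(\mathbf{u},\mathbf{v})\le\metrictree(\mathbf{s},\mathbf{t})$. In Case~1 the tree geodesic is long enough to hit the needed wormholes and your argument essentially goes through; in Case~2 the paper instead routes the tree curve through auxiliary points ($p_\tree$ and $q_\tree^{k-1,k}$, together with nearby wormhole points $t_m,t_{m-1}$) to force intersections with $\worm_m$ for all relevant levels, at the cost of a longer curve whose length is still comparable to $\metlaak(x,y)$. Your proposal needs an analogous detour in this regime. As a side remark, your Step~1 can be sharpened: the paper shows via the product description \eqref{e:unlaak1} that $\dist_{\metlaak}(z,B_{\laak(\glue,\branch)}(p_\laak,2^k)^{\compl})=\metrictree(\projtree(z),p_\tree^k)$ exactly, not merely up to constants, which simplifies the subsequent estimates.
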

	\begin{proof}
		We start with the observation that 
		\begin{equation} \label{e:unlaak1}
			B_{\laak(\glue.\branch)}(p_{\laak},2^k)= \quolaak\left( B_{\ultra(\glue)}(p_{\ultra(\glue)},2^k) \times B_{\tree(\branch)}(p_\tree,2^k) \right) \quad \mbox{for all $k \in \mathbb{Z}$,}
		\end{equation}
		where $p_{\ultra(\glue)} \in \ultra(\glue)$ denotes the function that is identically zero. 
		We note that \eqref{e:unlaak1} follows from Lemma \ref{l:ballbox} along with the observation that $\worm_m \cap B_{\tree(\branch)}(p_\tree,2^k) \neq \emptyset$ if and only if $m \in \llbracket -\infty, k-1\rrbracket$.
		As a result (similar to \eqref{e:unift3}), we have 
		\begin{equation} \label{e:unlaak2}
			\partial B_{\laak(\glue,\branch)}(p_\laak,2^k)= \quolaak(B_{\ultra(\glue)}(p_{\ultra(\glue)},2^k) \times\{ p_\tree^k\}),
		\end{equation}
		and 
		\begin{equation}\label{e:unlaak3}
			\dist(x, B_{\laak(\glue,\branch)}(p_\laak,2^k)^c)=d_{\tree(\branch)}(\projtree(x),p_\tree^k),  \mbox{ for all $x \in B_{\laak(\glue,\branch)}(p_\laak,2^k)$.}
		\end{equation}
		
		Let $x= \quolaak((\mathbf{u},\mathbf{s})), y = \quolaak((\mathbf{v},\mathbf{t})) \in B_{\laak(\glue,\branch)}(p_\laak,2^k)$.  The proof now splits into two cases.
		
		\noindent \emph{Case 1}: $2 \mathsf{d}_{\ultra(\glue)}(\mathbf{u},\mathbf{v}) \le \metrictree(\mathbf{s},\mathbf{t})$. \\
		Then by \eqref{e:wd1}, the geodesic from $\mathbf{s}$ to $\mathbf{t}$ in $\tree(\branch)$ intersects all $\worm_m$ with $m \in \mathbb{Z}$ such that  $2^m \le \metrictree(\mathbf{u},\mathbf{v})$. This implies that there is a geodesic in $\laak(\glue,\branch)$ between $x$ and $y$ such that its projection under $\projtree$ is the geodesic between $\mathbf{s}$ and $\mathbf{t}$. Therefore by \eqref{e:unlaak3} and Lemma \ref{l:uniftree}, any geodesic from $x$ to $y$ is an $(1,1/5)$-uniform curve in $\laak(\glue,\branch)$. \\

		\noindent \emph{Case 2}: $2 \mathsf{d}_{\ultra(\glue)}(\mathbf{u},\mathbf{v}) > \metrictree(\mathbf{s},\mathbf{t})$. \\
		Let  $m \in \llbracket -\infty, k-1\rrbracket$ be such that $\mathsf{d}_{\ultra(\glue)}(\mathbf{u},\mathbf{v}) =2^m$. 
		We consider a path on the tree as follows. Let $t_m \in \worm_{m} \cap B_{\tree(\branch)}(p_\tree,2^k), t_{m-1} \in \worm_{m-1} \cap B_{\tree(\branch)}(p_\tree,2^k)$ such that 
		\[
		\metrictree(\mathbf{s},t_m)= \dist( \worm_{m} \cap B_{\tree(\branch)}(p_\tree,2^k) \le 2^m, \quad \mathbf{s}), \quad  \metrictree(t_{m-1},t_m)= 2^{m-1}.
		\]
		We consider the concatenation of the geodesic from $\mathbf{s}$ to $p_\tree$, followed by the geodesic from $p_\tree$ to $q_{\tree}^{k-1,k}$ and finally the geodesic from $q_{\tree}^{k-1,k}$ to $\mathbf{t}$. This curve intersects $\worm_m$ for all $m \in \llbracket -\infty, k\rrbracket$ 
		and hence lifts to a curve in $\laak(\glue,\branch)$ from $x$ to $y$. 
		The desired conclusion for this case follows from Lemma \ref{l:uniftree}, triangle inequality and the fact that $\dist(\worm_m,p_\tree^k) =2^m$ for all $m \in \llbracket -\infty, k-1\rrbracket$.
	\end{proof}

	\section{Construction of Dirichlet form} \label{s:df}
	
	The goal of this section is to construct a strongly local, regular, Dirichlet form on the metric measure space $(\laak(\glue,\branch), \metlaak,\measlaak)$ or equivalently an $\mathsf{m}_\laak(\glue,\branch)$-symmetric diffusion process on $\laak(\glue,\branch)$. 
	To this end, we first recall the construction of Dirichlet form on  the $\mathbb{R}$-tree $\tree(\branch)$ in \textsection \ref{ss:difftree}. We then use the Dirichlet form on the tree to construct a Dirichlet form on the Laakso-type space in \textsection \ref{ss:laakdiff}. Following the approach of Barlow and Evans \cite{BE}, this step is carried by viewing the Laakso-type space as a projective limit of simpler spaces where the diffusion is easier to define.
	
	Similar to the previous section, we assume that the branching and gluing functions $\mathbf{b},\mathbf{g}:\mathbb{Z}\to \mathbb{Z}$ satisfy \eqref{e:defbranch} and \eqref{e:defglue} throughout the section. 
	\subsection{Diffusion on tree} \label{ss:difftree}
	In this subsection, we define a $\meastree$-symmetric diffusion process on $\tree(\branch)$.
	If the function $\glue$ is identically one, then the Laakso-type space $\laak(\glue,\branch)$ is same as the tree $\tree(\branch)$ and hence the diffusion on the tree can be viewed as a special case of the diffusion on the Laakso-type space that we will construct in \textsection \ref{ss:laakdiff}. 
	
	There are   equivalent approaches to define a Dirichlet form on $\tree(\branch)$ due to \cite{Kig95,AEW} (cf. \cite[Remark 3.1]{AEW}). The approach in \cite{Kig95} follows a limit of rescaled discrete energies while \cite{AEW} presents a direct description of this limit. 
	We follow the presentation of \cite{AEW}. 
	

	Let $\contfunc(\tree(\branch))$ denote the space of continuous functions  on $\left(\tree(\branch),\metrictree\right)$. Let $\contfunc_0(\tree(\branch))$ denote the space of compactly supported functions in $\contfunc(\tree(\branch))$. Let $\contfunc_\infty(\tree(\branch))$ denote the space of continuous functions vanishing at infinity; that is
	\[
	\contfunc_\infty(\tree(\branch))=  \Biggl\{  f \in \contfunc((\tree(\branch))\Biggm|
	\begin{minipage}{250pt}
		for all $\varepsilon>0$, there exists a compact set $K\subset \tree(\branch)$ such that $\abs{f(x)}< \varepsilon$ for all $x \in \tree(\branch) \setminus K$
	\end{minipage}
	\Biggr\}.
	\]

	We recall the notion of locally absolutely continuous functions. 
	\begin{definition}
		We say that a function $f \in \contfunc(\tree(\branch))$ is \textbf{locally absolutely continuous} if for
		all $\varepsilon>0$ and for all  Borel subsets $S \subset \tree(\branch)$ with $\lentree(S)<\infty$, there exists a $\delta=\delta(\varepsilon,S)>0$ 
		such that if there are disjoint arcs $[x_i,y_i], i=1,\ldots,n$ with $\sum_{i=1}^n \metrictree(x_i,y_i)<\delta$,  then $\sum_{i=1}^n \abs{f(x_i)-f(y_i)}<\varepsilon$. We set $\mathcal{A}(\tree(\branch))$ be the set of locally absolutely continuous functions on $(\tree(\branch),\metrictree)$.
	\end{definition}
	
	In order to define a gradient of a locally absolutely continuous function, we need an orientation of $\tree(\branch)$. For that purpose, we use the base point $p_\tree$ to define a partial order with respect to  $p_\tree$  denoted by $\le_{p}$. This is defined by setting $x \le_p y$ for $x,y \in \tree(\branch)$ if $x \in [p_{\tree},y]$. 
	For $x,y \in \tree(\branch)$ we denote by $x \wedge y$, the (unique) maximal element $z$ such that $z \le_p x$ and $z \le_p y$. Equivalently, $x \wedge y$ is the unique element in the intersection $[x,y] \cap [p_\tree,x] \cap [p_\tree,y]$.
	The root provides an orientation-sensitive integration given by
	\[
	\int_{x}^y g(z)\,\lentree(dz) := -\int_{[x \wedge y,x]}  g(z)\,\lentree(dz)+\int_{[x \wedge y,y]} g(z)\,\lentree(dz) \quad \mbox{for all $x,y \in T$.}
	\]
	By the fundamental theorem of calculus (see \cite[Proposition 1.1]{AEW}), for any locally absolutely continuous function $f \in \mathcal{A}(\tree(\branch))$, there exists a function $g:\tree(\branch) \to \mathbb{R}$ unique up to sets of $\lentree$ measure zero such that $g \in L^1([x,y], \restr{\lentree}{[x,y]})$ for all $x,y \in \tree(\branch)$ and
	\begin{equation} \label{e:defgrad}
		f(y)-f(x)=\int_x^y g(z)\,\lentree(dz).
	\end{equation}
	We say that the function $g$ satisfying the above property as the gradient of $f$, denoted by $\grad f$. We define 
	\begin{equation}\label{e:resform}
		\wt{\mathcal{F}}_{\tree}= \Biggl\{f \in \mathcal{A}(\tree(\branch)): \int_{\tree(\branch)} \abs{\grad f}^2\,d\lentree < \infty \Biggr\}, \quad  \mathcal{E}^\tree(f,g) = \int_{\tree(\branch)} \grad f \grad g \, d\lentree
	\end{equation}
	for all $f,g \in 	\wt{\mathcal{F}}_{\tree}$.
	It is easy to see that $(\mathcal{E}^\tree,\wt{\mathcal{F}}_\tree)$ is a resistance form in the sense of Kigami \cite[Definition 3.1]{Kig12} and the resistance metric coincides with the tree metric $\metrictree$. In particular, we have the following.
	\begin{lemma} \label{l:resform}
		\begin{enumerate}[(1)]
			\item $\wt{\mathcal{F}}_{\tree}$ is a linear subspace of $\contfunc(\tree(\branch))$ and $\mathcal{E}^\tree$ is a non-negative symmetric quadratic form on $\wt{\mathcal{F}}_{\tree}$. $\mathcal{E}^\tree(f,f)=0$ if and only if $f$ is a constant.
			\item Let $\sim$ be an equivalence relation on $\wt{\mathcal{F}}_\tree$ defined by $u \sim v$ if and only if $u-v$ is constant on $\tree(\branch)$. Then the quotient space $(\mathcal{F}^\tree/\sim, \mathcal{E}^\tree)$ is a Hilbert space.
			\item If $x \neq y$, then there exists $u \in \mathcal{F}^\tree$ such that $u(x) \neq u(y)$.
			\item For any $p,q \in \tree(\branch)$, we have 
			\begin{equation} \label{e:resist}
				\sup \Biggl\{ \frac{\abs{u(x)-u(y)}^2}{\mathcal{E}^\tree(u,u)} \Biggm| u \in \wt{\mathcal{F}}_\tree, \mathcal{E}^\tree(u,u)>0 \Biggr\} = \metrictree(p,q).
			\end{equation}
			\item If $u \in \wt{\mathcal{F}}_\tree$, then $\wt{u}:=(0\vee u) \wedge 1 \in \wt{\mathcal{F}}_\tree$ and $\mathcal{E}^\tree(\wt{u},\wt{u}) \le \mathcal{E}^\tree(u,u)$.
		\end{enumerate}
	\end{lemma}
	
	\begin{proof}
		The first claim in (1) follows from observing that the gradient $\nabla$ is a linear operator on $\mathcal{A}(\tree(\branch))$. If $\mathcal{E}^\tree(f,f)=0$, then $\grad f=0$ $\lentree$-almost everywhere which by \eqref{e:defgrad} implies that $f$ is constant. The converse follows from observing that constants functions have vanishing gradient.
		
		Let us prove (4) next. By \eqref{e:defgrad} and the Cauchy-Schwarz inequality, we obtain the estimate 
		\[
		\abs{u(x)-u(y)}^2 \le \abs{\int_{[x,y]} \abs{\grad u} \,d\lentree}^2 \le \metrictree(x,y) \int_{[x,y]} \abs{\grad u}^2 \,d\lentree \le \metrictree(x,y) \mathcal{E}(u,u)
		\]
		for all $x,y \in \tree(\branch), u \in \wt{\mathcal{F}}_\tree$. This gives the upper bound on the supremum in \eqref{e:resist}. For the lower bound, we can easily verify that the supremum is attained by the function
		\begin{equation} \label{e:deffxy}
			f_{x,y}(z)= \metrictree(x,z_{x,y}), \quad \mbox{where $\{z_{x,y}\}=[x,y] \cap [x,z] \cap [y,z]$,}
		\end{equation}
		as $\abs{f_{x,y}(x)-f_{x,y}(y)}= \metrictree(x,y)=\mathcal{E}^{\tree}(f_{x,y},f_{x,y})$ in this case. The function $f_{x,y}$ chosen above also shows (3).
		
		Next, we show (2). By (1), $\mathcal{E}^\tree$ is a well-defined inner product on $\mathcal{F}^{\tree}/\sim$. We need to show the completeness of the inner product. To this end,  let $[u_n]_\sim, n \in \mathbb{N}$ denote an $\mathcal{E}^\tree$-Cauchy sequence, where $[u_n]_\sim$ denotes the equivalence class of $u_n$ under the relation $\sim$.
		Let $\grad u_n$ denotes the corresponding sequence of gradients (note that the gradient is independent of the choice of the representatives $u_n$). Since $[u_n]_\sim, n \in \mathbb{N}$ denote an $\mathcal{E}^\tree$-Cauchy, we have $\grad u_n$ is $L^2(\tree(\branch),\lentree)$-Cauchy and hence converges to a limit $g \in L^2(\tree(\branch),\lentree)$. Defining 
		\[
		u(x)=\int_p^x g(z)\,\lentree(dz), \quad \mbox{for all $x \in \tree(\branch)$},
		\]
		we have that $u \in \mathcal{A}(\tree(\branch))$ and $\grad u= g$ $\lentree$-almost everywhere. Since $\grad u_n \xrightarrow{L^2(\tree(\branch),\lentree)} \grad u$, we conclude that  $\lim_{n \to \infty} \mathcal{E}^\tree(u_n-u,u_n-u)=0$.
		
		The contraction property (5) follows from observing that $\abs{\grad \wt{u}} \le \abs{\grad u}$ $\lentree$-almost everywhere, since $\abs{\wt{u}(x)-\wt{u}(y)} \le \abs{{u}(x)-{u}(y)}$ for all $x,y \in \tree(\branch)$.
	\end{proof}
	In order to show that $\wt{\mathcal{F}}_\tree$ contains sufficiently many continuous functions, we need to introduce the following notion. Let $n \in \mathbb{Z}$ and for any finitely supported function $\mathbf{v}:\llbracket n+1 , \infty \rrbracket \to \mathbb{Z}$ such that $\mathbf{v}(k) \in \llbracket 0, \branch(k)-1 \rrbracket$ for all $k \in \llbracket n+1 , \infty \rrbracket$, we set
	\begin{equation} \label{e:treeblock}
		\tree^{\mathbf{v}}(\branch)= \overline{\codetree(T_\infty^{\mathbf{v}})}= \{\codetree(\mathbf{u}): \mathbf{u} \in \ultra(\branch), \restr{\mathbf{u}}{\llbracket n+1, \infty \rrbracket}=\mathbf{v}\},
	\end{equation}
	where $T_\infty^{\mathbf{v}}$ is as defined in \eqref{e:defTinfu}
	By the continuity of the map $\codetree$ and Lemma \ref{l:inftree}(b), we have 
	\begin{equation} \label{e:rr2}
		\diam(	\tree^{\mathbf{v}}(\branch),\metrictree) \le 2^{n}.
	\end{equation}
	We define two special points in 	$p^{\mathbf{v}}_0, p^\mathbf{v}_1 \in \tree^{\mathbf{v}}(\branch)$ coming from Lemma \ref{l:inftree}(c)-(i) as 
	\begin{equation} \label{e:defpvi}
		p^{\mathbf{v}}_i=\codetree(\mathbf{v}_i), 
	\end{equation}
	where $i\in \{0,1\}$, $\mathbf{v}_i \in \ultra(\branch)$ is defined by $\restr{\mathbf{v}_i }{\llbracket n+1,\infty \rrbracket}=\mathbf{v}$ and $\mathbf{v}_i(k)=0$ for all $k \in \llbracket -\infty, n-1 \rrbracket$ and $\mathbf{v}_i(n)=i$ (note that $\metrictree(p^{\mathbf{v}}_0,p^{\mathbf{v}}_1)=2^n$). By Lemma \ref{l:inftree}(c), if t $\mathbf{u}, \mathbf{v}: \llbracket n+1,\infty \rrbracket \to \mathbb{Z}$ be \emph{distinct} finitely supported functions such that $\mathbf{u}(k), \mathbf{v}(k) \in \llbracket 0, \branch(k)-1 \rrbracket$ for all $k \in \llbracket n+1,\infty \rrbracket$, then exactly one of the three alternatives hold:
	\begin{equation} 
		\tree^{\mathbf{v}}(\branch)\cap \tree^{\mathbf{u}}(\branch)= \emptyset, \{p^{\mathbf{v}}_0\}, \mbox{ or } \{p^{\mathbf{v}}_1\}.
	\end{equation}
	For a finitely supported function $\mathbf{v}:\llbracket n+1 , \infty \rrbracket \to \mathbb{Z}$ such that $\mathbf{v}(k) \in \llbracket 0, \branch(k)-1 \rrbracket$ for all $k \in \llbracket n+1 , \infty \rrbracket$ and $i \in \{0,1\}$, we set
	\begin{equation}
		B_n^{\mathbf{v}}(\branch,i)= \Biggl\{\mathbf{u}: \llbracket n+1,\infty \rrbracket \Biggm| \begin{minipage}{250pt} $\mathbf{u}(k) \in \llbracket 0, \branch(k)-1 \rrbracket$ for all $k \in \llbracket n+1,\infty \rrbracket$ such that $\tree^{\mathbf{v}}(\branch)\cap \tree^{\mathbf{u}}(\branch)= \{p^{\mathbf{v}}_i\}$ \end{minipage} \Biggr\}.
	\end{equation} 
	Note that $\# B_n^{\mathbf{v}}(\branch,i) \le \sup_{k \in \mathbb{Z}}\branch(k) -1$ for all $n, \mathbf{v}:\llbracket n+1,\infty \rrbracket$ as above. For such $n \in \mathbb{Z}$,  $\mathbf{v}:\llbracket n+1 , \infty \rrbracket \to \mathbb{Z}$, we define $f_\mathbf{v}: \tree(\branch) \to [0,1]$ as
	\begin{equation} \label{e:deffv}
		f_\mathbf{v}(z)= \begin{cases}
			2^{-n}f_{p^{\mathbf{u}}_{1-i},p^{\mathbf{u}}_i}(z)&\mbox{if $z \in \tree^{\mathbf{u}}(\branch), \tree^{\mathbf{u}}(\branch) \cap \tree^{\mathbf{v}}(\branch)= \{p^{\mathbf{v}}_i\}, i \in \{0,1\}$,  } \\
			1 & \mbox{if $z \in \tree^{\mathbf{v}}(\branch)$, }\\
			0 & \mbox{otherwise},
		\end{cases}
	\end{equation}
	where $f_{p^{\mathbf{v}}_{1-i},p^{\mathbf{v}}_i}$ is as defined in \eqref{e:deffxy} and $\mathbf{u}:\llbracket n+1 , \infty \rrbracket \to \mathbb{Z}$ is a finitely supported such that $\mathbf{u}(k) \in \llbracket 0, \branch(k)-1 \rrbracket$ for all $k \in \llbracket n+1 , \infty \rrbracket$.
	The following lemma is elementary.
	\begin{lemma} \label{l:fvenergy}
		For a finitely supported function $\mathbf{v}:\llbracket n+1 , \infty \rrbracket \to \mathbb{Z}$ such that $\mathbf{v}(k) \in \llbracket 0, \branch(k)-1 \rrbracket$ for all $k \in \llbracket n+1 , \infty \rrbracket$ where $n \in \mathbb{Z}$, the function $f_\mathbf{v}: \tree(\branch) \to [0,1]$ defined in \eqref{e:deffv} satisfies 
		\[
		f_\mathbf{v} \in \contfunc_0(\tree(\branch)) \cap \wt{\mathcal{F}}_\tree, \quad \mathcal{E}^\tree(f_\mathbf{v},f_\mathbf{v}) \le 2^{-n+1} \sup_{k \in \mathbb{Z}} \branch(k).
		\]
	\end{lemma}
	\begin{proof}
		We note that $\abs{\nabla f_{\mathbf{v}}}$ is $\{0,2^{-n}\}$-valued $\lentree$-almost everywhere and it vanishes identically $\lentree$-almost everywhere except on  geodesics of the form $[p_0^{\mathbf{u}},p_1^{\mathbf{u}}]$, where  $\mathbf{u}:\llbracket n+1 , \infty \rrbracket \to \mathbb{Z}$ is a finitely supported such that $\mathbf{u}(k) \in \llbracket 0, \branch(k)-1 \rrbracket$ for all $k \in \llbracket n+1 , \infty \rrbracket$ and satisfies $\# \left( 	\tree^{\mathbf{v}}(\branch)\cap \tree^{\mathbf{u}}(\branch)\right)=1$. The set of such $\mathbf{u}:\llbracket n+1 , \infty \rrbracket \to \mathbb{Z}$ has cardinality at most $2 \sup_{k \in \mathbb{Z}} \branch(k)$ by Lemma \ref{l:inftree}(c). The conclusion follows from combining these facts along with the observation that $\lentree([p_0^{\mathbf{u}},p_1^{\mathbf{u}}])=2^n$ where $\mathbf{u}$ is as above.
	\end{proof}
	The following result shows that $(\mathcal{E}^{\tree},\wt{\mathcal{F}}_\tree)$ is regular in the sense of \cite[Definition 6.2]{Kig12}. 
	\begin{lemma} \label{l:regularres}
		The space $\contfunc_0(\tree(\branch)) \cap \wt{\mathcal{F}}_\tree$ is dense in $\contfunc_0(\tree(\branch))$ with respect to the supremum norm.
	\end{lemma}
	\begin{proof}
		By \cite[Theorem 6.3]{Kig12}, Lemma \ref{l:resform}(4) and the local compactness of $(\tree(\branch),\metrictree)$, it suffices to show that for any closed set $B \subset \tree(\branch)$, we have 
		\begin{equation} \label{e:rr1}
			B= \{x \in \tree(\branch) : f(x)=0 \mbox{ for all $f\in \wt{\mathcal{F}}_\tree$ such that $\restr{f}{B}\equiv 0$.}\}
		\end{equation}
		Clearly $B$ is a subset of the right-hand side above. Therefore it suffices to show that for any $x \notin B$, there exists a function $f\in \wt{\mathcal{F}}_\tree$ such that $\restr{f}{B}\equiv 0$ and $f(x) \neq 0$. By Proposition \ref{p:dsr}, there exists $\mathbf{t} \in \ultra(\branch)$ such that $x=\codetree(\mathbf{t})$. Since $B$ is closed and $x \notin B$, there exists $n \in \mathbb{Z}$ be such that $\metrictree(x,B)>2^n$. 
		
		Let $\mathbf{v}=\restr{\mathbf{t}}{\llbracket n  , \infty \rrbracket}$ and let $f_{\mathbf{v}}:\tree(\branch) \to [0,1]$ be as defined in \eqref{e:deffv}. Note that if $f_{\mathbf{v}}(z) \neq 0$, then either $z \in \tree^{\mathbf{v}}(\branch)$ or $z \in \tree^{\mathbf{u}}(\branch)$, where $\mathbf{u}: \llbracket n ,\infty \rrbracket \to \mathbb{Z}$ such that $\tree^{\mathbf{v}}(\branch) \cap \tree^{\mathbf{u}}(\branch) \neq \emptyset$. In either case 
		we have 
		\[
		\metrictree(z,x) \le \diam(\tree^{\mathbf{u}}(\branch), \metrictree)+\diam(\tree^{\mathbf{v}}(\branch), \metrictree) \stackrel{\eqref{e:rr2}}{\le} 2^{n-1}+2^{n-1}=2^n.
		\]
		Since $\metrictree(x,B)>2^n$, this along with Lemma \ref{l:fvenergy} shows that $\restr{f_\mathbf{v}}{B}\equiv 0$ and 	$f_\mathbf{v} \in \wt{\mathcal{F}}_\tree$.
	\end{proof}
	Next, we define the Dirichlet form  $(\dftree,\domtree)$ on $L^2(\tree(\branch),\meastree)$.
	\begin{definition} \label{d:dftree}
		We define the bilinear form $(\dftree,\domtree)$ using \eqref{e:resform}, where $\domtree \subset \wt{\mathcal{F}}_\tree$ is given by
		\[
		\domtree:= \wt{\mathcal{F}}_\tree\cap L^2(\tree(\branch),\meastree).
		\]
		Here with a slight abuse of notation, we denote  $\restr{\mathcal{E}^{\tree}}{\domtree\times \domtree}$ as $\mathcal{E}^{\tree}$.
	\end{definition}
	We record the basic properties of the bilinear form above. 
	\begin{prop} \label{p:treemmd}
		The linear form $(\dftree,\domtree)$ defined in $(\dftree,\domtree)$ is a strongly local, regular, Dirichlet form on $(\tree(\branch),\metrictree)$.
	\end{prop}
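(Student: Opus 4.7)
The plan is to derive Proposition~\ref{p:treemmd} by combining Lemmas~\ref{l:resform} and~\ref{l:regularres} with Kigami's theory of resistance forms, and then to verify strong locality by hand using the gradient representation.

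First I would verify the ambient hypotheses: the metric space $(\tree(\branch), \metrictree)$ is proper, separable, and locally compact by Corollary~\ref{c:meastree}, and the measure $\meastree$ is a Radon measure of full support because it is doubling with $\meastree(B_{\tree(\branch)}(x,r)) \asymp V_\branch(r) > 0$.

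Next, Lemma~\ref{l:resform} verifies the five axioms of a resistance form in Kigami's sense \cite[Definition 3.1]{Kig12}, with associated resistance metric coinciding with $\metrictree$, and Lemma~\ref{l:regularres} supplies regularity of this resistance form. Consequently by \cite[Theorem 9.4]{Kig12}, the restriction of $(\sE^\tree, \wt{\sF}_\tree)$ to $\domtree = \wt{\sF}_\tree \cap L^2(\tree(\branch),\meastree)$ yields a regular symmetric Dirichlet form on $L^2(\tree(\branch),\meastree)$. In particular, closedness, the Markov property (via Lemma~\ref{l:resform}(5)), density of $\domtree$ in $L^2$, and regularity (density of $\domtree \cap \contfunc_{\mathrm{c}}(\tree(\branch))$ in both $(\domtree, \sE^\tree_1)$ and in $(\contfunc_{\mathrm{c}}(\tree(\branch)), \|\cdot\|_{\mathrm{sup}})$) are all delivered by Kigami's theorem.

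It remains to verify strong locality, which is not automatic from the resistance form framework. Let $f, g \in \domtree$ have compact supports and suppose $\supp_m[f - a\one_{\tree(\branch)}] \cap \supp_m[g] = \emptyset$ for some $a \in \bR$. Since elements of $\domtree$ are continuous, the measure-theoretic supports coincide with the closures of $\{f \neq a\}$ and $\{g \neq 0\}$. Writing $A = \supp_m[f - a\one_{\tree(\branch)}]$ and $B = \supp_m[g]$, on the open set $\tree(\branch) \setminus A$ we have $f \equiv a$, so $\nabla f = 0$ $\lentree$-a.e.\ there; symmetrically $g \equiv 0$ on $\tree(\branch) \setminus B$, so $\nabla g = 0$ $\lentree$-a.e.\ there. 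Since $A \cap B = \emptyset$, the open sets $\tree(\branch) \setminus A$ and $\tree(\branch) \setminus B$ cover $\tree(\branch)$, forcing $\nabla f \cdot \nabla g = 0$ $\lentree$-a.e. Hence $\sE^\tree(f,g) = \int_{\tree(\branch)} \nabla f \, \nabla g \, d\lentree = 0$, which gives strong locality.

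The only real obstacle is locating the correct version of Kigami's theorem and checking its hypotheses; once that black box is applied, the remainder is mechanical, with strong locality relying only on the pointwise description of $\nabla$ and the set-theoretic identity $A^c \cup B^c = (A \cap B)^c$.
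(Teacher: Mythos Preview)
Your proof is correct and follows essentially the same approach as the paper: both invoke Lemma~\ref{l:resform} and Lemma~\ref{l:regularres} to verify that $(\sE^\tree,\wt{\sF}_\tree)$ is a regular resistance form, apply \cite[Theorem 9.4]{Kig12} to obtain the regular Dirichlet form, and deduce strong locality from the gradient expression~\eqref{e:resform}. Your version is simply more explicit, spelling out the ambient hypotheses and the strong locality computation that the paper leaves as a one-line remark.
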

	\begin{proof}
		By Lemma \ref{l:resform} $(\mathcal{E}^\tree,\wt{\mathcal{F}}_\tree)$ is a resistance form in the sense of \cite[Definition 3.1]{Kig12} and this resistance form is regular (as given in \cite[Definition 6.2]{Kig12}). Thus by \cite[Theorem 9.4]{Kig12}, we conclude that $(\dftree,\domtree)$ is a regular Dirichlet form. The fact that it is strongly local follows from the expression \eqref{e:resform}.
	\end{proof}
	
	We note the \emph{energy measure} $\emtree(f,g)$ for $f,g \in \domtree$ is given by
	\begin{equation} \label{e:defemtree}
		\emtree(f,g)(A)= \int_A \grad f \grad g \, d\lentree, \quad \mbox{for any Borel set $A\subset \tree(\branch)$.}
	\end{equation}
	Using  the H\"older regularity estimate \eqref{e:resist}, we obtain an elementary upper bound for any function in $\domtree$.
	\begin{lemma} \label{l:ptub}
		For any $f \in \domtree, M > \dftree(f,f)$ and $\mathbf{t} \in \tree(\branch)$, we have 
		\begin{equation} \label{e:ptwise}
			\frac{f(\mathbf{t})^2}{4} V_{\mathbf{b}}(\abs{f(\mathbf{t})}^2/(4M^2)) \le \int_{\tree(\branch)} f^2\,d\meastree,
		\end{equation}
		where $V_\branch$ is as defined in \eqref{e:defvol}.
		In particular, any function $f \in \domtree$ is bounded.
	\end{lemma}
	\begin{proof}
		Let $M>\dftree(f,f), f \in \domtree$ and $\mathbf{t} \in \tree(\branch)$.
		Without loss of generality, we may assume $f(\mathbf{t})\neq 0$
		By \eqref{e:resist}, we have 
		$\abs{f(\mathbf{s})-f(\mathbf{t})}\le \metrictree(s,t)^{1/2} M^{1/2}$ and hence $\abs{f(\mathbf{s})} \ge \abs{f(\mathbf{t})}/2$
		for all $\mathbf{s} \in B_{\tree(\branch)}(t, \abs{f(\mathbf{t})}^2/(4M^2))$. Hence we obtain \eqref{e:ptwise} by integration.
		
		The final claim follows from \eqref{e:ptwise} and the fact that $\lim_{R \to \infty} \frac{R^2}{4} V_\branch(R^2/(4M^2))=\infty$.
	\end{proof}
	
	Next, we show that the diffusion on tree satisfies sub-Gaussian heat kernel estimate. We define an increasing homeomorphism $\sttree:[0,\infty) \to [0,\infty)$ as 
	\begin{equation} \label{e:sttree}
		\sttree(r)= \begin{cases}
			0 & \mbox{if $r=0$,}\\
			r V_\branch(r)& \mbox{if $r=2^n$ for some $n \in \mathbb{Z}$,}\\
			t	2^n V_\branch(2^{n}) + (1-t)	2^{n+1} V_\branch(2^{n+1}) & \mbox{if $r=t2^n+(1-t)2^{n+1},n \in \mathbb{Z}, t \in [0,1]$.} 
		\end{cases}
	\end{equation}
	This function $\sttree$ plays the role of space-time scaling function for the diffusion on $\tree(\branch)$ corresponding to the Dirichlet form $(\dftree,\domtree)$ on $L^2(\tree(\branch),\meastree)$.
	\begin{theorem} \label{t:hketree}
		The MMD space $(\tree(\branch),\metrictree,\meastree, \dftree, \domtree)$ satisfies sub-Gaussian heat kernel estimate \hyperlink{hke}{$\operatorname{HKE_f}(\sttree)$}.
	\end{theorem}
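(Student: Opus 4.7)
The plan is to invoke the general heat kernel characterization for resistance forms, as developed in \cite{Kum04,Kig12}. The setup is ideal: by Lemma \ref{l:resform}(4) the identity \eqref{e:resist} shows that the resistance metric associated with the form $(\sE^\tree,\wt{\sF}_\tree)$ coincides with the tree metric $\metrictree$ (the supremum in \eqref{e:resist} is actually attained by the function $f_{x,y}$ of \eqref{e:deffxy}); by Lemma \ref{l:regularres} the resistance form is regular in the sense of \cite[Definition 6.2]{Kig12}; and by Corollary \ref{c:meastree} the measure $\meastree$ is doubling on $(\tree(\branch),\metrictree)$. Since $(\tree(\branch),\metrictree)$ is a geodesic $\mathbb{R}$-tree (Proposition \ref{p:rtree}), the chain condition of Definition \ref{d:chain} holds trivially with $K=1$.

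First I would verify that $\sttree$ is a scale function in the sense of \eqref{e:reg}. Doubling of $V_\branch$ and the formula \eqref{e:sttree} immediately yield an upper comparison of the form $\sttree(R)/\sttree(r)\lesssim (R/r)^{\beta_2}$ with $\beta_2=1+\log_2(\sup_k\branch(k))$. For the lower bound, the condition $\inf_k\branch(k)\ge 2$ combined with \eqref{e:measultraformula} (applied to $\branch$ in place of $\glue$) gives $V_\branch(2^n)/V_\branch(2^m)\ge 2^{n-m}$ for $n\ge m$, so $\sttree(R)/\sttree(r)\gtrsim (R/r)^{\beta_1}$ with $\beta_1=2$. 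Next I would identify the escape-time/capacity scaling: because $\metrictree$ is the resistance metric, the effective resistance from a point $x$ to the complement of a ball $B_{\tree(\branch)}(x,r)$ is comparable to $r$ (on a tree, resistance is additive along the essentially unique geodesic, and the supremum in \eqref{e:resist} restricted to cutoff functions gives the capacity reciprocal); together with $\meastree(B_{\tree(\branch)}(x,r))\asymp V_\branch(r)$ this gives the product scaling $\sttree(r)=rV_\branch(r)$ for both capacities and mean exit times.

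Having assembled these ingredients, I would apply the characterization of sub-Gaussian heat kernel estimates for regular resistance forms (e.g.\ the theorems in \cite[Chapter 15]{Kig12} or the equivalent formulation in \cite{Kum04}) to conclude the upper bound \eqref{e:uhke} and the near-diagonal lower bound \eqref{e:nlhke}. The chain condition then upgrades the near-diagonal lower bound to the full Gaussian-type lower bound \eqref{e:lhke} by the standard chaining argument: linking $x$ and $y$ by a sequence of $N\asymp d(x,y)/\Psi^{-1}(t/N)$ balls and applying the semigroup property $N$ times; the geodesic nature of $\metrictree$ ensures the chaining is sharp.

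The main obstacle is a bookkeeping one: matching the hypotheses of the general resistance-form theorem (often stated for compact or self-similar spaces, and for power-function scales) to the present unbounded setting with doubling-but-non-power scales $V_\branch$ and $\sttree$. The key facts needed are that $(\tree(\branch),\metrictree)$ is locally compact (Corollary \ref{c:meastree}), that functions in $\domtree$ are automatically bounded and H\"older continuous with exponent $1/2$ on every resistance-metric ball (Lemma \ref{l:ptub} and \eqref{e:resist}), and that the Dirichlet form is strongly local and regular (Proposition \ref{p:treemmd}); together these allow the standard construction of a jointly continuous heat kernel and the verification of all hypotheses needed to transfer the classical argument to our setting.
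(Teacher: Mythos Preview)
Your proposal is correct and follows essentially the same approach as the paper: the paper's proof simply invokes \cite[Theorem 3.1]{Kum04} or \cite[Theorem 15.10]{Kig12}, noting that the hypothesis there follows from \eqref{e:resist} and Corollary \ref{c:meastree}, and that the chain condition holds because $(\tree(\branch),\metrictree)$ is geodesic. You have supplied more detail (verifying that $\sttree$ is a scale function, discussing the unbounded/non-power-law setting), but the strategy and the cited ingredients are the same.
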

	\begin{proof}
		The result follows from invoking either \cite[Theorem 3.1]{Kum04} or \cite[Theorem 15.10]{Kig12}. The condition (c) in \cite[Theorem 15.10]{Kig12} is an immediate consequence of \eqref{e:resist} and Corollary \ref{c:meastree}. The required chain condition follows from the fact that $(\tree(\branch),\metrictree)$ is a geodesic space.
	\end{proof}
	Although the regularity of $(\dftree,\domtree)$ follows from abstract results in \cite{Kig12}, we provide a more concrete procedure to approximate an arbitrary function in $\domtree$ by multiplying by a sequence of increasing cutoff functions. This will be used to show the regularity of Dirichlet form that we construct on the Laakso-type space.
	\begin{lemma} \label{l:truncate}
		There exists a sequence of $[0,1]$-valued functions $\phi_n \in \domtree, n \in \mathbb{N}$ satisfying the following properties:
		\begin{enumerate}
			\item $\phi_n \equiv 1$ on $B_{\tree(\branch)}(p_\tree,2^{2n})$ and $\supp(\phi_n) \subset B_{\tree(\branch)}(p_\tree,2^{2n+1})$.
			\item There exists $C_1>1$ such that 
			\begin{equation} \label{e:trun1}
				\dftree(f\phi_n,f\phi_n) \le C_1 \dftree(f,f)+ \frac{C_1}{\sttree(2^{n})}\int f^2\, d\meastree.
			\end{equation}
			\item There exists $C_2>1$ such that for any $m,n \in \mathbb{N}$ with $m \le n$, $f \in \domtree$, we have 
			\begin{equation} \label{e:trun2}
				\dftree(f(\phi_n-\phi_m),f(\phi_n-\phi_m)) \le C_2\left(\int_{B(p_\tree,2^{2m})^c} f^2\, d\meastree+ \emtree(f,f)\left(B(p_\tree,2^{2m})^c \right)\right)
			\end{equation}
			In particular, the sequence $(f\phi_n)_{n \in \mathbb{N}}$ is $\dftree_1$-Cauchy and converges to $f$.
		\end{enumerate}
	\end{lemma}
	\begin{proof}
		By Theorem \ref{t:hketree} and \cite[Theorem 1.2]{GHL}, we have \hyperlink{cs}{$\operatorname{CS}(\sttree)$} for the MMD space $(\tree(\branch),\metrictree,\meastree, \dftree, \domtree)$. Hence there exists $C_1,A \in (1,\infty)$ such that for all $n \in \mathbb{N}$, we have cutoff functions $\phi_n$	for $B_{\tree(\branch)}(p_\tree,2^{2n}) \subset B_{\tree(\branch)}(p_\tree, A2^{2n})$ with 
		\begin{align} \label{e:tru1}
			\int {g}^2 \,d\emtree(\phi_n,\phi_n) &\le C_1 \emtree(g,g)\left(B_{\tree(\branch)}(p_\tree,A2^{2n}) \setminus B_{\tree(\branch)}(p_\tree,2^{2n})\right) \nonumber \\
			& \quad+ \frac{C_1}{\sttree(2^{n})}\int_{B_{\tree(\branch)}(p_\tree,A2^{2n}) \setminus B_{\tree(\branch)}(p_\tree,2^{2n})} g^2\,d\meastree
		\end{align}
		for all $g \in \domtree$ (recall that $g$ is continuous). If $m \le n$, then $\restr{\phi_n-\phi_m}{B(p_\tree,2^{2m})} \equiv 0$. Note that every function in $\domtree$ bounded and hence $\domtree$ is closed under multiplication due to \cite[Theorem 1.4.2(ii)]{FOT}. By Leibniz rule and Cauchy-Schwarz inequality, we have 
		\begin{align} \label{e:tru2}
			\lefteqn{\dftree(f(\phi_n-\phi_m),f(\phi_n-\phi_m))}  \nonumber\\
			&\le 2 \int f^2\, d\emtree(\phi_n-\phi_m, \phi_n-\phi_m) + 2\int (\phi_n-\phi_m)^2\, d\emtree(f,f)\nonumber\\
			&\le 4\int f^2\, d\emtree(\phi_n,\phi_n) + 4\int f^2\, d\emtree(\phi_m,\phi_m)+ 2\int_{B(p_\tree,2^{2m})^c} d\emtree(f,f) \nonumber \\
			&\stackrel{\eqref{e:tru1}}{\le} (2+8C_1)\int_{B(p_\tree,2^{2m})^c} d\emtree(\phi_m,\phi_m)  + \frac{8C_1}{\sttree(2^{m})} \int_{B(p_\tree,2^{2m})^c} f^2\,d\meastree.
		\end{align}
		By \eqref{e:tru2}, the estimate
		\[
		\int f^2(\phi_n-\phi_m)^2 \,d\meastree \le \int_{B_{\tree(\branch)(p_\tree,2^{2m})^c)}} f^2\,d\meastree
		\]
		for all $m \le n, f \in \domtree$ and applying the dominated convergence theorem, we conclude that $(f\phi_n)_{n \in \mathbb{N}}$ is $\dftree_1$-Cauchy and hence converges to $f$. The estimate \eqref{e:trun1} follows from an argument similar to \eqref{e:tru2}.
	\end{proof}
	
	\subsection{Diffusion on Laakso-type space} \label{ss:laakdiff}
	
	We first construct the Dirichlet form on the Laakso-type space $\laak(\glue,\branch)$ under the simplifying assumption  
	\begin{equation} \label{e:locreg}
		\lim_{k \to -\infty} \glue(k)=1, \mbox{ or equivalently, there exists $N_\glue \in \mathbb{Z}$ such that $g(k)=1$ for all $k \le N_\glue$.}	
	\end{equation} 
	The general case can be handled by approximating by a sequence of spaces that satisfy \eqref{e:locreg}. Such approximations are also useful to analyze the resulting diffusion process (see the proof of Poincar\'e inequality in \textsection \ref{ss:pi}).

	If \eqref{e:locreg} holds, then it is easy to see that $\ultra(\glue)$ is either finite or a countable space and the measure $\mathsf{m}_{\ultra(\glue)}$ is a multiple of the counting measure. 
	For $\mathbf{u} \in \ultra(\glue)$, define the isometric embedding 
	\begin{equation} \label{e:treelaak}
		I_\mathbf{u}: (\tree(\branch),\metrictree) \to (\laak(\glue,\branch),\metlaak), \quad 	I_\mathbf{u}(\mathbf{t})= [(\mathbf{u},\mathbf{t})]_{\laak}, \quad \mbox{for all $\mathbf{t} \in \tree(\branch)$}.
	\end{equation}
	Since $I_\mathbf{u}$ is continuous, for any $\mathbf{u} \in \ultra(\glue)$ and for any continuous function $f \in \contfunc(\laak(\glue,\branch))$, we have $f \circ I_\mathbf{u} \in \contfunc(\tree(\branch))$. The converse also holds under the condition \eqref{e:locreg}.
	\begin{lemma} \label{l:contlaak}
		Under the assumption \eqref{e:locreg}, 
		a function $f:\laak(\glue,\branch) \to \mathbb{R}$ is continuous if and only if $f \circ I_\mathbf{u}  \in \contfunc(\tree(\branch))$ for all $\mathbf{u} \in \ultra(\glue)$.
	\end{lemma}
	\begin{proof}
		As noted above, it suffices to show the if part. Let us assume that $f \circ I_\mathbf{u}  \in \contfunc(\tree(\branch))$ for all $\mathbf{u} \in \ultra(\glue)$. 
		
		Let $\varepsilon>0$, $[(\mathbf{v},\mathbf{t})]_{\laak} \in \laak(\glue,\branch)$. Let $N:=N_\glue$ be as given in \eqref{e:locreg}.

		If $\mathbf{t} \notin \cup_{n=N}^\infty \worm_n$, then by Lemma \ref{l:worm}(ii), we have $\metrictree\left(\mathbf{t}, \cup_{n=N}^\infty \worm_n\right)>0$. For any $r<\metrictree\left(\mathbf{t}, \cup_{n=N}^\infty \worm_n\right)$, we have $B_{\laak(\glue,\branch)}([(\mathbf{v},\mathbf{t})]_{\laak},r)= I_{\mathbf{v}}\left( B_{\tree(\branch)}(\mathbf{t},r)\right)$ and hence by continuity of $f\circ I_{\mathbf{v}}$, there exists $0<\delta<r$ such that 
		$\metlaak(p,[(\mathbf{v},\mathbf{t})]_{\laak})<\delta$ implies $\abs{f(p)-f([(\mathbf{v},\mathbf{t})]_{\laak})}<\varepsilon$.
		
		If $\mathbf{t} \in \cup_{n=N}^\infty \worm_n$, then by Lemma \ref{l:worm}(ii), we have $B_{\tree(\branch)}(\mathbf{t}, r) \cap \left(\cup_{n=N}^\infty \worm_n\right) = \{\mathbf{t}\}$ for any $r \le 2^N$. In this case the equivalence class $[(\mathbf{v},\mathbf{t})]_{\laak}$ is a finite set, say $\{(\mathbf{v}_i,\mathbf{t}): i =1,\ldots,n \}$, where $n \le \sup_{k \in \mathbb{Z}} \glue(k)$. Therefore for any $r \le 2^N$, we have $B_{\laak(\glue,\branch)}([(\mathbf{v},\mathbf{t})]_{\laak},r)= \cup_{i=1}^n I_{\mathbf{v}_i}\left(B_{\tree(\branch)}(\mathbf{t},r) \right)$.
		For each $i=1,\ldots,n$, by the continuity of $f\circ I_{\mathbf{v}_i}$, there exists $0<\delta_i<2^N$ such that $\metlaak(p,I_{\mathbf{v}_i}(\mathbf{t}))<\delta$ implies $\abs{f(p)-f(I_{\mathbf{v}_i}(\mathbf{t}))}<\varepsilon$. Hence by choosing $\delta=\min_{i=1,\ldots,n} \delta_i$, we obtain the continuity of $f$ at $[(\mathbf{v},\mathbf{t})]_{\laak}$.
	\end{proof}
	The   advantage of Laakso-type spaces satisfying \eqref{e:locreg} is that
	the definition of the Dirichlet form is simpler in this case. 
	Under  \eqref{e:locreg}, we define a bilinear form $\dflaak:\domlaak \times \domlaak \to \mathbb{R}$ on a subspace $\domlaak$ of $\contfunc(\laak(\glue,\branch))$ as  
	\begin{align} \label{e:defdflaak}
		\domlaak &= \Biggl\{f \in \contfunc(\laak(\glue,\branch)) \Biggm| \begin{minipage}{300pt}
			$I_{\mathbf{u}}(f) \in \domtree$ for all $\mathbf{u}\in \ultra(\glue)$, and $\int_{\laak(\glue,\branch)} f^2\,d\measlaak + \sum_{\mathbf{u} \in \ultra(\glue)} \dftree(f\circ I_\mathbf{u},f\circ I_\mathbf{u}) \mathsf{m}_{\ultra(\glue)}(\{\mathbf{u}\}) <\infty $
		\end{minipage} \Biggr\}, \nonumber \\
		\dflaak(f,g)&= \sum_{\mathbf{u} \in \ultra(\glue)} \dftree(f\circ I_\mathbf{u},g\circ I_\mathbf{u}) \mathsf{m}_{\ultra(\glue)}(\{\mathbf{u}\}),\quad \mbox{for all $f,g \in \domlaak$.}
	\end{align}
	Let us express the Dirichlet form similar in terms of the gradient analogous to \eqref{e:resform}. Since $\cup_{n \in \mathbb{Z}} \worm_n$ is countable and $\lentree$ is non-atomic, for any $f \in \domlaak$ there is a $\lenlaak$-almost everywhere well-defined function $\grad^{\laak} f: \laak(\glue,\branch) \to \mathbb{R}$ such that 
	\begin{equation} \label{e:defgradlaak}
		I_{\mathbf{u}} \circ \grad^{\laak} f = \nabla(I_{\mathbf{u}} \circ f) \quad \mbox{$\lentree$-almost everywhere for all $\mathbf{u} \in \ultra(\glue)$.}
	\end{equation}
	By \eqref{e:defdflaak}, \eqref{e:deflenlaak} and \eqref{e:defgradlaak}, we have (assuming \eqref{e:locreg})
	\begin{equation} \label{e:dflaakalt}
		\dflaak(f_1,f_2)= \int_{\laak(\glue,\branch)} \grad^\laak f_1 \grad^\laak f_2 \, d\lenlaak, \quad \mbox{for all $f_1,f_2 \in \domlaak$.}
	\end{equation}
	We check that this defines a Dirichlet form.
	\begin{prop} \label{p:dflaakprelim}
		Under the assumption \eqref{e:locreg}, the bilinear form $(\dflaak,\domlaak)$ is a  strongly local, regular, Dirichlet form on $L^2(\laak(\glue,\branch),\measlaak)$. In this case, the corresponding energy measure is given by 
		\begin{align} \label{e:emlaakapprox}
			\emlaak(f,f) (A)&= \sum_{\mathbf{u} \in \ultra(\glue)} \mathsf{m}_{\ultra(\glue)}(\{\mathbf{u}\}) I_{\mathbf{u}}^* \left( \emtree(f \circ I_{\mathbf{u}},f \circ I_{\mathbf{u}})\right)(A) \nonumber\\
			&=  \int_A \abs{\grad^\laak f}^2 \,  d\lenlaak, \quad \mbox{for all Borel sets $A \subset \laak(\glue,\branch)$.}
		\end{align}
	\end{prop}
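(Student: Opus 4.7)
The strategy is to exploit the weighted direct sum structure implicit in \eqref{e:defdflaak}. Under \eqref{e:locreg} the index set $\ultra(\glue)$ is at most countable and $\measultra$ is a uniformly-weighted counting measure, so \eqref{e:defmeasprod}, \eqref{e:measlaak} together with the observation that only countably many points of $\mathcal{P}(\glue,\branch)$ are identified by $\quolaak$ (hence this identification has $\measprod$-measure zero) imply the key fiberwise $L^2$-identity
\[
\int_{\laak(\glue,\branch)} f^2\,d\measlaak = \sum_{\mathbf{u} \in \ultra(\glue)} \mathsf{m}_{\ultra(\glue)}(\{\mathbf{u}\}) \int_{\tree(\branch)} (f \circ I_\mathbf{u})^2\,d\meastree.
\]
Combined with \eqref{e:defdflaak}, this means the assignment $f \mapsto (f \circ I_\mathbf{u})_{\mathbf{u} \in \ultra(\glue)}$ is an isometric embedding of $(\domlaak, \dflaak_1)$ into the corresponding weighted $\ell^2$-direct sum of copies of $(\domtree, \dftree_1)$. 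Most structural properties of $(\dflaak,\domlaak)$ will then follow coordinatewise from the corresponding property of the tree form $(\dftree,\domtree)$ established in Proposition \ref{p:treemmd}.

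Symmetry, bilinearity and non-negativity are immediate from \eqref{e:defdflaak}, and the Markov property follows from $(f^+\wedge 1) \circ I_\mathbf{u} = (f \circ I_\mathbf{u})^+ \wedge 1$ combined with Markovianity of $\dftree$. The main technical obstacle is closedness. Given a $\dflaak_1$-Cauchy sequence $(f_n) \subset \domlaak$, the fiber identity shows that each $(f_n \circ I_\mathbf{u})_{n \in \bN}$ is $\dftree_1$-Cauchy, hence converges in $\dftree_1$ to some $g_\mathbf{u} \in \domtree$. The hardest step is assembling these $g_\mathbf{u}$ into a single continuous function $f$ on $\laak(\glue,\branch)$ with $f \circ I_\mathbf{u} = g_\mathbf{u}$; this requires the compatibility $g_\mathbf{u}(\mathbf{t}) = g_\mathbf{v}(\mathbf{t})$ whenever $((\mathbf{u},\mathbf{t}),(\mathbf{v},\mathbf{t})) \in R_\laak$, i.e.\ at wormhole points. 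The tool here is the pointwise bound in Lemma \ref{l:ptub}, which upgrades $\dftree_1$-convergence to locally uniform pointwise convergence of the continuous representatives, so the identity $(f_n \circ I_\mathbf{u})(\mathbf{t}) = (f_n \circ I_\mathbf{v})(\mathbf{t})$, valid for each $n$, passes to the limit. Continuity of the assembled $f$ on $\laak(\glue,\branch)$ then follows from Lemma \ref{l:contlaak}, and the fiber identity applied to $f-f_n$ gives $f \in \domlaak$ with $f_n \to f$ in $\dflaak_1$.

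For regularity, the $\dflaak_1$-density of $\domlaak \cap \contfunc_{\mathrm{c}}(\laak(\glue,\branch))$ in $\domlaak$ uses the cutoff functions $\phi_n \in \domtree$ from Lemma \ref{l:truncate}: these are radial in $\metrictree(\cdot, p_\tree)$ and therefore automatically compatible at wormholes, so they lift to a cutoff $\Phi_n$ on $\laak(\glue,\branch)$ with $\Phi_n \circ I_\mathbf{u} = \phi_n$ for every $\mathbf{u}$. Multiplying by $\Phi_n$ (controlled fiberwise by Lemma \ref{l:truncate}) and then truncating to finitely many fibers yields the approximants. Sup-norm density of $\domlaak \cap \contfunc_{\mathrm{c}}(\laak(\glue,\branch))$ in $\contfunc_{\mathrm{c}}(\laak(\glue,\branch))$ follows from regularity of $(\dftree,\domtree)$ (Proposition \ref{p:treemmd}) by fiberwise approximation, after adjusting the approximants to match exactly on a neighbourhood of the finitely many wormholes intersecting the support (using the explicit tent functions $f_\mathbf{v}$ from \eqref{e:deffv}); this sup-norm density in turn implies $L^2$-density of $\domlaak$ in $L^2(\laak(\glue,\branch),\measlaak)$. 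Strong locality is immediate from the coordinatewise structure and strong locality of $\dftree$, since $f,g \in \domlaak$ satisfying the separation hypothesis in the definition of strong locality induce the analogous hypothesis for $f \circ I_\mathbf{u}, g \circ I_\mathbf{u}$ on each fiber.

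Finally, the energy measure formula \eqref{e:emlaakapprox} is obtained via \eqref{e:EnergyMeas}: for $f \in \domlaak \cap L^\infty(\laak(\glue,\branch),\measlaak)$ and $h \in \domlaak \cap \contfunc_{\mathrm{c}}(\laak(\glue,\branch))$ one computes
\[
\dflaak(f,fh) - \tfrac{1}{2}\dflaak(f^2,h) = \sum_{\mathbf{u}\in\ultra(\glue)} \mathsf{m}_{\ultra(\glue)}(\{\mathbf{u}\}) \Bigl(\dftree(f \circ I_\mathbf{u}, (fh)\circ I_\mathbf{u}) - \tfrac{1}{2}\dftree((f \circ I_\mathbf{u})^2, h \circ I_\mathbf{u})\Bigr),
\]
which by \eqref{e:EnergyMeas} on the tree equals $\sum_{\mathbf{u}} \mathsf{m}_{\ultra(\glue)}(\{\mathbf{u}\}) \int_{\tree(\branch)} (h \circ I_\mathbf{u}) \, d\emtree(f \circ I_\mathbf{u}, f \circ I_\mathbf{u})$. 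Uniqueness of the energy measure yields the first equality in \eqref{e:emlaakapprox}, and the second follows from \eqref{e:defemtree}, \eqref{e:defgradlaak}, and \eqref{e:deflenlaak}.
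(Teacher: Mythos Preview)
Your proposal is correct and tracks the paper's proof closely: the same fiberwise direct-sum structure, Lemma~\ref{l:ptub} together with Lemma~\ref{l:contlaak} for closedness, the lifted cutoffs from Lemma~\ref{l:truncate} for the $\dflaak_1$-density of compactly supported functions, and the defining relation \eqref{e:EnergyMeas} for the energy-measure formula.

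The one genuine difference is the sup-norm half of regularity. The paper takes the Stone--Weierstrass route: $\domlaak\cap\contfunc_0(\laak(\glue,\branch))$ is an algebra (every $f\in\domlaak$ is bounded by Lemma~\ref{l:ptub}) and separates points (a single tent function $f_\mathbf{v}$ from \eqref{e:deffv}, lifted to the finitely many fibers meeting a small ball, does the job), so density in $\contfunc_\infty$ is immediate. Your route---fiberwise approximation via the regularity of $(\dftree,\domtree)$ plus correction at the finitely many active wormholes in the support---also works under \eqref{e:locreg}, but needs more care to keep the corrections small in both sup-norm and energy and non-interfering; Stone--Weierstrass avoids that bookkeeping entirely. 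Conversely, your extra step of ``truncating to finitely many fibers'' after multiplying by $\Phi_n$ is not idle: when $\ultra(\glue)$ is countably infinite the set $\supp(\Phi_n)$ is unbounded in $\laak(\glue,\branch)$ (fibers indexed by $\mathbf{u}$ with $\mathbf{u}(k)\neq 0$ for some large $k$ sit far from $p_\laak$ by Lemma~\ref{l:wormlaak}), so $f\Phi_n$ need not lie in $\contfunc_0(\laak(\glue,\branch))$; the paper's assertion that it does is a small oversight which your additional truncation (done component-wise with respect to $\restr{\mathbf{u}}{\llbracket 2n+1,\infty\rrbracket}$ so as to preserve continuity) repairs.
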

	\begin{proof}
		The bilinearity, Markovian and   strong locality properties for $(\dflaak,\domlaak)$ follow easily from the corresponding properties of $(\dftree,\domtree)$.  
		
		Let us check that $(\dflaak,\domlaak)$ is a closed form. To this end, let us choose a $\dflaak_1$-Cauchy sequence $(f_n)_{n \in \mathbb{N}}$. Since $\dftree_1(f \circ I_\mathbf{u},f \circ I_\mathbf{u}) \le \mathsf{m}_{\ultra(\glue)}(\{\mathbf{u}\})^{-1} \dflaak_1(f ,f )$ for all $f \in \domlaak$ (note that $\mathsf{m}_{\ultra(\glue)}$ is a positive multiple of the counting measure due to \eqref{e:locreg}). Therefore $(f_n \circ I_\mathbf{u})$ is $\dftree_1$-Cauchy for each $\mathbf{u} \in \ultra(\glue)$ and hence $\dftree_1$-converges in a function $f_\mathbf{u} \in \domtree$ for each $\mathbf{u} \in \ultra(\glue)$. 
		
		By Lemma \ref{l:ptub},  $f_n \circ I_\mathbf{u}$ is uniformly bounded for all   $\mathbf{u} \in \ultra(\glue), n \in \mathbb{N}$. Similarly, by \eqref{e:resist} $f_n \circ I_\mathbf{u}$ is equicontinuous for all   $\mathbf{u} \in \ultra(\glue), n \in \mathbb{N}$.
		By Arzela-Ascoli theorem and passing to a subsequence using the diagonal argument, we may assume that $(f_n \circ I_\mathbf{u})_{n \in \mathbb{N}}$ converges to $f_\mathbf{u}$ in the supremum norm for each $\mathbf{u} \in \ultra(\glue)$. In particular, by Lemma \ref{l:contlaak} this implies that there is a well-defined continuous function $f:\laak(\glue,\branch) \to \mathbb{R}$ such that $f([(\mathbf{u},\mathbf{t})]_{\laak})=f_\mathbf{u}(\mathbf{t})$ for all $(\mathbf{u},\mathbf{t}) \in \mathcal{P}(\glue,\branch)$. By the completeness of $L^2(\laak(\glue,\branch),\measlaak)$, $f_n \xrightarrow{L^2}f$. 
		
		It remains to show that $\lim_{n\to \infty}\dflaak(f_n-f,f_n-f) = 0$. Define $\grad f_n: \mathcal{P}(\glue,\branch) \to \mathbb{R}$ as 
		$\grad f_n(\mathbf{u},\mathbf{t})= \grad (f_n \circ I_\mathbf{u})(\mathbf{t})$, for all $(\mathbf{u},\mathbf{t}) \in  \mathcal{P}(\glue,\branch)$ for all $n \in \mathbb{N}$. Note that $\grad f_n$ is well-defined   $\mathsf{m}_{\ultra(\glue)} \times \lentree$-almost everywhere for all $n \in \mathbb{N}$. Since $(f_n)$ is $\dflaak_1$-Cauchy, we have that 	$(\grad f_n)_{n \in \mathbb{N}}$ is a Cauchy sequence in $L^2(\mathcal{P}(\glue,\branch), \mathsf{m}_{\ultra(\glue)} \times \lentree)$ and therefore it converges to a limit $g:\mathcal{P}(\glue,\branch)\to \mathbb{R}$. By the $\dftree_1$-convergence of $(f_n \circ I_\mathbf{u})_{n \in \mathbb{N}}$ to $f_\mathbf{u}$, we obtain $g(\mathbf{u},\cdot)=\nabla f_\mathbf{u}$ for $\lentree$-almost every point in $\tree(\branch)$ and for all $\mathbf{u} \in \ultra(\branch)$. Therefore $\dflaak(f_n-f,f_n-f)= \int_{\mathcal{P}(\glue,\branch)}  \abs{\nabla f_n - g}^2 \, d(\mathsf{m}_{\ultra(\glue)} \times \lentree) \xrightarrow{n \to \infty}0$. This completes the proof that $(\dflaak,\domlaak)$ is a closed form.

		To check the regularity of the Dirichlet form we need to show that $\domlaak \cap \contfunc_0(\laak(\glue,\branch))$ is dense in $\contfunc_\infty(\laak(\glue,\branch))$ with respect to the supremum norm. To this end, it  suffices to show that $\domlaak\cap \contfunc_0(\laak(\glue,\branch))$ separates points by Stone–Weierstrass theorem and the fact that $\domlaak\cap \contfunc_0(\laak(\glue,\branch))$  is an algebra due to \cite[Theorem 1.4.2(ii)]{FOT}. To this end pick a pair of distinct points $[(\mathbf{u}_i,\mathbf{t}_i)]_{\laak}\in \laak(\glue,\branch)$ for $i=1,2$. Let $$0<r< 2^{N_\glue} \wedge (\metlaak([(\mathbf{u}_1,\mathbf{t}_1)]_{\laak},[(\mathbf{u}_2,\mathbf{t}_2)]_{\laak})/2),$$
		where $N_\glue$ is as given in \eqref{e:locreg}.
		By Lemma \ref{l:worm}(ii) and \eqref{e:projmet}, we have $$\# \left(\projtree \left(B_{\laak(\glue,\branch)}\left([(\mathbf{u}_1,\mathbf{t}_1)]_{\laak},r\right)\right) \cap \cup_{n \ge N_\glue} \worm_n \right) \le 1 $$
		and hence we have $\# U(\mathbf{u}_1,\mathbf{t}_1,r) \le \sup_{k\in \mathbb{Z}} \branch(k)$, where 
		\[
		U(\mathbf{u}_1,\mathbf{t}_1,r):=\{\mathbf{u} \in \ultra(\branch): I_{\mathbf{u}}(\tree(\branch)) \cap B_{\laak(\glue,\branch)}\left([(\mathbf{u}_1,\mathbf{t}_1)]_{\laak},r\right) \neq \emptyset \}.
		\]
		Let $n \in \mathbb{Z}$ be such that $2^{n}  < r \le 2^{n+1}$,  $\mathbf{t}_1= \codetree(\wt{\mathbf{v}}_1)$ for some $\mathbf{v} \in \ultra(\branch)$ and $\mathbf{v}= \restr{\wt{\mathbf{v}}_1}{\llbracket n, \infty \rrbracket}$. Then the function $f_\mathbf{v}$ defined in \eqref{e:deffv} satisfies $f\in \domtree$ and $\supp(f) \subset B_{\tree(\branch)}(\mathbf{t},2^n) \subset B_{\tree(\branch)}(\mathbf{t},r)$ by Lemma \ref{l:fvenergy}.
		This implies the function $g \in C(\laak(\glue,\branch))$ defined by
		\[
		g \circ I_\mathbf{u} \equiv \begin{cases}
			f_{\mathbf{v}} & \mbox{if $\mathbf{u} \in U(\mathbf{u}_1,\mathbf{t}_1,r)$,}\\
			0  &\mbox{if $\mathbf{u} \in \ultra(\branch)\setminus U(\mathbf{u}_1,\mathbf{t}_1,r)$.}
		\end{cases}
		\]
		This implies that $\supp(g) \subset B_{\laak(\glue,\branch)}([(\mathbf{u}_1,\mathbf{t}_1)]_{\laak},2r)$ and hence $g([(\mathbf{u}_1,\mathbf{t}_1)]_{\laak})=1$ and  $g([(\mathbf{u}_2,\mathbf{t}_2)]_{\laak})=0$.
		
		In order to conclude the proof of regularity, we need to show that 
		$\domlaak \cap \contfunc_0(\laak(\glue,\branch))$ is dense in $\domlaak$ with respect to the $\dflaak_1$-inner product. 
		Let $f$ be an arbitrary function in $\domlaak$. Let $(\phi_n)\in \domtree$ be a sequence of functions as given by Lemma \ref{l:truncate}. Then we define the function $f_n : \laak(\glue,\branch) \to \mathbb{R}$ as 
		\begin{equation}
			(f_n \circ I_\mathbf{u})(\mathbf{t})= \phi_n(\mathbf{t}) (f\circ I_\mathbf{u})(\mathbf{t}), \quad \mbox{for all $\mathbf{u} \in \ultra(\glue), \mathbf{t} \in \tree(\branch)$.}
		\end{equation}
		It is easy to verify that $f_n$ is a well-defined function on $\laak(\glue,\branch)$ and $f_n \in \contfunc_0(\laak(\glue,\branch))$ for all $n \in \mathbb{N}$. By Lemma \ref{l:truncate}, there exists $C_2>1$ such that for all $f \in \domlaak, n,m \in \mathbb{N}$ with $m \le n$
		\begin{align*}
			\lefteqn{\dflaak(f_n-f_m,f_n-f_m)} \\
			&\le	C_1 \sum_{\mathbf{u}\in \ultra(\glue)} \mathsf{m}_{\ultra(\glue)}(\mathbf{u}) \left(\int_{B(p_\tree,2^{2m})^c} (I_\mathbf{u}\circ f)^2\, d\meastree+ \emtree(I_\mathbf{u}\circ f,I_\mathbf{u}\circ f)\left(B(p_\tree,2^{2m})^c \right)\right).
		\end{align*}
		This along with dominated convergence theorem implies that $(f_n)_{n\in\mathbb{N}}$ is $\dflaak_1$-Cauchy and converges to $f$. This completes the proof that the Dirichlet form $(\dflaak,\domlaak)$  on $L^2(\laak(\glue,\branch),\measlaak)$. 
		
		Note that  every function in $\domlaak$ is bounded due to Lemma \ref{l:ptub} and hence forms an algebra by \cite[Lemma 1.4.2(ii)]{FOT}.
		The expression \eqref{e:emlaakapprox} for energy measure follows from \eqref{e:defdflaak}, \eqref{e:dflaakalt}, the definition of energy measure and the Leibniz rule for gradients on $\tree(\branch)$. 
	\end{proof}
	
	
	We would like to remove the assumption \eqref{e:locreg} in Proposition \ref{p:dflaakprelim}. To this end, we would like to approximate an arbitrary Laakso-type space $\laak(\glue,\branch)$ by those that satisfy the additional assumption \eqref{e:locreg}.
	The given Laasko-type space $\laak(\glue,\branch)$ can be viewed as an projective limit of $\laak(\glue_n,\branch)$ as $n \to \infty$ using the projection maps we define below.
	To this end, for any $\mathbf{g}: \mathbb{Z} \to \mathbb{N}$ and $n \in \mathbb{N}$, we set 
	\begin{equation} \label{e:defgn}
		\glue_n(k)= \begin{cases}
			\glue(k) & \mbox{if $k \ge -n $,}\\
			1 & \mbox{if $k < -n$.}
		\end{cases}
	\end{equation}
	For any $n \in \mathbb{N}$, we  define a \emph{projection map} $\projultra_n: \ultra(\glue) \to \ultra(\glue_n)$ as 
	\begin{equation} \label{e:defprojultra}
		[\projultra_n(\mathbf{u})](k) = \begin{cases}
			\mathbf{u}(k)& \mbox{if $k \in \llbracket -n , \infty \rrbracket$,}\\
			0 & \mbox{otherwise,}
		\end{cases}
	\end{equation}
	for all $\mathbf{u} \in \ultra(\glue)$.
	It is easy to verify that $\projultra_n$ is $1$-Lipschitz and induces  another $1$-Lipschitz map $\projlaak_n: \laak(\glue,\branch) \to \laak(\glue_n,\branch)$ for any $n \in \mathbb{N}$ given by
	\begin{equation} \label{e:defprojlaak}
		\projlaak_n([(\mathbf{u},\mathbf{t})]_{\laak})=[(\projultra_n(\mathbf{u}),\mathbf{t})]_{\laak}, \quad \mbox{for all $\mathbf{u}\in \ultra(\glue), \mathbf{t} \in \tree(\branch)$.}
	\end{equation}
	We note that the identity map induces   isometric embeddings of $\laak(\glue_n,\branch)$ in $\laak(\glue,\branch)$ and similarly of $\laak(\glue_m,\branch)$ in $\laak(\glue_n,\branch)$
	for all $m,n \in \mathbb{N}$ with $m \le n$. We similarly have projection maps
	$\projultra_{n,m}:\ultra(\glue_n) \to \ultra(\glue_m), \projlaak_{n,m}:\laak(\glue_n,\branch) \to \laak(\glue_m,\branch)$ for all $n,m \in \mathbb{N}$ with $n \ge m$.

	By Proposition \ref{p:dflaakprelim} for any bounded function $\mathbf{g}:\mathbb{Z} \to \mathbb{N}$ there is a sequence of MMD spaces $(\laak(\glue_n,\branch),\mathsf{d}_{\laak(\glue_n,\branch)},\mathsf{m}_{\laak(\glue_n,\branch)},\mathcal{E}^{\laak_n},\mathcal{F}^{\laak_n})$. Let $\generlaak_n:D(\generlaak_n) \to L^2(\laak(\glue_n,\branch),\mathsf{m}_{\laak(\glue_n,\branch)})$ denote the  (non-negative definite) generators corresponding to associated Dirichlet forms for $n \in \mathbb{N}$. 
	In the following lemma, we show that these MMD spaces are `consistent' with respect to the projection maps $\projlaak_{n,m}:\laak(\glue_n,\branch) \to \laak(\glue_m,\branch)$  where $m \le n$. 
	\begin{lemma} \label{l:consistent}
		Let $m,n \in \mathbb{N}$ such that $m \le n$. 
		\begin{enumerate}[(a)]
			\item $(\projlaak_{n,m})_*\left(\mathsf{m}_{\laak(\glue_n,\branch)} \right) =\mathsf{m}_{\laak(\glue_m,\branch)}$ and $(\projlaak_{n})_*\left(\measlaak\right)  =\mathsf{m}_{\laak(\glue_n,\branch)}$.
			\item For all $f \in \mathcal{F}^{\laak_m}$, we have $f \circ \projlaak_{n,m} \in \mathcal{F}^{\laak_n}$ and $\mathcal{E}^{\laak_n}(f \circ \projlaak_{n,m} ,f \circ \projlaak_{n,m} )=\mathcal{E}^{\laak_m}(f ,f)$.
			\item For $f \in D(\generlaak_m)$, we have $f \circ \projlaak_{n,m} \in D(\generlaak_n)$ and $\generlaak_n(f \circ \projlaak_{n,m})= (\generlaak_m(f)) \circ \projlaak_{n,m}$.
		\end{enumerate}
	\end{lemma}
	
	\begin{proof}
		\begin{enumerate}[(a)]
			\item This follows from observing that $(\projultra_{n,m})_*(\mathsf{m}_{\ultra(\glue_n)})=\mathsf{m}_{\ultra(\glue_m)}$ and  $(\projultra_{n})_*(\mathsf{m}_{\ultra(\glue)})=\mathsf{m}_{\ultra(\glue_n)}$ for any $n,m \in \mathbb{N}$ with $n \ge m$.
			\item Let $f \in \mathcal{F}^{\laak_m}$, then for any $\mathbf{u} \in \ultra(\glue_n)$, we have $I_{\mathbf{u}}(f \circ \projlaak_{n,m}) =I_{\projultra_{n,m}(\mathbf{u})} f$. Therefore 
			\begin{align*}
				\lefteqn{\sum_{\mathbf{u} \in \ultra(\glue_n)}\dftree(I_{\mathbf{u}}(f \circ \projlaak_{n,m}),I_{\mathbf{u}}(f \circ \projlaak_{n,m})) \mathsf{m}_{\ultra(\glue_n)}(\{\mathbf{u}\})} \\
				&= 	\sum_{\mathbf{u} \in \ultra(\glue_n)} \dftree(I_{\projultra_{n,m}(\mathbf{u})} (f),I_{\projultra_{n,m}(\mathbf{u})} (f)) \mathsf{m}_{\ultra(\glue_n)}(\{\mathbf{u}\})   \\
				&= \sum_{\mathbf{v} \in \ultra(\glue_m)} \dftree( I_{\mathbf{v}} (f),I_{\mathbf{v}} (f)   ) \left((\projultra_{n,m})_*(\mathsf{m}_{\ultra(\glue_n)})\right)(\{\mathbf{v}\})\\
				&\stackrel{(a)}{=}\sum_{\mathbf{v} \in \ultra(\glue_m)}   \dftree( I_{\mathbf{v}} (f),I_{\mathbf{v}} (f)   )   \mathsf{m}_{\ultra(\glue_m)}(\{\mathbf{v}\}) = \mathcal{E}^{\laak_m}(f ,f) <\infty.
			\end{align*}
			By part (a), $f$  and $f \circ  \projlaak_{n,m}$  have the same $L^2$-norms in the respective spaces. 
			Hence we conclude that $f \circ \projlaak_{n,m} \in \mathcal{F}^{\laak_n}$ and $\mathcal{E}^{\laak_n}(f \circ \projlaak_{n,m} ,f \circ \projlaak_{n,m} )=\mathcal{E}^{\laak_m}(f ,f)$.
			\item By a calculation similar to part (b), for any $f \in  \mathcal{F}^{\laak_m}, g \in \mathcal{F}^{\laak_n}$, we have 
			\begin{align} \label{e:cons1}
				\lefteqn{\sum_{\mathbf{u} \in \ultra(\glue_n)}\dftree(I_{\mathbf{u}}(f \circ \projlaak_{n,m}),I_{\mathbf{u}}(g)) \mathsf{m}_{\ultra(\glue_n)}(\{\mathbf{u}\})} \nonumber\\
				&=\sum_{\mathbf{v} \in \ultra(\glue_m)}  \dftree\left(I_{\mathbf{v}}(f),  \sum_{\mathbf{u} \in (\projultra_{n,m})^{-1}\left(\{\mathbf{v}\}\right)}I_{\mathbf{u}}(g) \frac{\mathsf{m}_{\ultra(\glue_n)}(\{\mathbf{u}\})}{\mathsf{m}_{\ultra(\glue_m)}(\{\mathbf{v}\})}\right)  \mathsf{m}_{\ultra(\glue_m)}(\{\mathbf{v}\}).
			\end{align}
			This motivates us to define an \emph{averaging operator} $A_{n,m}: \contfunc(\laak(\glue_n, \branch)) \to \contfunc(\laak(\glue_m, \branch))$ for all $n,m \in \mathbb{N}$ with $n \ge m$ defined as the function such that 
			\begin{equation} \label{e:defapprox}
				I_{\mathbf{v}} \circ A_{n,m}(h)= \sum_{\mathbf{u} \in (\projultra_{n,m})^{-1}\left(\{\mathbf{v}\}\right)}I_{\mathbf{u}}(h) \frac{\mathsf{m}_{\ultra(\glue_n)}(\{\mathbf{u}\})}{\mathsf{m}_{\ultra(\glue_m)}(\{\mathbf{v}\})}= \frac{1}{\#(\projlaak_{n,m})^{-1}\left(\{\mathbf{v}\}\right)} \sum_{\mathbf{u} \in (\projultra_{n,m})^{-1}\left(\{\mathbf{v}\}\right)}I_{\mathbf{u}}(h)
			\end{equation}
			for all $\mathbf{v} \in \ultra(\glue_m), h \in \contfunc(\laak(\glue_n,\branch))$. It is easy to verify that $A_{n,m}$ is well-defined and that $A_{n,m}(h)$ is continuous for all $h \in \contfunc(\laak(\glue_n,\branch))$. By Cauchy-Schwarz inequality, we have
			\begin{align} \label{e:cons2}
				\sum_{\mathbf{v} \in \ultra(\glue_m)} \dftree\left((I_{\mathbf{v}} \circ A_{n,m})(h),(I_{\mathbf{v}} \circ A_{n,m})(h) \right)\mathsf{m}_{\ultra(\glue_m)}(\{\mathbf{v}\}) \le \mathcal{E}^{\laak_n}(h,h), \quad \mbox{for all $h \in \mathcal{F}^{\laak_n}$ }
			\end{align}  
			and 
			\begin{equation} \label{e:cons3}
				\int_{\laak(\glue_m,\branch)} \abs{A_{n,m}(h)}^2 \, d\mathsf{m}_{\laak(\glue_m,\branch)} \le \int_{\laak(\glue_n,\branch)} h^2 \, d\mathsf{m}_{\laak(\glue_n,\branch)}, \quad \mbox{for all $h \in \contfunc(\laak(\glue_n,\branch))$.}
			\end{equation}
			By \eqref{e:cons2} and \eqref{e:cons3},  $h \in \mathcal{F}^{\laak_n}$ implies $A_{n,m}(h) \in \mathcal{F}^{\laak_m}$. Hence by \eqref{e:cons1}, \eqref{e:defapprox}, we have for all $f \in  D(\generlaak_m), g \in \mathcal{F}^{\laak_n}$,
			\begin{align} \label{e:cons4}
				\mathcal{E}^{\laak_n}(f \circ \projlaak_{n,m},g)&=\sum_{\mathbf{u} \in \ultra(\glue_n)}\dftree(I_{\mathbf{u}}(f \circ \projlaak_{n,m}),I_{\mathbf{u}}(g)) \mathsf{m}_{\ultra(\glue_n)}(\{\mathbf{u}\}) \nonumber \\
				&= \mathcal{E}^{\laak_m}(f, A_{n,m}(g))  \stackrel{\eqref{e:generate}}{=} \int \generlaak_m(f)  A_{n,m}(g) \, d\mathsf{m}_{\laak(\glue_m,\branch)}\quad \mbox{(by \eqref{e:cons1},\eqref{e:defapprox})} \nonumber \\
				&= \int (\projlaak_{n,m} \circ \generlaak_m )(f)  g \, d\mathsf{m}_{\laak(\glue_n,\branch)}
			\end{align}
			By \cite[Corollary 1.3.1]{FOT}, \eqref{e:cons4} and \eqref{e:cons3}, we obtain the desired conclusion.
		\end{enumerate}
	\end{proof}
	
	The averaging operator $A_{n,m}$ defined in \eqref{e:defapprox} admits a limit as $n \to \infty$ that we introduce below.
	\begin{definition}
		For $f \in \contfunc_0(\laak(\glue,\branch)), m \in \mathbb{N}$, we define $A_{\infty,m}(f): \laak(\glue_m,\branch) \to \mathbb{R}$ as 
		\begin{equation} \label{e:defaver}
			(A_{\infty,m}(f))([(\mathbf{u},\mathbf{t})]_{\laak})= \frac{1}{\mathsf{m}_{\ultra(\glue)} \left((\projultra_{m})^{-1}(\{\mathbf{u}\})\right)}\int_{(\projultra_{m})^{-1}(\{\mathbf{u}\})} f([(\mathbf{v},\mathbf{t})]_{\laak}) \, \mathsf{m}_{\ultra(\glue)}(d\mathbf{v})
		\end{equation}
		for any $(\mathbf{u},\mathbf{t}) \in \mathcal{P}(\glue_m,\branch)$.
		It is easy to see that $A_{\infty,m}(f)$ is well-defined.
		By the uniform continuity of $f$ restricted to closed and bounded sets, we have that $A_{\infty,m}(f) \in \contfunc_0(\laak(\glue_m,\branch))$ whenever $f \in \contfunc_0(\laak(\glue,\branch))$. Moreover the support of $A_{\infty,m}(f)$  is contained in  $(\projlaak_{m})(\supp(f))$.
		
	\end{definition}

	Let us define an operator $\generlaak_\infty: D(\generlaak_\infty) \to L^2(\laak(\glue,\branch),\measlaak)$ as 
	\begin{align} \label{e:deffred}
		D(\generlaak_\infty)&:= \Biggl\{ f \circ \projlaak_n \Biggm| f \in D(\generlaak_n), n \in \mathbb{N} \Biggr\}, \nonumber \\
		\generlaak_\infty(f \circ \projlaak_n)&:= (\generlaak_n(f))\circ\projlaak_n, \quad \mbox{for all $f \in D(\generlaak_n), n \in \mathbb{N}$.}
	\end{align}
	Let us check that \eqref{e:deffred} gives a well-defined operator. Indeed, if $f \circ \projlaak_n= g \circ \projlaak_m$ for $f \in D(\generlaak_n), g \in D(\generlaak_m)$ where $m < n, m,n \in \mathbb{N}$, then $f=  g \circ \projlaak_{n,m}$. This along with Lemma \ref{l:consistent}(c) implies that 
	\[
	(\generlaak_n(f))\circ\projlaak_n=(\generlaak_n(g \circ \projlaak_{n,m}))\circ\projlaak_n= \generlaak_m(g) \circ \projlaak_{n,m} \circ\projlaak_n= (\generlaak_m(g))\circ\projlaak_m.
	\]
	In other words, $\generlaak_\infty$ is a well-defined. 
	\begin{lemma} \label{l:friedprep}
		The operator  $\generlaak_\infty: D(\generlaak_\infty) \to L^2(\laak(\glue,\branch),\measlaak)$ is densely defined, non-negative definite and  symmetric on   $L^2(\laak(\glue,\branch),\measlaak)$.
	\end{lemma}
	\begin{proof}
		By Lemma \ref{l:consistent}(b), for all $n \in \mathbb{N}, f \in D(\generlaak_n)$, we have 
		\begin{align*}
			\int( f \circ \projlaak_{n}) \generlaak_\infty(f \circ \projlaak_{n}) \, d\measlaak&= \int  f \generlaak_n(f) \, d\mathsf{m}_{\laak(\glue_n,\branch)} \quad \mbox{(by \eqref{e:deffred} and Lemma \ref{l:consistent}(a))} \\
			&= \mathcal{E}^{\laak_n}(f,f) \ge 0 \quad \mbox{(by \eqref{e:generate})}.
		\end{align*}
		Therefore $\laak_\infty$ is non-negative definite. 
		
		For all $n,m \in \mathbb{N}$ with $m \le n$, $f \in D(\generlaak_n), m \in D(\generlaak_m)$
		\begin{align*}
			\int( f \circ \projlaak_{n}) \generlaak_\infty(g \circ \projlaak_{m}) \, d\measlaak&= \int  f \generlaak_n(g \circ \projlaak_{n,m}) \, d\mathsf{m}_{\laak(\glue_n,\branch)} \quad \mbox{(by Lemma \ref{l:consistent}(a,c))} \\
			&\stackrel{\eqref{e:generate}}{=} \mathcal{E}^{\laak_n}(f,g \circ \projlaak_{n,m}) \stackrel{\eqref{e:generate}}{=}   \int  (g \circ \projlaak_{n,m}) \generlaak_n(f) \, d\mathsf{m}_{\laak(\glue_n,\branch)} \\
			&= \int (g \circ \projlaak_{m})  \generlaak_\infty( f \circ \projlaak_{n})\, d\measlaak\quad \mbox{(by Lemma \ref{l:consistent}(a,c))}.
		\end{align*}
		In other words, we conclude that $\generlaak_\infty$ is a symmetric operator on  $L^2(\laak(\glue,\branch),\measlaak)$.
		
		It remains to show that $ D(\generlaak_\infty)$ is dense in $L^2(\measlaak)$. Let us denote the closure of $ D(\generlaak_\infty)$  in $L^2(\measlaak)$ as $\overline{D(\generlaak_\infty)}$. By Lemma \ref{l:consistent}(a,b) and the density of $D(\generlaak_n)$ in $L^2(\mathsf{m}_{\laak(\glue_n,\branch)})$ for all $n \in \mathbb{N}$, we have 
		\begin{equation} \label{e:fred1}
			\bigcup_{n \in \mathbb{N}} \{ f \circ \projlaak_n : f \in \contfunc_0(\laak(\glue_n,\branch))\}  \subset \bigcup_{n \in \mathbb{N}} \{ f \circ \projlaak_n : f \in L^2(\mathsf{m}_{\laak(\glue_n,\branch)})\} \subset	\overline{D(\generlaak_\infty)}.
		\end{equation}
		By \cite[Proposition 3.3.49]{HKST}, it suffices to show that  
		\begin{equation} \label{e:fred2}
			\contfunc_0(\laak(\glue,\branch))	\subset	\overline{D(\generlaak_\infty)}.
		\end{equation}
		For any $f \in \contfunc_0(\laak(\glue,\branch))$, then the sequence $f_n:= (A_{\infty,n}(f))\circ \projlaak_n \in \contfunc_0(\laak(\glue,\branch)) \cap \overline{D(\generlaak_\infty)}$ by \eqref{e:fred1} for all $n \in \mathbb{N}$. Since $\cup_{n \in \mathbb{N}} \supp(f_n)$ is bounded and $\sup_{n \in \mathbb{N}} \sup_{p \in \laak(\glue,\branch)} \abs{f_n(p)} \le \sup_{p \in \laak(\glue,\branch)} \abs{f(p)}$ by the dominated convergence theorem, we have $\lim_{n \to \infty} f_n = f$ in $L^2(\measlaak)$. This completes the proof of \eqref{e:fred2} and hence that $\overline{D(\generlaak_\infty)}= L^2(\measlaak)$.
	\end{proof}
	
	Next, we use Friedrichs extension theorem to construct a generator and the Dirichlet form on $L^2(\laak(\glue,\branch), \measlaak)$.  This completes the construction of diffusion on the limiting Laakso-type space and can be considered as an analytic counterpart of the probabilistic approach of Barlow and Evans \cite{BE}.
	\begin{prop} \label{p:friedrich}
		The symmetric bilinear form $(f,g) \mapsto \int_{\laak(\glue,\branch)} f \generlaak_\infty(g) \, d\measlaak$ on $D(\generlaak_\infty) \times D(\generlaak_\infty)$ is closable. Its closure $\dflaak: \domlaak \times \domlaak \to \mathbb{R}$ is a regular, strongly local, Dirichlet form on $L^2(\laak(\glue,\branch), \measlaak)$ such that    $	\bigcup_{n \in \mathbb{N}} \{f \circ \projlaak_{n} : f\in \mathcal{F}^{\laak_n}\cap \contfunc_0(\laak(\glue_n,\branch)) \}$ is a core.
	\end{prop}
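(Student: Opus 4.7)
The plan is to apply the Friedrichs extension theorem with Lemma \ref{l:friedprep} as input, and then transfer the Dirichlet-form structure from the approximating forms $(\sE^{\laak_n},\sF^{\laak_n})$ of Proposition \ref{p:dflaakprelim} to the limit by $\dflaak_1$-density.

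Since $(\generlaak_\infty,D(\generlaak_\infty))$ is densely defined, non-negative definite, and symmetric on $L^2(\laak(\glue,\branch),\measlaak)$ by Lemma \ref{l:friedprep}, the Friedrichs extension theorem yields a smallest closed symmetric form $(\dflaak,\domlaak)$ extending $(f,g)\mapsto\langle f,\generlaak_\infty g\rangle_{L^2(\measlaak)}$ on $D(\generlaak_\infty)\times D(\generlaak_\infty)$, with $D(\generlaak_\infty)$ being $\dflaak_1$-dense in $\domlaak$. My first step is to enlarge this dense subspace: for each $n\in\bN$ and $f\in\sF^{\laak_n}$, by $\sE^{\laak_n}_1$-density of $D(\generlaak_n)$ in $\sF^{\laak_n}$ I pick $f_k\in D(\generlaak_n)$ with $f_k\to f$ in $\sE^{\laak_n}_1$; Lemma \ref{l:consistent}(a)-(b) then shows that $f_k\circ\projlaak_n\in D(\generlaak_\infty)$ is $\dflaak_1$-Cauchy with $L^2(\measlaak)$-limit $f\circ\projlaak_n$, so $f\circ\projlaak_n\in\domlaak$ with
\begin{equation*}
\dflaak(f\circ\projlaak_n,g\circ\projlaak_n)=\sE^{\laak_n}(f,g)\quad\text{for all }f,g\in\sF^{\laak_n}.
\end{equation*}
Combined with regularity of each $(\sE^{\laak_n},\sF^{\laak_n})$ from Proposition \ref{p:dflaakprelim}, a further approximation inside $\sF^{\laak_n}$ shows that the candidate core $\sC:=\bigcup_{n\in\bN}\{f\circ\projlaak_n : f\in\sF^{\laak_n}\cap\contfunc_0(\laak(\glue_n,\branch))\}$ is itself $\dflaak_1$-dense in $\domlaak$.

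Next, I verify the Markovian property on $\sC$ and extend: for $u=f\circ\projlaak_n\in\sC$, the unit contraction $(0\vee u)\wedge 1=((0\vee f)\wedge 1)\circ\projlaak_n$ lies again in $\sC$ and its $\dflaak$-energy is bounded by $\dflaak(u,u)$ by the Markov property of $\sE^{\laak_n}$. For general $u\in\domlaak$, take a $\dflaak_1$-approximating sequence $u_k\in\sC$; then $(0\vee u_k)\wedge 1\to(0\vee u)\wedge 1$ in $L^2(\measlaak)$ and the $\dflaak$-energies are uniformly bounded, so $L^2$-lower semicontinuity of the closed form yields $(0\vee u)\wedge 1\in\domlaak$ with the required bound. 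Regularity now consists of the $\dflaak_1$-density of $\sC$ in $\domlaak$ already shown and the supremum-norm density of $\sC$ in $\contfunc_0(\laak(\glue,\branch))$; the latter follows via Stone--Weierstrass inside $\contfunc_\infty(\laak(\glue,\branch))$, since $\sC$ is a subalgebra (two elements at levels $m\le n$ may be lifted to the common level $n$ via Lemma \ref{l:consistent}(b), after which \cite[Theorem 1.4.2]{FOT} inside $\sF^{\laak_n}\cap\contfunc_0(\laak(\glue_n,\branch))$ gives closure under products), vanishes nowhere, and separates points. For the point-separation, any distinct $p,q\in\laak(\glue,\branch)$ satisfy $\projlaak_n(p)\neq\projlaak_n(q)$ for all sufficiently large $n$---this follows by unpacking $R_\laak$ and invoking pairwise disjointness of the wormholes $\{\worm_k\}_{k\in\bZ}$ from Lemma \ref{l:worm}(ii)---after which regularity of $\sE^{\laak_n}$ supplies a separating function whose pullback by $\projlaak_n$ lies in $\sC$.

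Strong locality on $\sC$ is immediate: given $u=f\circ\projlaak_n,v=g\circ\projlaak_n\in\sC$ at a common level $n$ with $\supp_\measlaak[u-c\one]\cap\supp_\measlaak[v]=\emptyset$ for some $c\in\bR$, Lemma \ref{l:consistent}(a) together with surjectivity of $\projlaak_n$ yields $\supp[f-c\one]\cap\supp[g]=\emptyset$, so strong locality of $\sE^{\laak_n}$ gives $\dflaak(u,v)=\sE^{\laak_n}(f,g)=0$. The main obstacle I anticipate is extending strong locality from $\sC$ to all of $\domlaak$, since support conditions are not stable under $\dflaak_1$-convergence. My plan to circumvent this is to first derive an explicit formula for the energy measure $\emlaak$ on $\domlaak$ analogous to \eqref{e:emlaakapprox}---established on $\sC$ directly from Proposition \ref{p:dflaakprelim} and extended to $\domlaak$ via the truncation-based construction in Definition \ref{d:EnergyMeas} together with a dominated-convergence argument driven by the averaging operators $A_{\infty,n}$ from \eqref{e:defaver}---and then to invoke the standard characterization that strong locality is equivalent to $\emlaak(u,u)$ not charging any level set $\{u=c\}$, a property inherited from $\emtree$ through this formula.
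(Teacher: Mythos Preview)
Your approach is essentially the same as the paper's, and it is correct. Both start from Lemma \ref{l:friedprep} and the Friedrichs extension theorem, enlarge $D(\generlaak_\infty)$ to $\bigcup_n\{f\circ\projlaak_n:f\in\sF^{\laak_n}\}$ via the $\sE^{\laak_n}_1$-density of $D(\generlaak_n)$ in $\sF^{\laak_n}$ together with Lemma \ref{l:consistent}, and obtain regularity through Stone--Weierstrass.

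The one place where you diverge is strong locality. You correctly observe that support conditions are not preserved under $\dflaak_1$-limits and propose to circumvent this by first deriving the energy-measure formula and then invoking the level-set characterization of strong locality. This would work, but it is considerably more effort than necessary. The paper instead appeals directly to \cite[Theorem 3.1.2 and Exercise 3.1.1]{FOT}, which assert precisely that (strong) locality passes from a closable Markovian form on a dense core to its smallest closed extension. Since you have already verified strong locality on $\sC$, those results finish the argument in one line; the energy-measure formula you describe is in fact established separately later in the paper (Lemma \ref{l:emeaslaak}) for use in the Poincar\'e and cutoff-energy inequalities, not for strong locality. As a minor further simplification, the paper obtains the Markovian property by citing \cite[Theorem 3.1.1]{FOT} rather than verifying it by hand via lower semicontinuity as you do---though your argument is perfectly valid and arguably more self-contained.
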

	\begin{proof}
		Note that by Lemma \ref{l:friedprep}, $ \generlaak_\infty$ is a non-negative, symmetric operator. Therefore by the Friedrichs extension theorem \cite[Theorem X.23]{RS}, the corresponding quadratic form $(f,g) \mapsto \int_{\laak(\glue,\branch)} f \generlaak_\infty(g) \, d\measlaak$ on $D(\generlaak_\infty) \times D(\generlaak_\infty)$ is closable. By \cite[Theorem 3.1.1]{FOT}, its closure $(\dflaak,\domlaak)$ is a Dirichlet form on $L^2(\measlaak)$.
		
		Since $D(\generlaak_n)$ is $\mathcal{E}^{\laak_n}_1$-dense in $\mathcal{F}^{\laak_n}$, by Lemma \ref{l:consistent}(a,b), we have 
		\begin{equation} \label{e:fried1}
			\bigcup_{n \in \mathbb{N}} \{f \circ \projlaak_{n} : f\in D(\generlaak_n) \} \subset \domlaak, \quad \dflaak(f \circ \projlaak_{k},g \circ \projlaak_{k}) = \mathcal{E}^{\laak_k}(f,g)
		\end{equation}
		for all $k \in \mathbb{N}, f,g \in \mathcal{F}^{\laak_k}$. The strong locality of $(\dflaak,\domlaak)$ follows from that of $(\mathcal{E}^{\laak_n},\mathcal{F}^{\laak_n})$ for all $n \in \mathbb{N}$, \cite[Theorem 3.1.2, Exercise 3.1.1]{FOT} along with observing that $	\bigcup_{n \in \mathbb{N}} \{f \circ \projlaak_{n} : f\in \mathcal{F}^{\laak_n} \cap \contfunc_0(\laak(\glue_n,\branch)) \}$ is an $\dflaak_1$-dense subset of $\domlaak$ due to the closability of the bilinear form $(f,g) \mapsto \int_{\laak(\glue,\branch)} f \generlaak_\infty(g) \, d\measlaak$ on $D(\generlaak_\infty)$.
		
		Since $	\bigcup_{n \in \mathbb{N}} \{f \circ \projlaak_{n} : f\in \mathcal{F}^{\laak_n} \cap \contfunc_0(\laak(\glue_n,\branch)) \}$ is an algebra that separates points, by the Stone-Weierstrass theorem $	\bigcup_{n \in \mathbb{N}} \{f \circ \projlaak_{n} : f\in \mathcal{F}^{\laak_n} \cap \contfunc_0(\laak(\glue_n,\branch)) \}$ is dense in $\contfunc_\infty(\laak(\glue,\branch))$ with respect to the supremum norm. This completes the proof of the regularity of $(\dflaak,\domlaak)$ and that $	\bigcup_{n \in \mathbb{N}} \{f \circ \projlaak_{n} : f\in \mathcal{F}^{\laak_n} \cap \contfunc_0(\laak(\glue_n,\branch)) \}$ is a core for $(\dflaak,\domlaak)$.  
	\end{proof}
	
	We describe the energy measure of functions in the core described in Proposition \ref{p:friedrich} in terms of the energy measure $\emtree$ for the diffusion on the tree $\tree(\branch)$. Let $\emlaak$ denote the \emph{energy measure corresponding to the MMD space $(\laak(\glue, \branch), \mathsf{d}_{\laak(\glue, \branch)}, \mathsf{m}_{\laak(\glue, \branch)}, \dflaak,\domlaak)$}. We have the following relation between $\emlaak$ and $\emtree$. 
	\begin{lemma} \label{l:emeaslaak}
		For any $n \in \mathbb{N}, f \in \mathcal{F}^{\laak_n} $, we have 
		\begin{equation} \label{e:emlaak}
			\emlaak(f \circ \projlaak_{n}, f \circ \projlaak_{n})= \quolaak_*\left( \sum_{ \mathbf{u} \in \ultra(\glue_n)} \restr{\mathsf{m}_{\ultra(\glue)}}{B_{\ultra(\glue)}(\mathbf{u},2^{-n})} \times  \emtree(f \circ I_{\mathbf{u}},f \circ I_{\mathbf{u}}) \right).
		\end{equation}
	\end{lemma}
	\begin{proof}
		For $m \ge n, m, n\in \mathbb{Z}, f \in \mathcal{F}^{\laak_n}, g \in \mathcal{F}^{\laak_m}$ we have
		\begin{align*}
			\MoveEqLeft{\int (g\circ \projlaak_{m}) \, d\emlaak(f \circ \projlaak_n,f \circ \projlaak_n) } \\
			&= \dflaak(f \circ \projlaak_n,(f \circ \projlaak_n) (g \circ \projlaak_m))-\frac{1}{2}\dflaak((f \circ \projlaak_n)^{2},g  \circ \projlaak_{m}) \\
			&= \mathcal{E}^{\laak_m}(f \circ \projlaak_{m,n},(f \circ \projlaak_{m,n}) g)-\frac{1}{2}\mathcal{E}^{\laak_m}((f \circ \projlaak_{m,n})^{2},g   ) \\
			&= \int g \, d\Gamma^{\laak_m}(f \circ \projlaak_{m,n},f \circ \projlaak_{m,n})  \\
			&= \sum_{\mathbf{u} \in \ultra(\glue_m)} \mathsf{m}_{\ultra(\glue_m)}(\{\mathbf{u}\}) \int_{\tree(\branch)} (g \circ I_{\mathbf{u}})  \, d \emtree(f \circ \pi_{m,n} \circ I_{\mathbf{u}},f \circ \pi_{m,n} \circ I_{\mathbf{u}}) \quad \mbox{(by \eqref{e:emlaakapprox})} \\
			&= \sum_{\mathbf{u} \in \ultra(\glue_m)} \mathsf{m}_{\ultra(\glue)}\left(B_{\ultra(\glue)}(\mathbf{u},2^{-m})\right) \int_{\tree(\branch)} (g \circ I_{\mathbf{u}}) \, d \emtree(f \circ  I_{\projultra_{n}(\mathbf{u})},f \circ  I_{\projultra_{n}(\mathbf{u})}) \\
			&= \int (g \circ \projlaak_{m}) \, d\quolaak_*\left( \sum_{ \mathbf{u} \in \ultra(\glue_m)}  \restr{\mathsf{m}_{\ultra(\glue)}}{B_{\ultra(\glue)}(\mathbf{u},2^{-m})} \times \emtree(f \circ I_{\projultra_{n}(\mathbf{u})},f \circ I_{\projultra_{n}(\mathbf{u})}) \right)\\
			&= \int (g \circ \projlaak_{m})\,  d\quolaak_*\left( \sum_{ \mathbf{u} \in \ultra(\glue_n)}  \restr{\mathsf{m}_{\ultra(\glue)}}{B_{\ultra(\glue)}(\mathbf{u},2^{-n})} \times \emtree(f \circ I_{\mathbf{u}},f \circ I_{\mathbf{u}}) \right).
		\end{align*}
		Since $\cup_{m \ge n} \{g \circ \projlaak_{m}: g \in \mathcal{F}^{\laak_m}\}$ is a core for $(\dflaak,\domlaak)$ we obtain \eqref{e:emlaak}.
	\end{proof}
	
	The expression \eqref{e:emlaak} can be rewritten using the gradient introduced in \eqref{e:defgradlaak}. Let us denote the gradient in \eqref{e:defgradlaak} for $f \in \mathcal{F}^{\laak_n}$ as $\grad^{\laak_n} f: \laak(\glue_n,\branch) \to \mathbb{R}$. In this case, we \emph{define} $\grad^\laak(f \circ \projlaak_n): \laak(\glue,\branch) \to \mathbb{R}$ as 
	\begin{equation} \label{e:defgradlaaklim}
		\grad^\laak(f \circ \projlaak_n):= (\grad^{\laak_n}f)  \circ \projlaak_n \quad \mbox{for all $n \in \mathbb{N}, f \in \mathcal{F}^{\laak_n}$.}
	\end{equation}
	It is easy to check that $\grad^\laak(f \circ \projlaak_n)$ is well-defined $\lenlaak$-almost everywhere. In other words, if $f_n \circ \projlaak_n = f_m \circ \projlaak_m$ for some $m,n \in \mathbb{N}, f_n \in \mathcal{F}^{\laak_n}, \mathcal{F}^{\laak_m}$, then $\grad^\laak(f_m \circ \projlaak_m)= \grad^\laak(f_n \circ \projlaak_n)$ $\lenlaak$-almost everywhere. Furthermore, by \eqref{e:emlaakapprox} and Lemma \ref{l:emeaslaak}, we have 
	\begin{equation} \label{e:emlaakint}
		\emlaak(f \circ \projlaak_n,f \circ \projlaak_n)(A)= \int_A \abs{\grad^\laak(f \circ \projlaak_n)}^2 \, d\lenlaak, \quad \mbox{for all Borel sets $A \subset \laak(\glue,\branch)$.}
	\end{equation}
	
	\section{Heat kernel estimates for the Laakso-type space} \label{s:hke}

	The main result of this section is that the diffusion on Laakso-type space satisfies sub-Gaussian heat kernel bounds (Theorem \ref{t:hkediff}). This is established by deriving a Poincar\'e inequality in \textsection \ref{ss:pi} and a cutoff energy inequality in  \textsection \ref{ss:cs}. 
	This sub-Gaussian estimate for Laakso-type space is enough to prove Theorem \ref{t:mainsuf} by choosing suitable branching and gluing functions using Lemma \ref{l:elementary}. 
	We discuss extensions of the main results to the discrete time setting in \textsection \ref{ss:rw}. The martingale dimension of the Laakso-type MMD space is computed in \textsection \ref{ss:martingale}. Finally, we state some questions and conjectures related to this work in \textsection \ref{ss:conclude}.
	
	\subsection{Poincar\'e inequality} \label{ss:pi}
	Next, we obtain Poincar\'e inequality for the diffusion on Laakso-type space.
	As mentioned in the introduction, the proof is an adaptation the approach used in \cite{Laa} using the pencil of curves constructed in Lemma \ref{l:pencil}.
	\begin{prop} \label{p:pi}
		Let $\branch,\glue:\mathbb{Z} \to \mathbb{N}$ satisfy \eqref{e:defbranch} and \eqref{e:defglue}.
		Then the Laakso-type MMD space $(\laak(\glue,\branch),\metlaak,\measlaak,\dflaak,\domlaak)$ satisfies the Poincar\'e inequality \hyperlink{pi}{$\on{PI(\sttree)}$}.
	\end{prop}
	\begin{proof}
		Note by Proposition \ref{p:friedrich} that	$\bigcup_{n \in \mathbb{N}} \{f \circ \projlaak_{n} : f\in \mathcal{F}^{\laak_n}\cap \contfunc_0(\laak(\glue_n,\branch)) \}$ is a core for the Dirichlet form $(\dflaak,\domlaak)$ on $L^2(\measlaak)$. Hence in order to show \hyperlink{pi}{$\on{PI(\sttree)}$} it suffices to obtain the Poincar\'e inequality for functions in this core.
		
		To this end consider   $f = f_n \circ \projlaak_n$, where $f_n \in \mathcal{F}^{\laak_n} \cap \contfunc_0(\laak(\glue_n,\branch) \cap , n \in \mathbb{N}$.
		For any curve parameterized by arc length $\gamma:[0,D] \to \laak(\glue,\branch)$ joining $x,y \in \laak(\glue,\branch)$, by the fundamental theorem of calculus and Cauchy-Schwarz inequality, we have 
		\begin{equation} \label{e:pi1}
			\abs{f(x)-f(y)} \le \int_{0}^D \abs{\grad^\laak f(\gamma(t))}\,dt \le D^{1/2} \left(\int_0^D \abs{\grad^\laak f(\gamma(t))}^2\,dt\right)^{1/2}. 
		\end{equation}
		By choosing a random geodesic between $x$ and $y$ in \eqref{e:pi1} and averaging using  Lemma \ref{l:pencil}, \eqref{e:8qc},  there exists $C_1 \in (1,\infty)$ such that
		\begin{align} \label{e:pi2}
			\lefteqn{\abs{f(x)-f(y)}^2}  \nonumber\\
			&\le 8 C_1 \metlaak(x,y) \int_{B_{\laak(\glue,\branch)}(x,8\metlaak(x,y))} \frac{ \abs{\grad^\laak f(z)}^2}{V_{\glue}(\metlaak(x,z) \wedge \metlaak(y,z))} \, d\lenlaak(z) \nonumber \\
			&\le 8 C_1 \metlaak(x,y) \int_{B_{\laak(\glue,\branch)}(x,8\metlaak(x,y))} \frac{d\emlaak(f,f)}{V_{\glue}(\metlaak(x,z) \wedge \metlaak(y,z))} \quad \mbox{(by \eqref{e:emlaakint})} \nonumber \\
			&\le  8 C_1 \metlaak(x,y) \int_{B_{\laak(\glue,\branch)}(x,8\metlaak(x,y))} \left(V_{\glue}(\metlaak(x,z))^{-1}+V_{\glue}(\metlaak(y,z))^{-1}\right) d\emlaak(f,f)  
		\end{align}
		for all $x,y \in \laak(\glue,\branch), n \in \mathbb{N}, f_n \in \mathcal{F}^{\laak_n}$ and $f = f_n \circ \projlaak_{n}$.
		
		Let $x_0 \in \laak(\glue,\branch), r>0$ be arbitrary and let us denote by $B=B_{\laak(\glue,\branch)}(x_0,r), AB= B_{\laak(\glue,\branch)}(x_0,Ar)$ for any $A >0$. Note that if $x,y \in B$, we have 
		\begin{equation} \label{e:pi3}
			\metlaak(x,y) <2r, \quad B_{\laak(\glue,\branch)}(x,8\metlaak(x,y)) \subset  B_{\laak(\glue,\branch)}(x_0,17r)=17B.
		\end{equation}
		Then   by \eqref{e:pi2}, \eqref{e:pi3} and Fubini's theorem, we obtain
		\begin{align} \label{e:pi4}
			\lefteqn{	\int_{B} \abs{f(x)-f_{B}}^2 \,d \measlaak} \nonumber\\
			&= \frac{1}{2\measlaak(B)} \int_{B}\int_{B} (f(x)-f(y))^2\, d\measlaak(x)\,d\measlaak(y) \nonumber\\
			&\le \frac{2^4C_1r}{\measlaak(B)} \int_B \int_B \int_{17B}    V_{\glue}(\metlaak(x,z))^{-1}  \,\emlaak(f,f)(dz) \,\measlaak(dy)\,\measlaak(dx) \nonumber \\
			&\le 2^4 C_1 r  \int_{17B} \int_B    V_{\glue}(\metlaak(x,z))^{-1}  \,\measlaak(dx)  \,\emlaak(f,f)(dz)  
		\end{align}
		The inner integral in \eqref{e:pi4}  can be estimated for any $x \in B, z \in 17B$ as
		\begin{align} \label{e:pi5}
			\lefteqn{\int_B    V_{\glue}(\metlaak(x,z))^{-1}  \,\measlaak(dx) } \nonumber \\
			&\le \int_{B_{\laak(\glue,\branch)}(z,18r)}    V_{\glue}(\metlaak(x,z))^{-1}  \,\measlaak(dx) \nonumber \\
			&\le \sum_{n=0}^\infty \int_{B_{\laak(\glue,\branch)}(z, 2^{-n}18r) \setminus B_{\laak(\glue,\branch)}(z, 2^{-n-1}18r)}    V_{\glue}(\metlaak(x,z))^{-1}  \,\measlaak(dx) \nonumber\\
			&\lesssim \sum_{n=0}^\infty V_{\glue}(2^{-n}r)^{-1} \measlaak(B_{\laak(\glue,\branch)}(z, 2^{-n}18r))  \nonumber \\
			&\lesssim \sum_{n=0}^\infty  V_{\branch}(2^{-n}r) \quad  \mbox{(by Corollary \ref{c:measlaak}, \eqref{e:defvol} and \eqref{e:measultraformula})}  \nonumber \\
			&\lesssim V_{\branch}(r) \quad \mbox{(by $\inf_{k \in \mathbb{Z}} \branch(k) \ge 2$, \eqref{e:defvol} and \eqref{e:measultraformula}})
		\end{align}
		Combining \eqref{e:pi4}, \eqref{e:pi5} and \eqref{e:sttree}, we obtain the desired Poincar\'e inequality.
	\end{proof}

	\subsection{Cutoff energy inequality} \label{ss:cs}
	Next, we will obtain the cutoff energy inequality \hyperlink{cs}{$\on{CS(\sttree)}$} for the Laakso-type space $(\laak(\glue,\branch),\metlaak,\measlaak,\dflaak,\domlaak)$. 
	To this, end we recall the simpler sufficient condition introduced in \cite[Defintion 6.1]{Mur24}.
	\begin{definition} \label{d:css}
		We say that $(X,d,m,\mathcal{E},\mathcal{F})$ satisfies the \textbf{simplified cutoff energy inequality} 
		\hypertarget{css}{$\operatorname{CSS}(\Psi)$},
		if there exist  $C_S>0,A_1, A_2, C_1>1$ such that the following holds: for all $x \in X$ and $0<R< \diam(X,d)/A_2$,
		there exists a cutoff function $\phi \in \mathcal{F}$ for $B(x,R) \subset B(x,A_1R)$ such that for all $f \in \mathcal{F}$,
		\begin{equation}\tag*{$\operatorname{CSS}(\Psi)$}
			\int_{B(x,A_1R)} \wt{f}^2\, d\Gamma(\phi,\phi) \le C_1 \int_{B(x,A_1R) }  d\Gamma(f,f)
			+ \frac{C_1}{\Psi(R)} \int_{B(x,A_1R) } f^2\,dm;
		\end{equation}
		where $\wt{f}$ is a quasi-continuous version of $f \in \mathcal{F}$.
	\end{definition}
	In the following proposition, we obtain the cutoff energy inequality for the diffusion on Laakso-type space.  
	The main idea behind the proof is to \emph{lift} suitable cutoff functions on the tree $\tree(\branch)$ to construct cutoff functions on the Laakso-type space $\laak(\glue,\branch)$.
	\begin{prop} \label{p:cs}
		Let $\branch,\glue:\mathbb{Z} \to \mathbb{N}$ satisfy \eqref{e:defbranch} and \eqref{e:defglue}.
		Then the Laakso-type MMD space $(\laak(\glue,\branch),\metlaak,\measlaak,\dflaak,\domlaak)$ satisfies the cutoff energy inequality \hyperlink{cs}{$\on{CS(\sttree)}$}.
	\end{prop}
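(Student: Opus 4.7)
My plan is to first reduce the cutoff energy inequality $\operatorname{CS}(\sttree)$ to the \emph{simplified} variant $\operatorname{CSS}(\sttree)$ of Definition~\ref{d:css}. By \cite[Lemma~6.2]{Mur24}, this reduction holds whenever the space satisfies volume doubling and the Poincar\'e inequality $\operatorname{PI}(\sttree)$, both of which have already been established for the Laakso-type MMD space in Corollary~\ref{c:measlaak} and Proposition~\ref{p:pi}. Thus it suffices to prove the simpler inequality $\operatorname{CSS}(\sttree)$.

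To prove $\operatorname{CSS}(\sttree)$, I will \emph{lift a tree cutoff to Laakso space}. Fix $x_0=[(\mathbf{u}_0,\mathbf{t}_0)]_\laak$ and $R>0$, and let $n_0$ be the smallest integer with $2^{n_0}\ge A_1 R$ for a suitable absolute constant $A_1$. By Theorem~\ref{t:hketree} and \cite[Theorem~1.2]{GHL}, the tree MMD space $(\tree(\branch),\metrictree,\meastree,\dftree,\domtree)$ satisfies $\operatorname{CS}(\sttree)$, so I can select a $[0,1]$-valued cutoff $\psi\in\domtree$ with $\psi\equiv 1$ on $B_{\tree(\branch)}(\mathbf{t}_0,R)$, $\supp\psi\subset B_{\tree(\branch)}(\mathbf{t}_0,A_1 R)$, obeying the tree cutoff energy bound against $\emtree$. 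I then define
\[
\phi([(\mathbf{v},\mathbf{s})]_\laak):=\psi(\mathbf{s})\,\chi(\mathbf{v}),\qquad \chi(\mathbf{v}):=\one_{\{\mathbf{w}\in\ultra(\glue)\,:\,\mathbf{w}(k)=\mathbf{u}_0(k)\text{ for all }k>n_0\}}(\mathbf{v}).
\]
The purpose of the factor $\chi$ is twofold: by Lemma~\ref{l:ballbox} it forces $\supp_{\measlaak}\phi$ into a Laakso ball of radius comparable to $A_1 R$ (so that $\phi\in L^2(\measlaak)$), while by Lemma~\ref{l:emeaslaak} and the gradient expression \eqref{e:emlaakint} the energy measure $\emlaak(\phi,\phi)$ is unaffected by $\chi$ beyond restricting its support, because $\dflaak$ carries no differentiation in the ultrametric direction.

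Next I would show $\phi\in\domlaak$ by exhibiting $\phi=\phi_n\circ\projlaak_n$ for a suitable $n\in\bN$ using the core of Proposition~\ref{p:friedrich}, and verifying via Proposition~\ref{p:dflaakprelim} that the defining series for $\phi_n\in\sF^{\laak_n}$ has only finitely many nonzero terms, since the free coordinates of admissible $\mathbf{v}\in\ultra(\glue_n)$ range over the finite set $\llbracket -n,n_0\rrbracket$. The $\operatorname{CSS}(\sttree)$ energy estimate then drops out: for a core element $f=g\circ\projlaak_n$, Lemma~\ref{l:emeaslaak} gives
\[
\int \wt f^{\,2}\,d\emlaak(\phi,\phi)=\sum_{\mathbf{v}\,:\,\chi(\mathbf{v})=1}\mathsf{m}_{\ultra(\glue)}(\{\mathbf{v}\})\int (g\circ I_\mathbf{v})^2\,d\emtree(\psi,\psi),
\]
after which the tree $\operatorname{CS}(\sttree)$ applied to each summand and resummation against $\mathsf{m}_{\ultra(\glue)}$ produce the Laakso $\operatorname{CSS}(\sttree)$ bound with constants depending only on the tree constants.

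The main obstacle is the compatibility of $\psi$ with the quotient relation $R_\laak$: the product $\psi(\mathbf{s})\chi(\mathbf{v})$ descends to a well-defined, continuous function on $\laak(\glue,\branch)$ only if $\psi$ vanishes at every wormhole point $\mathbf{s}\in\worm_k\cap\supp\psi$ with $k>n_0$, since at precisely such $\mathbf{s}$ the indicator $\chi$ distinguishes between equivalent representatives. By Lemma~\ref{l:worm}(ii) there are only finitely many such offending points in $B_{\tree(\branch)}(\mathbf{t}_0,A_1 R)$, and the fix is to pre-correct $\psi$ by a harmonic modification in a small neighbourhood of each, pinning $\psi$ to $0$ there while controlling the additional tree energy by the resistance-form description \eqref{e:resform}--\eqref{e:resist}. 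Absorbing this correction into the tree $\operatorname{CS}$ constants—so that the resulting cutoff still satisfies a clean tree energy bound with the correct scale $\sttree(R)$—is the delicate bookkeeping step that drives the proof.
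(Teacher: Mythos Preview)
Your overall strategy---reduce to $\operatorname{CSS}(\sttree)$ via \cite[Lemma~6.2]{Mur24}, then lift a tree cutoff $\psi$ to the Laakso space using an ultrametric indicator $\chi$---is exactly the paper's approach. The gap is in your choice of $\chi$ and your proposed fix.

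You set $\chi(\mathbf{v})=1$ iff $\mathbf{v}(k)=\mathbf{u}_0(k)$ for all $k>n_0$, and then observe that well-definedness on the quotient fails at wormhole points $\mathbf{s}\in\worm_k\cap\supp\psi$ with $k>n_0$. Your remedy is to force $\psi(\mathbf{s})=0$ near each such point. But such a point $\mathbf{s}$ can lie in the \emph{inner} ball $B_{\tree(\branch)}(\mathbf{t}_0,R)$: for instance, $\mathbf{t}_0$ itself may belong to $\worm_k$ for some $k$ with $2^k$ vastly larger than $R$ (take $\mathbf{t}_0$ at distance an odd multiple of $2^{k-1}$ from $p_\tree$). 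Pinning $\psi$ to $0$ there destroys the cutoff property $\psi\equiv 1$ on $B_{\tree(\branch)}(\mathbf{t}_0,R)$, and hence $\phi$ fails to equal $1$ on $B_{\laak(\glue,\branch)}(x_0,R)$. No harmonic modification can repair this, since the requirement $\phi\equiv 1$ on a neighbourhood of $\overline{B(x_0,R)}$ leaves no room.

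The paper sidesteps this entirely by choosing $\chi$ differently: instead of the crude threshold $k>n_0$, it frees exactly those coordinates $k$ for which $\worm_k\cap B_{\tree(\branch)}(\mathbf{t}_0,A_1 R)\neq\emptyset$ (this is the cylinder $C_{\laak(\glue,\branch)}$ of Lemma~\ref{l:ballbox}). With that choice, whenever $(\mathbf{u},\mathbf{s})\sim(\mathbf{u}',\mathbf{s})$ via $\worm_k$ and $\chi(\mathbf{u})\neq\chi(\mathbf{u}')$, one automatically has $\mathbf{s}\notin B_{\tree(\branch)}(\mathbf{t}_0,A_1 R)$, hence $\psi(\mathbf{s})=0$---no modification of $\psi$ is needed. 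Lemma~\ref{l:ballbox} then gives $\supp\phi\subset B_{\laak(\glue,\branch)}(x_0,4A_1 R)$, and your resummation argument goes through verbatim. In short, the right fix is to change $\chi$, not $\psi$.
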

	\begin{proof}
		Let us note from \cite[Theorem 1.2]{GHL} and Theorem \ref{t:hketree} that the MMD space $(\tree(\branch),\metrictree,\meastree, \dftree, \domtree)$  satisfies \hyperlink{cs}{$\on{CS(\sttree)}$}  and hence \hyperlink{css}{$\on{CSS(\sttree)}$}. By \cite[Lemma 6.2]{Mur24}, it suffices to verify \hyperlink{css}{$\on{CSS(\sttree)}$} for $(\laak(\glue,\branch),\metlaak,\measlaak,\dflaak,\domlaak)$
		
		To this end, let $[(\mathbf{v},\mathbf{t})]_{\laak} \in \laak(\glue,\branch),r >0$ be arbitrary.
		Let us define the \emph{cylinder} set
		\begin{equation*}
			C_{\laak(\glue,\branch)}([(\mathbf{v},\mathbf{t})]_{\laak},r) = \quolaak \left(  \Biggl\{  (\mathbf{u},\mathbf{s}) \in \mathcal{P}(\glue,\branch)\Biggm|
			\begin{minipage}{200pt}
				$\metrictree(\mathbf{s},\mathbf{t})<r$, $\mathbf{u}\in \ultra(\glue)$  satisfies $\mathbf{u}(k)=\mathbf{v}(k)$  for all $k \in \mathbb{Z}$ such that $\worm_k \cap B_{\tree(\branch)}(\mathbf{t},r)=\emptyset$
			\end{minipage}
			\Biggr\}\right).
		\end{equation*}
		By Lemma \ref{l:ballbox}, we can compare balls and cylinders as
		\begin{equation}\label{e:cs1}
			{C_{\laak(\glue,\branch)}([(\mathbf{v},\mathbf{t})]_{\laak},r/4) } \subset {B_{{\laak(\glue,\branch)}}([(\mathbf{v},\mathbf{t})]_{R_{\laak}},r)} \subset {C_{\laak(\glue,\branch)}([(\mathbf{v},\mathbf{t})]_{\laak},r) }
		\end{equation}
		for any  $[(\mathbf{v},\mathbf{t})]_{\laak} \in \laak(\glue,\branch),r >0$.
		
		Let $A_1>1, C_S>0$ be the constant associated with \hyperlink{css}{$\on{CSS(\sttree)}$} for the MMD space $(\tree(\branch),\metrictree,\meastree, \dftree, \domtree)$. Let $\phi^\tree_{\mathbf{t},r}$ is the cutoff function for $B_{\tree(\branch)}(\mathbf{t},r) \subset B_{\tree(\branch)}(\mathbf{t},A_1r)$   such that 
		\begin{equation} \label{e:cs2}
			\int_{B_{\tree(\branch)}(\mathbf{t},A_1r)}  {f}^2\, d\emtree(\phi^\tree_{\mathbf{t},r},\phi^\tree_{\mathbf{t},r}) \le C_S \int_{B_{\tree(\branch)}(\mathbf{t},A_1r) }  d\emtree(f,f)
			+ \frac{C_S}{\sttree(r)} \int_{B_{\tree(\branch)}(\mathbf{t},A_1r) } f^2\,d\meastree
		\end{equation}
		for all $f \in \domtree$.
		Define the function $\phi^\laak_{\mathbf{v},\mathbf{t},r}: \laak(\glue,\branch) \to \mathbb{R}$ defined by
		\[
		\phi^\laak_{\mathbf{v},\mathbf{t},r} \circ I_{\mathbf{u}}(\cdot) = \begin{cases}
			\phi^\tree_{\mathbf{t},r} (\cdot) & \mbox{if $ I_{\mathbf{u}}(\tree(\branch)) \cap  C_{\laak(\glue,\branch)}([(\mathbf{v},\mathbf{t})]_{\laak},A_1 r) \neq \emptyset$,} \\
			0 & \mbox{if $ I_{\mathbf{u}}(\tree(\branch)) \cap  C_{\laak(\glue,\branch)}([(\mathbf{v},\mathbf{t})]_{\laak}, A_1 r) = \emptyset$.}
		\end{cases}
		\]
		It is easy to see that $\phi^\laak_{\mathbf{v},\mathbf{t},r} \in \contfunc(\laak(\glue,\branch) ) \cap \domlaak$ and 
		by \eqref{e:cs1} we have that $\phi^\laak_{\mathbf{v},\mathbf{t},r}$ is a cutoff function for $B_{{\laak(\glue,\branch)}}([(\mathbf{v},\mathbf{t})]_{R_{\laak}},r) \subset B_{{\laak(\glue,\branch)}}([(\mathbf{v},\mathbf{t})]_{R_{\laak}},4A_1 r)$.
		Since there exists $m \in \mathbb{N}$ large enough such that $\worm_{k} \cap B_{\tree(\branch)}(\mathbf{t},A_1R) \neq \emptyset$ for all $k \le -m$. 
		Therefore $\phi^\laak_{\mathbf{v},\mathbf{t},r} = \phi_n \circ \projlaak_{n}$ for all $n \ge m$, where $\phi_n \in \mathcal{F}^{\laak_n}$.

		By Proposition \ref{p:friedrich},  it suffices  \hyperlink{css}{$\on{CSS(\sttree)}$} consider functions of the form $f=f_n \circ \projlaak_n$ for $n \ge m, f_n \in \mathcal{F}^{\laak_n}\cap \contfunc_0(\laak(\glue_n,\branch))$.
		In this case, we have the estimate 
		\begin{align} \label{e:cs3}
			\lefteqn{ \int_{B_{\laak(\glue,\branch)}([(\mathbf{v},\mathbf{t})]_{R_{\laak}},4A_1r)} f^2\, d\emlaak (\phi^\laak_{\mathbf{v},\mathbf{t},r},  \phi^\laak_{\mathbf{v},\mathbf{t},r}) }  \nonumber \\
			&= \sum_{\mathbf{u} \in \ultra(\glue_n,\branch)}  \mathsf{m}_{\ultra(\glue_n)}(\{\mathbf{u}\}) \int_{B_{\tree(\branch)}( \mathbf{t}, r)}(f_n \circ I_{\mathbf{u}})^2 \,d\emtree(\phi_n\circ I_{\mathbf{u}},\phi_n\circ I_{\mathbf{u}} )   \quad \mbox{(by \eqref{e:emlaak})}\nonumber \\
			&\le C_S\sum_{\substack{\mathbf{u} \in \ultra(\glue_n,\branch),\\ \phi_n\circ I_{\mathbf{u}} \neq 0 }}  \mathsf{m}_{\ultra(\glue_n)}(\{\mathbf{u}\}) \left(   \int_{B_{\tree(\branch)}(\mathbf{t},A_1r) }  d\emtree(f_n\circ I_{\mathbf{u}}, f_n\circ I_{\mathbf{u}}) \right. \nonumber \\
			&\qquad \qquad
			\left. + \frac{1}{\sttree(r)} \int_{B_{\tree(\branch)}(\mathbf{t},A_1r) } (f_n\circ I_{\mathbf{u}})^2\,d\meastree\right) \nonumber \\
			&\le C_S \int_{B_{\laak(\glue,\branch)}([(\mathbf{v},\mathbf{t})]_{R_{\laak}},4A_1r)}  d\emlaak(f,f)
			+ \frac{C_S}{\sttree(r)} \int_{B_{\laak(\glue,\branch)}([(\mathbf{v},\mathbf{t})]_{R_{\laak}},4A_1r)} f^2\,d\measlaak,
		\end{align} 
		where the last line above follows from  \eqref{e:emlaak}, \eqref{e:cs1}, and \eqref{e:measlaak}. By \eqref{e:cs3} and approximating an arbitrary function using functions in the core, we complete the proof of \hyperlink{css}{$\on{CSS(\sttree)}$}  for $(\laak(\glue,\branch),\metlaak,\measlaak,\dflaak,\domlaak)$.
	\end{proof}

	\subsection{Sub-Gaussian bounds on the heat kernel}
	We complete the proof of sub-Gaussian heat kernel bounds.
	\begin{theorem} \label{t:hkediff}
		Let $\branch,\glue:\mathbb{Z} \to \mathbb{N}$ satisfy \eqref{e:defbranch} and \eqref{e:defglue}.
		Then the Laakso-type MMD space $(\laak(\glue,\branch),\metlaak,\measlaak,\dflaak,\domlaak)$ satisfies the full sub-Gaussian heat kernel estimate 
		\hyperlink{hke}{$\on{HKE_f(\sttree)}$}. 
		
	\end{theorem}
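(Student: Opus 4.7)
The plan is to combine the three hard analytic ingredients already established -- volume doubling, Poincar\'e inequality, and cutoff energy inequality -- and feed them into the standard stable characterization of sub-Gaussian heat kernel estimates for strongly local regular Dirichlet forms. Specifically, by Proposition \ref{p:friedrich} the MMD space $(\laak(\glue,\branch),\metlaak,\measlaak,\dflaak,\domlaak)$ is a well-defined strongly local regular MMD space; by Corollary \ref{c:measlaak} its measure is volume doubling with doubling profile comparable to $r \mapsto V_\glue(r)V_\branch(r)$; by Proposition \ref{p:pi} it satisfies \hyperlink{pi}{$\on{PI(\sttree)}$}; and by Proposition \ref{p:cs} it satisfies \hyperlink{cs}{$\on{CS(\sttree)}$}, where $\sttree(r) \asymp rV_\branch(r)$ is the scale function of \eqref{e:sttree} (which satisfies the regularity assumption \eqref{e:reg} since $\branch$ is bounded and $\inf \branch \ge 2$).

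Next, I would invoke the characterization theorem \cite[Theorem 1.2]{GHL}, which states that for a strongly local regular MMD space with volume doubling measure, the combination of \hyperlink{pi}{$\on{PI(\Psi)}$} and \hyperlink{cs}{$\on{CS(\Psi)}$} is equivalent to the sub-Gaussian heat kernel estimate \hyperlink{hke}{$\on{HKE(\Psi)}$}. Applying this with $\Psi=\sttree$ immediately yields the upper bound \eqref{e:uhke} and the ``near-diagonal'' lower bound \eqref{e:nlhke}.

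To promote \hyperlink{hke}{$\on{HKE(\sttree)}$} to the full estimate \hyperlink{hkef}{$\on{HKE_f(\sttree)}$}, I would use the chain condition. By Proposition \ref{p:laak}(c) the metric space $(\laak(\glue,\branch),\metlaak)$ is quasiconvex (in fact with quasiconvexity constant $8$ as quantified in \eqref{e:8qc}), hence satisfies the chain condition in the sense of Definition \ref{d:chain}. On a space satisfying the chain condition, the near-diagonal lower bound \eqref{e:nlhke} can be upgraded to the matching off-diagonal exponential lower bound \eqref{e:lhke} by a standard chaining argument: one covers a geodesic chain of balls of appropriate radius between $x$ and $y$, applies the near-diagonal bound on each link, and uses the semigroup property $p_t=\int p_{t/n}(\cdot,z_1)\cdots p_{t/n}(z_{n-1},\cdot)\,d\measlaak^{\otimes(n-1)}$; this is carried out, for instance, in \cite[Section 5]{GT} and \cite[Section 3]{Bar98}, and is precisely the upgrade converting  \hyperlink{hke}{$\on{HKE(\sttree)}$} to \hyperlink{hkef}{$\on{HKE_f(\sttree)}$}.

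The main obstacle in this argument is not the invocation itself but verifying that the hypotheses of \cite[Theorem 1.2]{GHL} apply in our setting: the ambient metric measure space must be a complete, locally compact, separable, unbounded (or sufficiently large) length space, and the Dirichlet form must be regular and strongly local. All of these are available: completeness and local compactness come from Proposition \ref{p:laak}(a), separability from Corollary \ref{c:measlaak}, the length-space property from Proposition \ref{p:laak}(c) (combined with the fact that $(\laak(\glue,\branch),\metlaak)$ is complete and locally compact, which forces existence of geodesics via Hopf-Rinow), and regularity plus strong locality of $(\dflaak,\domlaak)$ from Proposition \ref{p:friedrich}. Once these are noted, the theorem follows.
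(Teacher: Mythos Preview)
Your proposal is correct and follows essentially the same route as the paper: combine Propositions~\ref{p:pi} and~\ref{p:cs} with the volume doubling from Corollary~\ref{c:measlaak} to invoke \cite[Theorem 1.2]{GHL} for \hyperlink{hke}{$\on{HKE(\sttree)}$}, then upgrade to the full estimate via the chain condition from Proposition~\ref{p:laak}(c). The paper's proof is simply a terser version of what you wrote.
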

	\begin{proof}
		By Propositions \ref{p:pi} and \ref{p:cs} along with \cite[Theorem 1.2]{GHL}, we obtain the   sub-Gaussian heat kernel estimate 
		\hyperlink{hke}{$\on{HKE(\sttree)}$}, where the  doubling and reverse doubling properties required to apply \cite[Theorem 1.2]{GHL} follows from  Corollary \ref{c:measlaak}. We upgrade this to the desired full heat kernel estimate by chain condition which is a consequence of the property that the metric is quasiconvex as shown in Proposition \ref{p:laak}(c).
	\end{proof}
	
	\begin{remark} \label{r:hkelaak}
		\begin{enumerate}[(a)]
			\item A careful examination of the proof above  shows that the constants associated in the conclusion of Theorem \ref{t:hkediff} depend  only on $\sup_{k \in \mathbb{Z}} \branch(k)$ and $\sup_{k \in \mathbb{Z}} \glue(k)$.
			\item The general results \cite[Theorems 7.2.1 and 4.5.3]{FOT}
			from the theory of regular symmetric Dirichlet forms guarantee the existence
			of an associated diffusion which is unique only up to
			a properly exceptional set of starting points. By the conclusion of Theorem \ref{t:hkediff}, the Laakso-type MMD space admits a unique continuous heat kernel $p=p_{t}(x,y): (0,\infty) \times \laak(\glue,\branch) \times \laak(\glue,\branch) \to [0,\infty)$   and gives a Markovian transition function
			with the Feller and strong Feller properties, which allow us to define canonically
			an associated diffusion starting from \emph{every} $x \in \laak(\glue,\branch)$.   In this case, for every starting point  $x \in \laak(\glue,\branch), t>0$, the law of the diffusion at time $t$ started at $x$ is $p_t(x.y) \,m(dy)$. These statements follow from \cite[Theorem 3.1]{BGK}, \cite[Proposition 3.2]{Lie},  \cite[Proposition 2.18]{KM24+}.
		\end{enumerate} 
	\end{remark}

	We can conclude the proof of Theorem \ref{t:mainsuf} using Theorem \ref{t:hkediff} and the following elementary lemma whose proof is omitted. 
	The proof involves first constructing $\branch$ using the bound on $\Psi$ and then using the estimate on $V$ to construct $\glue$.
	\begin{lemma} \label{l:elementary}
		Let $V, \Psi:[0,\infty) \to [0,\infty)$ be doubling functions and let $C_0 \in (1,\infty)$ be such that 
		\[
		C_0^{-1} \frac{R^2}{r^2} \le \frac{\Psi(R)}{\Psi(r)} \le C_0 \frac{RV(R)}{rV(r)}, \quad \mbox{for all $0<r \le R$.}
		\] Then there exists $\branch,\glue: \mathbb{Z} \to \mathbb{Z}$ and $C_1 \in (1,\infty)$ such that 
		\[
		1 \le \inf_{k \in \mathbb{Z}} \glue(k) \le \sup_{k \in \mathbb{Z}} \glue(k) < \infty, \quad	2 \le \inf_{k \in \mathbb{Z}} \branch(k) \le \sup_{k \in \mathbb{Z}} \branch(k) < \infty
		\]
		and 
		\[
		C_1^{-1} V_\glue(r) V_\branch(r) \le V(r) \le 	C_1  V_\glue(r) V_\branch(r), \quad C_1^{-1} \sttree(r) \le \Psi(r) \le C_1 \sttree(r) \quad \mbox{for all $r>0$.}
		\]
	\end{lemma}
	We now conclude the proof of sufficiency of \eqref{e:necdiff}.
	\begin{proof}[Proof of Theorem \ref{t:mainsuf}]
		Let $\branch,\glue:\mathbb{Z} \to \mathbb{Z}$ be functions that satisfy the conclusion of Lemma \ref{l:elementary}. Then the desired volume growth and heat kernel estimates for the Laakso-type MMD space $(\laak(\glue,\branch),\metlaak,\measlaak,\dflaak,\domlaak)$  follows from Corollary \ref{c:measlaak} and Theorem \ref{t:hkediff} respectively.
	\end{proof}
	The following two sided estimates on Green function under the conclusion of Theorem \ref{t:mainsuf} follows easily.
	\begin{prop} \label{p:greendiff}
		Let $V, \Psi:[0,\infty) \to [0,\infty)$ be doubling functions and let $C_0 \in (1,\infty)$ be such that 
		\[
		C_0^{-1} \frac{R^2}{r^2} \le \frac{\Psi(R)}{\Psi(r)} \le C_0 \frac{RV(R)}{rV(r)}, \quad \mbox{for all $0<r \le R$.}
		\]
		Let $(X,d,m,\mathcal{E},\mathcal{F})$ be an  unbounded MMD space that satisfies the full sub-Gaussian kernel estimate 	\hyperlink{hkef}{$\on{HKE_f(\Psi)}$} and suppose there exists $C_1 \in (1,\infty)$ such that the volume of balls satisfy the estimate \eqref{e:volest}. Let $p_t(\cdot,\cdot)$ denote the continuous heat kernel (cf. Remark \ref{r:hkelaak}(b)). Then there exists $C_2>1$ such that 
		\[
		C_2^{-1} \int_{d(x,y)}^\infty \frac{\Psi(s)}{sV(s)} \,ds \le \int_0^\infty p_t(x,y) \,dt \le C_2 \int_{d(x,y)}^\infty \frac{\Psi(s)}{sV(s)} \,ds
		\]
		In particular, the diffusion is transient if and only if 
		\[
		\int_{1}^\infty \frac{\Psi(s)}{sV(s)} \,ds < \infty.
		\]
	\end{prop}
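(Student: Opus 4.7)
The proof proposal is to reduce the claim to integrating the full sub-Gaussian heat kernel estimate \hyperlink{hkef}{$\on{HKE_f(\Psi)}$} in time and performing the natural change of variable $s=\Psi^{-1}(t)$. Write $R:=d(x,y)$ and $G(x,y):=\int_0^\infty p_t(x,y)\,dt$. I would split the integral at $t_{*}:=\Psi(R)$ and treat the large-$t$ and small-$t$ pieces separately; the volume hypothesis \eqref{e:volest} lets us replace $m(B(x,\Psi^{-1}(t)))$ by $V(\Psi^{-1}(t))$ throughout.

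For the lower bound, only the tail $t\ge t_{*}$ is needed. A short calculation using the definition of $\Phi$ in \eqref{e:defPhi} together with the regularity \eqref{e:reg} of $\Psi$ produces a constant $c_0$ such that $t\Phi(C_5 R/t)\le c_0$ whenever $R\le\Psi^{-1}(t)$; hence for $t\ge\Psi(R)$ the exponential factor in \eqref{e:lhke} is bounded below by a positive constant and so $p_t(x,y)\gtrsim V(\Psi^{-1}(t))^{-1}$. Changing variables to $s=\Psi^{-1}(t)$ turns $dt$ into $d\Psi(s)$, and the doubling and reverse-doubling implied by \eqref{e:reg} give the comparison $\int_a^b V(s)^{-1}\,d\Psi(s) \asymp \int_a^b \frac{\Psi(s)}{sV(s)}\,ds$ on any interval (verified dyadically). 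This delivers $G(x,y)\gtrsim \int_R^\infty \frac{\Psi(s)}{sV(s)}\,ds$. The large-$t$ half of the upper bound is identical once the exponential in \eqref{e:uhke} is discarded. It remains to show the near-diagonal piece $\int_0^{\Psi(R)} p_t(x,y)\,dt$ is dominated by $\int_R^\infty \frac{\Psi(s)}{sV(s)}\,ds$. The standard way is to exploit the sub-Gaussian decay: after the same change of variable the near-diagonal integrand becomes $V(s)^{-1}\exp(-c_1 \Psi(s)\Phi(c_2 R/\Psi(s)))\,d\Psi(s)$ on $s\in(0,R)$, and a Laplace/convexity-type estimate using \eqref{e:reg} yields the clean bound $\int_0^{\Psi(R)} p_t(x,y)\,dt \lesssim \Psi(R)/V(R)$. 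Finally, the doubling properties of $\Psi$ and $V$ give $\Psi(R)/V(R)\asymp \int_R^{2R} \frac{\Psi(s)}{sV(s)}\,ds \le \int_R^\infty \frac{\Psi(s)}{sV(s)}\,ds$, closing the upper bound.

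For the transience assertion, note that the integrand $\Psi(s)/(sV(s))$ is bounded on any compact subinterval of $(0,\infty)$ by doubling, so finiteness of $\int_{d(x,y)}^\infty \Psi(s)/(sV(s))\,ds$ is equivalent to finiteness of $\int_1^\infty \Psi(s)/(sV(s))\,ds$, independent of $x,y$ with $x\ne y$; by the two-sided bound already established this is equivalent to $G(x,y)<\infty$, i.e.\ transience of the associated diffusion (recall Remark \ref{r:hkelaak}(b)). The only non-routine step is the near-diagonal upper bound, and the real work there is just verifying that the Legendre-type function $t\mapsto t\Phi(cR/t)$ decays fast enough as $t\downarrow 0$ to concentrate the near-diagonal integral at $t\asymp \Psi(R)$; all other manipulations are bookkeeping with the doubling and \eqref{e:reg} estimates.
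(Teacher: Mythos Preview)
Your proposal is correct and is precisely the first approach the paper indicates: the paper's proof simply says ``It follows from integrating the heat kernel bounds'' and then sketches an alternative route via capacity estimates from \cite{BCM}. Your write-up supplies the details the paper omits for the direct-integration argument (splitting at $t=\Psi(R)$, handling the exponential via $\Phi$, the change of variable $s=\Psi^{-1}(t)$, and the dyadic comparison $\int V(s)^{-1}\,d\Psi(s)\asymp\int \Psi(s)/(sV(s))\,ds$), all of which are standard and sound; the only thing worth noting is that the paper records a second, capacity-based proof (estimating the killed Green function on $B(x,2^n d(x,y))$ via \cite[Lemmas~5.10 and~5.12]{BCM} and \hyperlink{cap}{$\operatorname{cap}(\Psi)$}, then letting $n\to\infty$), which you do not mention but which is not needed once the direct integration is carried out.
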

	\begin{proof}
		It follows from integrating the heat kernel bounds. 
		Alternately, this follows from applying \cite[Lemma 5.10]{BCM} to estimate the green function between $x$ and $y$ for the diffusion killed upon exiting a balls of radius $2^n d(x,y)$ in terms of capacity and then using \cite[Lemma 5.12]{BCM}, \hyperlink{cap}{$\operatorname{cap}(\Psi)$} to estimate the capacity. Finally letting $n \to \infty$ implies the desired result. 
	\end{proof}
	In the following definition, we introduce the MMD space corresponding to reflected diffusions on balls of the Laakso-type space.
	\begin{definition}
		For $k \in \mathbb{Z}$, we define 
		\begin{align*}
			\laak(\glue,\branch)^{(k)}&= \overline{B_{\laak(\glue,\branch)}(p_\laak,2^k)}, \\
			\metlaak^{(k)}&= \restr{\metlaak}{	\laak(\glue,\branch)^{(k)} \times 	\laak(\glue,\branch)^{(k)}}, \\
			\measlaak^{(k)}&= \restr{\measlaak}{	\laak(\glue,\branch)^{(k)}},\\
			\mathcal{F}^{\laak,(k)} &= \{ \restr{f}{\laak(\glue,\branch)^{(k)}}: f \in \domlaak\}, \\
			\mathcal{E}^{\laak,(k)}(f,g)&=	\emlaak(\overline{f},\overline{g})(B_{\laak(\glue,\branch)}(p_\laak,2^k)),
		\end{align*}
		for all $f,g \in \mathcal{F}^{\laak,(k)}$, where $\overline{f},\overline{g} \in \domlaak$ are such that $\restr{\overline{f}}{	\laak(\glue,\branch)^{(k)}}= f$, $\restr{\overline{g}}{	\laak(\glue,\branch)^{(k)}}= g$ and $\emlaak$ is the energy measure corresponding to the MMD space. 
	\end{definition}
	
	The following theorem states properties of the bilinear form $(\mathcal{E}^{\laak,(k)},	\mathcal{F}^{\laak,(k)})$   and the metric measure space $(\laak(\glue,\branch)^{(k)}, 	\metlaak^{(k)}, 	\measlaak^{(k)})$ defined above.
	\begin{theorem} \label{t:hkeseq}
		For each $k \in \mathbb{Z}$,  the metric measure space $(\laak(\glue,\branch)^{(k)}, 	\metlaak^{(k)}, 	\measlaak^{(k)})$ and   the bilinear form $(\mathcal{E}^{\laak,(k)},	\mathcal{F}^{\laak,(k)})$  define a strongly local, regular, Dirichlet form on $L^2(\measlaak^{(k)})$.  Furthermore, the family of MMD spaces  $(\laak(\glue,\branch)^{(k)}, 	\metlaak^{(k)}, 	\measlaak^{(k)}, \mathcal{E}^{\laak,(k)},	\mathcal{F}^{\laak,(k)})$ satisfy the full sub-Gaussian heat kernel estimates 	\hyperlink{hke}{$\on{HKE_f(\sttree)}$} uniformly in $k$; that is,  the constants involved in the estimate 	\hyperlink{hke}{$\on{HKE_f(\sttree)}$} are independent of $k \in \mathbb{Z}$.
	\end{theorem}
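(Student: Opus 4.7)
The plan is to obtain Theorem \ref{t:hkeseq} as a direct consequence of the uniform-domain transfer principle \cite[Theorem 2.8]{Mur24}, applied to the balls $B_{\laak(\glue,\branch)}(p_\laak,2^k)$. The key structural input has already been established in Proposition \ref{p:uniformlaak}: there exist constants $c_U \in (0,1)$ and $C_U \in [1,\infty)$, \emph{independent of $k \in \bZ$}, such that each ball $B_{\laak(\glue,\branch)}(p_\laak,2^k)$ is a $(c_U,C_U)$-uniform domain in $(\laak(\glue,\branch),\metlaak)$. The other input is Theorem \ref{t:hkediff} together with Remark \ref{r:hkelaak}(a), which says that the ambient MMD space $(\laak(\glue,\branch),\metlaak,\measlaak,\dflaak,\domlaak)$ satisfies \hyperlink{hkef}{$\on{HKE_f(\sttree)}$} with constants depending only on $\sup_{k \in \bZ}\branch(k)$ and $\sup_{k \in \bZ}\glue(k)$, hence independent of the particular $k$ under consideration.

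The first step is to identify $(\mathcal{E}^{\laak,(k)},\mathcal{F}^{\laak,(k)})$ on $L^2(\measlaak^{(k)})$ as the reflected (or trace) Dirichlet form associated with the uniform domain $B_{\laak(\glue,\branch)}(p_\laak,2^k)$; its strong locality follows from the strong locality of $(\dflaak,\domlaak)$ and the restriction definition, and its regularity is standard once one verifies that there are sufficiently many compactly supported continuous functions in $\mathcal{F}^{\laak,(k)}$. Because the ambient Dirichlet form admits the core described in Proposition \ref{p:friedrich}, and because $\laak(\glue,\branch)^{(k)}$ is compact (Proposition \ref{p:laak}(a) combined with the local compactness established there), restrictions of core functions yield a dense subalgebra of $\contfunc(\laak(\glue,\branch)^{(k)})$ that is also $\sE^{\laak,(k)}_1$-dense in $\mathcal{F}^{\laak,(k)}$.

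The second step is the quantitative one: apply \cite[Theorem 2.8]{Mur24} to transfer \hyperlink{hkef}{$\on{HKE_f(\sttree)}$} from the ambient MMD space to each of these reflected MMD spaces. The hypotheses of that theorem require volume doubling of $(\laak(\glue,\branch),\metlaak,\measlaak)$ (Corollary \ref{c:measlaak}), the scale function $\sttree$ being doubling with polynomial upper bound (true by \eqref{e:sttree}), the chain/quasiconvexity condition (Proposition \ref{p:laak}(c)), and the uniform domain property with prescribed constants (Proposition \ref{p:uniformlaak}). Since every one of these inputs is quantitatively $k$-free, the constants produced in the conclusion \hyperlink{hkef}{$\on{HKE_f(\sttree)}$} for $(\laak(\glue,\branch)^{(k)},\metlaak^{(k)},\measlaak^{(k)},\mathcal{E}^{\laak,(k)},\mathcal{F}^{\laak,(k)})$ depend only on $\sup_{k \in \bZ}\branch(k), \sup_{k \in \bZ}\glue(k), c_U$ and $C_U$ — all independent of $k$. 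The main thing requiring care is simply bookkeeping: checking that the constants in the ambient \hyperlink{hkef}{$\on{HKE_f(\sttree)}$} and in the uniform-domain transfer both depend on $k$ only through quantities that are $k$-independent. No new probabilistic or functional-inequality argument should be needed beyond what is already assembled in Propositions \ref{p:pi}, \ref{p:cs}, and \ref{p:uniformlaak}.
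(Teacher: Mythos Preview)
Your proposal is correct and follows essentially the same route as the paper: invoke the uniform-domain property of $B_{\laak(\glue,\branch)}(p_\laak,2^k)$ from Proposition \ref{p:uniformlaak} with $k$-independent constants, then apply the transfer principle of \cite{Mur24} to the ambient MMD space from Theorem \ref{t:hkediff}. The paper additionally cites \cite[Theorem 2.7]{Mur24} for the fact that $(\mathcal{E}^{\laak,(k)},\mathcal{F}^{\laak,(k)})$ is a strongly local regular Dirichlet form (what you handle by hand in your first step), and it separates the argument into two pieces: \cite[Theorem 2.8]{Mur24} is used only to obtain \hyperlink{hke}{$\on{HKE(\sttree)}$}, and then the upgrade to the \emph{full} estimate \hyperlink{hkef}{$\on{HKE_f(\sttree)}$} is done afterwards by noting that the \emph{restricted} spaces $(\laak(\glue,\branch)^{(k)},\metlaak^{(k)})$ are themselves quasiconvex with the same constant $C_q=8$ (this follows from the proof of Proposition \ref{p:laak}(c)). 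Your proposal implicitly folds this into one step; just be careful that the chain condition you need for the matching lower bound is on the restricted space, not merely the ambient one.
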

	\begin{proof}
		The sub-Gaussian estimate \hyperlink{hke}{$\on{HKE(\sttree)}$} is consequence of \cite[Theorems 2.7 and 2.8]{Mur24} along with the fact that $B_{\laak(\glue,\branch)}(p_\laak,2^k)$ is an uniform domain for each $k \in \mathbb{N}$ due to Proposition \ref{p:uniformlaak}.
		The statement about the dependency of constants follows from the similar dependency the conclusion in Proposition \ref{p:uniformlaak} and the fact that the constants in the conclusion of \cite[Theorem 2.8]{Mur24} depend only on the constants involved in the assumptions.
		This dependency of constants in the conclusion of \cite[Theorem 2.8]{Mur24}  is not explicitly stated there but follows from a careful reading of the proof.
		
		The proof in Proposition \ref{p:laak}(c) shows that $(\laak(\glue,\branch)^{(k)}, 	\metlaak^{(k)})$ is quasiconvex with the constant $C_q=8$ in Definition \ref{d:qgeodesic} for all $k \in \mathbb{Z}$. This allows us to upgrade the uniform sub-Gaussian estimate \hyperlink{hke}{$\on{HKE(\sttree)}$} to the uniform \emph{full} sub-Gaussian estimate \hyperlink{hke}{$\on{HKE_f(\sttree)}$}.
	\end{proof}

	\subsection{Sub-Gaussian estimates for random walks} \label{ss:rw}

	Let $\mathbb{G}=(X,E)$ be a connected (undirected) graph with vertex set $X$ and edge set $E$. Let $d_\mathbb{G}: X \times X \to [0,\infty)$ denote the combinatorial (graph) distance.
	Let $m_\mathbb{G}$ denote the measure on $X$ defined by $m_\mathbb{G}(\{x\})=\deg(x)$ for all $x \in X$, where $\deg(x)$ is the number of neighbors of $x$ in $X$. Let $(Y_n)_{n \in \mathbb{N}}$ denote the simple random on $\mathbb{G}$.
	We denote the transition probability function by $P_n(x,y)= \mathbb{P}_x[Y_n=y]$ for all $x,y \in X, n \in \mathbb{N}$ and the (discrete time) heat kernel as 
	\begin{equation}\label{e:dis-HK}
		p_n(x,y):= \frac{P_n(x,y)}{m_\mathbb{G}(\{y\})} \quad \mbox{for all $x,y \in X, n \in \mathbb{N}$.}
	\end{equation}

	We recall the definition of sub-Gaussian heat kernel estimates for simple random walks on graphs. It is a discrete analogue of Definition \ref{d:HKE}. We use subscript $d$ or superscript $(d)$ to denote that we are in a discrete setting.
	\begin{definition} \label{d:dHKE}
		Let $V^{(d)}, \Psi^{(d)}:[1,\infty) \to [1,\infty)$ be homeomorphism such that there are constants $C_1, C_2, \beta_1, \beta_2 \in (1,\infty)$ such that 
		\begin{equation} \label{e:condscale}
			V^{(d)}(2r) \le C_1 V^{(d)}(r) \quad \mbox{for all $r \in [1,\infty)$,} \quad \mbox{and } C_2^{-1} \left(\frac{R}{r}\right)^{\beta_1} \le \frac{\Psi^{(d)}(R)}{\Psi^{(d)}(r)} \le C_2  \left(\frac{R}{r}\right)^{\beta_1}  
		\end{equation}
		for all $1 \le r < R <\diam(X,d_\mathbb{G})$. 
		
		We say that the simple random walk on a graph $\mathbb{G}= (X,E)$ satisfies full sub-Gaussian heat kernel estimates  	 
		\hypertarget{hked}{$\on{HKE_{d,f}(\Psi^{(d)})}$},  
		if there exist $C_{1},c_{1},c_{2},c_{3},C_4, C_5>0$ such that for any $n \in \mathbb{N}$
		\begin{align}\label{e:duhke}
			p_n(x,y) &\le \frac{C_{1}}{m\bigl(B(x,\Psi^{-1}(t))\bigr)} \exp \left( -c_{1} t \Phi^{(d)}\left( c_{2}\frac{d_\mathbb{G}(x,y)} {t} \right) \right)
			\qquad \mbox{for all $x,y \in X$,}
		\end{align}
		and the lower bound
		\begin{align}
			p_n(x,y) + p_{n+1}(x,y)	p_t(x,y) &\ge \frac{c_{3}}{m\bigl(B(x,\Psi^{-1}(t))\bigr)} \exp \left( -C_{4} t \Phi^{(d)}\left( C_{5}\frac{d_{\mathbb{G}}(x,y)} {t} \right) \right)
			\qquad  
			\label{e:dnlhke}
		\end{align}
		for all $x,y\in X$, where $\Phi^{(d)}$ is given by 
		\begin{equation*} 
			\Phi^{(d)}(s)= \sup_{r \ge 1} \left(\frac{s}{r}-\frac{1}{\Psi(r)}\right), \quad \mbox{for all $s>0$.}
		\end{equation*}
	\end{definition}
	
	Let us first explain how our construction of the Laakso-type space $\laak(\glue,\branch)$ contains graphs as a special case. This is under the restriction that 
	\begin{equation} \label{e:laakdisc}
		\branch(k)=2, \quad \glue(k)=1, \quad \mbox{for all $k \in \mathbb{Z} \cap (-\infty,0]$.}
	\end{equation}
	In this case, the tree $\tree(\branch)$ can be viewed as the cable system of a graph (see \cite[\textsection 2]{BB04} for the notion of cable system)
	whose vertex set is 
	$\{x \in \tree(\branch): \metrictree(x,p_\tree) \in \mathbb{Z}\}$. Similarly, assuming \eqref{e:laakdisc} the Laakso-type space $\laak(\glue,\branch)$ is a cable-system of a graph whose vertex set is 
	\[
	X(\glue, \branch):= \{ x \in \laak(\glue,\branch):  \metrictree \left(\projtree(x),p_\tree\right) \in \mathbb{Z}\}
	\]
	and two distinct points $x,y \in	X(\laak,\glue)$  form an edge (that is $\{x,y\} \in E(\glue,\branch)$) if and only if $\metlaak(x,y)=1$. 
	By Proposition \ref{p:laak}(c), the graph metric $d_\mathbb{G}$ on the graph $\mathbb{G}:= \mathbb{G}(\glue,\branch)= (	X(\glue, \branch),	E(\glue, \branch))$ corresponding to the cable system is comparable to the restriction of $\metlaak$ on $X(\glue,\branch)$. 
	It is well-known that sub-Gaussian heat kernel estimates for simple random walk on graph is equivalent to sub-Gaussian heat kernel estimates for diffusion on the corresponding cable-system by arguments in \cite[\textsection 3,4]{BB04} (the arguments in \cite{BB04} also generalize when the scaling function is not necessarily of the form $r \mapsto r^\beta$). Therefore, we obtain the following theorem as a consequence of Theorem \ref{t:hkediff} and Corollary \ref{c:measlaak}.
	
	\begin{theorem} \label{t:hkegraph}
		Under the additional assumption \eqref{e:laakdisc}, the simple random walk on the graph $\mathbb{G}=\mathbb{G}(\glue,\branch)$ defined above satisfies the sub-Gaussian heat kernel estimate 	\hyperlink{hked}{$\on{HKE_{d,f}(\sttree^{(d)})}$}, where $\sttree^{(d)}=\restr{\sttree}{[1,\infty)}$.  Furthermore, there exists $C \in (1,\infty)$ such that corresponding measure $m_\mathbb{G}$ on $X(\glue,\branch)$ satisfies 
		\[
		C^{-1} V_\glue(r)V_\branch(r) \le m_\mathbb{G} \left(B_{\mathbb{G}}(x,r)\right) \le   V_\glue(r)V_\branch(r) \quad \mbox{for all $x \in X(\glue,\branch), r \ge 1$,}
		\]
		where $B_{\mathbb{G}}$ denote the balls with respect to the metric $d_\mathbb{G}$.
	\end{theorem}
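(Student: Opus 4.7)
The plan is to reduce the discrete statement to the continuous one already established in Theorem \ref{t:hkediff}, via the cable-system correspondence. First I would verify carefully the structural claim that, under the restriction \eqref{e:laakdisc}, the Laakso-type space $\laak(\glue,\branch)$ is precisely the cable system of $\bG(\glue,\branch)$ in the sense of \cite[\textsection 2]{BB04}. Under \eqref{e:laakdisc}, the tree $\tree(\branch)$ identifies with a cable system built from the subtree whose vertices are the points of $\tree(\branch)$ at integer $\metrictree$-distance from $p_{\tree}$; by \eqref{e:defbranch} the branching at each vertex is bounded and finite, and the length measure $\lentree$ and measure $\meastree$ on each unit cable are uniformly comparable to Lebesgue measure and counting measure respectively. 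Passing from $\tree(\branch)$ to $\laak(\glue,\branch)$ just identifies these cables at vertices lying in wormholes, so again the resulting space is a cable system of the graph $\bG=(X(\glue,\branch),E(\glue,\branch))$. Proposition \ref{p:laak}(c) and Lemma \ref{l:worm} then give the two-sided comparison of the graph metric $d_{\bG}$ with $\restr{\metlaak}{X(\glue,\branch) \times X(\glue,\branch)}$, with constants depending only on $\sup_{k}\branch(k)$ and $\sup_{k} \glue(k)$.

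Next I would transfer the heat kernel estimates. Theorem \ref{t:hkediff} provides the full sub-Gaussian estimate $\operatorname{HKE_f}(\sttree)$ for the diffusion on $(\laak(\glue,\branch),\metlaak,\measlaak,\dflaak,\domlaak)$, and Corollary \ref{c:measlaak} supplies the volume estimate $\measlaak(B_{\laak(\glue,\branch)}(\cdot,r)) \asymp V_\glue(r) V_\branch(r)$. Since $\bG$ has uniformly bounded vertex degrees (again by boundedness of $\branch,\glue$) and unit-length cables, the measure $m_\bG$ on $X(\glue,\branch)$ is comparable to the restriction of $\measlaak$ to a $1$-neighborhood of vertices. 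From the metric comparison above we therefore obtain
\[
m_\bG(B_\bG(x,r)) \asymp V_\glue(r) V_\branch(r), \quad \mbox{for all $x \in X(\glue,\branch), r \ge 1$,}
\]
which is the stated volume bound. For the heat kernel estimate itself, I would invoke the discrete/continuous equivalence carried out in \cite[\textsection 3,4]{BB04}. That argument compares the discrete-time heat kernel $p_n(x,y)$ of the simple random walk with the continuous-time heat kernel of the diffusion on the cable system, showing that the estimates transfer between the two settings up to multiplicative constants and parity corrections (which is why \eqref{e:dnlhke} involves $p_n+p_{n+1}$). Since $\sttree(r)$ and $V_\glue(r) V_\branch(r)$ are doubling with bounded doubling constants and $\sttree$ satisfies \eqref{e:reg}, the estimates in Definition \ref{d:dHKE} correspond exactly to the continuous ones in Definition \ref{d:HKE} restricted to $[1,\infty)$; this yields $\operatorname{HKE_{d,f}}(\sttree^{(d)})$.

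The main obstacle is the cable-system transfer itself. The equivalence in \cite{BB04} is stated in the power-function regime $\Psi(r)=r^\beta$, and a reader needs to be convinced that the comparison argument is \emph{robust under the more general regular scale function} $\sttree$ satisfying \eqref{e:reg}. I would address this by noting the following two points that are used in \cite{BB04}: (i) doubling and reverse-doubling of $\sttree$, which hold here by \eqref{e:sttree} and \eqref{e:defbranch}; and (ii) the time-doubling estimate $\sttree(2r) \lesssim \sttree(r)$ which lets one pass from $t$-steps to $2t$-steps in the chaining arguments. The key step is that near-diagonal lower bounds and upper Gaussian bounds are preserved under the projection from the cable system to its vertex set because the exit time from a unit cable from one endpoint is of order $1$ and the probability to reach the other endpoint is bounded below, both uniformly; these estimates follow for our diffusion from $\operatorname{E}(\sttree)$ applied at scale $1$, which is a consequence of Theorem \ref{t:hkediff}. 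Once this continuous-to-discrete comparison is established, the upper and lower bounds in \eqref{e:duhke}--\eqref{e:dnlhke} follow by substituting $\Phi(s)$ with its discrete analogue $\Phi^{(d)}(s)$, noting that the difference between the two is absorbed in the constants for $n \ge 1, d_{\bG}(x,y) \ge 1$.
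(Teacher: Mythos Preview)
Your proposal is correct and follows essentially the same route as the paper: deduce the discrete estimates from Theorem \ref{t:hkediff} and Corollary \ref{c:measlaak} via the cable-system correspondence of \cite[\textsection 3,4]{BB04}, noting that those arguments extend to general doubling scale functions $\sttree$. The paper's own proof is just this outline stated in one sentence; your additional care about the robustness of the cable-system transfer for non-power scale functions is exactly the point the paper flags parenthetically.
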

	In order to obtain analogues of Theorem \ref{t:mainsuf} for random walks we need to state a version of Lemma \ref{l:elementary} whose proof is similar.
	\begin{lemma} \label{l:d-elem}
		Let $V^{(d)}, \Psi^{(d)}:[1,\infty) \to [1,\infty)$ be doubling homeomorphisms and let $C_0 \in (1,\infty)$ be such that 
		\[
		C_0^{-1} \frac{R^2}{r^2} \le \frac{\Psi^{(d)}(R)}{\Psi^{(d)}(r)} \le C_0 \frac{RV^{(d)}(R)}{rV^{(d)}(r)}, \quad \mbox{for all $1< r \le R$.}
		\] Then there exists $\branch,\glue: \mathbb{Z} \to \mathbb{Z}$ and $C_1 \in (1,\infty)$ such that 
		\[
		1 \le \inf_{k \in \mathbb{Z}} \glue(k) \le \sup_{k \in \mathbb{Z}} \glue(k) < \infty, \quad	2 \le \inf_{k \in \mathbb{Z}} \branch(k) \le \sup_{k \in \mathbb{Z}} \branch(k) < \infty,		\]
		with $\branch(l)=2, \glue(l)=1$ for all $l \le 0$, and 
		\[
		C_1^{-1} V_\glue(r) V_\branch(r) \le V^{(d)}(r) \le 	C_1  V_\glue(r) V_\branch(r), \quad C_1^{-1} \sttree(r) \le \Psi^{(d)}(r) \le C_1 \sttree(r) \quad \mbox{for all $r \ge 1$.}
		\]
	\end{lemma}
	
	The following theorem is an analogue of Theorem \ref{t:mainsuf} for random walks on infinite graphs and sequence of growing finite graphs. 
	\begin{theorem} \label{t:sufgraph}
		Let $V^{(d)}, \Psi^{(d)}:[1,\infty) \to [1,\infty)$ be doubling homeomorphisms and let $C_0 \in (1,\infty)$ be such that 
		\begin{equation} \label{e:necdisc}
			C_0^{-1} \frac{R^2}{r^2} \le \frac{\Psi^{(d)}(R)}{\Psi^{(d)}(r)} \le C_0 \frac{RV^{(d)}(R)}{rV^{(d)}(r)}, \quad \mbox{for all $1< r \le R$.}
		\end{equation}
		\begin{enumerate}[(a)] 
			\item There exists an infinite graph $\mathbb{G}=(X,E)$ such that the simple random walk on $\mathbb{G}$ satisfies the sub-Gaussian heat kernel estimate  \hypertarget{hked}{$\on{HKE_{d,f}(\Psi^{(d)})}$} and there exists $C_1 \in (1,\infty)$ such that 
			\[
			C_1^{-1} V^{(d)}(r) \le	m_{\mathbb{G}}\left(B_{d_\mathbb{G}}(x,r)\right) \le C_1 V^{(d)}(r), \quad \mbox{for all $x \in X,r \ge 1$.}
			\]
			\item There exists a point $p \in X$ such that if $\mathbb{G}_n=(X_n,E_n)$ denotes the subgraph induced by the closed ball $\overline{B}_{d_\mathbb{G}}(p,2^n)$ for all $n \in \mathbb{N}$, we have that the sequence of graphs $\mathbb{G}_n$ satisfy  the sub-Gaussian heat kernel estimate  \hypertarget{hked}{$\on{HKE_{d,f}(\Psi^{(d)})}$} uniformly and that there exists $C_2 \in (1,\infty)$ such  that 
			\[
			C_2^{-1} V^{(d)}(r) \le	m_{\mathbb{G}_n}\left(B_{d_{\mathbb{G}_n}}(x,r)\right) \le C_1 V^{(d)}(r),
			\]
			for all $x \in X_n, 1 \le r \le \diam(\mathbb{G}_n), n \in \mathbb{N}$
		\end{enumerate}
	\end{theorem}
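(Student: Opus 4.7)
The plan is to apply Lemma \ref{l:d-elem} to reduce everything to a Laakso-type space with parameters $\branch,\glue$ that additionally satisfy the discretization condition \eqref{e:laakdisc}, and then to transfer the continuum sub-Gaussian heat kernel estimates from the Laakso-type space (respectively, from balls in the Laakso-type space) to the discrete simple random walk on the associated cable-system graph.

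First, given $V^{(d)},\Psi^{(d)}$ satisfying \eqref{e:necdisc}, invoke Lemma \ref{l:d-elem} to produce branching and gluing functions $\branch,\glue:\bZ\to\bN$ satisfying \eqref{e:defbranch}, \eqref{e:defglue}, and the additional normalization $\branch(l)=2, \glue(l)=1$ for all $l\le 0$ (i.e.\ \eqref{e:laakdisc}), such that $V_\glue(r)V_\branch(r)\asymp V^{(d)}(r)$ and $\sttree(r)\asymp\Psi^{(d)}(r)$ for all $r\ge 1$. For part (a), take $\bG:=\bG(\glue,\branch)=(X(\glue,\branch),E(\glue,\branch))$ to be the graph from the discussion preceding Theorem \ref{t:hkegraph} (so that $\laak(\glue,\branch)$ is the cable system of $\bG$). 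Theorem \ref{t:hkegraph} then yields the full discrete sub-Gaussian heat kernel estimates $\on{HKE_{d,f}(\sttree^{(d)})}$ on $\bG$ as well as the two-sided volume estimate $m_\bG(B_{d_\bG}(x,r))\asymp V_\glue(r)V_\branch(r)$. Using the comparabilities between $\sttree$ and $\Psi^{(d)}$ and between $V_\glue V_\branch$ and $V^{(d)}$, these are precisely the estimates claimed in (a). The graph is infinite because $\laak(\glue,\branch)$ is unbounded (cf.\ Corollary \ref{c:measlaak}).

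For part (b), take the distinguished vertex $p\in X(\glue,\branch)$ to correspond to the base point $p_\laak$ defined in \eqref{e:bplaak}; under the identification of the cable system of $\bG$ with $\laak(\glue,\branch)$, the ball $\ol{B}_{d_\bG}(p,2^n)$ in $\bG$ is precisely the set of vertices of $\bG$ contained in (a bounded perturbation of) the closed ball $\laak(\glue,\branch)^{(n)}=\ol{B_{\laak(\glue,\branch)}(p_\laak,2^n)}$. Hence the induced subgraph $\bG_n$ can be identified as the underlying graph of the cable system of $\laak(\glue,\branch)^{(n)}$. By Proposition \ref{p:uniformlaak} the balls $B_{\laak(\glue,\branch)}(p_\laak,2^n)$ are $(c_U,C_U)$-uniform domains with constants independent of $n$, so Theorem \ref{t:hkeseq} gives the full sub-Gaussian heat kernel estimates $\on{HKE_f(\sttree)}$ for the reflected MMD spaces $(\laak(\glue,\branch)^{(n)},\metlaak^{(n)},\measlaak^{(n)},\mathcal{E}^{\laak,(n)},\mathcal{F}^{\laak,(n)})$ uniformly in $n$, and Corollary \ref{c:measlaak} gives the uniform volume estimate.

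It remains to transfer these continuum estimates for the reflected diffusion on $\laak(\glue,\branch)^{(n)}$ to the discrete-time simple random walk on $\bG_n$. This is carried out by the standard cable-system argument of \cite[\textsection 3,4]{BB04}: the simple random walk on $\bG_n$ is (up to a constant time-change) the skeleton of the reflected Brownian motion on the cable system; the comparison between discrete and continuous heat kernels at integer times together with the parabolic Harnack inequality (which follows from \hyperlink{hke}{$\on{HKE_f(\sttree)}$}) yields the discrete heat kernel estimates with the same constants. The main technical point, and the one step requiring the most care, is verifying that the constants in the Barlow--Bass comparison depend only on the doubling, Poincar\'e, and cutoff constants of the ambient MMD space, so that the uniform-in-$n$ conclusion of Theorem \ref{t:hkeseq} genuinely propagates to a uniform-in-$n$ bound for the discrete random walks on $\bG_n$. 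Granting this, we conclude $\on{HKE_{d,f}(\sttree^{(d)})}$ uniformly for $(\bG_n)_{n\in\bN}$, and the comparabilities from Lemma \ref{l:d-elem} finish the proof.
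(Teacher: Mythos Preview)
Your proposal is correct and follows essentially the same route as the paper: part (a) via Lemma \ref{l:d-elem} and Theorem \ref{t:hkegraph}, and part (b) via Theorem \ref{t:hkeseq} together with the cable-system transfer discussed before Theorem \ref{t:hkegraph}. You supply more detail than the paper's terse proof (in particular you make explicit the uniformity of constants in the Barlow--Bass comparison), but the structure and ingredients are identical.
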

	\begin{proof}
		The proof of part (a) is similar to that of Theorem \ref{t:hkediff} given Lemma \ref{l:d-elem} and Theorem \ref{t:hkegraph}.
		
		The proof of (b) follows from Theorem \ref{t:hkeseq} and the discussion before Theorem \ref{t:hkegraph} concerning the equivalence between sub-Gaussian heat kernel estimates for graphs and the corresponding cable systems. 
	\end{proof}
	The following remark concerns the necessity of \eqref{e:necdisc}.
	\begin{remark}
		In the setting of random walks on graphs, one can show the necessity of \eqref{e:necdiff} using the same argument as the corresponding result for diffusions in Theorem \ref{t:mainnec}. Alternately, one can deduce the necessity of \eqref{e:necdiff} by considering cable process  corresponding to  a random walk and then appealing to  Theorem \ref{t:mainnec} for the cable process.
	\end{remark}
	Similar to Proposition \ref{p:greendiff}, the random walk in Theorem \ref{t:hkegraph}(a) is transient if and only if 
	\[
	\int_1^\infty  \frac{\Psi^{(d)}(s)}{sV^{(d)}(s)} \,ds < \infty
	\]
	and the corresponding (discrete time) Green function between $x,y \in X$ comparable to 
	$$\int_{1 \vee d(x,y)}^\infty  \frac{\Psi^{(d)}(s)}{sV^{(d)}(s)} \,ds.$$
	\subsection{Martingale dimension} \label{ss:martingale}
	The goal of this subsection is to show that martingale dimension of the diffusion on the Laakso-type space $(\laak(\glue,\branch),\metlaak,\measlaak,\dflaak,\domlaak)$ is one. 
	Hino showed that the martingale dimension is equivalent to an analytic quantity called the index of the Dirichlet form \cite[Theorem 3.4]{Hin10}.
	We show that the index of $(\laak(\glue,\branch),\metlaak,\measlaak,\dflaak,\domlaak)$ is one.

	To define the index, we first recall the concept of a minimal energy-dominant measure.
	\begin{definition}[{\cite[Definition 2.1]{Hin10}}]\label{d:minimal-energy-dominant}
		Let $(X,d,m,\mathcal{E},\mathcal{F})$ be a MMD space and let $\Gamma(\cdot,\cdot)$ denote the corresponding energy measure. A $\sigma$-finite Borel measure
		$\nu$ on $X$ is called a \textbf{minimal energy-dominant measure}
		of $(\mathcal{E},\mathcal{F})$ if the following two conditions are satisfied:
		\begin{enumerate} 
			\item[(i)](Domination) For every $f \in \mathcal{F}$, we have $\Gamma(f,f) \ll \nu$.
			\item[(ii)](Minimality) If another $\sigma$-finite Borel measure $\nu'$
			on $X$ satisfies condition (i) with $\nu$ replaced
			by $\nu'$, then $\nu \ll \nu'$.
		\end{enumerate}
		Note that by \cite[Lemmas 2.2, 2.3 and 2.4]{Hin10}, a minimal energy-dominant measure of
		$(\mathcal{E},\mathcal{F})$ always exists and is precisely a $\sigma$-finite Borel measure
		$\nu$ on $X$ such that for each Borel subset $A$ of $X$, $\nu(A)=0$ if and only if
		$\Gamma(f,f)(A)=0$ for all $f\in\mathcal{F}$.
	\end{definition}
	Next, we recall the definition of index associated to a Dirichlet form. 
	\begin{definition}\cite[Definition 2.9]{Hin10}
		Let $(X,d,m,\mathcal{E},\mathcal{F})$ be a MMD space. Let $\Gamma(\cdot,\cdot)$ denote the corresponding energy measure and let $\nu$ be a minimal energy dominant measure.
		\begin{enumerate}[(i)]
			\item The \emph{pointwise index} is a measurable function $p:X \to \mathbb{N} \cup \{0,\infty\}$ such that the following hold:
			\begin{enumerate}[(a)]
				\item For any $N \in \mathbb{N}$, $f_1,\ldots,f_N \in \mathcal{F}$, we have 
				\[
				\on{rank} \left(\frac{d\Gamma(f_i,f_j)}{d\nu}(x)\right)_{1 \le i,j \le N} \le p(x) \quad \mbox{for $\nu$-almost every $x \in X$.}
				\]
				\item For any other function $p':X \to \mathbb{N} \cup \{0,\infty\}$ that satisfies (a) with $p'$ instead of $p$, then $p(x) \le p'(x)$ for $\nu$-almost every $x \in X$.
			\end{enumerate}
			\item The \textbf{index} of the MMD space $(X,d,m,\mathcal{E},\mathcal{F})$ is defined as $\nu-\esssup_{x \in X} p(x)$, where $p$ is a pointwise index.
		\end{enumerate}
		It is easy to see that the pointwise index is well-defined in the $\nu$-almost everywhere sense and does not depend on the choice of $\nu$. Therefore the index is well-defined and takes values in $\mathbb{N} \cup \{0,\infty\}$.
	\end{definition}
	The following is the  desired result concerning martingale dimension.
	\begin{prop} \label{p:index}
		The index of the   MMD space $(\laak(\glue,\branch),\metlaak,\measlaak,\dflaak,\domlaak)$ is one.
	\end{prop}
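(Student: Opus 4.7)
The plan is to identify a natural minimal energy-dominant measure and a \emph{scalar} gradient defined on all of $\domlaak$, from which rank-one is immediate. Specifically, I expect to show the following: the length measure $\lenlaak$ from \eqref{e:deflenlaak} is minimal energy-dominant, and every $f \in \domlaak$ admits a well-defined $\grad^\laak f \in L^2(\laak(\glue,\branch), \lenlaak)$ such that
\[
d\emlaak(f,f) = (\grad^\laak f)^2 \, d\lenlaak.
\]
Once these are in place, polarization yields $d\emlaak(f,g)/d\lenlaak = (\grad^\laak f)(\grad^\laak g)$, so for any finite family $f_1,\ldots,f_N \in \domlaak$ the matrix $\bigl(d\emlaak(f_i,f_j)/d\lenlaak(x)\bigr)_{i,j}$ equals the outer product $v(x)v(x)^\top$ with $v(x) = (\grad^\laak f_i(x))_{i=1}^N$, which has rank at most one $\lenlaak$-almost everywhere. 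This gives the upper bound $p(x) \le 1$ on the pointwise index.

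To construct $\grad^\laak f$ on all of $\domlaak$, I would start from the core described in Proposition~\ref{p:friedrich}, where $\grad^\laak(f \circ \projlaak_n)$ is already defined via \eqref{e:defgradlaaklim} and satisfies the identity \eqref{e:emlaakint}. For general $f \in \domlaak$, pick a sequence $(g_k)$ in the core with $g_k \to f$ in $\dflaak_1$; then $(\grad^\laak g_k)$ is Cauchy in $L^2(\lenlaak)$ by \eqref{e:emlaakint}, so it has an $L^2(\lenlaak)$-limit which I define to be $\grad^\laak f$. Independence of the approximating sequence and the identity $d\emlaak(f,f) = (\grad^\laak f)^2\,d\lenlaak$ both pass to the limit using the $\dflaak$-closedness of the form and the bilinear extension of \eqref{e:emlaakint}. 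This step immediately gives that $\lenlaak$ dominates $\emlaak(f,f)$ for every $f \in \domlaak$.

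The minimality of $\lenlaak$ follows by exhibiting, for any Borel set $A \subset \laak(\glue,\branch)$ with $\lenlaak(A) > 0$, a function $f \in \domlaak$ with $\emlaak(f,f)(A) > 0$. Unfolding \eqref{e:deflenlaak}, positivity of $\lenlaak(A)$ means $(\measultra \times \lentree)(\quolaak^{-1}(A)) > 0$, so for a set of $\mathbf{u} \in \ultra(\glue)$ of positive $\measultra$-measure the slice $\{\mathbf{t} : (\mathbf{u},\mathbf{t}) \in \quolaak^{-1}(A)\}$ has positive $\lentree$-measure. Using the building-block functions $f_{\mathbf{v}}$ of \eqref{e:deffv} (whose gradient, by Lemma~\ref{l:fvenergy}, is $\{0, \pm 2^{-n}\}$-valued and supported on a finite union of geodesics), one can produce, by lifting via a finite collection of the isometries $I_\mathbf{u}$ as in the regularity argument of Proposition~\ref{p:dflaakprelim}, a function in $\domlaak$ whose gradient is nonzero on a positive-$\lenlaak$-measure subset of $A$. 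The same family of functions also shows the pointwise index is at least one on a set of positive $\lenlaak$-measure, so by combining with the rank-one bound, the index equals one.

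The main obstacle is the first step: extending the gradient $\grad^\laak$ unambiguously from the core to all of $\domlaak$ and verifying that the identity $d\emlaak(f,f) = (\grad^\laak f)^2\,d\lenlaak$ survives the limit. The definition depends a priori on the approximating sequence, and the key input is that on the core the map $f \mapsto \grad^\laak f$ is a linear isometry (up to constants) from $(\domlaak, \dflaak)/\sim$ into $L^2(\lenlaak)$, so Cauchy-ness of $f_n$ in $\dflaak$ forces Cauchy-ness of $\grad^\laak f_n$. After that, everything is a standard approximation argument coupled with Hino's characterization of the index via the pointwise index \cite{Hin10}.
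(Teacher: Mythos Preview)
Your approach is correct and rests on the same idea as the paper: the energy measure has density $(\grad^\laak f)(\grad^\laak g)$ with respect to $\lenlaak$, so the Gram matrix is an outer product and has rank at most one. The paper economizes in two places where you do extra work. First, instead of extending $\grad^\laak$ from the core to all of $\domlaak$, the paper invokes \cite[Proposition~2.10]{Hin10}, which says the rank bound need only be checked on a countable sequence whose linear span is $\dflaak_1$-dense; since Proposition~\ref{p:friedrich} supplies such a sequence inside the core where \eqref{e:emlaakint} is already available, no limiting argument is required. Second, the paper does not prove that $\lenlaak$ is \emph{minimal} energy-dominant: domination alone gives $\nu \ll \lenlaak$ for the minimal $\nu$, and a rank bound holding $\lenlaak$-a.e.\ then holds $\nu$-a.e.\ automatically. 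For the lower bound the paper simply cites \cite[Proposition~2.11]{Hin10} (the index of a nontrivial strongly local form is at least one), bypassing your explicit construction. Your additional steps are valid and make the proof more self-contained, but the paper's route is shorter by outsourcing these reductions to Hino's general results.
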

	\begin{proof}
		Recall from Proposition \ref{p:friedrich} that $	\bigcup_{n \in \mathbb{N}} \{f \circ \projlaak_{n} : f\in \mathcal{F}^{\laak_n}\cap \contfunc_0(\laak(\glue_n,\branch)) \}$ is a core, where $\glue_n$ is as defined in \eqref{e:defgn} and $\mathcal{F}^{\laak_n}$ is as defined before Lemma \ref{l:consistent}. Therefore there exists a sequence of functions $(f_k)_{k \in \mathbb{N}}$ such that $f_k \in	\bigcup_{n \in \mathbb{N}} \{f \circ \projlaak_{n} : f\in \mathcal{F}^{\laak_n}\cap \contfunc_0(\laak(\glue_n,\branch)) \}$ for all $k \in \mathbb{N}$ and such that the linear span of $(f_k)_{k \in \mathbb{N}}$ is $\dflaak_1$-dense in $\domlaak$.
		
		Let $\nu$ be a minimal energy dominant measure.
		Recalling the definitions of gradient \eqref{e:defgradlaak}, the expression for energy measure $\emlaak$ given in \eqref{e:emlaakint}, by \cite[Lemma 2.2]{Hin10} we have that $\nu \ll \lenlaak$ (recall \eqref{e:deflenlaak}) and hence
		\[
		\on{rank} \left(\frac{d\Gamma(f_i,f_j)}{d\nu}(x)\right)_{1 \le i,j \le N} =  \on{rank} \left(\frac{d\Gamma(f_i,f_j)}{d\lenlaak}(x)\right)_{1 \le i,j \le N} 
		\]
		for $\nu$-almost every $x \in X$ and all $n \in \mathbb{N}$. By \eqref{e:emlaakint}, we have 
		\[
		\on{rank} \left(\frac{d\Gamma(f_i,f_j)}{d\lenlaak}(x)\right)_{1 \le i,j \le N} =  \on{rank} \left( \nabla^\laak f_i(x) \nabla^\laak f_j(x) \right)_{1 \le i,j \le N} \le 1
		\]
		for $\lenlaak$-almost every (and hence $\nu$-almost every) $x \in X$ and $N \in \mathbb{N}$.
		By \cite[Proposition 2.10]{Hin10}, we obtain that the index is atmost one. 
		The matching lower bound on the index follows from \cite[Proposition 2.11]{Hin10}.
	\end{proof}
	
	\subsection{Concluding remarks} \label{ss:conclude}
	The upper bound on martingale dimension due to Hino \cite[Theorem 3.5]{Hin13}
	along with Proposition \ref{p:index} provides some weak evidence for the following conjecture which concerns the joint behavior of volume growth exponent, escape time exponent and martingale dimension. 
	\begin{conjecture} For any $\alpha \in [1,\infty),  \beta \in [2,\infty)$ and $d_m \in \mathbb{N}$ such that $$2 \le \beta \le  \alpha +1, \quad 1 \le d_m \le \frac{2\alpha}{\beta},$$ there exists a symmetric diffusion process on a metric measure space that satisfies full sub-Gaussian heat kernel estimate with volume growth exponent $\alpha$, escape time exponent $\beta$ and martingale dimension (or index) $d_m$.
	\end{conjecture}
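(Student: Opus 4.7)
The plan splits the conjecture into a tractable part, handled by taking products of the Laakso-type diffusions constructed in this paper, and a harder residual range that appears to require genuinely new constructions.

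First, the base case $d_m = 1$ would follow immediately from Theorem \ref{t:mainsuf} combined with Proposition \ref{p:index}: for any admissible pair $(\alpha, \beta)$, the Laakso-type MMD space with branching and gluing functions supplied by Lemma \ref{l:elementary} has volume growth exponent $\alpha$, escape time exponent $\beta$, and index $1$. For $d_m \ge 2$ satisfying the strengthened bound $d_m(\beta - 1) \le \alpha$, I would fix $k := d_m$ and take the product MMD space $L_1 \times \cdots \times L_k$, where each factor $L_i$ is a Laakso-type space with escape time exponent $\beta$ and volume growth exponent $\alpha/k$. Each factor is admissible because the hypothesis yields $\beta \le \alpha/k + 1$. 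Standard results on product Dirichlet forms (product energy measures, product semigroups, product heat kernels) then give volume growth $r^\alpha$ and escape time profile $r^\beta$ on the product, while Hino's identification of martingale dimension with the index, together with the additivity of the pointwise index under products, yields martingale dimension $k = d_m$.

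The routine bookkeeping consists of verifying the full sub-Gaussian heat kernel estimate for the product, which can be done either by multiplying factor-wise bounds via Chapman--Kolmogorov, or by verifying the Poincar\'e and cutoff energy inequalities on the product (both of which are stable under products with the same space-time scaling) and invoking \cite[Theorem 1.2]{GHL}. The index additivity assertion follows from \cite[Propositions 2.10 and 2.11]{Hin10} applied coordinate-wise to the product energy measure, since the product gradient decomposes as an orthogonal sum of the factor gradients.

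The main obstacle is the residual range $\alpha/(\beta - 1) < d_m \le 2\alpha/\beta$, which is non-empty precisely when $\beta > 2$. Product constructions of Laakso-type factors cannot reach it, since each factor requires $\alpha_i \ge \beta - 1$ for its existence, which caps $d_m$ at $\alpha/(\beta - 1) < 2\alpha/\beta$. The hard part would therefore be to construct diffusions whose martingale dimension strictly exceeds the number of independent Laakso factors. Two natural directions to explore are: (i) replacing one or more Laakso factors with a higher martingale-dimensional fractal having $\beta > 2$ (such as a generalized Sierpi\'nski carpet or a suitable nested fractal), provided such a building block with index strictly greater than one could be identified; (ii) modifying the Laakso construction so that ``wormhole'' gluings carry a continuous family of identifications, thereby enriching the gradient bundle beyond a single scalar copy. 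Since no metric measure Dirichlet space with martingale dimension strictly between $1$ and $2\alpha/\beta$ is presently known outside the Euclidean regime $\beta = 2$, the residual range is where the depth of the conjecture lies, and closing it would likely demand genuinely new constructions beyond the methods of this paper.
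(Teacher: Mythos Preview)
The statement is a \emph{conjecture} in the paper, not a theorem; the paper offers no proof and presents it explicitly as an open problem in \S\ref{ss:conclude}, citing Proposition~\ref{p:index} together with Hino's upper bound as weak evidence, and the examples from \cite{Mur19} with $\alpha=\beta\in(2,\infty)$ and $d_m=2$ as further evidence. So there is no paper proof to compare against; your write-up is a proposed partial resolution plus an honest identification of where the real difficulty lies, which is exactly the right attitude.

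Your product construction for the range $d_m(\beta-1)\le\alpha$ is sound in outline and goes beyond what the paper itself establishes. The factor admissibility check $\beta\le\alpha/k+1$ is correct, the product heat kernel satisfies \hyperlink{hkef}{$\operatorname{HKE_f}(\Psi)$} with $\Psi(r)=r^\beta$ because the exponential terms combine as $\sum_i d_i(x_i,y_i)^{\beta/(\beta-1)}\asymp d(x,y)^{\beta/(\beta-1)}$ for a finite product metric, and the index additivity follows from the block-diagonal structure of the product energy measure with respect to the coordinate gradients (this is not hard but is not quite a citation to \cite{Hin10}; you would need to write out the two-line argument that the rank of the Gram matrix on the product is the sum of the factor ranks $\nu$-a.e.). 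One small correction: your closing sentence overstates the state of the art. The \cite{Mur19} examples cited in the paper already achieve $d_m=2=2\alpha/\beta$ with $\beta>2$, sitting exactly at the conjectured upper boundary of the residual range; these are not covered by your product construction (since $\alpha/(\beta-1)=\beta/(\beta-1)<2$ there) and show that the residual range is not entirely uncharted. You should incorporate them as a second family of building blocks alongside the Laakso-type factors.
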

	Another evidence  for the above conjecture is the existence of diffusions with $\alpha=\beta \in (2,\infty)$ and $d_m=2$ \cite[p. 4205]{Mur19}.
	Another question is to prove the necessity of the bound $d_m \le 2 \alpha/\beta$, since Hino's result applies only for self-similar spaces \cite{Hin13}. 
	
	Our diffusions provide  interesting new examples to study   the attainment problem for conformal walk dimension introduced in \cite[Problem 1.3(1)]{KM23}. There is some progress in the case of self-similar sets in \cite[\textsection 6]{KM23} but  our Laakso-type spaces are not self-similar in general.  Hence the following question will require developing  new methods to solve the attainment problem. More precisely, we formulate the following question referring the reader to \cite{KM23} for   definitions and background.
	\begin{question} \label{q:dcw}
		When does the Laakso-type MMD space $(\laak(\glue,\branch),\metlaak,\measlaak,\dflaak,\domlaak)$ attain  the conformal walk dimension? 
	\end{question}
	One obvious sufficient condition is $\sttree(r) \asymp r^2$ which is equivalent to $\# \{k : \branch(k) \neq 2\} <\infty$.  It is not clear if  attainment happens in general.
	
	Another direction of research is to consider \emph{random} variants of trees of Laakso-type spaces. For instance, the branching function $\branch:\mathbb{Z} \to \mathbb{Z}$ can be such that the $(\branch(k))_{k \in \mathbb{Z}}$ are independent and identically distributed random variables whose law $\mu_\branch$ is supported in $\llbracket 2, \infty \rrbracket$. Similarly, let us assume that the gluing function $\glue:\mathbb{Z} \to \mathbb{Z}$ is also random such that  $(\glue(k))_{k \in \mathbb{Z}}$ are independent and identically distributed random variables whose law $\mu_\glue$ is supported in $\llbracket 1, \infty \rrbracket$. If $\mu_\branch$ and $\mu_\glue$ have finite support then by Theorem \ref{t:hkediff}, the corresponding Laakso-type MMD space satisfies sub-Gaussian heat kernel bounds. The interesting case is when one or both of these measures do not have finite support as Theorem \ref{t:hkediff} will no longer apply in this case.

\noindent \textbf{Acknowledgments.} 
	This work was initiated after Martin Barlow communicated Question \ref{q:main} \cite{Bar22}. I thank him for helpful advice and for this question. I am grateful to Laurent Saloff-Coste for  asking me about the existence of  families of finite graphs with prescribed sub-Gaussian heat kernel estimates which led to the formulation of
Theorem \ref{t:sufgraph}(b) \cite{Sal23}.
I thank Xinyi Li and Izumi Okada for their interest in Theorem \ref{t:sufgraph}(a) and explaining why such graphs might be useful for the study of favorite sites for simple random walks. Finally, I thank Shashank Sharma and Aobo Chen for   discussions regarding Laakso-type spaces  and martingale dimension  respectively. The author is grateful to the two anonymous referees for a careful reading of this work and helpful suggestions.

\noindent Department of Mathematics, University of British Columbia,
Vancouver, BC V6T 1Z2, Canada. \\
mathav@math.ubc.ca 


\begin{thebibliography}{4}
	
	\bibitem[AB]{AB} \textsc{Andres, S.,  Barlow, M. T.} 
	Energy inequalities for cutoff-functions and some applications, 
	{\em J. Reine Angew. Math.} {\bf 699} (2015), 183--215. 
	
	\bibitem[AE]{AE} \textsc{Antilla, R., Eriksson-Bique, S.} Iterated graph systems and the combinatorial Loewner property (preprint) arXiv:2408.15692
	
	\bibitem[AEW]{AEW} \textsc{Athreya, S. Eckhoff, M. Winter, A.} Brownian motion on  $\mathbb{R}$-trees. {\em Trans. Amer. Math. Soc.} {\bf 365} (2013), no. 6, 3115--3150.
	
	\bibitem[Bar98]{Bar98} \textsc{Barlow, M. T.}  Diffusions on fractals,
	{\em Lecture Notes in Math.}\ {\bf 1690}, 1--121,
	Springer, Berlin, 1998.
	
	\bibitem[Bar04]{Bar04} \textsc{Barlow, M. T.}  
	Which values of the volume growth and escape time exponent are possible for a graph?. \emph{Rev. Mat. Iberoamericana} {\bf 20}(2004), no.1, 1--31. 
	
	\bibitem[Bar22]{Bar22} \textsc{Barlow, M. T.}  personal communication, December 2022.
	
	
	\bibitem[BB92]{BB92} \textsc{Barlow, M. T.} and \textsc{Bass, R. F.}
	Transition densities for Brownian motion on the Sierpinski carpet,
	\emph{Probab.\ Theory Related Fields} \textbf{91} (1992), no.\ 3--4, 307--330. 
	
	\bibitem[BB99]{BB99}  \textsc{Barlow, M. T.} and \textsc{Bass, R. F.}
	Brownian motion and harmonic analysis on Sierpi\'{n}ski carpets,
	\emph{Canad.\ J.\ Math.}\ \textbf{51} (1999), no.\ 4, 673--744.
	
	\bibitem[BB04]{BB04}
	\textsc{Barlow, M. T., Bass, R. F.} Stability of parabolic Harnack inequalities. 
	{\em Trans. Amer. Math. Soc.} {\bf 356} (2004) no. 4, 1501--1533.
	
	\bibitem[BBK]{BBK}
	\textsc{Barlow, M. T., Bass, R. F., Kumagai, T.}
	Stability of parabolic Harnack inequalities on metric measure spaces, 
	{\em J. Math. Soc. Japan} (2) {\bf 58} (2006), 485--519.  (correction in arxiv:2001.06714)
	
	\bibitem[BCM]{BCM} \textsc{Barlow, M. T.,  Chen, Z.-Q., Murugan. M.} Stability of EHI and regularity of MMD spaces. arXiv:2008.05152  (preprint)
	
	\bibitem[BE]{BE} \textsc{Barlow, M. T., Evans, S. N.}
	Markov processes on vermiculated spaces. {\em Random walks and geometry}, 337--348.
	{\em Walter de Gruyter GmbH \& Co. KG, Berlin}, 2004
	
	\bibitem[BGK]{BGK} \textsc{Barlow, M. T., Grigor'yan, A., Kumagai, T.}
	On the equivalence of parabolic Harnack inequalities and heat kernel estimates. {\em J. Math. Soc. Japan} {\bf 64} (2012), no.4, 1091--1146.
	
	\bibitem[BH]{BH} \textsc{Barlow, M. T.} and \textsc{Hambly, B. M.}
	Transition density estimates for Brownian motion on scale irregular Sierpinski gaskets,
	\emph{Ann.\ Inst.\ H.\ Poincar\'{e} Probab.\ Statist.}\ \textbf{33} (1997), no.\ 5, 531--557
	
	\bibitem[BP]{BP} \textsc{Barlow, M. T.} and \textsc{Perkins, E. A.}
	Brownian motion on the Sierpinski gasket,
	\emph{Probab.\ Theory Related Fields} \textbf{79} (1988), no.\ 4, 543--623. 
	
	\bibitem[BM]{BM}
	\textsc{Barlow, M. T.} and \textsc{Murugan, M.}
	Stability of the elliptic Harnack inequality,
	{\em Ann. of Math.}\ (2) {\bf 187} (2018), 777--823 
	
	\bibitem[BT21]{BT21} 	\textsc{Bonk, M., Tran, H.} 
	The continuum self-similar tree. \emph{Fractal geometry and stochastics VI}, 143--189.
	\emph{Progr. Probab.}, {\bf 76}
	Birkh\"auser/Springer, Cham, 2021.
	
	\bibitem[BBI]{BBI} \textsc{Burago, D., Burago, Y.} and \textsc{Ivanov, S.}
	A course in Metric Geometry,
	{\em Graduate Studies in Mathematics}, {\bf 33}. American Mathematical Society, Providence, RI, 2001.
	
	\bibitem[CQ]{CQ} \textsc{Cao, S., Qiu, H.} Dirichlet forms on unconstrained Sierpinski carpets, {\em Probab. Theory Related Fields} {\bf 189} (2024), no. 1-2, 613--657.
	
	\bibitem[CT]{CT} \textsc{Carron, G., Tewodrose, D.} A rigidity result for metric measure spaces with Euclidean heat kernel
	\emph{J. \'Ec. polytech. Math.} {\bf 9} (2022), 101--154.
	
	
	\bibitem[CK]{CK} \textsc{Cheeger, J., Kleiner, B.} 
	Inverse limit spaces satisfying a Poincar\'e inequality.
	\emph{Anal. Geom. Metr. Spaces} \textbf{3} (2015), no. 1, 15--39.
	
	\bibitem[CE]{CE}  	\textsc{Cheeger, J., Eriksson-Bique, S.} Thin Loewner carpets and their quasisymmetric embeddings in  $\mathbb{S}^2$. 
	\emph{Comm. Pure Appl. Math.} \textbf{76} (2023), no. 2, 225--304.
	
	\bibitem[CF]{CF}  \textsc{Chen, Z.-Q.} and \textsc{Fukushima, M.}
	\emph{Symmetric Markov Processes, Time Change, and Boundary Theory},
	London Math.\ Soc.\ Monogr.\ Ser., vol.\ 35,
	Princeton University Press, Princeton, NJ, 2012. 
	
	\bibitem[DS]{DS} \textsc{David, G., Semmes, S.}
	{\em Fractured fractals and broken dreams.
		Self-similar geometry through metric and measure}
	Oxford Lecture Ser. Math. Appl., 7
	The Clarendon Press, Oxford University Press, New York, 1997. x+212 pp.
	
	\bibitem[DKN]{DKN} \textsc{Dembo, A., Kumagai, T., Nakamura, C.}  
	Cutoff for lamplighter chains on fractals.
	\emph{Electron. J. Probab.} {\bf 23} (2018), Paper No. 73, 21 pp.
	
	
	
	
	\bibitem[FHK]{FHK} \textsc{Fitzsimmons, P. J.,  Hambly, B. M.} and \textsc{Kumagai, T.}
	Transition density estimates for Brownian motion on affine nested fractals,
	\emph{Comm.\ Math.\ Phys.}\ \textbf{165} (1994), no.\ 3, 595--620.
	
	\bibitem[FOT]{FOT}\textsc{Fukushima, M., Oshima, Y.} and  \textsc{Takeda, M.}
	\emph{Dirichlet Forms and Symmetric Markov Processes},
	Second revised and extended edition, de Gruyter Studies in Mathematics, vol.\ 19,
	Walter de Gruyter \& Co., Berlin, 2011.
	
	\bibitem[GH]{GH} \textsc{Ghys, E., de la Harpe, P.} (eds.), Sur les Groupes Hyperboliques d'apr\'es Mikhael Gromov,
	{\it Progress in mathematics} {\bf 83}, Birkh\"auser, Boston, 1990, pp. 285.
	
	\bibitem[GHL03]{GHL03} \textsc{Grigor'yan, A., Hu, J., Lau, K.-S.}  Heat kernels on metric-measure spaces and an application to semi-linear elliptic equations, {\em Trans. AMS} {\bf 355} (2003) no.5, 2065--2095
	
	\bibitem[GHL15]{GHL}
	\textsc{Grigor'yan, A., Hu, J.} and \textsc{Lau, K.-S.} 
	Generalized capacity, Harnack inequality and heat kernels of Dirichlet forms on metric spaces.
	{\em J. Math. Soc. Japan} {\bf 67} (2015) 1485--1549.
	
	\bibitem[GT]{GT} \textsc{Grigor'yan, A., Telcs, A.}  Two-sided estimates of heat kernels on metric measure spaces,
	\emph{Ann.\ Probab.}\ \textbf{40} (2012), no.\ 3, 1212--1284. 
	
	
	\bibitem[Gro]{Gro} \textsc{Gromov, M.} Groups of polynomial growth and expanding maps.
	{\em Inst. Hautes \'Etudes Sci. Publ. Math.} (1981), no. {\bf 53}, 53–73.
	
	\bibitem[Ham92]{Ham92}  \textsc{Hambly, B.}
	Brownian motion on a homogeneous random fractal,
	{\em Probab.\ Theory Related Fields} {\bf 94} (1992), no.\ 1, 1--38.
	
	\bibitem[Ham00]{Ham00} \textsc{Hambly, B.~M.}
	Heat kernels and spectral asymptotics for some random Sierpinski gaskets,
	in: \emph{Fractal Geometry and Stochastics II} (C.\ Bandt et al., eds.),
	Progr.\ Probab., vol.\ 46, Birkh\"{a}user, 2000, pp.\ 239--267.
	
	\bibitem[Hei]{Hei}  \textsc{Heinonen, J.}  {\em Lectures on Analysis on Metric Spaces},  Universitext. Springer-Verlag, New York, 2001. x+140 pp.
	
	\bibitem[Hin08]{Hin08}  \textsc{Hino, M.} Martingale dimensions for fractals.
	{\em Ann. Probab.} {\bf 36} (2008), no. 3, 971--991.
	
	\bibitem[Hin10]{Hin10} \textsc{Hino, M.}
	Energy measures and indices of Dirichlet forms, with applications to derivatives on some fractals,
	\emph{Proc.\ Lond.\ Math.\ Soc.}\ (3) \textbf{100} (2010), no.\ 1, 269--302.
	
	\bibitem[Hin13]{Hin13} \textsc{Hino, M.}
	Upper estimate of martingale dimension for self-similar fractals. 
	{\em Probab. Theory Related Fields} {\bf 156}(2013), no.3-4, 739--793. 
	
	\bibitem[HS]{HS} \textsc{Hebisch, W., Saloff-Coste, L.} Gaussian estimates for Markov chains and random walks on groups.
	{\em Ann. Probab.} {\bf 21} (1993), no. 2, 673--709.
	
	\bibitem[HKST]{HKST}  
	\textsc{Heinonen, J., Koskela, P.,  Shanmugalingam, N.,  Tyson, J. T.}
	{\em  Sobolev spaces on metric measure spaces. 
		An approach based on upper gradients}. New Mathematical Monographs,   27. Cambridge University Press, Cambridge, 2015. xii+434 
	
	\bibitem[KM23]{KM23} \textsc{Kajino, N., Murugan,  M.} On the conformal walk dimension: Quasisymmetric uniformizition for symmetric diffusions, 
	{\em	Invent. Math.} {\bf 231} (2023), no. 1, 263--405.
	
	\bibitem[KM24+]{KM24+} \textsc{Kajino, N., Murugan,  M.}  Heat kernel estimates for boundary traces of reflected diffusions on uniform domains, arxiv:2312.08546 (2024).
	
	
	\bibitem[Kig95]{Kig95} \textsc{Kigami, J.}  Harmonic calculus on limits of networks and its application to dendrites.
	{\em J. Funct. Anal.}, {\bf 125}:48--86, 1995.
	
	\bibitem[Kig01]{Kig01} \textsc{Kigami, J.}
	\emph{Analysis on Fractals}, Cambridge Tracts in Math., vol.\ 143,
	Cambridge University Press, Cambridge, 2001.
	
	\bibitem[Kig12]{Kig12} \textsc{Kigami, J.} Resistance forms, quasisymmetric maps and heat kernel estimates. {\em Mem. Amer. Math. Soc.},
	{\bf 216}(1015):vi+132, 2012.
	
	\bibitem[Kig23]{Kig23} \textsc{Kigami, J.} 
	Conductive homogeneity of compact metric spaces and construction of $p$-energy,
	\emph{Mem. Eur. Math. Soc.}\ Vol.\ 5 (2023). 
	
	\bibitem[Kum93]{Kum93} \textsc{Kumagai, T.} 
	Estimates of transition densities for Brownian motion on nested fractals,
	\emph{Probab.\ Theory Related Fields} \textbf{96} (1993), no.\ 2, 205--224. 
	
	\bibitem[Kum04]{Kum04} \textsc{Kumagai, T.} 
	Heat kernel estimates and parabolic Harnack inequalities on graphs and resistance forms. \emph{Publ. Res. Inst. Math. Sci.} {\bf 40}(2004), no.3, 793--818.
	
	\bibitem[Laa]{Laa} \textsc{Laakso,  T.~J.}
	Ahlfors $Q$-regular spaces with arbitrary $Q>1$ admitting weak Poincar\'e inequality. {\em Geom. Funct. Anal.} {\bf 10}(2000), no.1, 111--123.
	
	\bibitem[Lie]{Lie} \textsc{Lierl, J.}  
	Scale-invariant boundary Harnack principle on inner uniform domains in fractal-type spaces.  
	{\em Potential Anal.} {\bf 43} (2015), no. 4, 717--747.
	
	\bibitem[Lin]{Lin} \textsc{Lindstr{\o}m, T.} Brownian motion on nested fractals. {\em Mem. Amer. Math. Soc.} {\bf 83} (1990), no. 420.
	
	\bibitem[Mur19]{Mur19} \textsc{Murugan, M.} Quasisymmetric uniformization and heat kernel estimates.
	{\em Trans. Amer. Math. Soc.} {\bf 372} (2019), no. 6, 4177–4209.
	
	\bibitem[Mur20]{Mur20}\textsc{Murugan, M.}
	On the length of chains in a metric space. {\em J. Funct. Anal.} 279(2020), no.6, 108627, 18 pp.
	
	\bibitem[Mur24]{Mur24} \textsc{Murugan, M.}
	Heat kernel for reflected diffusion and extension property on uniform domains.
	\emph{Probab. Theory Related Fields} {\bf 190}(2024), no.1-2, 543–599.
	
	\bibitem[OO]{OO} \textsc{Ostrovska, S.,  Ostrovskii, M. I.}   Nonexistence of embeddings with uniformly bounded distortions of Laakso graphs into diamond graphs. 
	{\em Discrete Math.} {\bf 340} (2017), no. 2, 9--17.
	
	\bibitem[RS]{RS} \textsc{Reed, M., Simon, B.}  Methods of modern mathematical physics. II. Fourier analysis, self-adjointness
	\emph{Academic Press [Harcourt Brace Jovanovich, Publishers], New York-London}, 1975, xv+361 pp.
	
	\bibitem[Sal23]{Sal23} \textsc{Saloff-Coste, L.} personal communication, October 2023.
	
	\bibitem[SW25]{SW25} \textsc{Saloff-Coste, L., Wang, Y.}
	Expected hitting time estimates on finite graphs. 
	{\em Stochastic Process. Appl.} {\bf 185} (2025), Paper No. 104626, 18 pp.
	
	
	\bibitem[Tro]{Tro} \textsc{Trofimov, V.\ I.} 
	Graphs with polynomial growth. 
	{\em Mat. Sb. (N.S.)} {\bf 123}(165)(1984), no.3, 407--421.
	
	\bibitem[WF]{WF} \textsc{Wilson,  K.\ G., Fisher, M.\ E.} Critical exponents in 3.99 dimensions. {\em Phys. Rev. Lett.} {\bf 28}, 240--243 (1972)
	
	\bibitem[Yan]{Yan} \textsc{Yang., M.} $p$-Poincar\'e inequalities and cutoff Sobolev inequalities on metric measure spaces. arXiv:2504.09503, preprint, 2025.
\end{thebibliography}
\end{document}